\tikzset{
  schraffiert/.style={pattern=horizontal lines, pattern color=#1},
  schraffiert/.default=black
}
	\def\@cite#1#2{[\textbf{#1}\if@tempswa , #2\fi]}	%text
	\def\@biblabel#1{[#1]}								%bibliography
\numberwithin{equation}{section}
\numberwithin{figure}{section}
\newcommand{\ind}{\mathbbm{1}}
\let\save@mathaccent\mathaccent
\newcommand*\if@single[3]{%
  \setbox0\hbox{${\mathaccent"0362{#1}}^H$}%
  \setbox2\hbox{${\mathaccent"0362{\kern0pt#1}}^H$}%
  \ifdim\ht0=\ht2 #3\else #2\fi
  }
\newcommand*\rel@kern[1]{\kern#1\dimexpr\macc@kerna}
\newcommand*\widebar[1]{\@ifnextchar^{{\wide@bar{#1}{0}}}{\wide@bar{#1}{1}}}
\newcommand*\wide@bar[2]{\if@single{#1}{\wide@bar@{#1}{#2}{1}}{\wide@bar@{#1}{#2}{2}}}
\newcommand*\wide@bar@[3]{%
  \begingroup
  \def\mathaccent##1##2{%
%Enable nesting of accents:
    \let\mathaccent\save@mathaccent
%If there's more than a single symbol, use the first character instead (see below):
    \if#32 \let\macc@nucleus\first@char \fi
%Determine the italic correction:
    \setbox\z@\hbox{$\macc@style{\macc@nucleus}_{}$}%
    \setbox\tw@\hbox{$\macc@style{\macc@nucleus}{}_{}$}%
    \dimen@\wd\tw@
    \advance\dimen@-\wd\z@
%Now \dimen@ is the italic correction of the symbol.
    \divide\dimen@ 3
    \@tempdima\wd\tw@
    \advance\@tempdima-\scriptspace
%Now \@tempdima is the width of the symbol.
    \divide\@tempdima 10
    \advance\dimen@-\@tempdima
%Now \dimen@ = (italic correction / 3) - (Breite / 10)
    \ifdim\dimen@>\z@ \dimen@0pt\fi
%The bar will be shortened in the case \dimen@<0 !
    \rel@kern{0.6}\kern-\dimen@
    \if#31
      \overline{\rel@kern{-0.6}\kern\dimen@\macc@nucleus\rel@kern{0.4}\kern\dimen@}%
      \advance\dimen@0.4\dimexpr\macc@kerna
%Place the combined final kern (-\dimen@) if it is >0 or if a superscript follows:
      \let\final@kern#2%
      \ifdim\dimen@<\z@ \let\final@kern1\fi
      \if\final@kern1 \kern-\dimen@\fi
    \else
      \overline{\rel@kern{-0.6}\kern\dimen@#1}%
    \fi
  }%
  \macc@depth\@ne
  \let\math@bgroup\@empty \let\math@egroup\macc@set@skewchar
  \mathsurround\z@ \frozen@everymath{\mathgroup\macc@group\relax}%
  \macc@set@skewchar\relax
  \let\mathaccentV\macc@nested@a
%The following initialises \macc@kerna and calls \mathaccent:
  \if#31
    \macc@nested@a\relax111{#1}%
  \else
%If the argument consists of more than one symbol, and if the first token is
%a letter, use that letter for the computations:
    \def\gobble@till@marker##1\endmarker{}%
    \futurelet\first@char\gobble@till@marker#1\endmarker
    \ifcat\noexpand\first@char A\else
      \def\first@char{}%
    \fi
    \macc@nested@a\relax111{\first@char}%
  \fi
  \endgroup
}
\newtheorem {theorem}{Theorem}[section]
\newtheorem {proposition}[theorem]{Proposition}
\newtheorem {lemma}[theorem]{Lemma}
\newtheorem {corollary}[theorem]{Corollary}
\theoremstyle{definition}
\newtheorem{definition}[theorem]{Definition}
\newtheorem {remark}[theorem]{Remark}
\newcommand{\Mod}[1]{\ (\mathrm{mod}\ #1)}
\newcommand{\BB}{\mathbb{B}}
\newcommand{\EE}{\mathbb{E}}
\newcommand{\HH}{\mathbb{H}}
\newcommand{\NN}{\mathbb{N}}
\newcommand{\ZZ}{\mathbb{Z}}
\newcommand{\PP}{\mathbb{P}}
\newcommand{\MM}{\mathbb{M}}
\newcommand{\RR}{\mathbb{R}}
\newcommand{\LL}{\mathbb{L}}
\newcommand{\RRd}{\mathbb{R}^{d+1}}
\renewcommand{\SS}{\mathbb{S}}
\newcommand{\XX}{\mathbb{X}}
\newcommand{\cF}{\mathcal{F}}
\newcommand{\cP}{\mathcal{P}}
\newcommand\cV{\mathcal{V}}
\DeclareMathOperator{\artanh}{artanh}
\DeclareMathOperator{\Vol}{Vol}
\DeclareMathOperator{\pos}{pos}
\DeclareMathOperator{\aff}{aff}
\DeclareMathOperator{\lin}{lin}
\DeclareMathOperator{\conv}{conv}
\DeclareMathOperator{\dist}{dist}
\DeclareMathOperator{\cotanh}{cotanh}
\newcommand{\dint}{\mathrm{d}}
\DeclareMathSymbol{\tildesym}{\mathord}{largesymbols}{"65}
\DeclareMathOperator{\arccosh}{arcosh}
\let\@fnsymbol\@alph
\begin{document}

\title{\bfseries Beta-star polytopes and\\ hyperbolic stochastic geometry}

\author{%
 Thomas Godland\footnotemark[1]
\and
Zakhar Kabluchko\footnotemark[2]%
\and
Christoph Th\"ale\footnotemark[3]%
}

\date{}
\renewcommand{\thefootnote}{\fnsymbol{footnote}}
\footnotetext[1]{%
    University of M\"unster, Germany. Email: thomas.godland@uni-muenster.de
}

\footnotetext[2]{%
	University of M\"unster, Germany. Email: zakhar.kabluchko@uni-muenster.de
}

\footnotetext[3]{%
    Ruhr University Bochum, Germany. Email: christoph.thaele@rub.de
}

\maketitle

\begin{abstract}\noindent
%In this paper the class of beta-star polytopes is introduced and studied.
Motivated by problems of hyperbolic stochastic geometry we introduce and study the class of beta-star polytopes.
A beta-star polytope is defined as the convex hull of an inhomogeneous Poisson processes on the complement of the unit ball in $\mathbb{R}^d$ with density proportional to $(\|x\|^2-1)^{-\beta}$, where $\|x\|>1$ and $\beta>d/2$. Explicit formulas for various geometric and combinatorial functionals associated with beta-star polytopes are provided, including the expected number of $k$-dimensional faces, the expected external angle sums and the expected intrinsic volumes.  Beta-star polytopes are relevant in the context of hyperbolic stochastic geometry, since they are tightly connected to the typical cell of a Poisson-Voronoi tessellation as well as the zero cell of a Poisson hyperplane tessellation in hyperbolic space. The general results for beta-star polytopes are used to provide explicit formulas for the expected $f$-vector of the typical hyperbolic Poisson-Voronoi  cell and the hyperbolic Poisson zero cell. Their asymptotics for large intensities and their monotonicity behaviour  is discussed as well. Finally, stochastic geometry in the de Sitter half-space is studied as the hyperbolic analogue to recent investigations about random cones generated by random points on half-spheres in spherical or conical stochastic geometry.

    \smallskip\noindent
    \textbf{Keywords.} Beta-star polytope, beta-star set, de Sitter space, expected angle sum, expected $f$-vector, hyperbolic space, hyperbolic stochastic geometry, Poisson hyperplane tessellation, Poisson process, Poisson-Voronoi tessellation, zero cell, typical cell

    \smallskip\noindent
    \textbf{MSC 2010.} Primary:  51M10, 52A22, 60D05; Secondary: 52A55, 60F05.
\end{abstract}

\tableofcontents

\section{Introduction}\label{sec:Motivation}

%\subsection{Motivation}\label
Stochastic geometry is concerned with the analysis of complex spatial random structures. While traditionally research has been centred around models in Euclidean space, the focus has partially moved to stochastic geometry models in non-Euclidean spaces in the last years, most notably to spaces of constant curvature $+1$ and $-1$. As examples which are most relevant to our situation we mention here the studies on hyperbolic random geometric graphs~\cite{BodeFountoulakisMuller,FountulakisHoornMuller,FountulakisYukich}, hyperbolic random polytopes~\cite{BesauLudwigWerner,BesauRosenThaele,BesauThaele}, hyperbolic Poisson line or hyperplane tessellations~\cite{HeroldHugThaele,PorretBlanc,SantaloYanez,TykessonCalka} and hyperbolic Poisson-Voronoi (or Poisson-Delaunay) tessellations~\cite{BenjaminiEtAl,BenjaminiEtAl21,CalkaChapron,HansenMuller,Isokawa,IsokawaPlane,NielsenNock}.

\medskip
In the focus of the present paper are the typical cell of a Poisson-Voronoi tessellation and the zero cell of a Poisson hyperplane tessellation in a $d$-dimensional hyperbolic space.
We start by a description of these two objects. For this, let
$$
\HH^d:= \{(x_0,x_1,\ldots,x_d)\in\RRd:x_0^2- x_1^2 - \ldots - x_d^2 = 1, x_0>0\}
$$
be the hyperboloid model for a $d$-dimensional hyperbolic space, $d\in \NN$. It is well known that via the so-called gnomonic projection $\Pi_{\text{gn}}:\HH^d\to\BB^d$ it can be identified with the Klein model in the $d$-dimensional open unit ball $\BB^d$ and via the stereographic projection $\Pi_{\text{st}}:\HH^d\to\BB^d$ with the Poincar\'e model in $\BB^d$; see Section~\ref{subsec:HyperbolicGeo} for details and further explanations.

%\medskip

\paragraph{Poisson-Voronoi tessellation of the hyperbolic space.}
For $\lambda>0$ let $\eta_{d,\lambda}$ be a stationary Poisson process in $\HH^d$ with intensity $\lambda>0$. More precisely, this means that $\eta_{d,\lambda}$ is a Poisson process whose intensity measure is a multiple $\lambda$ of the $d$-dimensional hyperbolic volume measure on $\HH^d$. In particular, this implies that the law of $\eta_{d,\lambda}$ is invariant under all isometries of $\HH^d$. If $d_{\text{hyp}}(\,\cdot\,,\,\cdot\,)$ denotes the hyperbolic distance on $\HH^d$, we can associate with each point $x\in \eta_{d,\lambda}$ its \textit{Voronoi cell}
$$
V(x;\eta_{d,\lambda}) := \{y\in\HH^d: d_{\text{hyp}}(x,y)\leq d_{\text{hyp}}(z,y)\text{ for all }z\in\eta_{d,\lambda}\}.
$$
In other words, $V(x;\eta_{d,\lambda})$ is the set of all points of $\HH^d$ that are closer to $x$ than to any other point of $\eta_{d,\lambda}$, in the sense of the hyperbolic distance. As in the Euclidean case, it is not hard to verify that each Voronoi cell is a hyperbolic random polytope. The collection of all such Voronoi cells is the \textit{hyperbolic Poisson-Voronoi tessellation} $\cV_{d,\lambda}$ with intensity $\lambda$; see the left panel of Figure~\ref{fig:Voronoi} showing it in the Poincar\'e model. We are interested in what is known as the \textit{typical cell} $V^{\text{typ}}_{d,\lambda}$ of $\cV_{d,\lambda}$. Intuitively, one can think of $V^{\text{typ}}_{d,\lambda}$ as a randomly selected cell of $\cV_{d,\lambda}$, where each cell has the same chance of being selected, independently of size and shape. Formally, the distribution of $V^{\text{typ}}_{d,\lambda}$ can be defined using Palm calculus. However, there is an alternative way based on Slivnyak's theorem for Poisson processes. In fact, if $e=(1,0,\ldots,0)$ denotes the apex of the hyperboloid $\HH^d$ we may define $V^{\text{typ}}_{d,\lambda}$ as the Voronoi cell of $e$ if the point $e$ is added to the Poisson process $\eta_{d,\lambda}$:
\begin{equation}\label{eq:DefTypicalCell}
V^{\text{typ}}_{d,\lambda} := \{y\in\HH^d: d_{\text{hyp}}(y,e)\leq d_{\text{hyp}}(y,z)\text{ for all } z\in\eta_{d,\lambda}\}.
\end{equation}
In Theorem~\ref{thm:TypicalVoronoiCell} below we shall identify the distribution (of the gnomonic projection) of $V^{\text{typ}}_{d,\lambda}$.

\begin{figure}[t]
	\centering
	\includegraphics[width=0.49\columnwidth]{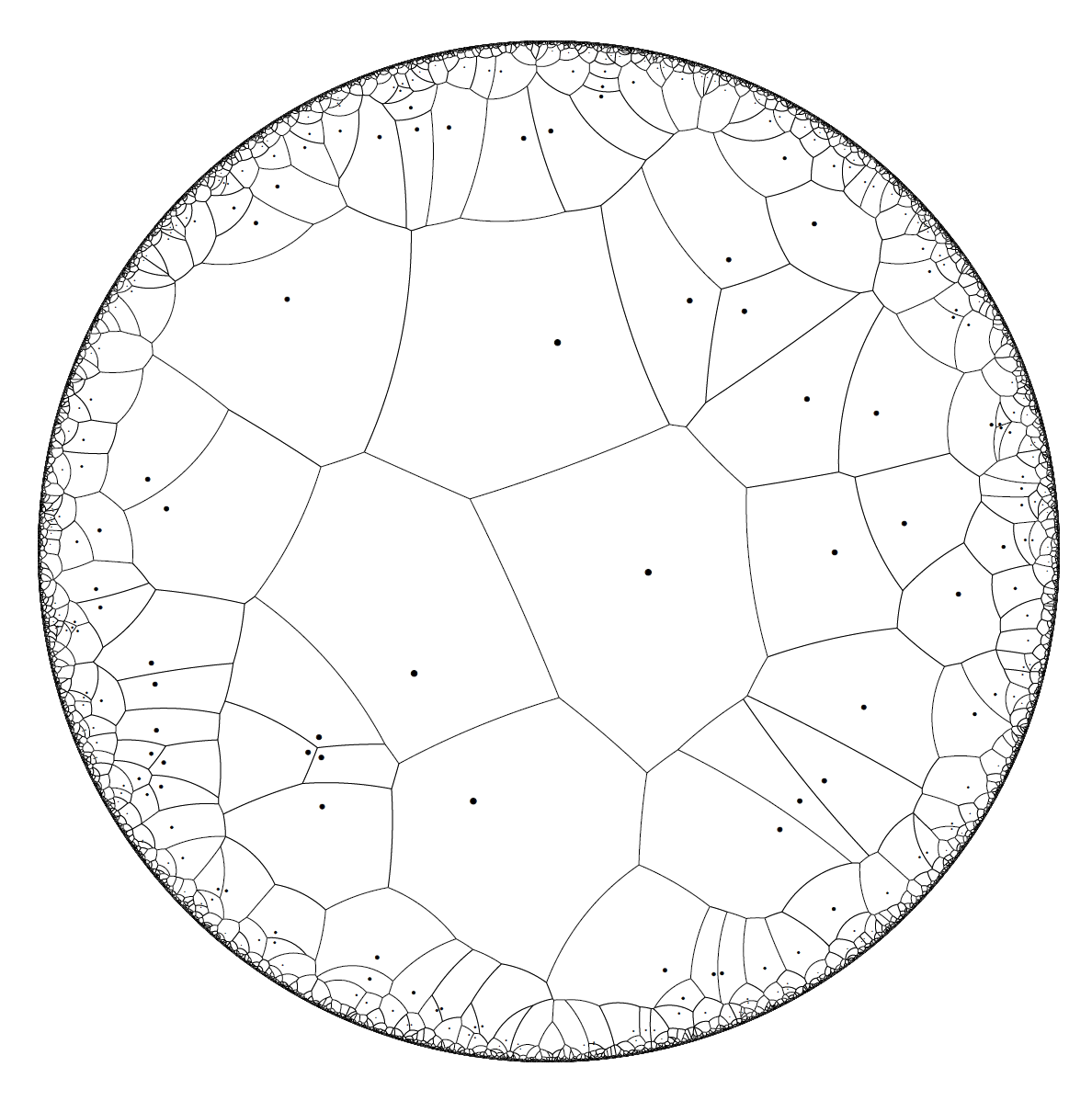}
	\includegraphics[width=0.49\columnwidth]{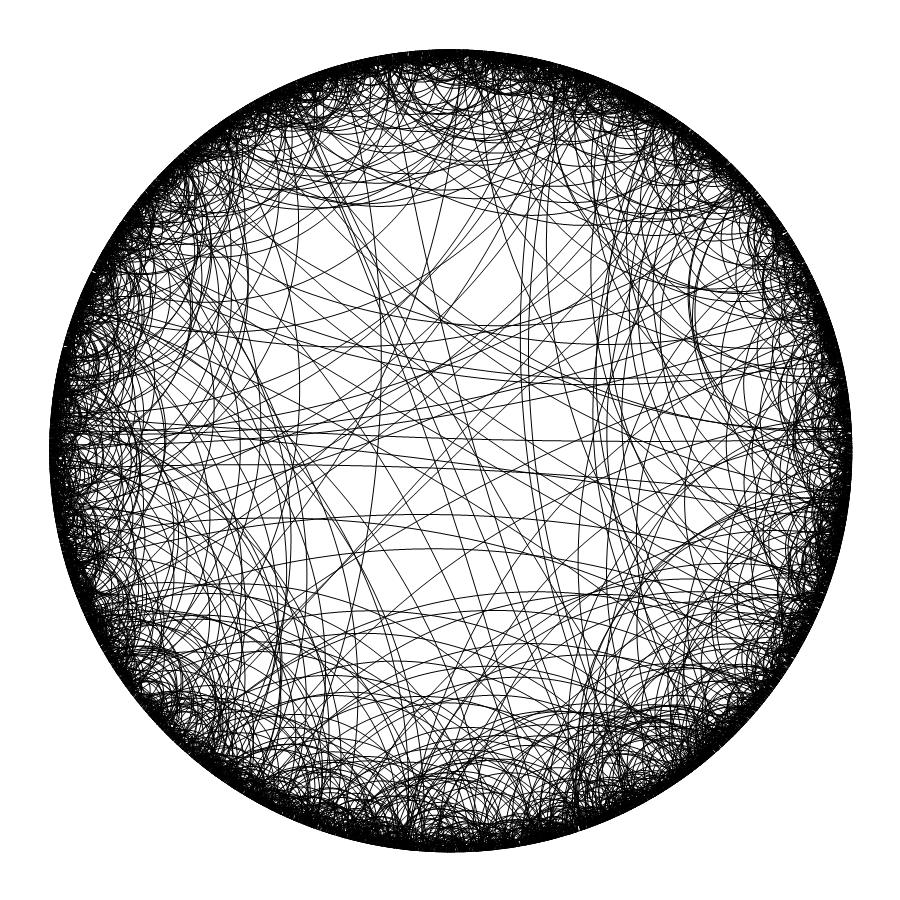}
	\caption{Simulation of a Poisson-Voronoi tessellation (left panel) and a Poisson hyperplane tessellation (right panel) in the Poincar\'e disk model of the hyperbolic plane.}
\label{fig:Voronoi}
\end{figure}

\paragraph{Poisson hyperplane tessellation of the hyperbolic space.}
To describe the next model,  we fix $\lambda>0$ and  consider a Poisson process $\xi_{d, \lambda}$ on the space $A_h(d,d-1)$ of hyperbolic hyperplanes in $\HH^d$  (which are defined as intersections of linear hyperplanes in $\RR^{d+1}$ with $\HH^d$).
The intensity measure of $\xi_{d, \lambda}$ is chosen to be the following infinite measure on $A_h(d,d-1)$:
$$
\mu_{d,\lambda}(\,\cdot\,):= \lambda \int\limits_{\SS_e^{d-1}}\int\limits_{0}^\infty (\cosh \theta)^{d-1}\, \ind_{\{H_e(u,\theta)\in\,\cdot\,\}} \,
\dint \theta \, \sigma_{d-1;e}(\dint u).
$$
Here, $e=(1,0,\ldots,0)$ is the apex (or the origin) of $\HH^d$, $\SS_e^{d-1}$ stands for the (hyperbolic) unit sphere in $\HH^d$ centred at $e$, $\sigma_{d-1;e}$ is the normalized spherical Lebesgue measure on $\SS_e^{d-1}$,  and $H_e(u,\theta)$ denotes the unique hyperbolic hyperplane orthogonal to $u$ having hyperbolic distance $\theta$ to $e$.  It is known that the measure $\mu_{d,\lambda}$ is the unique (up to a multiplicative constant) measure on $A_h(d,d-1)$ which is invariant under all isometries of $\HH^d$; see~\cite{santalo_book} and, in particular,  Equation~(17.54) on p.~309 there.  Equivalently, we can describe $\xi_{d,\lambda}$ as follows: the hyperbolic distances from $e$ to the hyperplanes from $\xi_{d,\lambda}$ form a Poisson process on $(0,\infty)$ with Lebesgue intensity $\theta \mapsto \cosh^{d-1}\theta$, while their normal directions are independent and uniformly distributed  on the unit sphere. The hyperplanes from $\xi_{d,\lambda}$ decompose the space $\HH^d$ into random subsets with pairwise disjoint interiors; see the right panel of Figure~\ref{fig:Voronoi} for a realization in the Poincar\'e model.  With probability one, there is a unique such set containing the origin $e$. We refer to this set as the \textit{hyperbolic Poisson zero cell} and denote it by $Z_{d,\lambda}^0$. Remarkably and in contrast to the case of the typical Voronoi cell described above, it is known for $d=2$ from~\cite{BenjaminiJonassonEtAL,PorretBlanc,TykessonCalka} (and can also be concluded from a paper of Hoffmann-J\o rgensen~\cite{HoffmannJoergensen} for general $d$, as we shall demonstrate) that $Z_{d,\lambda}^0$ is a \textit{bounded} hyperbolic random polytope only if the intensity parameter $\lambda$ lies above a certain critical value, while below this value $Z_{d,\lambda}^0$ is hyperbolically unbounded with positive probability. In the forthcoming Theorem~\ref{thm:ZeroCell} we shall identify the precise distribution (again, of the gnomonic projection) of $Z_{d,\lambda}^0$.

\paragraph{Beta$^*$ sets.}
The connection between the typical cell $V^{\text{typ}}_{d,\lambda}$ and the zero cell $Z_{d,\lambda}^0$ comes from the fact that, under gnomonic projection and after application of convex duality/polarity, both random polytopes can be identified with the convex hull of an inhomogeneous Poisson process on $\RR^d\backslash\widebar \BB^d$ whose Lebesgue intensity is proportional to $(\|x\|^2-1)^{-\beta}$, for $\|x\|>1$; see Figure~\ref{fig:beta_star_polys}.  As it turns out, in the Voronoi case one has that $\beta=d$, whereas one has to choose $\beta=(d+1)/ 2$ in case of the Poisson zero cell. This motivates an independent study of what we call a beta$^*$ set (or a beta-star set), which is defined as the closed convex hull of the atoms of the Poisson process mentioned above. In general, this Poisson process has a countably infinite number of atoms and the beta$^*$ set is a closed convex set. However, for some values of parameters it turns out that a \textit{finite} subset of these atoms suffices to generate this closed convex hull, in which case we speak of a beta$^*$ polytope.

\begin{figure}[t]
	\centering
	\includegraphics[width=0.48\columnwidth]{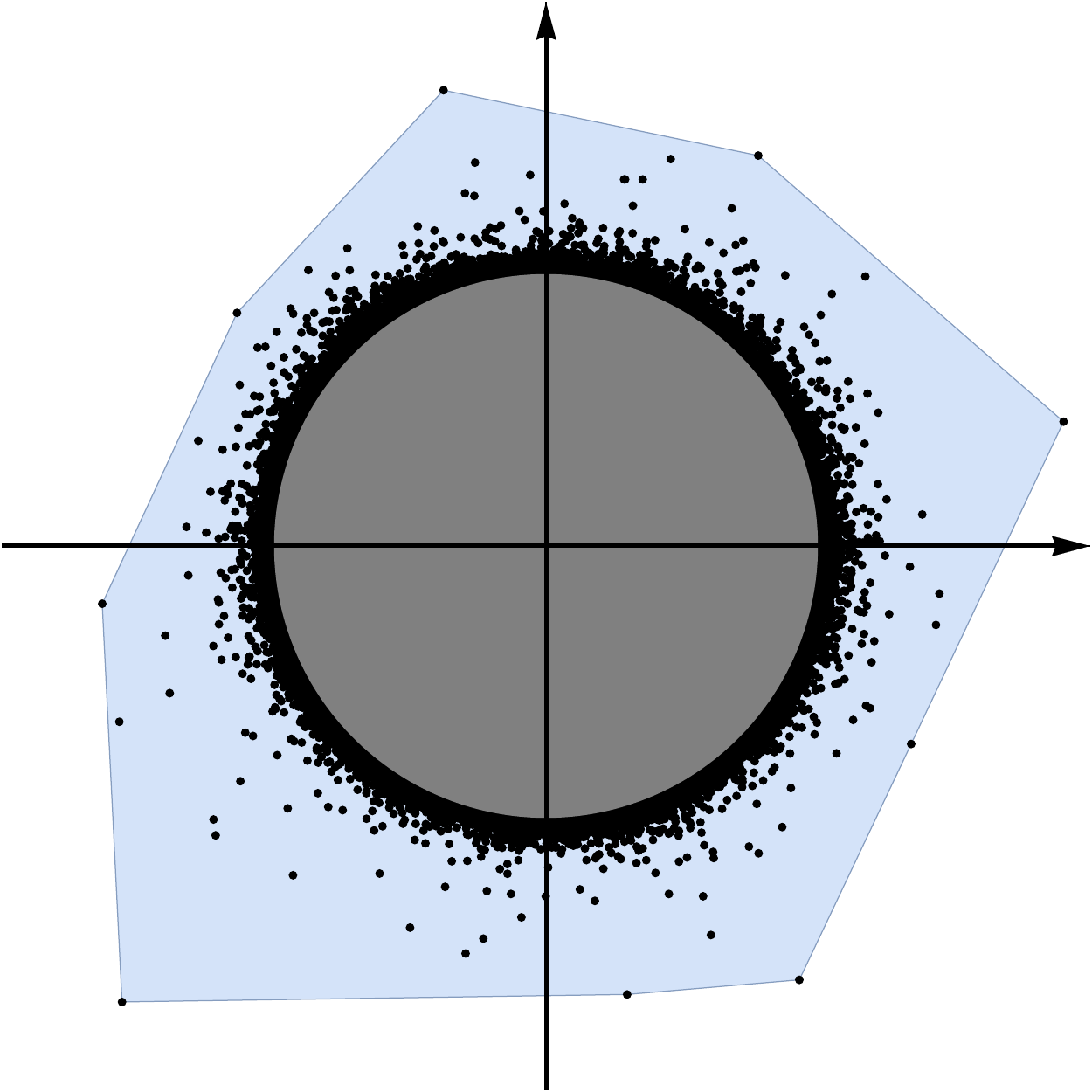}
	\includegraphics[width=0.48\columnwidth]{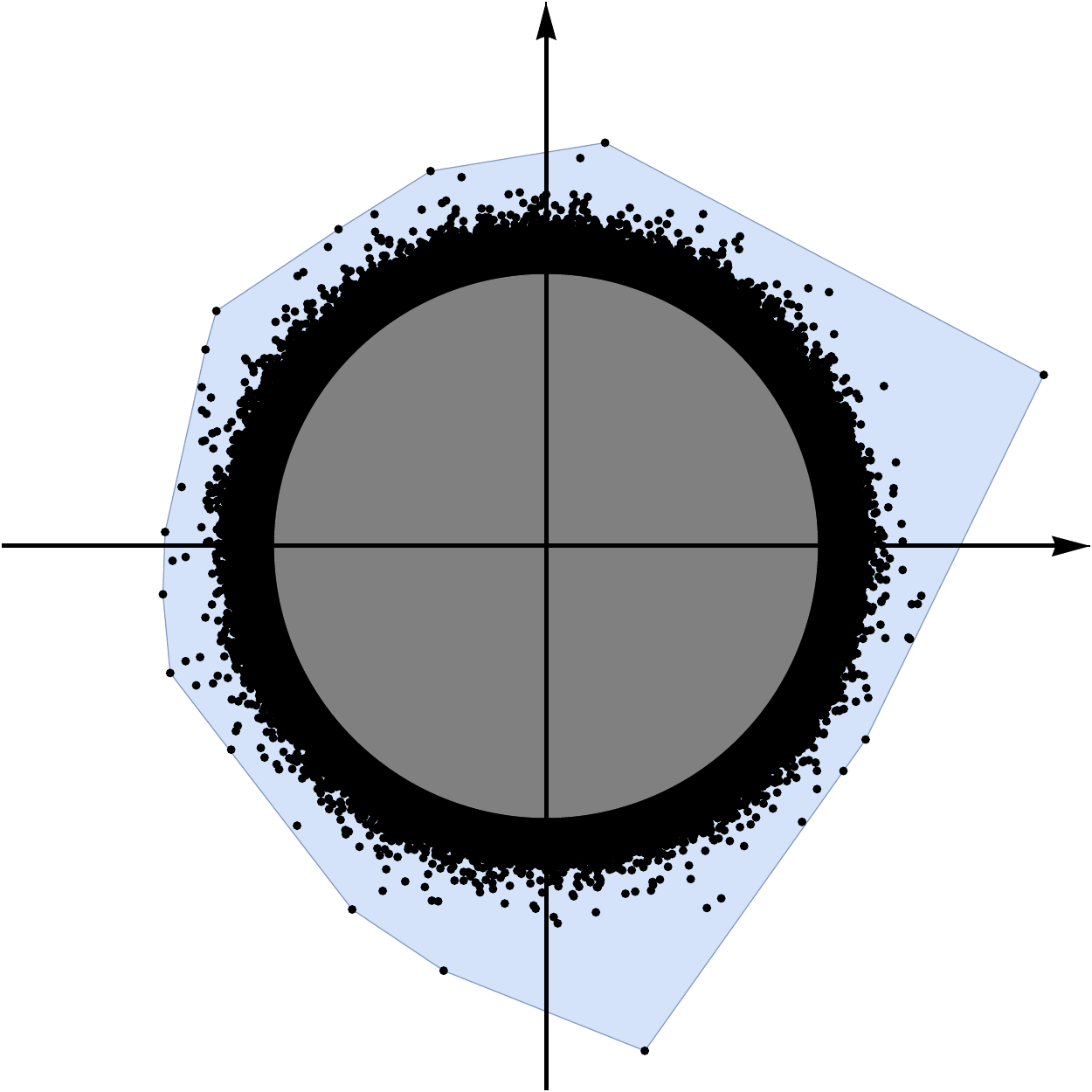}
	\caption{Beta$^*$ polytopes in dimension $d=2$ with $\beta=3$ and $\beta=6$.}
\label{fig:beta_star_polys}
\end{figure}

\paragraph{Summary of the main results.}
Our main results can roughly be summarized as follows:
\begin{itemize}
\item[(a)] We provide conditions on the  parameters $\beta$, $d$, and an additional intensity parameter which we call $\alpha$, under which a beta$^*$ set is a polytope; see Theorem~\ref{thm:PolytopeOrNot}.
\item[(b)] We compute the expectation of the so-called $T$-functional of beta$^*$ sets; see Theorem~\ref{thm:Tfunctional}. As a consequence, we derive a formula for the expected intrinsic volumes of the beta$^*$ set; see Proposition~\ref{prop:IntrinsicVolume}.
\item[(c)] We compute the expected $f$-vector (i.e., the expected number of faces of any  given dimension) of a beta$^*$ polytope; see Theorem~\ref{thm:f-vector_beta_star}. The main tool is the so-called canonical decomposition of the beta$^*$ intensity which we state and prove in Theorem~\ref{thm:canonical_decomp}. We also compute the expected external angle sums of beta$^*$ polytopes in Theorem~\ref{thm:exp_ext_anlge_sums}.
\item[(d)] We show that the typical Voronoi cell in the hyperbolic space $\HH^d$ is related to the beta$^*$ polytope with $\beta=d$ and compute its expected $f$-vector; see Theorems~\ref{thm:TypicalVoronoiCell} and~\ref{thm:TypicalVoronoiCellExpectedFaceNums}.
\item[(e)] We introduce a class of (in general, non-stationary) Poisson hyperplane tessellations in the hyperbolic space and show that the zero cells of these tessellations are related to beta$^*$ polytopes by convex duality; see Theorem~\ref{thm:ZeroCell}. This fact can be used to compute their expected $f$-vectors. The stationary hyperplane tessellation is included as a special case; see Theorem~\ref{thm:f-vector,beta=(d+1)/2}.
\item[(f)] We analyse the behaviour of beta$^*$ polytopes as the intensity parameter goes to $\infty$ and show that they converge to their Euclidean counterparts; see Theorem~\ref{thm:convergence_beta_star}. We also prove that the convergence of the expected $f$-vectors takes place in a strictly decreasing way; see Theorem~\ref{thm:Monotonicity}.
\end{itemize}

\paragraph{Related polytopes.}
The results listed above complement our findings for beta and beta' polytopes in~\cite{KabluchkoAngles,KabluchkoZeroPolytope,KabluchkoMarynychTemesvariThaele,KabluchkoTemesvariThaele,KabluchkoThaeleVoronoiSphere,KabluchkoThaeleZaporozhets}, which in turn have found application to Voronoi and hyperplane tessellations in Euclidean and spherical spaces. The beta polytopes are defined as convex hulls of i.i.d.\ samples in the unit ball $\BB^d$ with density proportional to $(1-\|x\|^2)^\beta$, $\|x\|<1$. Similarly, beta' polytopes are defined as convex hulls of i.i.d.\ samples in $\RR^d$ with density proportional to $(1+\|x\|^2)^{-\beta}$, $x\in \RR^d$. As we have shown in the above mentioned papers, the beta' polytopes are related to  Voronoi and hyperplane tessellations on the sphere and to random polytopes in the half-sphere. In the present paper we will show that the beta$^*$ polytopes are related to similar objects in the hyperbolic space and the de Sitter half-space.
Moreover, the infinite intensity limits of both, the beta' and beta$^*$ polytopes, which are called Poisson polytopes and defined as convex hulls of Poisson processes with intensities proportional to $\|x\|^{-2\beta}$ on $\RR^d\backslash \{0\}$, are related to Voronoi and hyperplane tessellations of the Euclidean space. The beta and beta' distributions with densities proportional to  $(1-\|x\|^2)^\beta$, $\|x\|<1$, and $(1+\|x\|^2)^{-\beta}$, $x\in \RR^d$, (together with the Gaussian distribution and the uniform distribution on the sphere, which are their limit cases) were characterized by Ruben and Miles~\cite{Ruben_Miles} as the only probability distributions satisfying the so-called canonical decomposition property. The beta$^*$ intensity proportional to $(\|x\|^2-1)^{-\beta}$, $\|x\|>1$, defines an \textit{infinite} measure (for $d\geq 2$) and does not appear in the classification of Ruben and Miles, although it also satisfies a variant of canonical decomposition, as we shall show in Theorem~\ref{thm:canonical_decomp}.

\paragraph{Structure of the paper.}
Our main results on beta$^*$ polytopes will be stated in Section~\ref{sec:beta_star_polytope_results}. Applications to  hyperbolic stochastic geometry will be presented in Section~\ref{sec:SpecialCases}. The remaining sections contain proofs.

\paragraph{Notation.} We write  $\|\,\cdot\,\|$ for the Euclidean norm and $\langle\,\cdot\,,\,\cdot\,\rangle$ for the Euclidean scalar product in $\RR^d$.  We denote by $\BB^d :=\{x\in \RR^d: \|x\|<1\}$ the open unit ball and by $\widebar\BB^d:=\{x\in \RR^d: \|x\|\leq 1\}$ its closure.
Following the book of Schneider and Weil~\cite[p.~13]{SW} we write $\kappa_d$ for the volume of $\BB^d$ and $\omega_d$ for the surface area (that is, the $(d-1)$-dimensional Hausdorff measure) of the unit sphere $\SS^{d-1} := \{x\in \RR^d: \|x\|=1\}$ in $\RR^d$. It is well known that
$$
\kappa_d = \frac{\pi^{d/2}}{\Gamma(\frac d2 + 1)}
\quad
\text{ and }
\quad
\omega_d = d \kappa_d = \frac{d\pi^{d/2}}{\Gamma(\frac d2 + 1)}= \frac{2\pi^{d/2}}{\Gamma(\frac d2)}.
$$
By $\conv(A)$, $\aff (A)$ and $\lin (A)$ we indicate the convex, the affine and the linear hull of a set $A\subset\RR^d$, respectively.

\section{Statement of the main results}\label{sec:beta_star_polytope_results}

\subsection{Definition and existence of \texorpdfstring{beta$^*$}{beta*} polytopes}\label{subsec:def_and_existence_of_beta_star}
Let us define the objects we are interested in.  Fix some space dimension $d\in \NN$. For $\alpha>0$ and $\beta > d/2$ we denote by $\zeta_{d,\alpha,\beta}$ a Poisson process on $\RR^d\backslash \widebar\BB^d = \{x\in \RR^d: \|x\|>1\}$ whose intensity $f_{d,\alpha,\beta}$ (with respect to the Lebesgue measure)  is given by
\begin{equation}\label{eq:DensityZeta}
f_{d,\alpha,\beta}(x) = \frac{\alpha\,\tilde c_{d,\beta}}{(\|x\|^2-1)^{\beta}},
\qquad
\|x\|>1,
\qquad
\tilde c_{d,\beta} := {\Gamma(\beta)\over\pi^{d\over 2}\Gamma(\beta-{d\over 2})}.
\end{equation}
It should be observed that although $\zeta_{d,\alpha,\beta}$ does not have points inside the unit ball $\widebar\BB^d$, the points of $\zeta_{d,\alpha,\beta}$ accumulate in an outside neighbourhood of the boundary of $\BB^d$, as we shall see in a moment (see also Figure~\ref{fig:beta_star_polys}). Introducing the constant $\tilde c_{d,\beta}$, which may look unnatural at a first sight, will prove convenient at many places, for example in Lemma~\ref{lem:Projection}. For background information and properties of Poisson processes we refer to~\cite{LPbook} and~\cite[Chapter~3] {resnick_book}.

\begin{definition}[beta$^*$ sets]
The closed convex hull of the atoms of $\zeta_{d,\alpha,\beta}$ is denoted by $P_{d,\alpha,\beta}$ and called a \emph{beta$^*$ set}  with parameters $\alpha>0$ and $\beta>d/2$.
\end{definition}

Realizations of beta$^*$ sets are shown on Figure~\ref{fig:beta_star_polys}. We record some basic properties of the Poisson process $\zeta_{d,\alpha,\beta}$.

\begin{proposition}\label{prop:poi_process_properties}
Let $d\in \NN$, $\beta > \max(d/2,1)$ and $\alpha>0$. Then, the total number of atoms of $\zeta_{d,\alpha,\beta}$ is infinite a.s., while the number of atoms having norm $\geq r$ is finite a.s.\ for every $r>1$. Also, with probability $1$, all points on the unit sphere $\SS^{d-1}$ are accumulation points for the atoms of $\zeta_{d,\alpha,\beta}$.
\end{proposition}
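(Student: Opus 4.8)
The plan is to reduce all three statements to the behaviour of the single radial integral
$$
I(r):=\int_r^\infty \frac{s^{d-1}}{(s^2-1)^{\beta}}\,\dint s,\qquad r\ge 1 .
$$
Passing to polar coordinates, the intensity measure $\mu_{d,\alpha,\beta}$ of $\zeta_{d,\alpha,\beta}$ satisfies $\mu_{d,\alpha,\beta}(\{x:\|x\|\ge r\})=\alpha\,\tilde c_{d,\beta}\,\omega_d\, I(r)$ and, more generally, $\mu_{d,\alpha,\beta}(A)=\alpha\,\tilde c_{d,\beta}\int_{\SS^{d-1}}\int_1^\infty s^{d-1}(s^2-1)^{-\beta}\,\ind\{su\in A\}\,\dint s\,\dint u$ for Borel sets $A\subseteq\RR^d\backslash\widebar\BB^d$, with $\dint u$ the (unnormalised) surface measure on $\SS^{d-1}$. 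The integrand is $\sim s^{\,d-1-2\beta}$ as $s\to\infty$, so $I(r)<\infty$ for every fixed $r>1$ exactly because $\beta>d/2$; and it is $\sim(2(s-1))^{-\beta}$ as $s\downarrow 1$, so $I(1)=\infty$ because $\beta\ge 1$. (It is precisely at this last point that the hypothesis is needed in the stronger form $\beta>\max(d/2,1)$ rather than merely $\beta>d/2$; for $d\ge 2$ it is automatic.)

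Given this, the second assertion is immediate: the number of atoms of $\zeta_{d,\alpha,\beta}$ of norm $\ge r$ equals the Poisson random variable $\zeta_{d,\alpha,\beta}(\{x:\|x\|\ge r\})$ with finite mean $\alpha\,\tilde c_{d,\beta}\,\omega_d\,I(r)$, hence is almost surely finite. For the first assertion, $\mu_{d,\alpha,\beta}(\RR^d\backslash\widebar\BB^d)=\alpha\,\tilde c_{d,\beta}\,\omega_d\,I(1)=\infty$, and a Poisson process with infinite total intensity has infinitely many atoms almost surely; one sees this by exhausting $\RR^d\backslash\widebar\BB^d$ by sets $A_n$ with $\mu_{d,\alpha,\beta}(A_n)\to\infty$ and noting that $\PP(\zeta_{d,\alpha,\beta}(A_n)\le k)=e^{-\mu_{d,\alpha,\beta}(A_n)}\sum_{j=0}^{k}\mu_{d,\alpha,\beta}(A_n)^j/j!\to 0$ for every fixed $k\in\NN$. (The first assertion is in fact also a consequence of the third one below.)

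For the third assertion I would first prove a local version of the divergence at the sphere: for every $u\in\SS^{d-1}$ and every $\eps>0$ one has $\mu_{d,\alpha,\beta}(B(u,\eps))=\infty$. Indeed, for $\delta,\gamma>0$ with $\delta+\gamma<\eps$, the set of those $x$ with $1<\|x\|<1+\delta$ whose normalised direction $x/\|x\|$ lies within Euclidean distance $\gamma$ of $u$ is contained in $B(u,\eps)\backslash\widebar\BB^d$, and its $\mu_{d,\alpha,\beta}$-mass is a positive constant (the surface measure of a spherical cap) times $\int_1^{1+\delta} s^{d-1}(s^2-1)^{-\beta}\,\dint s=\infty$, again because $\beta\ge 1$. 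Hence $\zeta_{d,\alpha,\beta}(B(u,\eps))=\infty$ almost surely for each fixed pair $(u,\eps)$. To upgrade this to a statement valid for all $u$ simultaneously, I would fix a countable dense set $D\subseteq\SS^{d-1}$ and a sequence $\eps_m\downarrow 0$ and work on the almost sure event $\Omega_0:=\bigcap_{v\in D}\bigcap_{m\in\NN}\{\zeta_{d,\alpha,\beta}(B(v,\eps_m))=\infty\}$: on $\Omega_0$ every $u\in\SS^{d-1}$ is an accumulation point of the atom set, because any ball $B(u,\eps)$ contains a ball $B(v,\eps_m)$ with $v\in D$, $\|v-u\|<\eps/2$ and $\eps_m<\eps/2$, and therefore contains infinitely many atoms.

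The routine integrability estimates carry essentially all of the argument; the only point requiring a little care is, in the third assertion, the passage from the uncountable family of balls $B(u,\eps)$ to a countable cofinal subfamily, which is handled by monotonicity of $A\mapsto\zeta_{d,\alpha,\beta}(A)$ as above. One should also keep in mind that ``accumulation point'' here means ``infinitely many atoms in every neighbourhood'', which the argument delivers directly (and which would in any case follow from ``at least one atom in every neighbourhood'', since a point of $\SS^{d-1}$ is never itself an atom of $\zeta_{d,\alpha,\beta}$).
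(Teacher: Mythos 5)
Your proof is correct. For the first two assertions you argue exactly as the paper does: the radial integral $\int_r^\infty s^{d-1}(s^2-1)^{-\beta}\,\dint s$ converges for $r>1$ because $\beta>d/2$ and diverges at $r=1$ because $\beta\geq 1$, and finiteness/infiniteness of atom counts follows from the corresponding property of the intensity measure. For the third assertion your route differs from the paper's. The paper writes the atoms as $R_1U_1, R_2U_2,\ldots$ with $R_1>R_2>\ldots>1$, $R_n\to 1$, and $U_1,U_2,\ldots$ i.i.d.\ uniform on $\SS^{d-1}$ (hence a.s.\ dense), and concludes that $\SS^{d-1}$ lies in the closure of the atom set. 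You instead show that every ball $B(u,\eps)$ centred on the sphere has infinite intensity mass (a spherical cap times the divergent radial integral near $1$), so it a.s.\ contains infinitely many atoms, and then pass to all $u$ simultaneously via a countable dense family and monotonicity. Your version is slightly more self-contained: the paper's appeal to a.s.\ denseness of the $U_n$ (really, of every tail of that sequence, since one also needs $R_n$ close to $1$ for the same indices) itself hides a small countable-cover argument of exactly the kind you make explicit. Your argument also directly produces the stronger statement ``infinitely many atoms in every neighbourhood'' rather than mere closure. The paper's version buys brevity. Both are sound.
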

\begin{proof}
Condition $\beta > 1$ ensures that  $\int_{\|x\| > 1}f_{d,\alpha,\beta}(x) \dint x  = +\infty$ meaning that the total number of atoms of $\zeta_{d,\alpha,\beta}$ is infinite a.s. On the other hand, condition $\beta>d/2$  ensures  that $\int_{\|x\|>r} f_{d,\alpha,\beta}(x) \dint x < \infty$ for all $r>1$ meaning that the number of atoms outside any ball $r \BB^d$ of radius $r>1$ is finite a.s. To prove the last claim of the proposition, represent the atoms as $R_1 U_1, R_2U_2,\ldots$, where $R_1>R_2>\ldots>1$ are the distances from the atoms to the origin, while $U_1,U_2,\ldots$ are independent random vectors whose distribution is  uniform on the unit sphere due to the isotropy of the intensity. Since $R_n\to 1$ as $n\to\infty$ and the sequence $U_1,U_2,\ldots$ is dense on the unit sphere with probability $1$, we conclude that the unit sphere belongs to the closure of the set of atoms of $\zeta_{d,\alpha,\beta}$.
\end{proof}

\begin{remark}
Although in the following we are interested in the case $d\geq 2$, let us mention that for $d=1$ and $\beta\in (1/2,1)$ the number of atoms of $\zeta_{1,\alpha,\beta}$ is finite a.s.
For $d=\beta=1$ the number of atoms is infinite and they cluster at $\pm 1$  a.s.
\end{remark}

By Proposition~\ref{prop:poi_process_properties}, the beta$^*$ set $P_{d,\alpha,\beta}$ is a compact convex set containing $\widebar\BB^d$ with probability $1$.  However, $P_{d,\alpha,\beta}$ need not be a polytope a.s.\  since, as we shall see in a moment,  with positive probability its boundary  might touch the boundary of $\widebar\BB^d$ at infinitely many points that become exposed points~\cite[p.~18]{SchneiderBook} of $P_{d,\alpha,\beta}$; see the right panel of Figure~\ref{fig:regime_and_cantor}. In order to characterize the cases when this does not happen, let us denote by $R$ the radius of the largest ball centred at the origin and contained in $P_{d,\alpha,\beta}$. That is, we put
$$
R := \sup\{r>0: r \widebar\BB^d \subset P_{d,\alpha, \beta}\}.
$$
Note that if $R>1$, then $P_{d,\alpha,\beta}$ is a convex hull of the atoms of $\zeta_{d,\alpha,\beta}$ with norm exceeding $R$, and, since their number is a.s.\ finite, a polytope.

\begin{figure}[t]
%		\begin{center}
		\begin{tikzpicture}[scale=0.9]
	\begin{scope}
%\draw [thick,domain=-2:2] plot ({sinh(\x)}, {cosh(\x)});
%\draw[thick] (-4,1) -- (4,1);
%\draw[schraffiert=black] (0,0) rectangle (1.5,4);
\fill[pattern=north east lines, pattern color=lightgray!50] (0,0) rectangle (1.5,4);
\filldraw [fill = gray, draw = gray] (3.5,0) rectangle (5.95,4) ;
%\shade[left color=red!50,right color=green!50](3.5,0) rectangle (6,4) ;
%\shade[left color=gray,right color=white](6.2,0) rectangle (9,4) ;
\filldraw [fill = lightgray, draw = lightgray] (1.5,0) rectangle (3.5,4) ;
\draw[->,thick] (0,0) -- (6,0);
\draw[->,thick] (0,0) -- (0,4);
\draw[fill=black] (0,0) circle(0.05);

\draw[fill=black] (0,1.8) circle(0.05);
\node at (-1,1.8) {$(d-1)\pi$};

\draw[line width=5pt, lightgray] (3.5,0) -- (3.5,1.8);
\draw[line width=5pt, gray] (3.5,1.8) -- (3.5,4);
\draw[dashed] (0,1.8) -- (5.95,1.8);
\draw[fill=white] (3.5,1.8) circle(0.15);
%	
%\draw[dashed] (2.3,2.5) -- (0,-1);	
%\draw[fill=black] (0,2.5) circle(0.05);

\draw[fill=black] (3.5,0) circle(0.05);
\node at (3.5,-0.4) {$\frac{d+1}{2}$};

\draw[fill=black] (1.5,0) circle(0.05);
\node at (1.5,-0.4) {$\frac{d}{2}$};

%\draw[-,line width = 0.05mm] (3.65,0) -- (3.65,4);
%\draw[-,line width = 0.05mm] (3.35,0) -- (3.35,4);

\node at (6.2,-0.3) {$\beta$};
\node at (-0.2,4.2) {$\alpha$};

\node at (-0.2,-0.3) {$0$};

\node at (4.7,3.3) {$\textup{\color{white}{polytope}}$};
\end{scope}
	\end{tikzpicture}
%\end{center}
\includegraphics[width=0.45\columnwidth]{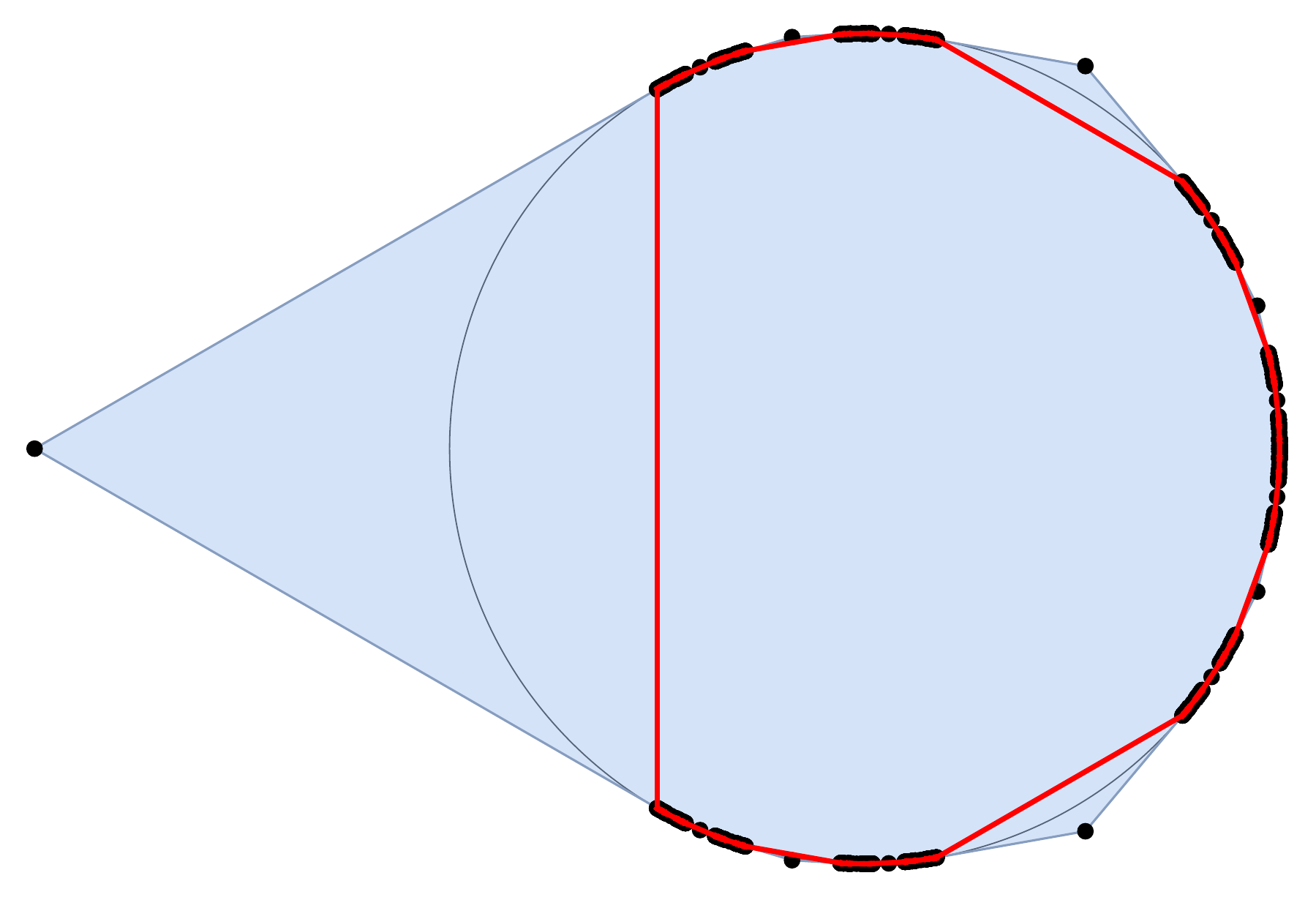}
\caption{
Left panel: Phase diagram illustrating Theorem~\ref{thm:PolytopeOrNot} for fixed dimension $d\ge 2$. Darkgray region: the beta$^*$ set $P_{d,\alpha,\beta}$ is a.s.~a polytope.  Lightgray region: $P_{d,\alpha,\beta}$ is (with positive probability) not a polytope. Hatched lightgray: the beta$^*$ set is not defined. The white point with $\beta=(d+1)/2$ and $\alpha=(d-1)\pi$ represents the doubly critical case which remains an open problem except for $d=2$.
Right panel: A convex body (in blue) touching the unit circle at infinitely many points that form a Cantor set on the circle. The convex dual of this body is shown in red. The edges of the convex dual correspond to the arcs which are removed during the construction of the Cantor set. For $d=2$, the beta$^*$ set is conjectured to have a similar structure if it is not a polytope.
}
\label{fig:regime_and_cantor}
\end{figure}

%\begin{figure}[t]
%	\centering
%	\includegraphics[width=0.48\columnwidth]{Cantor_Polytope_Dual.pdf}
%	\caption{{\color{red}}}
%	\label{fig:cantor_sphere}
%\end{figure}

\begin{theorem}[Existence of beta$^*$ polytopes]\label{thm:PolytopeOrNot}
Fix a dimension $d\geq 2$.
\begin{itemize}
\item[(i)] If $\beta>(d+1)/2$ and $\alpha>0$ is arbitrary, or if $\beta=(d+1)/2$ and $\alpha > (d-1)\pi$, then $P_{d,\alpha,\beta}$ is a.s.\ a $d$-dimensional polytope with $R>1$.
\item[(ii)] If $d/2 < \beta < (d+1)/2$ and $\alpha>0$ is arbitrary, or if  $\beta=(d+1)/2$ and $0 < \alpha < (d-1)\pi$, then
    $$
    \PP[\text{$P_{d,\alpha,\beta}$ is not a polytope and $R=1$}] >0.
    $$
\end{itemize}
\end{theorem}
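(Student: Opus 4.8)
The plan is to recast the statement, via convex duality, as a question about whether a family of random spherical caps covers the sphere $\SS^{d-1}$, and then to treat the off-critical regimes $\beta\ne(d+1)/2$ and the critical line $\beta=(d+1)/2$ separately.

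\textit{Setup.} Write $C(v,\psi)\subseteq\SS^{d-1}$ for the closed cap with centre $v$ and angular radius $\psi$, and to each atom $x$ of $\zeta_{d,\alpha,\beta}$ associate the open cap $C_x:=\{y\in\SS^{d-1}:\langle x,y\rangle>1\}=\interior C\!\big(x/\|x\|,\arccos(1/\|x\|)\big)$. Since $P_{d,\alpha,\beta}\supseteq\widebar\BB^d$ by Proposition~\ref{prop:poi_process_properties}, its support function $h(w)=\sup\{\langle x,w\rangle:x\text{ an atom}\}$ satisfies $h\ge1$ on $\SS^{d-1}$, and $R=\min_{w}h(w)$; hence
$$R=1\quad\Longleftrightarrow\quad\exists\,y_0\in\SS^{d-1}:\ \langle x,y_0\rangle\le1\text{ for every atom }x\quad\Longleftrightarrow\quad\SS^{d-1}\ne\bigcup\nolimits_x C_x.$$
Two further observations are needed. (a) If $y_0$ is \emph{strictly} uncovered, i.e.\ $\langle x,y_0\rangle<1$ for all atoms $x$, then the hyperplane $\{z:\langle z,y_0\rangle=1\}$ supports $P_{d,\alpha,\beta}$ and meets it only in $y_0$, since the extreme points of $P_{d,\alpha,\beta}$ lie in the closure of the atom set, which by Proposition~\ref{prop:poi_process_properties} equals that set together with $\SS^{d-1}$, and $y_0$ is the only point of this closure on the hyperplane. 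Thus every strictly uncovered direction is an exposed point of $P_{d,\alpha,\beta}$, so infinitely many of them force $P_{d,\alpha,\beta}$ not to be a polytope. (b) In polar coordinates, $\EE\,\zeta_{d,\alpha,\beta}(\{x:\|x\|>1,\ \langle x,y_0\rangle>1\})=\alpha\tilde c_{d,\beta}\int_1^\infty\frac{r^{d-1}}{(r^2-1)^\beta}\,\mathcal{H}^{d-1}\!\big(C(\,\cdot\,,\arccos\tfrac1r)\big)\dint r$; the integrand is $\asymp(r-1)^{(d-1)/2-\beta}$ as $r\downarrow1$ and $\asymp r^{d-1-2\beta}$ as $r\to\infty$, so the expectation is finite if and only if $\beta<(d+1)/2$, with a value independent of $y_0$ by isotropy.

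\textit{Off-critical cases.} For part (ii) with $d/2<\beta<(d+1)/2$ I would argue by first moments: by (b) the above expectation is a finite number $m$, so $\PP[y_0\text{ uncovered}]=e^{-m}>0$ for each $y_0$, and by Fubini the uncovered set $U:=\SS^{d-1}\setminus\bigcup_x C_x$ has $\EE\,\mathcal{H}^{d-1}(U)=\omega_d e^{-m}>0$, hence $\PP[\mathcal{H}^{d-1}(U)>0]>0$; on that event $U$ is uncountable, and since $\bigcup_x\bd C_x$ is an $\mathcal{H}^{d-1}$-null union of $(d-2)$-spheres, uncountably many points of $U$ are strictly uncovered, so by (a) $P_{d,\alpha,\beta}$ is not a polytope and $R=1$. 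For part (i) with $\beta>(d+1)/2$ I would run a covering argument: fix small $\rho>0$, a $\rho$-net $w_1,\dots,w_N$ of $\SS^{d-1}$ with $N\asymp\rho^{-(d-1)}$, and the sets $A_i:=\{x:\angle(x/\|x\|,w_i)<\rho,\ \|x\|>\sec(3\rho)\}$; any atom in $A_i$ has $C_x\supseteq C(w_i,2\rho)$, so if every $A_i$ is occupied then $\SS^{d-1}$ is covered and $R>1$. A polar-coordinate estimate gives $\EE\,\zeta_{d,\alpha,\beta}(A_i)\asymp\rho^{d-1}\cdot\rho^{2-2\beta}=\rho^{\,d+1-2\beta}\to\infty$ as $\rho\downarrow0$ — this is exactly where $\beta>(d+1)/2$ is used — whence $\PP[\exists i:A_i\text{ empty}]\le N e^{-c\rho^{\,d+1-2\beta}}$, which is summable along $\rho=1/k$; by Borel--Cantelli, a.s.\ $R>1$ for all large scales, so a.s.\ $R>1$ and $P_{d,\alpha,\beta}$ is a.s.\ a $d$-dimensional polytope.

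\textit{The critical line --- the hard part.} When $\beta=(d+1)/2$, the expectation in (b) diverges (so the first-moment argument of part (ii) breaks) while $\EE\,\zeta_{d,\alpha,\beta}(A_i)\asymp\rho^{0}$ stays bounded (so the crude union bound of part (i) breaks): one must invoke a sharp covering theorem for the caps $\{C_x\}$ on $\SS^{d-1}$. Here I would compute the decreasing rearrangement of the normalized cap measures and show it behaves like $\big(\alpha/((d-1)\pi)\big)\,n^{-1}$, which identifies $(d-1)\pi$ as the critical intensity through the Shepp-type series $\sum_n n^{-2}\exp(v_1+\dots+v_n)$ (convergent precisely when the leading constant is $<1$). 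Then: for $d=2$ this is Shepp's classical theorem on covering the circle by random arcs; for general $d$ it is the covering result of Hoffmann--J\o rgensen~\cite{HoffmannJoergensen}. For $\alpha>(d-1)\pi$ the sphere is a.s.\ covered, so $R>1$ a.s.\ and $P_{d,\alpha,\beta}$ is a.s.\ a $d$-dimensional polytope; for $\alpha<(d-1)\pi$ it is a.s.\ not covered, so $R=1$ a.s., and since in the subcritical regime the uncovered set is a.s.\ uncountable (a random Cantor-type set), this — together with (a) and the $\mathcal{H}^{d-1}$-null structure of the cap boundaries — forces infinitely many exposed points, so $P_{d,\alpha,\beta}$ is a.s.\ not a polytope. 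The principal obstacles are pinning down the constant $(d-1)\pi$ exactly and, for $d\ge3$, checking that the multidimensional covering theorem applies with this threshold; this is also why the doubly critical point $\alpha=(d-1)\pi$, $\beta=(d+1)/2$ remains open for $d\ge3$.
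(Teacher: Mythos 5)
Your reduction of $R>1$ to the covering of $\SS^{d-1}$ by the open caps $C_x$ is exactly the paper's reduction (the paper phrases it through the dual zero cell $Z_{d,\lambda,\beta}\stackrel{d}{=}P_{d,\alpha,\beta}^\circ$ of Theorem~\ref{thm:ZeroCell}, which turns atoms into half-spaces and the caps $C_x$ into the caps $S(u_n,r_n)$). Where you genuinely diverge is in the off-critical regimes: the paper handles all cases uniformly by deriving the a.s.\ asymptotics $\sigma_{d-1}(S(u_n,r_n))\sim c\,n^{-(d-1)/(2\beta-2)}$ and feeding them into Hoffmann--J{\o}rgensen's covering theorem (Theorem~\ref{theo:hoffmann_joergensen} with $a=\infty$, resp.\ $a=0$), whereas your first-moment/Fubini argument for $d/2<\beta<(d+1)/2$ and your $\rho$-net union bound for $\beta>(d+1)/2$ are self-contained and more elementary. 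Both check out (the integrability threshold $\beta<(d+1)/2$ for the expected number of caps containing a fixed direction, and the rate $\EE\,\zeta_{d,\alpha,\beta}(A_i)\asymp\rho^{\,d+1-2\beta}$, are correct), and your observation that strictly uncovered directions are exposed points of $P_{d,\alpha,\beta}$ is a clean way to get ``not a polytope'' whenever the uncovered set has positive $\mathcal H^{d-1}$-measure.

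On the critical line $\beta=(d+1)/2$ your plan coincides with the paper's but has three concrete gaps. First, everything hinges on the asymptotics of the rearranged cap measures, which you assert rather than derive; the paper gets $1-r_n\sim c\,n^{-1/(\beta-1)}$ from the law of large numbers for the radial Poisson process (Lemma~\ref{lem:PPP_on_01_asympt_H_n}) and then~\eqref{eq:vol_caps_asymptotics}, and only this computation produces the constant $(d-1)\pi$. Second, the claim ``a.s.\ not covered, so $R=1$ a.s.'' for $\alpha<(d-1)\pi$ is false: the subcritical part of Hoffmann--J{\o}rgensen only yields uncovered points with \emph{positive} probability, and indeed $\PP[R>1]>0$ throughout regime (ii) (put one atom near each vertex of $[-2,2]^d$), which is why the theorem is stated with positive probability. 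Third, and most substantively, your route to ``not a polytope'' in the subcritical critical case needs uncountably many strictly uncovered points, which you justify by asserting the uncovered set is Cantor-like; but at $\beta=(d+1)/2$ the expected number of caps covering a fixed direction diverges, so the uncovered set has $\mathcal H^{d-1}$-measure zero a.s., the Fubini argument is unavailable, and uncountability does not follow from Hoffmann--J{\o}rgensen. You can close this either as the paper does (a polytope would be cut out by finitely many of the half-spaces; the union of the corresponding finitely many closed caps is closed, so its non-empty complement contains a spherical cap, forcing the dual cell to contain a relatively open piece of $\SS^{d-1}$), or by noting that a \emph{single} strictly uncovered $y_0$ --- which Hoffmann--J{\o}rgensen does provide with positive probability, since it concerns the closed caps --- already suffices: $y_0$ is then an exposed point of $P_{d,\alpha,\beta}$ lying on $\SS^{d-1}$, and a polytope containing $\widebar\BB^d$ cannot have a vertex on $\SS^{d-1}$ because its tangent cone there would contain a half-space.
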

Observe that the probability in case (ii) is not equal to one. Indeed, on an event of positive probability the Poisson process $\zeta_{d,\alpha,\beta}$ has at least one atom in a sufficiently small ball around each vertex of the cube $[-2,2]^d$, which guarantees that $R>1$ and, as a consequence,  $P_{d,\alpha,\beta}$ is a polytope.
It is an interesting problem to investigate the (probably fractal) structure of  the boundary of a beta$^*$-set when it touches the unit sphere in the case (ii),   $R=1$.
The doubly critical case when $\beta=(d+1)/2$ and $\alpha = (d-1)\pi$ remains open except when $d=2$, where we shall show the following result.  The parameter regimes are illustrated in Figure~\ref{fig:regime_and_cantor}, left panel.

\begin{theorem}[Existence of beta$^*$ polygons in the doubly critical case]\label{thm:PolytopeOrNot_critical}
Almost surely, $P_{2,\pi,3/2}$ is a polygon with $R>1$.
\end{theorem}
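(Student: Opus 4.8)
The plan is to translate the event $\{R>1\}$ into a covering statement for the circle $\SS^1$ and then invoke the Dvoretzky--Shepp theorem on covering the circle by random arcs. The doubly critical value $\alpha=\pi=(d-1)\pi$ (with $d=2$) will land \emph{exactly} on the boundary of Shepp's criterion, but on the side where covering occurs almost surely. This is also the reason the analogous question stays open for $d\geq 3$: there one would need a sharp threshold for covering $\SS^{d-1}$ by random spherical caps, which is not available.

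\emph{Step 1: change of variables and reformulation as a covering problem.} Working with general $\alpha>0$ in order to see where the threshold comes from, write a point $x\in\RR^2$ with $\|x\|>1$ as $x=\coth(t)\,(\cos\phi,\sin\phi)$ with $t>0$ and $\phi\in[0,2\pi)$. Using $\tilde c_{2,3/2}=1/(2\pi)$ and $\coth^2 t-1=1/\sinh^2 t$, the change of variables $r=\coth t$ turns the radial factor $r\,\dint r/(r^2-1)^{3/2}$ into $\cosh t\,\dint t$, so that $\zeta_{2,\alpha,3/2}$ is carried to a Poisson process on $(0,\infty)\times[0,2\pi)$ with intensity $\tfrac{\alpha}{2\pi}\cosh t\,\dint t\,\dint\phi$; in particular the angular parts of the atoms are i.i.d.\ uniform on $\SS^1$ given the radial parts. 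Since $P:=P_{2,\alpha,3/2}\supseteq\widebar\BB^2$ almost surely, we have $R=\min_{w\in\SS^1}h_P(w)$, hence $R>1$ if and only if for every direction $w_\psi=(\cos\psi,\sin\psi)$ there is an atom $x$ of $\zeta_{2,\alpha,3/2}$ with $\langle x,w_\psi\rangle>1$. For an atom with coordinates $(t,\phi)$ this inequality reads $\cos(\phi-\psi)>\tanh t$, i.e.\ $\psi$ lies in the open arc of half-length $\gamma(t):=\arccos(\tanh t)\in(0,\pi/2)$ centred at $\phi$. Therefore $R>1$ holds if and only if the random arcs $\bigl(\phi_i-\gamma(t_i),\,\phi_i+\gamma(t_i)\bigr)$, $i\ge 1$, cover $\SS^1$.

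\emph{Step 2: the arc-length intensity and Shepp's criterion at the critical value.} Next one computes the law of the arc lengths: using $\sinh(\artanh y)=y/\sqrt{1-y^2}$, the expected number of arcs with half-length exceeding $\theta\in(0,\pi/2)$ equals $\alpha\sinh(\artanh(\cos\theta))=\alpha\cot\theta$. Ordering the half-lengths decreasingly as $\gamma_1\ge\gamma_2\ge\cdots$, the Poisson structure gives that $\alpha\cot\gamma_n$ is distributed as a sum $\Gamma_n$ of $n$ i.i.d.\ $\mathrm{Exp}(1)$ variables; hence $\gamma_n\sim\alpha/n$, and a two-series argument applied to $\gamma_n=\Gamma_n^{-1}\alpha+O(\Gamma_n^{-3})$ yields $\gamma_1+\dots+\gamma_n=\alpha\log n+O_{\mathrm{a.s.}}(1)$. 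Rescaling $\SS^1$ to unit circumference (so lengths are divided by $2\pi$ and $\ell_n:=\gamma_n/\pi$), Shepp's theorem asserts that the circle is covered almost surely if and only if $\sum_{n\ge 1}n^{-2}\exp(\ell_1+\dots+\ell_n)=\infty$; since the covering probability is a measurable function of the (almost surely admissible: decreasing, with divergent sum) length sequence, one applies this conditionally on the lengths. Here $\exp(\ell_1+\dots+\ell_n)\asymp n^{\alpha/\pi}$, so the series is comparable to $\sum_n n^{\alpha/\pi-2}$, which diverges exactly for $\alpha\ge\pi$. For the doubly critical value $\alpha=\pi$ this is the harmonic series, hence divergent, so $\SS^1$ is covered almost surely and thus $R>1$ almost surely. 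By the observation preceding Theorem~\ref{thm:PolytopeOrNot}, $R>1$ forces $P_{2,\pi,3/2}$ to coincide with the convex hull of the finitely many atoms of norm exceeding $R$, and since it contains $\widebar\BB^2$ it is a two-dimensional polytope, i.e.\ a polygon.

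\emph{Main obstacle.} The delicate point is precisely that $\alpha=\pi$ sits on the threshold of Shepp's criterion, so there is no slack: one must verify that the decisive series is genuinely $\sum 1/n$ (divergent) and not, say, $\sum 1/(n\log^2 n)$ (convergent). This forces one to keep the exact normalising constant $\tilde c_{2,3/2}=1/(2\pi)$ from \eqref{eq:DensityZeta}, so that the arc-length counting function is \emph{exactly} $\theta\mapsto\alpha\cot\theta$, and to prove $\gamma_1+\dots+\gamma_n=\alpha\log n+O(1)$ with a genuinely bounded remainder (not merely $o(\log n)$), so that the random prefactor hidden in $\exp(\ell_1+\dots+\ell_n)\asymp n^{\alpha/\pi}$ cannot destroy the divergence. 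The transfer of Shepp's theorem from deterministic to Poissonian arc lengths --- via conditioning on the length sequence together with the Hewitt--Savage $0$--$1$ law for the i.i.d.\ uniform centres --- is routine but should be made explicit.
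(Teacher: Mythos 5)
Your proposal is correct and takes essentially the same route as the paper: the paper proves the dual statement that the zero cell $Z_{2,\pi,3/2}$ is a.s.\ a polygon by reformulating hyperbolic boundedness as covering $\SS^1$ by random open arcs, representing the radii through a unit-rate Poisson process so that $\ell_1+\cdots+\ell_n=\log n+O(1)$ a.s., and verifying Shepp's criterion at the critical exponent, then invokes the duality of Theorem~\ref{thm:ZeroCell}. Your direct translation of $\{R>1\}$ into the same covering event simply makes that duality step explicit, and your intensity computation ($\alpha\cot\theta$ arcs of half-length $\geq\theta$) matches the paper's $r_n=P_n/\sqrt{P_n^2+\pi^2}$ parametrization.
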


\subsection{The \texorpdfstring{$T$}{T}-functional}

In this section we start the investigation of geometric parameters of beta$^*$ polytopes and more general beta$^*$ sets $P_{d,\alpha,\beta}$ for $\alpha>0$ and $\beta>d/2$,  as introduced in the previous section. For that purpose we extend the definition of the so-called $T$-functional for polytopes, as introduced by Wieacker \cite{Wieacker}, to general convex sets. Namely, consider a non-empty closed convex set $K\subset\RR^d$, $d\geq 2$, and let $H$ be a supporting hyperplane of $K$. We call the intersection $H\cap K$ a $k$-face of $K$, $k\in\{0,1,\ldots,d-1\}$, provided that the affine hull of  $H\cap K$ is a $k$-dimensional affine subspace of $\RR^d$ (in the literature such faces are known as exposed faces, see \cite[Section 2.1, page 75]{SchneiderBook}, but since this is the only type of faces we consider we do not follow this terminology). By $\cF_k(K)$ we denote the (possibly empty) set of all $k$-faces of $K$. We remark that the (exposed) $(d-1)$-skeleton $\bigcup_{F\in\cF_{d-1}(K)}$ of $K$ may in general be a proper subset of the boundary of $K$, whereas equality holds for $d$-dimensional polytopes, for example. Now, for  parameters $a,b\geq 0$ we define the $T$-functional of $K$ as
$$
T_{a,b}(K) := \sum_{F\in\cF_{d-1}(K)}\dist(F)^a\,V_{d-1}(F)^b,
$$
where $\dist(F):=\dist(\rm aff(F))$ denotes the distance from the origin to the affine hyperplane spanned by $F$, and $V_{d-1}(F)$ is the $(d-1)$-dimensional volume (Hausdorff measure) of $F$.
In the special case $d=1$ and for intervals $K=[m,M]$ with $m < M$ we define $T_{a,b}(K) :=|m|^a+|M|^a$ for $a\geq 0$ (the functional does not depend on $b$).
The next theorem provides a formula for the expected $T$-functional of a beta$^*$ set.

\begin{theorem}[Expected $T$-functional] \label{thm:Tfunctional}
Let $d\geq 2$ and $a,b\geq 0$, $\alpha>0$, $\beta > d/2$ be parameters satisfying the following constraints:
\begin{itemize}
	\item if $\beta=(d+1)/2$ then $0\leq b<1$, $0\leq a<2d-(d-1)(b+1)-1$ and $\alpha>\pi(d-1)(1-b)$,
	\item if $\beta>(d+1)/2$ then $0\leq b<2\beta-d$, $0\leq a<d(2\beta-d+1)-(d-1)(b+1)-1$.
\end{itemize}
Only in these cases $\EE T_{a,b}(P_{d,\alpha,\beta})<\infty$ and its value is given by
$$
\EE T_{a,b}(P_{d,\alpha,\beta}) = {(\tilde{c}_{d,\beta}\alpha)^d\over d}{\omega_d}S_{d,\beta}(b)\int\limits_1^\infty p_{d,\alpha,\beta}(h)\,h^a\,(h^2-1)^{{(d-1)(b+1)\over 2}-d(\beta-{d-1\over 2})}\,\dint h,
$$
where the function $p_{d,\alpha,\beta}(h)$, $h>1$, and the constant $S_{d,\beta}(b)$ are given by
\begin{align}
	p_{d,\alpha,\beta}(h) &:= \exp\Bigg\{-\alpha \tilde{c}_{1,{\beta-{d-1\over 2}}}\int\limits_h^\infty(r^2-1)^{-\beta+{d-1\over 2}}\dint r\Bigg\},\notag\\
	S_{d,\beta}(b) &:= {\tilde{c}_{d-1,\beta}^{-d}\over ((d-1)!)^{b+1}}{\Gamma(d(\beta-{d-1\over 2})-{(d-1)(b+1)\over 2})\over\Gamma(d(\beta-{d+b\over 2}))}\bigg({\Gamma(\beta-{d+b\over 2})\over\Gamma(\beta-{d-1\over 2})}\bigg)^{d}\prod_{i=1}^{d-1}{\Gamma({i+b+1\over 2})\over\Gamma({i\over 2})}.\label{eq:def_Sdbeta}
\end{align}
Let $d=1$ and $a>0$,  $\alpha>0$, $\beta>1$. 	Then, $\EE T_{a,b}(P_{1,\alpha,\beta})<\infty$ if and only if $a<2\beta-1$ and its value is given by the same formula as above (with $S_{1,\beta}(b) = 1$).
%	Only in this case  and its value is given by
%	$$
%	\EE T_{a}(P_{1,\alpha,\beta}) = a2^a\int\limits_1^\infty %h^{a-1}\Big(1-\exp\Big\{-\alpha\tilde{c}_{1,\beta}\int\limits_{h}^{\infty}{\dint r\over(r^2-1)^\beta}\Big\}\Big)\,\dint h.
%	$$
\end{theorem}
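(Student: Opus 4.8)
The plan is to compute $\EE T_{a,b}(P_{d,\alpha,\beta})$ via the Slivnyak–Mecke formula applied to the Poisson process $\zeta_{d,\alpha,\beta}$, after reducing the sum over $(d-1)$-faces to a sum over ordered $d$-tuples of atoms spanning a facet hyperplane. First I would observe that almost every $(d-1)$-face $F$ of the beta$^*$ set is a simplex spanned by exactly $d$ atoms $x_1,\dots,x_d$ of $\zeta_{d,\alpha,\beta}$; write $H=\aff(x_1,\dots,x_d)$ for the affine hull and note that $F$ is a facet precisely when all other atoms of $\zeta_{d,\alpha,\beta}$ lie strictly on one side of $H$. Hence
$$
T_{a,b}(P_{d,\alpha,\beta}) = \frac{1}{d!}\sum_{(x_1,\dots,x_d)}^{\neq} \dist(H)^a\, V_{d-1}(\conv(x_1,\dots,x_d))^b\, \ind\{\text{all other atoms on one side of }H\},
$$
and the multivariate Mecke equation turns $\EE$ of this into
$$
\frac{1}{d!}\int_{(\RR^d\setminus\widebar\BB^d)^d}\!\!\!\dist(H)^a\,V_{d-1}(\conv)^b\,\PP[\zeta_{d,\alpha,\beta}\text{ has no atom on the ``inner'' side of }H]\,\prod_{i=1}^d f_{d,\alpha,\beta}(x_i)\,\dint x_i,
$$
where the vanishing probability is $\exp\{-\int_{H^-}f_{d,\alpha,\beta}(x)\dint x\}$ with $H^-$ the open half-space cut off by $H$ that does not contain the origin (and one still has to multiply by $2$ to account for the two choices of which side is empty, or handle this via orientation).

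The next step is the change of variables that decouples the hyperplane $H$ from the configuration of the $x_i$ inside $H$. I would parametrise $H = H(u,h) := \{x : \langle x,u\rangle = h\}$ by its unit normal $u\in\SS^{d-1}$ and signed distance $h=\dist(H)>1$ (the constraint $h>1$ comes from $H$ having to support a set containing $\widebar\BB^d$, or more precisely from all $d$ atoms lying outside $\widebar\BB^d$ on such a hyperplane forcing $h>1$ generically; the boundary behaviour $h\to 1$ is exactly where the integrability constraints bite). The classical affine Blaschke–Petkantschin formula then rewrites $\prod\dint x_i$ as a constant times $V_{d-1}(\conv(x_1,\dots,x_d))\,\dint\sigma(x_1)\cdots\dint\sigma(x_d)\,\dint h\,\sigma_{d-1}(\dint u)$, where $\sigma$ is Lebesgue measure on $H$. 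On $H$, parametrising points by their distance to the foot point of the origin, the density $f_{d,\alpha,\beta}$ restricted to $H(u,h)$ becomes proportional to a one-dimensional beta$^*$-type density in the variable $r = $ (distance within $H$), with parameter $\beta - (d-1)/2$ — this is precisely the content of the ``projection'' Lemma~\ref{lem:Projection} alluded to in the excerpt, and it is why the constant $\tilde c_{d,\beta}$ was chosen as it was. The inner $(d-1)$-fold integral over $H$ of $V_{d-1}(\conv)^{b+1}$ against this product of one-dimensional beta$^*$ densities is then itself a moment of the volume of a random simplex inside an affine subspace, which can be evaluated by another application of Blaschke–Petkantschin (this time within $H$) together with the known beta-integral / Gamma-function identities; this produces the constant $S_{d,\beta}(b)$ and the factor $(h^2-1)^{(d-1)(b+1)/2 - d(\beta-(d-1)/2)}$.

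Finally, the half-space exponential $\exp\{-\int_{H^-}f_{d,\alpha,\beta}\}$ depends on $H$ only through $h$ (by isotropy), and evaluating $\int_{H(u,h)^-}(\|x\|^2-1)^{-\beta}\dint x$ in ``slab coordinates'' (slicing $H^-$ by hyperplanes parallel to $H$) reduces it, using the same one-dimensional projection identity iteratively, to the one-dimensional tail integral $\int_h^\infty (r^2-1)^{-\beta+(d-1)/2}\dint r$, which is exactly $p_{d,\alpha,\beta}(h)$. Collecting the constants from the two Blaschke–Petkantschin applications, the $d$ copies of $\tilde c_{d,\beta}\alpha$ from the intensities, the $\omega_d/d$ from integrating over $u\in\SS^{d-1}$ (the $1/d$ and the $1/d!$ combining appropriately), and the $S_{d,\beta}(b)$ from the simplex-volume moment, yields the claimed formula with the remaining single integral over $h$. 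The integrability conditions on $a$ and $b$ then come from analysing the behaviour of the $h$-integrand as $h\to 1^+$ and as $h\to\infty$: near $h=1$ the factor $(h^2-1)^{(d-1)(b+1)/2 - d(\beta-(d-1)/2)}$ may blow up but is tamed by $p_{d,\alpha,\beta}(h)$, which in the critical case $\beta=(d+1)/2$ decays like a power $(h-1)^{\alpha/\pi}$ — this is the source of the extra constraint $\alpha>\pi(d-1)(1-b)$ — while for $\beta>(d+1)/2$ the function $p_{d,\alpha,\beta}$ tends to a positive constant and integrability near $1$ is a pure power condition; at infinity $p_{d,\alpha,\beta}(h)\to 1$ and one just needs $a + (d-1)(b+1) - 2d(\beta-(d-1)/2) < -1$. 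The case $d=1$ is handled directly, since there are no Blaschke–Petkantschin reductions: the beta$^*$ set is the interval between the smallest and largest atoms, $T_{a,b}$ is $|m|^a+|M|^a$, and one computes the distribution of the extreme atoms from the Poisson tail integrals.

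The main obstacle I expect is the bookkeeping of constants across the two nested Blaschke–Petkantschin formulas together with the repeated use of the one-dimensional projection identity for the beta$^*$ density: getting $S_{d,\beta}(b)$ in exactly the displayed closed form — with its quotient of four Gamma factors and the product $\prod_{i=1}^{d-1}\Gamma((i+b+1)/2)/\Gamma(i/2)$ — requires carefully tracking which Gamma functions arise from the simplex-volume moment in dimension $d-1$ versus from the radial integrations, and matching them against the standard formula for moments of the volume of a beta-simplex (cf.\ the Ruben–Miles / Miles results referenced in the introduction). A secondary but genuinely delicate point is justifying the reduction in case (ii)-adjacent regimes that $h>1$ holds for a.e.\ facet and that the $(d-1)$-skeleton coincides with the boundary (so that no facet is ``lost'' to the unit sphere) — under the theorem's hypotheses $P_{d,\alpha,\beta}$ is a polytope by Theorem~\ref{thm:PolytopeOrNot}, which is what makes this legitimate, but the integrability thresholds are exactly calibrated to this and must be cross-checked against the phase diagram.
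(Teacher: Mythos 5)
Your proposal follows essentially the same route as the paper's proof: the multivariate Mecke formula for the sum over ordered $d$-tuples spanning a facet, the affine Blaschke--Petkantschin formula together with the decomposition of $\mu_{d-1}$ over $G(d,d-1)\times E^\perp$, the rescaling by $\sqrt{h^2-1}$ reducing the inner integral to the $(b+1)$-st volume moment of a beta$'$ simplex in $\RR^{d-1}$ (which is $S_{d,\beta}(b)$), the identification of the emptiness probability of the far half-space with the one-dimensional tail $p_{d,\alpha,\beta}(h)$ via Lemma~\ref{lem:Projection}, and the endpoint analysis of the resulting $h$-integral. Three points should be corrected before the details will come out right. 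First, the density restricted to $H(u,h)$ is not a ``one-dimensional beta$^*$ density with parameter $\beta-\frac{d-1}{2}$'': at distance $\rho$ from the foot point it is proportional to $(h^2-1+\rho^2)^{-\beta}$, which after the substitution $\rho=\sqrt{h^2-1}\,z$ becomes the $(d-1)$-dimensional beta$'$ density $\tilde f_{d-1,\beta}$ (parameter $\beta$); the parameter $\beta-\frac{d-1}{2}$ belongs to the one-dimensional \emph{marginal} used for $p_{d,\alpha,\beta}$, and no second Blaschke--Petkantschin inside $H$ is needed --- one can simply quote Miles' moment formula for beta$'$ simplices. Second, in the critical case $\beta=\frac{d+1}{2}$ the non-absorption probability equals $\big(\frac{h-1}{h+1}\big)^{\alpha/(2\pi)}$, so the decay exponent is $\alpha/(2\pi)$, not $\alpha/\pi$; with your exponent the threshold would come out as $\pi(d-1)(1-b)/2$ rather than the stated $\pi(d-1)(1-b)$. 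Third, the hypotheses of the theorem do \emph{not} guarantee that $P_{d,\alpha,\beta}$ is a polytope (for $\beta=\frac{d+1}{2}$ and $b>0$ the range $\pi(d-1)(1-b)<\alpha\le\pi(d-1)$ is permitted), and no such reduction is required: the fact that almost every exposed $(d-1)$-face of the beta$^*$ set is a simplex spanned by exactly $d$ atoms holds regardless, and that is all the Mecke step uses.
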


Let us discuss some special cases of this result.  By choosing $a=b=0$, Theorem~\ref{thm:Tfunctional} yields a formula for the expected number $\EE T_{0,0}(P_{d,\alpha,\beta})=\EE f_{d-1}(P_{d,\alpha,\beta})$ of facets (i.e.\ $(d-1)$-faces) of a beta$^*$ set $P_{d,\alpha,\beta}$. In particular $\EE f_{d-1}(P_{d,\alpha,\beta})$ is finite for all $\alpha>0$ if $\beta>(d+1)/2$, while for $\beta=(d+1)/ 2$ this is the case if  and only if $\alpha>\pi(d-1)$. Note that in the doubly critical case $\alpha=\pi(d-1)$  and $\beta=(d+1)/2$, we have that $\EE f_{d-1}(P_{d,\alpha,\beta})=\infty$ although, at least for $d=2$, $P_{d,\alpha,\beta}$ is almost surely a polygon according to  Theorem~\ref{thm:PolytopeOrNot_critical}.

Choosing $a=0$ and $b=1$ in Theorem~\ref{thm:Tfunctional} an explicit formula for the expected surface area of beta$^*$ polytopes $P_{d,\alpha,\beta}$ can be derived, provided $\alpha>0$ and $\beta>(d+1)/2$. Moreover, since in these cases $P_{d,\alpha,\beta}$ almost surely contains the unit ball according to Theorem \ref{thm:PolytopeOrNot} and in particular the origin of $\RR^d$ in its interior, decomposing $P_{d,\alpha,\beta}$ into pyramids $\conv(F\cup\{0\})$ spanned by the facets $F\in\cF_{d-1}(P_{d,\alpha,\beta})$ of $P_{d,\alpha,\beta}$ meeting at the origin, one can use the base-times-height-formula for the volume of such pyramids to express the expected volume of $P_{d,\alpha,\beta}$ as ${1\over d}\EE T_{1,1}(P_{d,\alpha,\beta})$.  In addition, we observe that this term is a lower bound of the volume of $P_{d,\alpha,\beta}$, even in the cases where the beta$^*$ set is not a polytope.

More generally,  all expected intrinsic volumes $\EE V_k(P_{d,\alpha,\beta})$, $k\in\{1,\ldots,d\}$, can be expressed by means of the $T$-functional as follows (note that the case $k=0$ is trivial in this context, since $V_0(P_{d,\alpha,\beta})=1$ almost surely).

\begin{proposition}[Expected intrinsic volumes]\label{prop:IntrinsicVolume}
Suppose that $d\geq 2$, $\beta>(d+1)/ 2$ and $\alpha>0$. Then, for $k\in\{1,\ldots,d\}$ it holds that $\EE V_k(P_{d,\alpha,\beta})<\infty$ and
\begin{equation}\label{eq:14-2-22}
\EE V_k(P_{d,\alpha,\beta}) = {1\over d}{d\choose k}{\kappa_d\over\kappa_{k}\kappa_{d-k}}\,\EE T_{1,1}(P_{k,\alpha,\beta-{d-k\over 2}}).
\end{equation}
\end{proposition}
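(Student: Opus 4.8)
The plan is to exploit the classical Kubota-type integral-geometric formula expressing intrinsic volumes as averages of lower-dimensional volumes over subspaces, combined with the stability of the beta$^*$ intensity under orthogonal projection. The key structural input is that the projection of $\zeta_{d,\alpha,\beta}$ onto a $k$-dimensional linear subspace $L$ is again (after discarding its restriction to $\BB^k$) a beta$^*$ Poisson process on $\RR^k\setminus\widebar\BB^k$ with shifted shape parameter $\beta - (d-k)/2$ and the same $\alpha$ — this is precisely the content of the projection lemma referenced in the text (Lemma~\ref{lem:Projection}), which explains the role of the normalizing constant $\tilde c_{d,\beta}$. Hence the image $P_{d,\alpha,\beta}\mid L = \conv(\zeta_{d,\alpha,\beta}\mid L)$ is distributed like $P_{k,\alpha,\beta-(d-k)/2}$.

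First I would recall Kubota's formula (see \cite[(6.11)]{SW} or \cite{SchneiderBook}): for a convex body $K\subset\RR^d$ and $k\in\{1,\dots,d\}$,
$$
V_k(K) = \binom{d}{k}\frac{\kappa_d}{\kappa_k\kappa_{d-k}}\int_{\Gr(d,k)} V_k(K\mid L)\,\nu_k(\dint L),
$$
where $\nu_k$ is the Haar probability measure on the Grassmannian $\Gr(d,k)$ of $k$-dimensional linear subspaces and $K\mid L$ denotes the orthogonal projection of $K$ onto $L$. Applying this with $K = P_{d,\alpha,\beta}$, taking expectations, and using Fubini (justified by nonnegativity) gives
$$
\EE V_k(P_{d,\alpha,\beta}) = \binom{d}{k}\frac{\kappa_d}{\kappa_k\kappa_{d-k}}\int_{\Gr(d,k)} \EE\big[V_k(P_{d,\alpha,\beta}\mid L)\big]\,\nu_k(\dint L).
$$
Next, by the rotational invariance of the intensity $f_{d,\alpha,\beta}$ the inner expectation does not depend on $L$; fixing any $L$ and invoking the projection lemma to identify $P_{d,\alpha,\beta}\mid L \stackrel{d}{=} P_{k,\alpha,\beta-(d-k)/2}$ (noting $\beta-(d-k)/2 > (k+1)/2$ since $\beta>(d+1)/2$, so the projected beta$^*$ set is again a.s.\ a polytope containing $\widebar\BB^k$, in particular the origin in its interior), the integral collapses and
$$
\EE V_k(P_{d,\alpha,\beta}) = \binom{d}{k}\frac{\kappa_d}{\kappa_k\kappa_{d-k}}\,\EE V_k\big(P_{k,\alpha,\beta-(d-k)/2}\big).
$$
Finally, since $P_{k,\alpha,\beta-(d-k)/2}$ is a.s.\ a $k$-dimensional polytope containing the origin in its interior, I decompose it into the pyramids $\conv(F\cup\{0\})$ over its facets $F\in\cF_{k-1}$; the base-times-height formula gives $V_k(P_{k,\alpha,\beta-(d-k)/2}) = \frac1k\sum_{F}\dist(F)V_{k-1}(F) = \frac1k T_{1,1}(P_{k,\alpha,\beta-(d-k)/2})$ (this is exactly the volume identity already noted in the discussion preceding the proposition). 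Substituting yields
$$
\EE V_k(P_{d,\alpha,\beta}) = \frac1k\binom{d}{k}\frac{\kappa_d}{\kappa_k\kappa_{d-k}}\,\EE T_{1,1}\big(P_{k,\alpha,\beta-(d-k)/2}\big).
$$
One then checks this equals the claimed expression: the stated formula has prefactor $\frac1d\binom dk$, and indeed $\frac1d\binom dk = \frac1d\cdot\frac{d!}{k!(d-k)!} = \frac1k\cdot\frac{(d-1)!}{(k-1)!(d-k)!} = \frac1k\binom{d-1}{k-1}$ — wait, this needs care, so the arithmetic reconciliation of $\frac1k\binom dk$ versus $\frac1d\binom dk$ is the one genuinely delicate bookkeeping point; it resolves because Kubota's normalization should be paired with the correct convention, and I would double-check the constant against \cite[Section~6.2]{SW} rather than trust the first pass. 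Finiteness of $\EE T_{1,1}(P_{k,\alpha,\beta-(d-k)/2})$ follows from Theorem~\ref{thm:Tfunctional} applied in dimension $k$ with $a=b=1$, whose constraints ($\beta' > (k+1)/2$ with $b=1<2\beta'-k$ and $a=1 < k(2\beta'-k+1)-(k-1)\cdot2-1$) are all met for $\beta' = \beta-(d-k)/2 > (k+1)/2$.

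The main obstacle I anticipate is not conceptual but is pinning down the exact combinatorial constant and verifying that the projection lemma delivers the distributional identity $P_{d,\alpha,\beta}\mid L \stackrel{d}{=} P_{k,\alpha,\beta-(d-k)/2}$ with the correct parameter shift and \emph{without any leftover multiplicative factor} — this is where the specific choice of $\tilde c_{d,\beta}$ pays off, and getting the Gamma-function arithmetic to cancel cleanly is the step most prone to error. A secondary technical point is confirming that the projection of the \emph{convex hull} of the atoms equals the convex hull of the projected atoms (true since projection is linear hence commutes with $\conv$) and that discarding atoms projecting into $\BB^k$ does not change the convex hull (true a.s.\ since $P_{k,\alpha,\beta-(d-k)/2}\supset\widebar\BB^k$, so those atoms are interior and irrelevant to the boundary structure).
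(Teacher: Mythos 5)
Your route is exactly the paper's: Kubota's mean projection formula combined with Fubini, then Lemma~\ref{lem:Projection} to identify $\pi_L P_{d,\alpha,\beta}$ in distribution with $P_{k,\alpha,\beta-{d-k\over 2}}$, and finally the pyramid decomposition over the facets seen from the origin; the finiteness check via Theorem~\ref{thm:Tfunctional} with $a=b=1$ is also the intended one, and your two ``secondary'' points (projection commutes with taking convex hulls; atoms projected into $\widebar\BB^k$ are irrelevant because the projected hull contains $\widebar\BB^k$) are exactly the right things to record.

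The one step where you hedge is the one step where you should have held firm. Your chain of identities produces the prefactor ${1\over k}{d\choose k}{\kappa_d\over\kappa_k\kappa_{d-k}}$, because the pyramid decomposition takes place inside the $k$-dimensional space $L\cong\RR^k$, so the base-times-height factor is $1/k$. No normalization convention for Kubota's formula converts this into the ${1\over d}{d\choose k}$ of \eqref{eq:14-2-22}: the two constants coincide only for $k=d$ and differ by the factor $d/k$ otherwise. A direct check supports your version. For $d=2$, $k=1$, Cauchy's formula gives $V_1(K)={1\over 2}\cdot(\text{perimeter of } K)={\pi\over 2}\,\EE_L\big[V_1(\pi_L K)\big]$, and $\EE V_1(\pi_L P_{2,\alpha,\beta})=\EE T_{1,1}(P_{1,\alpha,\beta-{1\over 2}})$ by the one-dimensional definition of $T_{1,1}$; the prefactor is therefore ${\pi\over 2}={1\over 1}{2\choose 1}{\kappa_2\over\kappa_1^2}$ and not ${\pi\over 4}={1\over 2}{2\choose 1}{\kappa_2\over\kappa_1^2}$. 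Likewise, for $d=3$, $k=2$, Cauchy's surface area formula yields the prefactor $1={1\over 2}\cdot{3\choose 2}{\kappa_3\over\kappa_2\kappa_1}$ rather than ${2\over 3}$. So instead of ``resolving'' the mismatch in favour of the displayed formula, you should conclude that the constant ${1\over d}$ in \eqref{eq:14-2-22} --- and in the last display of the paper's own proof, which passes from $\EE V_k(P_{k,\alpha,\beta-{d-k\over 2}})$ to ${1\over d}\EE T_{1,1}(P_{k,\alpha,\beta-{d-k\over 2}})$ --- should read ${1\over k}$. Apart from this point, on which your computation is the trustworthy one, your argument is complete and coincides with the paper's.
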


\begin{remark}
The proof of Proposition \ref{prop:IntrinsicVolume} together with the argument using the base-times-height-formula above shows that for $d\geq 2$ the right-hand side in \eqref{eq:14-2-22} is a lower bound for $\EE V_k(P_{d,\alpha,\beta})$ for all parameters $\alpha>0$, $\beta>d/2$ and $k\in\{1,\ldots,d\}$. If $\beta\leq{d+1\over 2}$ and $k\geq 2$, then $\beta':=\beta-{d-k\over 2}\in({k\over 2},{k+1\over 2}]$ and we have from Theorem \ref{thm:Tfunctional} (case $d\geq 2$) that $ \EE T_{1,1}(P_{k,\alpha,\beta'})=\infty$. If, on the other hand, $\beta\leq{d+1\over 2}$ and $k=1$, the same conclusion follows from Theorem \ref{thm:Tfunctional} (case $d=1$). In particular, this shows that for all $\beta\in (\frac{d}{2},\frac{d+1}{2}]$ the expected intrinsic volumes $\EE V_k(P_{d,\alpha,\beta})$, $k\in\{1,\dots,d\}$, are infinite.
\end{remark}

\subsection{Expected \texorpdfstring{$f$}{f}-vector}
Our next target is an explicit formula for the number of $k$-dimensional faces of $P_{d,\alpha,\beta}$ for all $k\in\{0,1,\ldots,d-1\}$. To state it, we need to introduce some further notation.

\medskip
\noindent
\textit{Angles of polytopes.}
For a $d$-dimensional polytope $P$,  a face $F\in\cF_k(P)$ and a point $x$ in the relative interior of $F$ let $N_F(P)$ be the cone of normal vectors of $P$ at $x$ (note that $N_F(P)$ is independent of the precise choice of $x$). The solid angle of $N_F(P)$ is called the \textit{external angle} $\gamma(F,P)$ of $P$ at its face $F$. Next, define the tangent cone $T_F(P)$ as the positive hull of $P-x$ (again, this definition is independent of the choice of $x$).
The \textit{internal angle} $\beta(F,P)$ of $P$ at its face $F$ is the solid angle of the cone $T_F(P)$. Solid angles are normalized such that the full-space angle is $1$. We refer to~\cite[Chapter 14]{Grunbaum} for further details.

\medskip
\noindent
\textit{Internal angle sums of beta' simplices.}
To be consistent with the notation used in the works on beta and beta' polytopes~\cite{KabluchkoThaeleZaporozhets,KabluchkoAngles}, let $\tilde J_{m,\ell}(\beta)$ denote the expected internal angle at some $(\ell-1)$-dimensional face of the simplex $\conv(Z_1,\dots,Z_m)\subset\RR^{m-1}$, where $Z_1,\dots,Z_m$ are $m$ independent random points in $\RR^{m-1}$ each having the so-called beta' probability density
\begin{equation}\label{eq:BetaprimeDensity}
\tilde f_{m-1,\beta}(x):= \tilde c_{m-1,\beta}\big(1+\|x\|^2\big)^{-\beta},\qquad x\in\RR^{m-1}.
\end{equation}
That is,
\begin{align}\label{eq:def_J_tilde}
\tilde J_{m,\ell}(\beta)
&	=\EE\beta(\conv(Z_1,\dots,Z_\ell),\conv(Z_1,\dots,Z_m)).
\end{align}
%\TG{Inconvenient that $\beta$ is in the argument of $\tilde J_{m,\ell}$ and also the notation for internal angle. Can we work around that? Do we need to? ZK: Yes, it is inconvenient... We could use \measuredangle T_{face}(polytope), but this is difficult.}
By exchangeability, the expected sum of internal angles at all $(\ell-1)$-dimensional faces of the beta' simplex $\conv(Z_1,\dots,Z_m)$ is then
\begin{align*}
\tilde{\mathbb J}_{m,\ell}(\beta):=\binom m\ell \tilde J_{m,\ell}(\beta).
\end{align*}
The admissible values of the  parameters are $m\in\NN$, $\ell\in\{1,\dots,m\}$ and $\beta >(m-1)/2$. By convention, $\tilde{\mathbb J}_{1,1}(\beta) := 1$ and $\tilde{\mathbb J}_{m,\ell}(\beta):=0$ for $m<\ell$.

\medskip
\noindent
\textit{External angle sums of beta$^*$ polytopes.}
For $m\in\NN$ and $\lambda>1, \alpha>0$ or $\lambda=1,\alpha > (m-1)\pi$ we introduce the quantities
\begin{align}\label{eq:def_I_star}
I^*_{\alpha,m}(\lambda)
=
\int\limits_1^\infty \tilde c_{1,\frac{\lambda m+1}{2}}(y^2-1)^{-\frac{\lambda m+1}{2}} \exp\Bigg\{-\alpha\int\limits_{y}^\infty\tilde c_{1,\frac{\lambda+1}{2}}(t^2-1)^{-\frac{\lambda+1}{2}}\,\dint t\Bigg\}\,\dint y
\end{align}
and
\begin{align}\label{eq:def_I_star_sum}
\mathbb I^*_{\alpha,m}(\lambda)
	=\frac{\alpha^m}{m!}\, I^*_{\alpha,m}(\lambda).
\end{align}
The latter term $\mathbb I^*_{\alpha,m}(\lambda)$ turns out to be the expected sum of external angles at all $(m-1)$-dimensional faces of the beta$^*$ polytope $P_{d,\alpha,\frac{\lambda+d}2}$ in $\RR^d$, as we will see in Theorem~\ref{thm:exp_ext_anlge_sums} below.

\medskip
The next result provides an explicit formula for the expected $f$-vector of $P_{d,\alpha,\beta}$ in terms of $\mathbb I^*_{\alpha,m}(\lambda)$ and $\tilde{\mathbb J}_{m,\ell}(\beta)$.

\begin{theorem}[Expected $f$-vector]\label{thm:f-vector_beta_star}
Suppose that either $\beta>(d+1)/ 2$ and $\alpha>0$ or $\beta=(d+1)/2$ and $\alpha > (d-1)\pi$. Then, for $k\in\{0,\dots,d-1\}$, it holds that
\begin{align*}%\label{eq:exp_f-vector}
\EE f_k(P_ {d,\alpha,\beta})=2\sum_{s=0}^{\lfloor (d-k-1)/2\rfloor}   \mathbb I^*_{\alpha,d-2s}(2\beta-d)\cdot\tilde {\mathbb J}_{d-2s,k+1}\Big(\beta-s-\frac 12\Big).
\end{align*}
\end{theorem}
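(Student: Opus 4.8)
The plan is to count faces of $P_{d,\alpha,\beta}$ via the standard integral-geometric approach used for beta and beta' polytopes, combining the Poisson process structure with a conditional Mecke-type formula and then integrating out the geometry. A $k$-face of $P_{d,\alpha,\beta}$ is, with probability one, the convex hull of exactly $k+1$ atoms of $\zeta_{d,\alpha,\beta}$ that lie on a common supporting hyperplane $H$ with all remaining atoms strictly on one side. So I would write $\EE f_k(P_{d,\alpha,\beta})$ as the expected number of $(k+1)$-subsets $\{x_0,\dots,x_k\}$ of the atoms such that the affine hull $H=\aff(x_0,\dots,x_k)$ is a $k$-flat and one of the two closed half-spaces bounded by the hyperplane through $H$ contains all other atoms. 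By the multivariate Mecke equation this becomes
\begin{equation*}
\EE f_k(P_{d,\alpha,\beta}) = \frac{1}{(k+1)!}\int_{(\RR^d\setminus\widebar\BB^d)^{k+1}} \PP[\text{no atom on one fixed side of } \aff(x_0,\dots,x_k)]\cdot\mathbbm{1}\{\dots\}\prod_{i=0}^k f_{d,\alpha,\beta}(x_i)\,\dint x_i,
\end{equation*}
where the void probability of a half-space is $\exp\{-\int_{\text{half-space}\setminus\widebar\BB^d} f_{d,\alpha,\beta}\}$ and I must account for both choices of side (this is the source of the factor $2$; whichever side is chosen, the other contains no atoms only if the flat supports the polytope, and the two cases are symmetric and a.s.\ disjoint).

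The next step is the change of variables that decouples the ``in-flat'' geometry of $x_0,\dots,x_k$ from the distance of the flat to the origin. I would use the affine Blaschke--Petkantschin formula to parametrize a $k$-flat $L$ by its orthogonal complement data together with the positions of the $k+1$ points within $L$, picking up a Jacobian that is a power of the $k$-dimensional volume of $\conv(x_0,\dots,x_k)$. Here the key structural input is the canonical decomposition of the beta$^*$ intensity (Theorem~\ref{thm:canonical_decomp}): restricting $f_{d,\alpha,\beta}$ to an affine subspace at a given distance from the origin produces, up to an explicit constant and an explicit power of $(h^2-1)$ where $h$ is the distance, a lower-dimensional beta' density (as in Lemma~\ref{lem:Projection} / the projection lemma referenced in the text). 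This is exactly why the beta' quantities $\tilde{\mathbb J}_{m,\ell}$ appear: after conditioning on the flat, the $k+1$ points within it are distributed like beta' points, and the probability that their convex hull is a full $k$-face, weighted by the appropriate angle, is governed by the beta' simplex angle expectations. The void-probability factor, after the same decomposition, contributes the one-dimensional exponential integral appearing in $I^*_{\alpha,m}(\lambda)$ and $p_{d,\alpha,\beta}$.

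I would then assemble the pieces: the integral over the in-flat point configuration, when combined with the requirement that the $k$-flat actually supports a face of the whole polytope, factors into (i) the external angle of the beta$^*$ polytope at that face, whose expected sum over all $(m-1)$-faces with $m=d-2s$ is precisely $\mathbb I^*_{\alpha,d-2s}(2\beta-d)$, and (ii) the internal-angle-sum contribution $\tilde{\mathbb J}_{d-2s,k+1}(\beta-s-\tfrac12)$ coming from the beta' simplex spanned by the points within the flat; the sum over $s$ from $0$ to $\lfloor(d-k-1)/2\rfloor$ arises from the recursive/Blaschke--Petkantschin bookkeeping of intermediate flats (each step reducing the ambient dimension by $2$ and shifting the beta' parameter by $\tfrac12$), exactly as in the Euclidean beta and beta' cases~\cite{KabluchkoAngles,KabluchkoThaeleZaporozhets}. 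The parameter shift $\beta\mapsto\beta-s-\tfrac12$ and the argument $2\beta-d$ of $\mathbb I^*$ should come out of the accumulated powers of $(h^2-1)$ in the canonical decomposition. Finiteness and convergence of all integrals under the stated hypotheses ($\beta>(d+1)/2$, or $\beta=(d+1)/2$ and $\alpha>(d-1)\pi$) follows from Theorem~\ref{thm:PolytopeOrNot} (so $P_{d,\alpha,\beta}$ is a.s.\ a polytope) together with the integrability already established for the $T$-functional.

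The main obstacle, I expect, is the bookkeeping in the iterated Blaschke--Petkantschin reduction: matching the powers of $(h^2-1)$, the Gamma-function constants, and the shifted beta' parameters so that the external-angle factor cleanly separates as $\mathbb I^*_{\alpha,d-2s}(2\beta-d)$ and the internal-angle factor as $\tilde{\mathbb J}_{d-2s,k+1}(\beta-s-\tfrac12)$, rather than some tangled combination. A secondary subtlety is justifying that non-exposed faces contribute nothing (a.s.\ $P_{d,\alpha,\beta}$ is simplicial in the relevant regime, so every $k$-face has exactly $k+1$ vertices) and that the ``both sides'' argument genuinely doubles the count without double-counting. Both of these are handled in the same way as for beta$'$ polytopes, so the work is largely in carefully transporting that machinery to the beta$^*$ intensity using the canonical decomposition.
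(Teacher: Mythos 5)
Your proposal contains a genuine gap: the mechanism you give for producing the factor $2$ and the alternating sum over $s$ is not the one that actually works, and the direct Mecke set-up you start from is only valid for facets. For $k<d-1$ a $k$-flat $\aff(x_0,\dots,x_k)$ is not contained in a unique hyperplane, so the event ``all other atoms lie in one of the two half-spaces bounded by the hyperplane through $\aff(x_0,\dots,x_k)$'' is not well defined, and the probability that $\conv(x_0,\dots,x_k)$ is a $k$-face is \emph{not} a half-space void probability of the Poisson process; it is the probability that the common projection of the $x_i$ onto $\aff(x_0,\dots,x_k)^\perp$ is a vertex of the projected convex hull, which is a genuinely $(d-k)$-dimensional convex-position event. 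Likewise, there is no ``iterated Blaschke--Petkantschin reduction'' that drops the ambient dimension by $2$ at each step; Blaschke--Petkantschin is applied once and produces no such recursion.

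The missing idea is the one the paper's proof is built on: by the projection stability of the beta$^*$ intensity (Lemma~\ref{lem:Projection}), $P_{d,\alpha,\beta}$ has the same law as the image of the higher-dimensional beta$^*$ polytope $P_{d+\ell,\alpha,\beta+\ell/2}$ under a uniformly random orthogonal projection $\pi_{L_d}$ with $L_d\in G(d+\ell,d)$. One then invokes the Affentranger--Schneider formula
$\EE f_k(\pi_{L_d}P)=2\sum_{s\ge 0}\sum_{G\in\cF_{d-1-2s}(P)}\gamma(G,P)\sum_{F\in\cF_k(G)}\beta(F,G)$,
which is the sole source of the prefactor $2$, of the sum over faces of dimensions $d-1-2s$, and of the product structure ``external angle at a $(d-1-2s)$-face times internal angle sum of that face''. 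Only after this step do the Mecke formula and the canonical decomposition (Theorem~\ref{thm:canonical_decomp}) enter, to factor the expectation of each summand into $\mathbb I^*_{\alpha,d-2s}(2\beta-d)$ (via the half-space void probability for the radial component, exactly as in Theorem~\ref{thm:exp_ext_anlge_sums}) and $\tilde{\mathbb J}_{d-2s,k+1}(\beta-s-\tfrac12)$ (via the beta$'$ law of the in-flat configuration). Without the projection representation and the Affentranger--Schneider identity, your direct computation would at best reproduce the facet count $\EE T_{0,0}$ from Theorem~\ref{thm:Tfunctional}, and there is no route from it to the stated sum over $s$.
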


Theorem~\ref{thm:f-vector_beta_star} can be seen as the analogue for beta$^*$ polytopes of the corresponding results for beta polytopes~\cite[Theorem 1.2]{KabluchkoThaeleZaporozhets} and beta' polytopes~\cite[Theorem 1.14]{KabluchkoThaeleZaporozhets}.

Our next result provides an interpretation of $\mathbb I^*_{\alpha,m}(\lambda)$ as an expected external angle sum and is the analogue for beta$^*$ polytopes of~\cite[Theorem 1.13 and Theorem 1.16]{KabluchkoThaeleZaporozhets}.

\begin{theorem}[Expected external angle sums]\label{thm:exp_ext_anlge_sums}
Suppose that either $\beta>(d+1)/2$ and $\alpha>0$, or $\beta=(d+1)/2$ and $\alpha > (d-1)\pi$. Then, for $k\in\{0,\dots,d-1\}$ it holds that
\begin{align*}%\label{eq:exp_external_anglesums}
\EE\Bigg[\sum_{G\in\cF_k(P_{d,\alpha,\beta})}\gamma(G,P_{d,\alpha,\beta})\Bigg]=\mathbb I^*_{\alpha,k+1}(2\beta-d).
\end{align*}
\end{theorem}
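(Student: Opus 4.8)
The plan is to express the expected external angle sum as an integral over the Poisson process $\zeta_{d,\alpha,\beta}$ using the multivariate Mecke equation, and then reduce the resulting geometric integral to the one-dimensional integral $I^*_{\alpha,k+1}(2\beta-d)$ appearing in~\eqref{eq:def_I_star}. Concretely, a $k$-face $G$ of $P_{d,\alpha,\beta}$ is spanned (a.s.) by exactly $k+1$ atoms $x_1,\dots,x_{k+1}$ of $\zeta_{d,\alpha,\beta}$, and the pair $(G, P_{d,\alpha,\beta})$ contributes its external angle $\gamma(G,P_{d,\alpha,\beta})$. By the Mecke formula,
\begin{align*}
\EE\Bigg[\sum_{G\in\cF_k(P_{d,\alpha,\beta})}\gamma(G,P_{d,\alpha,\beta})\Bigg]
=\frac{1}{(k+1)!}\int\limits_{(\RR^d\setminus\widebar\BB^d)^{k+1}}
\EE\big[\gamma(\conv(x_1,\dots,x_{k+1}),P_{d,\alpha,\beta}\cup\{x_1,\dots,x_{k+1}\})\,\ind_{E}\big]
\prod_{i=1}^{k+1}f_{d,\alpha,\beta}(x_i)\,\dint x_i,
\end{align*}
where $E$ is the event that $\conv(x_1,\dots,x_{k+1})$ is actually a $k$-face of the convex hull, i.e.\ that the affine hull $\aff(x_1,\dots,x_{k+1})$ is a supporting hyperplane's intersection — more precisely that all remaining atoms lie strictly on one side of some supporting hyperplane through the $x_i$'s. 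The external angle $\gamma(G,\cdot)$ depends only on the $(d-k)$-dimensional normal cone, which in turn depends only on those atoms lying in a thin slab; this is the standard mechanism that lets external angles factor cleanly.

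The key steps, in order, would be: (1) Introduce the orthogonal decomposition $\RR^d = L \oplus L^\perp$ where $L = \aff(x_1,\dots,x_{k+1}) - x_1$ is $k$-dimensional; parametrize the affine hull by its distance $h$ to the origin and its unit normal direction in a $(d-k)$-dimensional sphere. (2) Use the affine Blaschke–Petkantschin formula to split the integration $\prod \dint x_i$ into an integration over the affine hull (carrying a Jacobian that is a power of the $k$-volume of $\conv(x_1,\dots,x_{k+1})$) times integration of the positions of the $x_i$ within that hull. (3) Observe that conditionally on the affine hull $H$ at distance $h$, the external angle of $P_{d,\alpha,\beta}$ at the face equals the external angle of a lower-dimensional beta$^*$-type configuration — in fact, by Lemma~\ref{lem:Projection} (the projection lemma flagged in the text), the projection of $\zeta_{d,\alpha,\beta}$ onto the line through the origin orthogonal-ish to $H$, or rather onto $L^\perp$, is again a beta$^*$ process with shifted parameter. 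The event $E$ and the external angle both become events/quantities about this projected $(d-k)$-dimensional process. (4) The external angle is then a probability — the probability that a certain random cone generated by the projected process is "pointed" in the right half-space, or equivalently that the origin's projection is extreme — and the void probability of the Poisson process over the complementary slab gives exactly the exponential factor $\exp\{-\alpha\int_y^\infty \tilde c_{1,(\lambda+1)/2}(t^2-1)^{-(\lambda+1)/2}\dint t\}$ with $\lambda = 2\beta-d$. (5) Assemble the constants: the $(k+1)$-fold product of intensities produces $(\alpha\tilde c_{d,\beta})^{k+1}$, the Blaschke–Petkantschin Jacobian and the within-hull integration of the simplex volume produce the Gamma-function constants, and everything should collapse to $\frac{\alpha^{k+1}}{(k+1)!}I^*_{\alpha,k+1}(2\beta-d) = \mathbb I^*_{\alpha,k+1}(2\beta-d)$.

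The main obstacle I expect is Step (3)–(4): correctly identifying that, after conditioning on the affine hull of the face and projecting out the $k$ "tangential" directions, the external angle of the high-dimensional beta$^*$ polytope at that face is exactly realized as a one-dimensional (in the relevant radial sense) quantity governed by a beta$^*$ process with parameter $\beta - k/2$ on $\RR^{d-k}\setminus\widebar\BB^{d-k}$ — and that the normalization constant $\tilde c$ is preserved under this projection. This is precisely where the seemingly unnatural choice of $\tilde c_{d,\beta}$ in~\eqref{eq:DensityZeta} pays off, via Lemma~\ref{lem:Projection}. One has to be careful that the external angle involves the normal cone, which lives in $L^\perp$, so the "distance $h$" of the face to the origin and the radial structure of the projected process must be matched up so that the void-probability integral has the exact form in~\eqref{eq:def_I_star}; this bookkeeping of which power of $(h^2-1)$ or $(y^2-1)$ appears where is the delicate part. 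The remaining computations (Blaschke–Petkantschin, integrating out the simplex volume using the known moment formulas for beta'-type densities, and multiplying Gamma functions) are routine given the companion results on beta and beta' polytopes in~\cite{KabluchkoThaeleZaporozhets,KabluchkoAngles}.
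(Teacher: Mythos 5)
Your plan follows essentially the same route as the paper's proof: Mecke's formula, the Blaschke--Petkantschin splitting (which the paper packages as the canonical decomposition, Theorem~\ref{thm:canonical_decomp}, giving the radial part the beta$^*$ density $f_{d-k,\alpha^{k+1},(k+1)\beta-(d+1)k/2}$), the identification of the projected process onto $A_x^\perp$ as $\zeta_{d-k,\alpha,\beta-k/2}$ via Lemma~\ref{lem:Projection}, and a final projection to one dimension whose void probability produces the exponential factor in $I^*_{\alpha,k+1}(2\beta-d)$. The delicate points you flag in steps (3)--(4) are exactly where the paper's argument does its work, and they resolve as you anticipate.
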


\begin{remark}
We mention two alternative expressions for $I^*_{\alpha,m}(\lambda)$. They are obtained from~\eqref{eq:def_I_star} by the change of variables $y=\cosh \varphi$, $t=\cosh\theta$ with $\varphi,\theta\in (0,\infty)$, and, respectively, $y=\coth \varphi$, $t=\coth\theta$ with $\varphi,\theta\in (0,\infty)$:
\begin{align}
I^*_{\alpha,m}(\lambda)
&	=\int\limits_0^\infty \tilde c_{1,\frac{\lambda m+1}{2}} (\sinh \varphi)^{-\lambda n}\,\exp\Bigg\{-\alpha\int\limits_\varphi^\infty \tilde c_{1,\frac{\lambda+1}{2}}(\sinh \theta)^{-\lambda}\,\dint \theta\Bigg\}\,\dint \varphi\label{eq:I*_alt_expression_sinh_negative_powers}\\
&	=\int\limits_0^\infty\tilde c_{1,\frac{\lambda m+1}{2}} (\sinh \varphi)^{\lambda m-1}\,\exp\Bigg\{-\alpha\int\limits_0^\varphi \tilde c_{1,\frac{\lambda+1}{2}}(\sinh \theta)^{\lambda-1}\,\dint \theta\Bigg\}\,\dint \varphi.\notag%\label{eq:I*_alt_expression_sinh}
\end{align}
We also mention that the quantity $\tilde {\mathbb J}_{m,\ell}(\beta)$ has been explicitly computed in~\cite{KabluchkoAngles}. In particular, from that paper it is known that
\begin{align} \label{eq:internal_angle_sum_J_formula}
\tilde {\mathbb J}_{m,\ell}\bigg(\frac{\lambda+m-1}{2}\bigg)
=
\binom m\ell
\int\limits_{-\pi/2}^{\pi/2}\tilde c_{1,\frac{\lambda m}{2}}(\cos\varphi)^{\lambda m-2}\left(\:\int\limits_{-\infty}^{\textup i\varphi}\tilde c_{1,\frac{\lambda+1}{2}}(\cosh \theta)^{-\lambda} \,\dint \theta\right)^{m-\ell}\dint\varphi,
\end{align}
for all $\lambda>0$, $m\in\NN$ and $\ell\in\{1,\dots,m\}$ such that $\lambda m>1$, and where $\textup{i}=\sqrt{-1}$ stands for the imaginary unit.
This makes the formula in Theorem~\ref{thm:f-vector_beta_star} for the expected $f$-vector of a beta$^*$ polytope fully explicit.
\end{remark}

\section{Applications and special cases}\label{sec:SpecialCases}

In this section we come back to the two hyperbolic stochastic geometry models discussed earlier. We show how they are related to beta$^*$ polytopes and then discuss several consequences.

\subsection{Basics from hyperbolic geometry}\label{subsec:HyperbolicGeo}
The hyperbolic space can be defined as a simply connected, geodesically complete $d$-dimensional Riemannian manifold of constant negative curvature $\kappa=-1$. In the following, we shall briefly describe three classical realizations of the hyperbolic space we shall work with.  We refer the reader to~\cite{CannonEtAl,AlekseevskijVinbergSolodovnikov,LoustauBook,Ratcliffe} for further background material on hyperbolic geometry.

\medskip
\noindent
\textit{The hyperboloid model.} Fix some dimension $d\in \NN$ and consider the space $\RR^{d+1}$. The \textit{Minkowski product} of two vectors $x= (x_0,x_1,\ldots, x_d)$ and $y = (y_0,y_1,\ldots,y_d)$ in $\RR^{d+1}$ is defined by
$$
B(x,y) := x_0 y_0 - x_1 y_1 - \ldots - x_d y_d.
$$
The hyperboloid model for a $d$-dimensional hyperbolic space is defined on the \textit{upper hyperboloid}
$$
\HH^d:= \{(x_0,x_1,\ldots,x_d)\in\RRd:x_0^2- x_1^2 - \ldots - x_d^2 = 1, x_0>0\}.
$$
The restriction of $B(x,y)$ to any tangent space of $\HH^d$ turns out to be positive definite and defines a Riemannian metric on $\HH^d$ with constant curvature $\kappa= -1$. The Riemannian metric, in turn, determines a volume measure $\nu_{\text{hyp}}$ on $\HH^d$.
The geodesic distance $d_{\text{hyp}} (x,y)$ between any two points $x\in\HH^d$ and $y\in\HH^d$ is given by
\begin{equation}\label{eq:HyperbolicDistance}
d_{\text{hyp}} (x,y) := \arccosh B(x,y).
\end{equation}
The hyperbolic hyperplanes (i.e.\ totally geodesic subspaces of $\HH^d$ of dimension $d-1$) can be represented as $\HH^d\cap H$ where $H\subset\RR^{d+1}$ is a $d$-dimensional linear subspace (especially, $H$ passes through the origin $0$ of $\RR^{d+1}$).

Any point $x\in\HH^d$ can be parametrized as $x= (\cosh\theta,u\sinh\theta)$, where $u\in\SS^{d-1}\subset\RR^d$ and $\theta\geq 0$ is the hyperbolic distance between $x$ and the \textit{apex} $e := (1,0,\ldots,0)$ of the hyperboloid $\HH^d$. We observe that the apex  can be parametrized as $e=(\cosh 0,u\sinh 0)$ for any $u\in\SS^{d-1}$, while the parametrization of any other point $x\in \HH^d$ is unique.

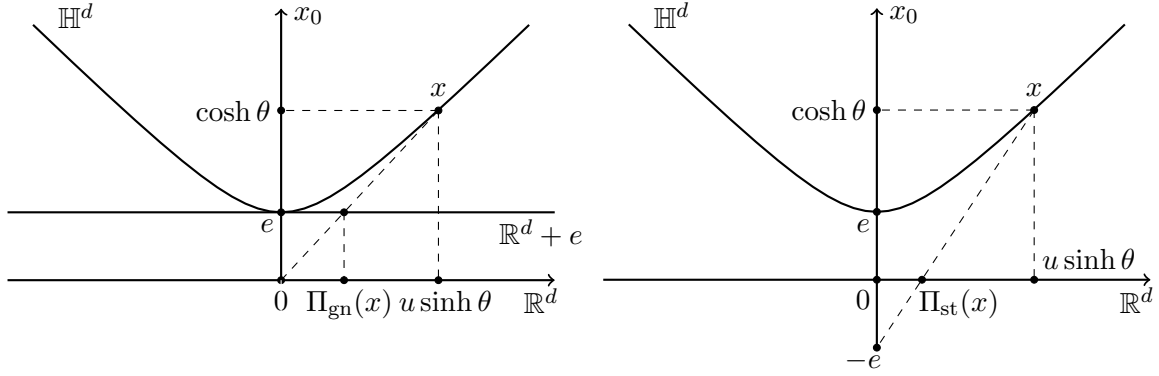
\begin{figure}[t]
		\begin{center}
	\begin{tikzpicture}[scale=0.9]
	\begin{scope}
	\draw [thick,domain=-2:2] plot ({sinh(\x)}, {cosh(\x)});
	\draw[thick] (-4,1) -- (4,1);
	\draw[->,thick] (-4,0) -- (4,0);
	\draw[->,thick] (0,0) -- (0,4);
	\draw[fill=black] (0,0) circle(0.05);
	\draw[fill=black] (0,1) circle(0.05);
	\draw[fill=black] (2.3,2.5) circle(0.05);
	\draw[dashed] (2.3,2.5) -- (0,2.5);
	\draw[dashed] (2.3,2.5) -- (2.3,0);	
	\draw[dashed] (2.3,2.5) -- (0,0);	
	\draw[fill=black] (0,2.5) circle(0.05);
	\draw[fill=black] (2.3,0) circle(0.05);
	\draw[fill=black] (0.92,1) circle(0.05);
	\draw[fill=black] (0.92,0) circle(0.05);
	\draw[dashed] (0.92,1) -- (0.92,0);	
	\node at (0.4,3.9) {$x_0$};
	\node at (-3,3.9) {$\HH^d$};
	\node at (3.8,-0.3) {$\RR^d$};
	\node at (3.8,-0.3+1) {$\RR^d+e$};
	\node at (-0.2,0.8) {$e$};
	\node at (-0,0.7-1) {$0$};
	\node at (1,0.65-1) {$\Pi_{\text{gn}}(x)$};
	\node at (1+1+0.4,0.7-1) {$u\sinh\theta$};
	\node at (1+1+0.3,0.7-1+3.1) {$x$};
	\node at (-0.7,0.7-1+2.8) {$\cosh\theta$};
	\draw[white] (0,0) -- (0,-1.47);
	\end{scope}
	\end{tikzpicture}
		\begin{tikzpicture}[scale=0.9]
	\begin{scope}
\draw [thick,domain=-2:2] plot ({sinh(\x)}, {cosh(\x)});
%\draw[thick] (-4,1) -- (4,1);
\draw[->,thick] (-4,0) -- (4,0);
\draw[->,thick] (0,-1) -- (0,4);
\draw[fill=black] (0,0) circle(0.05);
\draw[fill=black] (0,1) circle(0.05);
\draw[fill=black] (0,-1) circle(0.05);
\draw[fill=black] (2.3,2.5) circle(0.05);
\draw[dashed] (2.3,2.5) -- (0,2.5);
\draw[dashed] (2.3,2.5) -- (2.3,0);	
\draw[dashed] (2.3,2.5) -- (0,-1);	
\draw[fill=black] (0,2.5) circle(0.05);
\draw[fill=black] (2.3,0) circle(0.05);
\draw[fill=black] (0.66,0) circle(0.05);
%\draw[fill=black] (0.92,1) circle(0.05);
%\draw[fill=black] (0.92,0) circle(0.05);
%\draw[dashed] (0.92,1) -- (0.92,0);	
\node at (0.4,3.9) {$x_0$};
\node at (-3,3.9) {$\HH^d$};
\node at (3.8,-0.3) {$\RR^d$};
%\node at (3.8,-0.3+1) {$\RR^d+e$};
\node at (-0.2,0.8) {$e$};
\node at (-0.2,-1.2) {$-e$};
\node at (-0.2,0.7-1) {$0$};
\node at (1+0.2,0.65-1) {$\Pi_{\text{st}}(x)$};
\node at (1+1+0.8+0.3,0.7-0.6+0.2) {$u\sinh\theta$};
\node at (1+1+0.3,0.7-1+3.1) {$x$};
\node at (-0.7,0.7-1+2.8) {$\cosh\theta$};
\end{scope}
	\end{tikzpicture}
\end{center}
\caption{Illustration of the gnomonic projection which identifies the hyperboloid model with the Klein model (left), and the stereographic projection (right) which identifies the hyperboloid model with the Poincar\'e model of hyperbolic space.}
\label{fig:Projection}
\end{figure}

\medskip
\noindent
\textit{The Klein model.} The hyperboloid model on $\HH^d$ is connected to the Klein model on the open unit ball $\BB^d = \{x\in \RR^d: \|x\| < 1\}$ via the \textit{gnomonic projection} $\Pi_{\text{gn}}: \HH^d \to \BB^d$; see the left panel of Figure~\ref{fig:Projection}. To define this projection, consider a segment joining $0$ and some point $x\in \HH^d$. This segment intersects the hyperplane $\{x_0=1\}$ at some point denoted by $(1,v)$ with $v\in \BB^d$. Then the gnomonic projection of $x$ is defined as $\Pi_{\text{gn}}(x) := v$. For a point $x\in\HH^d$ with parametrization $(\cosh\theta,u\sinh\theta)$ the gnomonic projection is given by
\begin{equation}\label{eq:GnomonicProjection}
\Pi_{\text{gn}}(x) := u\tanh\theta\in\BB^d\subset\RR^d.
\end{equation}
In the Klein model, the hyperbolic hyperplanes are represented by (non-empty) intersections of usual affine hyperplanes in $\RR^d$ with $\BB^d$. The Riemannian metric $d_{\text{Kl}}$ (i.e.\ the distance between a point $v\in \BB^d$ and an infinitesimally close point $v+\dint v$) and the volume element $\nu_{\text{Kl}}$ in the Klein model are given by
\begin{equation}\label{eq:metric_volume_klein}
d_{\text{Kl}}^2(v, v+\dint v) =  \frac{\|\dint v\|^2}{1 - \| v \|^2} +  \frac{\langle v, \dint v \rangle^2}{(1 - \| v \|^2)^2},
\qquad
\nu_{\text{Kl}}(\dint v_1, \ldots, \dint v_d) = \frac{\dint v_1\ldots \dint v_d}{(1-\|v\|^2)^{\frac{d+1}2}}.
%\qquad
%\|x\| <1.
\end{equation}
The distance between $v\in \BB^d$ and the origin $0$ is given by
\begin{equation}\label{eq:dist_0_x_klein}
d_{\text{Kl}} (0, v) = \frac 12 \log \frac{1+\|v\|}{1 - \|v\|} = \artanh \|v\|.
%\qquad
%\|v\| <1.
\end{equation}

\medskip
\noindent
\textit{The Poincar\'e model.}
To pass from the hyperboloid model to the Poincar\'e model which is defined on the unit ball $\BB^d$ (as is the Klein model), we need the \textit{stereographic projection} $\Pi_{\text{st}}:\HH^d\to \BB^d$; see the right panel of Figure~\ref{fig:Projection}. To define the projection, consider a segment joining $-e = (-1,0,\ldots,0)$ and some point $x\in \HH^d$. This segment intersects the hyperplane $\{x_0=0\}$ at some point denoted by $(0,w)$ with $w\in \BB^d$. Then the stereographic  projection of $x$ is defined as $\Pi_{\text{gn}}(x) := w$. For a point $x\in\HH^d$ with parametrization $(\cosh\theta,u\sinh\theta)$ we have
\begin{equation}\label{eq:StereographicProjection}
\Pi_{\text{st}}(x) := u\tanh{\theta\over 2}\in\BB^d.
\end{equation}
In the Poincar\'e model, the hyperbolic hyperplanes have the form $S\cap \BB^d$, where $S$ is a $(d-1)$-dimensional sphere in $\RR^d$ intersecting $\SS^{d-1}$ orthogonally or a hyperplane in $\RR^d$ passing through the origin.
The Riemannian metric and the volume element in the Poincar\'e model are given by
\begin{equation}\label{eq:metric_volume_poincare}
d_{\text{Poi}}^2(w, w+\dint w) =  \frac{4 \|\dint w\|^2}{(1 - \| w \|^2)^2},
\qquad
\nu_{\text{Poi}}(\dint w_1,\ldots, \dint w_d) = \frac{2^d \dint w_1\ldots \dint w_d}{(1-\|w\|^2)^{d}}.
\end{equation}
The distance between $0$ and $w\in \BB^d$ is given by
\begin{equation}\label{eq:dist_0_x_poincare}
d_{\text{Poi}} (0, w) = \log \frac{1+\|w\|}{1 - \|w\|} = 2 \artanh \|w\| = 2 d_{\text{Kl}} (0, w).
\end{equation}
This relation between the distances in the Klein and Poincar\'e models will be crucial in our treatment of the typical hyperbolic Voronoi cell.

\medskip
\noindent
\textit{Isomorphism between the models.}
Let us finally describe the map $\varphi_{\text{Poi$\to$Kl}}:\BB^d\to \BB^d$ between the Poincar\'e and the Klein model.  If some point $x\in \HH^d$ is represented by $w = \Pi_{\text{st}}(x) \in \BB^d$ in the Poincar\'e model, then the corresponding point in the Klein model is given by
$$
\varphi_{\text{Poi$\to$Kl}} (w) = \frac{2w}{1+\|w\|^2} = \Pi_{\text{gn}}(x).
$$

\subsection{Poisson hyperplane tessellations in the hyperbolic space}\label{sec:Hyperplane}
We are now going to describe the connection between beta$^*$ polytopes and certain hyperplane tessellations of the hyperbolic space. It is known from~\cite{santalo_book} that there is a unique (up to a multiplicative constant $\lambda>0$) infinite  measure on the space of hyperbolic hyperplanes that is invariant under the isometries of the $d$-dimensional hyperbolic space. Consider a Poisson process on the space of hyperbolic hyperplanes whose intensity is given by that measure. The atoms of this process give rise to countably many hyperplanes dissecting the hyperbolic space into random cells with disjoint interiors.   We shall be interested in the  hyperbolic Poisson zero cell, i.e.\ the a.s.\ unique cell containing some fixed point (the ``origin'').  In fact, we shall define a family of hyperbolic hyperplane tessellations indexed by two parameters $\lambda$ and $\beta$ which reduces to the isometry-invariant tessellation mentioned above in the special case $\beta= (d+1)/2$.

\subsubsection{A family of hyperbolic hyperplane tessellations}\label{subsubsec:hyperbolic_hyperplane_tess_def}
We start by defining our tessellations in the Klein model on $\BB^d$, $d\in\NN$. Fix some parameters $\beta>\max(d/2,1)$ and $\lambda>0$.  Consider  a Poisson process on $(0,1)$ with Lebesgue intensity
\begin{align}\label{eq:def_f(r)}
f(r)
%=
%\lambda \left(\frac{1}{\sqrt{1-r^2}}\right)^{d-1} \left(\frac r {\sqrt{1-r^2}}\right)^{2\beta-d-1} \frac 1{1-r^2}
:=
\lambda \frac{r^{2\beta - d - 1}}{(1-r^2)^{\beta}},
\qquad
0<r<1.
\end{align}
By our assumption on $\beta$, the function $f$ is integrable at $0$ but not at $1$. Hence, the atoms of this Poisson process can be ordered increasingly as $r_1<r_2<\ldots$ and satisfy $r_n\to 1$, as $n\to\infty$.
Independently, let $u_1,u_2,\ldots$ be i.i.d.\ points drawn uniformly at random from the unit sphere $\SS^{d-1}$. Then, in the Klein model, our hyperplane tessellation consists of the hyperplanes
$$
\{x\in \BB^d : \langle x, u_n\rangle = r_n\} = (u_n^\bot  +  r_n u_n)\cap \BB^d
\qquad
n=1,2,\ldots;
$$
see the left panel of Figure~\ref{fig:hyperplane_tess} for a realization.
%(or more precisely, by the intersection of these hyperplanes with the unit ball).
Note that $r_1,r_2,\ldots \in (0,1)$ are the \textit{Euclidean} distances from the hyperplanes to the origin, while $u_1,u_2,\ldots\in \SS^{d-1}$ are their normals.

Let us now give an equivalent definition of the tessellation which
%, although it is stated in the hyperboloid model,
is model-independent in the sense that it involves only quantities intrinsic to  hyperbolic geometry. Consider a $d$-dimensional hyperbolic space $\MM^d$ endowed with the hyperbolic metric $d_{\text{h}}(\cdot, \cdot)$. Fix an arbitrary point $p\in \MM^d$, referred to as the origin, and let $\SS_p^{d-1} = \{x\in \MM^d: d_{\text{h}}(x,p) = 1\}$ be the unit hyperbolic sphere centred at $p$ and endowed with the normalized spherical Lebesgue measure $\sigma_{d-1;p}$.
Let again $\lambda>0$ and $\beta > \max(d/2,1)$ be parameters and consider a Poisson process $\xi_{d,\lambda,\beta}$ on the space $A_{\text{h}}(d,d-1)$ of hyperbolic hyperplanes whose intensity measure is chosen to be
\begin{equation}\label{eq:intensity_hyperplane_tess}
\mu_{d,\lambda,\beta}(\,\cdot\,)
:=
\lambda \int\limits_{\SS_p^{d-1}}\int\limits_{0}^\infty
(\cosh \theta)^{d-1}  \, (\sinh \theta)^{2\beta-d-1} \,
\ind_{\{H_p(u,\theta)\in\,\cdot\,\}}
\,\dint \theta \, \sigma_{d-1;p}(\dint u),
\end{equation}
where $H_p(u,\theta)\in A_{\text{h}}(d,d-1)$ is the unique hyperbolic hyperplane orthogonal to the hyperbolic line connecting $p$ to $u$ and having hyperbolic distance $\theta>0$ to $p$.
In the special case when $\beta= (d+1)/2$, the $\sinh$-term disappears and it is known~\cite{santalo_book} (see, in particular, Equation~(17.54) on p.~309 there) that the above definition does not depend on the choice of the origin $p$ (which is not true for other values of $\beta$). Moreover,  $\mu_{d,\lambda, (d+1)/2}$ is invariant under isometries of the hyperbolic space; we shall give another proof of invariance in Remark~\ref{rem:invariance_hyperplane_measure}.

Returning to the Klein model, we choose $p=0$, write $\theta = \artanh r$ for $r\in (0,1)$ and observe that  for any unit vector $u\in\SS^{d-1}$ we have $H_0(u, \theta) = (u^\bot  + r u) \cap \BB^d$ since $d_{\text{Kl}}(0,x) = \artanh \|x\|$ by~\eqref{eq:dist_0_x_klein}. Hence, we can rewrite the formula for $\mu_{d,\lambda,\beta}$ as follows:
$$
\mu_{d,\lambda,\beta}(\,\cdot\,)
=
\lambda
\int\limits_{\SS^{d-1}} \int\limits_{0}^1
\frac{r^{2\beta - d - 1}}{(1-r^2)^{\beta}}
\,\ind_{\{ (u^\bot + r u) \cap \BB^d \in \cdot\}}
\,\dint r \, \sigma_{d-1}(\dint u),
$$
where $\sigma_{d-1}$ is the uniform distribution on $\SS^{d-1}$ and we used the identities
\begin{equation}\label{eq:cosh_artanh}
\cosh (\artanh r) = \frac 1 {\sqrt{1-r^2}},
\qquad
\sinh (\artanh r) = \frac r {\sqrt{1-r^2}},
\qquad
\dint (\artanh r) = \frac{\dint r}{1-r^2}.
%\qquad
%-1 < r < 1.
\end{equation}
It follows that the definition of $\mu_{d,\lambda, \beta}$ given in~\eqref{eq:intensity_hyperplane_tess} is equivalent to the construction we presented at the beginning of this section.

\subsubsection{Poisson zero cell and the \texorpdfstring{beta$^*$}{beta*} polytope}
We are interested in the \textit{zero cell} of the hyperbolic Poisson hyperplane tessellation with intensity measure $\mu_{d,\lambda,\beta}$, as defined in Section~\ref{subsubsec:hyperbolic_hyperplane_tess_def}, which, in the Klein model, is the random closed convex set given by
\begin{equation}\label{eq:zero_cell_def}
Z_{d,\lambda,\beta} := \{x\in \widebar\BB^d: \langle x, u_n\rangle \leq r_n \text{ for all } n\in \NN\} \subset \widebar\BB^d.
\end{equation}
As we shall see in Theorem~\ref{thm:PolytopeOrNot_Tessellation}, the zero cell need not be bounded in the sense of hyperbolic geometry, that is it may touch the unit sphere.
The next theorem relates $Z_{d,\lambda, \beta}$ to the convex dual of a beta$^*$ set. Recall that the convex dual of a set $K\subset \RR^d$ is defined as
\begin{equation}\label{eq:convex_duality_def}
K^\circ : = \{x\in \RR^d: \langle x, y\rangle \leq 1 \text{ for all }y \in K\}.
\end{equation}

\begin{figure}[t]
\centering
\includegraphics[width=0.48\columnwidth]{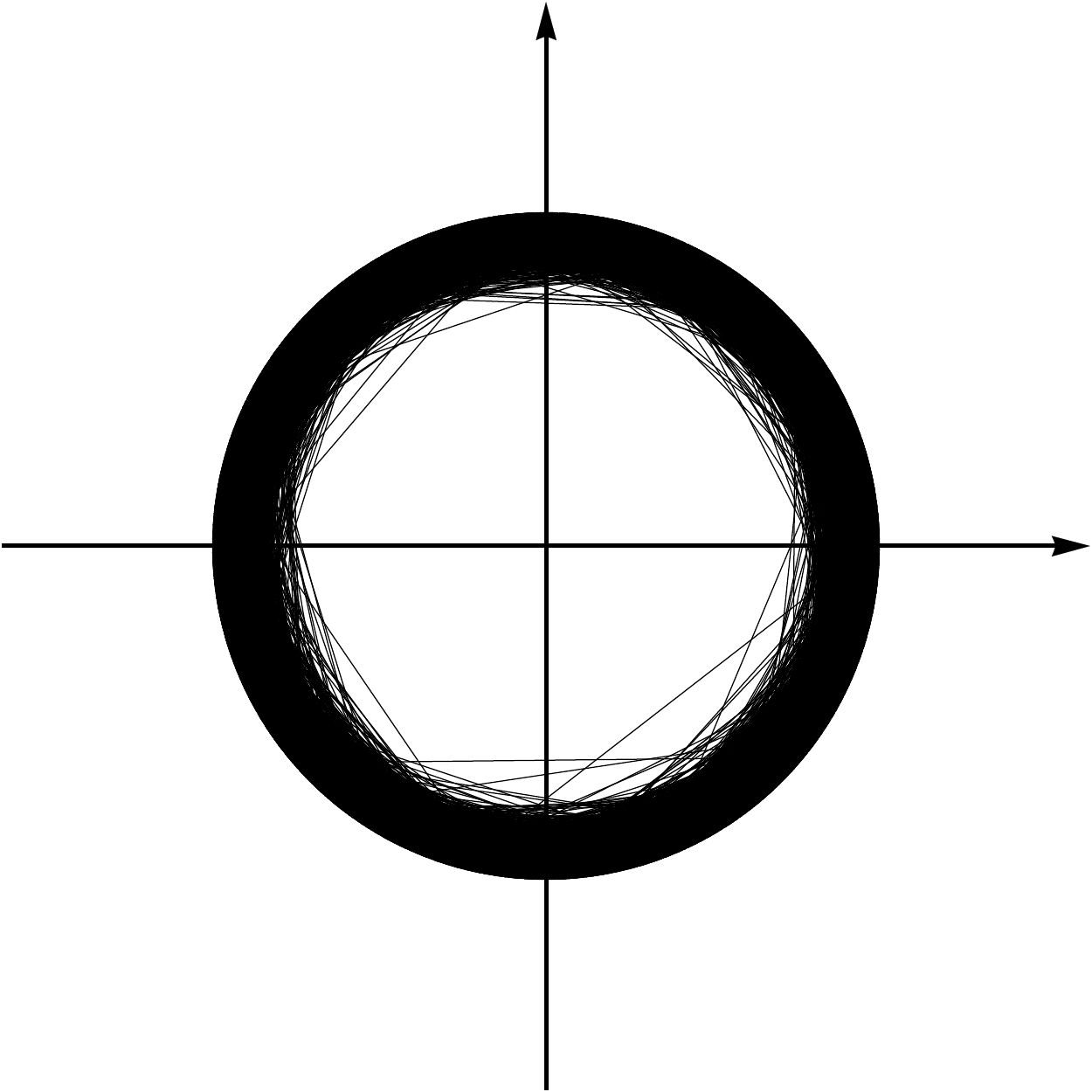}
\includegraphics[width=0.48\columnwidth]{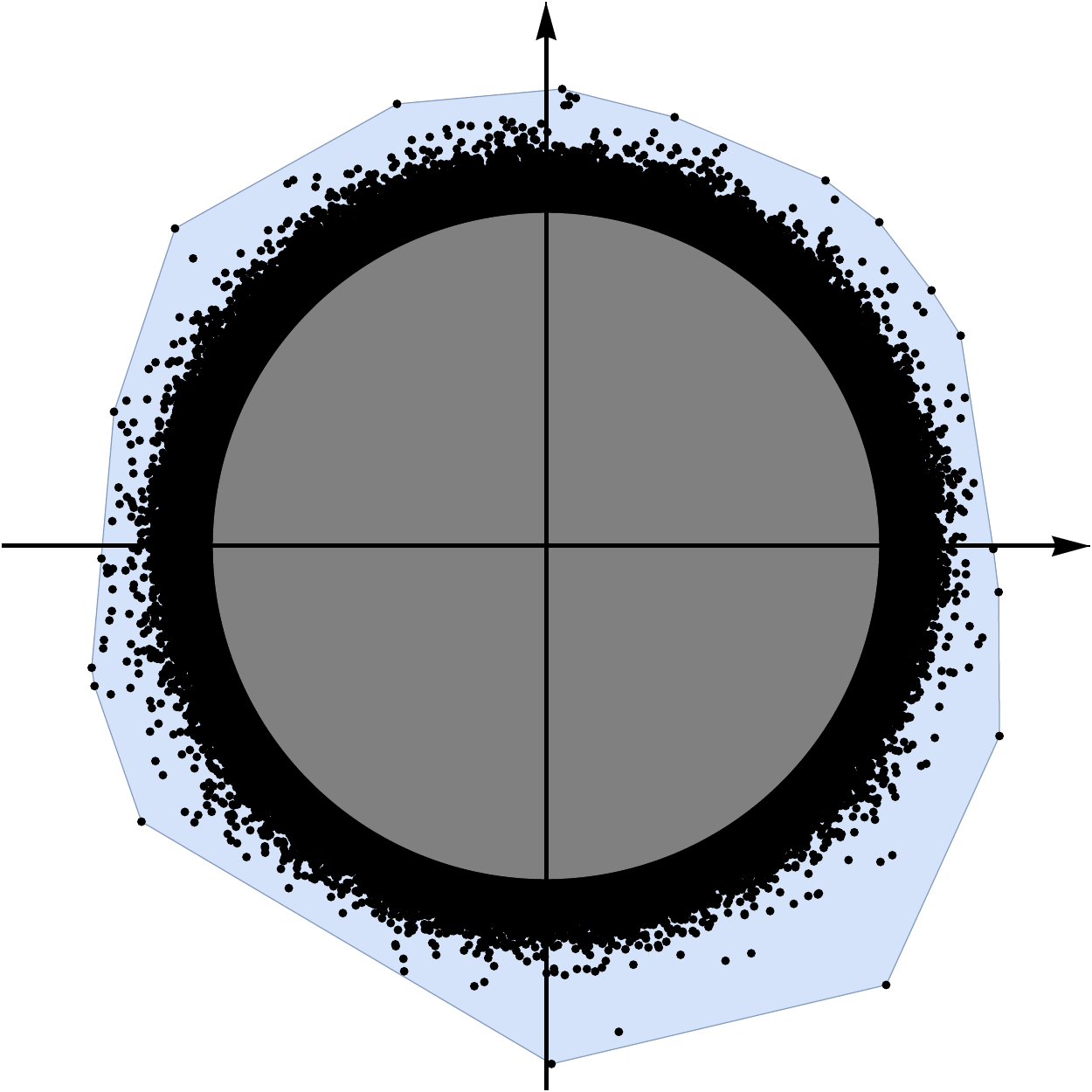}
\caption{A line tessellation in the Klein model of the hyperbolic plane  (left panel) and the convex dual of its zero cell, a beta$^*$ polytope (right panel). In this simulation we took $\beta=6$.}
\label{fig:hyperplane_tess}
\end{figure}

\begin{theorem}[Reduction of the zero cell to beta$^*$ sets]\label{thm:ZeroCell}
Let $d\in \NN$, $\lambda>0$ and $\beta>\max(d/2,1)$.
Then the zero cell $Z_{d,\lambda, \beta}$ has the same distribution as the convex dual $P_{d,\alpha,\beta}^\circ$ of the beta$^*$ set $P_{d,\alpha,\beta}$ with $\alpha = \lambda/ (\tilde c_{d,\beta}\omega_d)$.
\end{theorem}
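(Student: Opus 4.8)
The plan is to establish a pointwise (sample-path) identity between the zero cell $Z_{d,\lambda,\beta}$ and the polar $P_{d,\alpha,\beta}^\circ$ by coupling the two underlying Poisson processes through the polarity map on hyperplanes, and then check that the intensity measures match under this coupling. Recall that a halfspace $\{x: \langle x,u\rangle \le r\}$ with $r>0$ has, as its ``polar point'', the point $(r^{-1})u \in \RR^d$; conversely a point $z$ with $\|z\|>1$ corresponds to the halfspace $\{x : \langle x, z/\|z\|\rangle \le 1/\|z\|\}$. Under this correspondence, the zero cell in~\eqref{eq:zero_cell_def}, being the intersection of the halfspaces $\{x : \langle x, u_n\rangle \le r_n\}$ (together with $\widebar\BB^d$, but that is automatic once $r_n<1$), is exactly the polar of the set of polar points $\{(1/r_n)u_n : n \in \NN\}$ — this is the standard duality $\left(\bigcup_i H_i^-\right) = \left(\conv\{p_i\}\right)^\circ$ where $p_i$ is the polar point of the halfspace $H_i^-$, valid because all halfspaces contain the origin in their interior. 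Hence $Z_{d,\lambda,\beta} = \left(\conv\{(1/r_n)u_n\}\right)^\circ$ almost surely, and it remains to identify the law of the point process $\{(1/r_n)u_n\}$.

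First I would record precisely the input law: by the construction in Section~\ref{subsubsec:hyperbolic_hyperplane_tess_def}, the distances $r_1<r_2<\dots$ form a Poisson process on $(0,1)$ with Lebesgue intensity $f(r) = \lambda r^{2\beta-d-1}(1-r^2)^{-\beta}$ from~\eqref{eq:def_f(r)}, and the normals $u_n$ are i.i.d.\ uniform on $\SS^{d-1}$, independent of the $r_n$. Equivalently, $\{r_n u_n\} \cup \{-r_n u_n\}$-style bookkeeping aside, the pairs $(r_n, u_n)$ form a Poisson process on $(0,1)\times\SS^{d-1}$ with intensity $f(r)\,\dint r\,\sigma_{d-1}(\dint u)$, where $\sigma_{d-1}$ is the \emph{normalized} uniform measure on the sphere. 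Now I apply the mapping theorem for Poisson processes to the map $(r,u)\mapsto (1/r)\,u =: x$, which sends $(0,1)\times\SS^{d-1}$ bijectively onto $\{x : \|x\|>1\}$. Writing $\rho = 1/r$ so $r = 1/\rho$, $\dint r = -\rho^{-2}\dint\rho$, the pushforward intensity on $(1,\infty)\times \SS^{d-1}$ becomes $\lambda\,\rho^{-(2\beta-d-1)}(1-\rho^{-2})^{-\beta}\rho^{-2}\,\dint\rho\,\sigma_{d-1}(\dint u) = \lambda\, \rho^{d-1}(\rho^2-1)^{-\beta}\,\dint\rho\,\sigma_{d-1}(\dint u)$, using $(1-\rho^{-2})^{-\beta} = \rho^{2\beta}(\rho^2-1)^{-\beta}$. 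Converting $\rho^{d-1}\,\dint\rho\,\sigma_{d-1}(\dint u)$ to the Lebesgue measure on $\{\|x\|>1\}$ — for which polar coordinates give $\dint x = \|x\|^{d-1}\,\dint\|x\|\,\omega_d\,\sigma_{d-1}(\dint u)$ — the intensity of $\{(1/r_n)u_n\}$ with respect to $\dint x$ is $\frac{\lambda}{\omega_d}(\|x\|^2-1)^{-\beta}$. Comparing with~\eqref{eq:DensityZeta}, $f_{d,\alpha,\beta}(x) = \alpha\tilde c_{d,\beta}(\|x\|^2-1)^{-\beta}$, this is exactly the beta$^*$ intensity with $\alpha = \lambda/(\tilde c_{d,\beta}\omega_d)$, as claimed.

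Finally I would assemble the pieces: the point process $\{(1/r_n)u_n : n\}$ is a Poisson process on $\RR^d\setminus\widebar\BB^d$ with intensity $f_{d,\alpha,\beta}$, i.e.\ it is equal in distribution to $\zeta_{d,\alpha,\beta}$ with $\alpha = \lambda/(\tilde c_{d,\beta}\omega_d)$; therefore $\conv\{(1/r_n)u_n\}$ has the law of (the convex hull defining) $P_{d,\alpha,\beta}$, and taking polars — a measurable operation that commutes with the equality in distribution — gives $Z_{d,\lambda,\beta} \overset{d}{=} P_{d,\alpha,\beta}^\circ$. The condition $\beta>\max(d/2,1)$ is exactly what Proposition~\ref{prop:poi_process_properties} requires for $P_{d,\alpha,\beta}$ to be a well-defined compact convex set containing $\widebar\BB^d$, so $P_{d,\alpha,\beta}^\circ$ is itself a compact convex subset of $\widebar\BB^d$ and the duality identity $\left(\bigcup_n H_n^-\right) = \left(\conv\{p_n\}\right)^\circ$ needs no further justification. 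The only genuinely delicate point — and the one I would write out carefully — is the bipolar/duality identity at the level of sets: one must verify that the closed convex hull of the polar points is exactly $\left(\bigcap_n H_n^-\right)^\circ$ even when there are infinitely many hyperplanes accumulating at the unit sphere (so that $\conv$ need not be closed before taking closure), and that no extra constraint ``$x\in\widebar\BB^d$'' is lost or gained; this follows from the facts that all $H_n^-$ contain $\widebar\BB^d$ hence their intersection does too, and that polarity is an inclusion-reversing involution on closed convex sets containing the origin, but it deserves an explicit lemma-style argument rather than being asserted.
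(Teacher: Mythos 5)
Your proof is correct and follows essentially the same route as the paper's: both apply the mapping theorem for Poisson processes to the inversion $r\mapsto 1/r$ to match the radial intensity $\lambda r^{2\beta-d-1}(1-r^2)^{-\beta}$ on $(0,1)$ with the beta$^*$ radial intensity $\alpha\tilde c_{d,\beta}\omega_d R^{d-1}(R^2-1)^{-\beta}$ on $(1,\infty)$ (the paper runs the computation from the beta$^*$ side toward the zero cell, you run it the other way), and then identify $Z_{d,\lambda,\beta}$ with $P_{d,\alpha,\beta}^\circ$ via the polarity between points and halfspaces. One small correction to your closing justification: the halfspaces $H_n^-=\{x:\langle x,u_n\rangle\le r_n\}$ do \emph{not} contain $\widebar\BB^d$ (since $r_n<1$, the point $u_n$ violates the inequality); the reason the constraint $x\in\widebar\BB^d$ in~\eqref{eq:zero_cell_def} is harmless is rather that $P_{d,\alpha,\beta}\supset\widebar\BB^d$ a.s.\ by Proposition~\ref{prop:poi_process_properties}, whence $P_{d,\alpha,\beta}^\circ=\bigcap_n H_n^-\subset(\widebar\BB^d)^\circ=\widebar\BB^d$ automatically.
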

%For the proof we need a lemma which will be useful on other occasions.
%\begin{lemma}
%%Let $d\geq 2$, $\lambda>0$ and $\beta>d/2$ and
%The points $r_1,r_2,\ldots$ form a Poisson process on $(0,1)$ with intensity $f(r)$ if and only if the points $R_1: %=1/r_1,R_2:=1/r_2,\ldots$ form a Poisson process on $(1,\infty)$ with intensity $g(R)$, where
%$$
%f(r)
%=
%\lambda \frac{r^{2\beta - d - 1}}{(1-r^2)^{\beta}},
%\qquad
%g(R)
%=
%\lambda \frac{R^{d-1}}{(R^2-1)^{\beta}},
%\qquad
%0<r<1, \;\; R>1.
%$$
%\end{lemma}
%\begin{proof}
%The claim follows from the mapping theorem for Poisson processes. Assume that $r_1,r_2,\ldots$ form a Poisson process %with intensity $f(r)$ on $(0,1)$. A point $R_n = 1/r_n$ belongs to an infinitesimal interval $[R, R+\dint R]$ with $R>1$ if and %only if $r_n$ belongs to the interval $[1/(R+\dint R), 1/R]$ whose length is $R^{-2}\dint R +  o(\dint R)$. Hence, the intensity of %the Poisson process formed by the points $R_1,R_2,\ldots$ is
%$$
%R^{-2} f(1/R)
%=
%\lambda R^{-2} \frac{R^{-2\beta + d + 1}}{(1-R^{-2})^{\beta}}
%=
%\lambda \frac{R^{d-1}}{(R^{2}-1)^{\beta}}
%\qquad
%R>1,
%$$
%and the proof is complete.
%\end{proof}
\begin{proof}%[Proof of Theorem~\ref{thm:ZeroCell}]
By definition, the beta$^*$ set $P_{d,\alpha, \beta}$ is the closed convex hull of the atoms of the Poisson process with intensity  $\alpha\,\tilde c_{d,\beta}(\|x\|^2-1)^{-\beta}$, where $\|x\|>1$. We represent these atoms as $R_1 U_1, R_2U_2,\ldots$, where $R_1 > R_2 > \ldots >1$ denote the distances from the atoms  to $0$, and $U_1,U_2,\ldots\in \SS^{d-1}$ are vectors of unit length. By the transformation property of the Poisson processes, the distances $R_1,R_2,\ldots$ form a Poisson process on $(1,\infty)$  with intensity
$$
\psi(R) := \alpha\,\tilde c_{d,\beta} \frac{\omega_d R^{d-1}}{(R^2-1)^{\beta}},
\qquad
R>1,
$$
where the term $\omega_d R^{d-1}$ comes from the polar integration formula. Moreover, it follows from the isotropy of the intensity that $U_1,U_2,\ldots$ are independent, uniformly distributed on $\SS^{d-1}$, and independent from $R_1,R_2,\ldots$

By definition of convex duality~\eqref{eq:convex_duality_def},
%the dual of the closed convex hull of a union of sets is the intersection of the duals of these sets. Hence,
the dual of the beta$^*$ set $P_{d,\alpha, \beta}$ (which is the closed convex hull of the points $R_1 U_1, R_2 U_2,\ldots$ and the open unit ball $\BB^d$) is given by
\begin{equation}\label{eq:beta_star_set_dual}
P_{d,\alpha,\beta}^\circ
=
\{x\in \widebar\BB^d: \langle x, R_n U_n\rangle \leq 1\; \forall n\in\NN\}
=
\{x\in \widebar\BB^d: \langle x, U_n\rangle \leq 1/R_n\; \forall n\in\NN\}.
\end{equation}
Again by the transformation property of Poisson processes, the inverse distances $1/R_1$, $1/R_2, \ldots$ form a Poisson process on $(0,1)$. To compute its intensity, observe that a point $1/R_n$ belongs to an infinitesimal interval $[r, r+\dint r]$ with $0<r<1$ if and only if $R_n$ belongs to the interval $[1/(r+\dint r), 1/r]$ whose length is $r^{-2}\dint r +  o(\dint r)$. Hence, the intensity of the Poisson process formed by the points $1/R_1,1/R_2,\ldots$ is
$$
g(r)
=
r^{-2} \psi(1/r)
=
\alpha\,\tilde c_{d,\beta} r^{-2} \frac{\omega_d r^{1-d}}{(r^{-2}-1)^{\beta}}
=
\alpha\,\tilde c_{d,\beta} \omega_d  \frac{ r^{2\beta - d - 1}}{(1-r^2)^{\beta}}
,
\qquad
0<r<1.
$$
If $\alpha$ and $\lambda$ are such that  $\alpha\,\tilde c_{d,\beta} \omega_d = \lambda$, then $f(r) =g(r)$ (as defined in~\eqref{eq:def_f(r)}) and the Poisson processes formed by the points $1/R_1, 1/R_2, \ldots$ and $r_1, r_2, \ldots$ have the same law. Comparing~\eqref{eq:zero_cell_def} and~\eqref{eq:beta_star_set_dual} we conclude that $Z_{d,\lambda, \beta}$ and   $P_{d,\alpha,\beta}^\circ$ have the same law.
\end{proof}
\begin{remark}
As a corollary of Theorem~\ref{thm:ZeroCell}, we record the identity  $\EE f_k(Z_{d,\lambda,\beta}) = \EE f_{d-k-1} (P_{d,\alpha, \beta})$, for all $k = 0,\ldots, d-1$. Combined with Theorem~\ref{thm:f-vector_beta_star}, it yields an explicit formula for the expected $f$-vector of $Z_{d,\lambda,\beta}$.
\end{remark}
%\begin{equation}\label{eq:duality_zero_cell_beta_star_nonstat}
%f_k(Z_{d,\lambda,\beta}) \stackrel{d}{=} f_{d-k-1} (P_{d,\alpha, \beta}),
%\qquad
%k = 0,\ldots, d-1.
%\end{equation}

The next theorem characterizes the cases when the zero cell is bounded in the hyperbolic sense (that is, when it does not touch the unit sphere).

\begin{theorem}[(Un-)boundedness of the zero cells]\label{thm:PolytopeOrNot_Tessellation}
Fix a dimension $d\geq 2$ and let $\lambda^{\text{crit}}_d := (d-1)^2 \sqrt \pi \, \Gamma (\frac{d-1}{2}) / \Gamma(\frac d2)$.
\begin{itemize}
\item[(i)] If $\beta>(d+1)/ 2$ and $\lambda > 0$ is arbitrary, or if $\beta=(d+1)/2$ and $\lambda > \lambda^{\text{crit}}_d$,
then $Z_{d,\lambda,\beta}$ is a  (Euclidean) polytope contained in $\BB^d$ (meaning that the zero cell is hyperbolically bounded), with probability $1$.
%\item[(ii)] If $\beta=(d+1)/2$ and $\lambda > \lambda_{\text{crit}}$, then $Z_{d,\lambda,\beta}$ is a  polytope contained in $\BB^d$, with probability $1$.
\item[(ii)] If $d/2<\beta<(d+1)/2$ and $\lambda>0$ is arbitrary, or if $\beta=(d+1)/2$ and $0 < \lambda < \lambda^{\text{crit}}_d$, then with non-vanishing probability  the set $Z_{d,\lambda,\beta}$ intersects $\SS^{d-1}$ (meaning that it is hyperbolically unbounded) and is not a (Euclidean) polytope.
%\item[(iii)] , then with non-vanishing probability the set $Z_{d,\lambda,\beta}$ intersects $\SS^{d-1}$ and is not a polytope.
\end{itemize}
\end{theorem}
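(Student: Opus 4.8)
The plan is to reduce everything to Theorem~\ref{thm:ZeroCell} and Theorem~\ref{thm:PolytopeOrNot}. By Theorem~\ref{thm:ZeroCell}, the zero cell $Z_{d,\lambda,\beta}$ has the same distribution as the polar dual $P^\circ$ of the beta$^*$ set $P:=P_{d,\alpha,\beta}$ with $\alpha=\lambda/(\tilde c_{d,\beta}\,\omega_d)$; the constraint $\beta>\max(d/2,1)$ required by Theorem~\ref{thm:ZeroCell} holds automatically since $d\geq 2$. It then only remains to translate the dichotomy of Theorem~\ref{thm:PolytopeOrNot} for $P$ (polytope with $R>1$, versus not a polytope and $R=1$) into the corresponding statement for $P^\circ$, and to match the threshold $\alpha=(d-1)\pi$ that appears there at $\beta=(d+1)/2$ with the value $\lambda^{\text{crit}}_d$.

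First I would argue on the almost sure event, furnished by Proposition~\ref{prop:poi_process_properties}, on which $P$ is a compact convex body with $\widebar\BB^d\subseteq P$; then $0\in\interior P$, and $P^\circ$ is again a compact convex body with $0\in\interior P^\circ$. On this event two elementary polarity facts are all that is needed. For $r>0$ one has $r\widebar\BB^d\subseteq P$ if and only if $P^\circ\subseteq r^{-1}\widebar\BB^d$, since $(r\widebar\BB^d)^\circ=r^{-1}\widebar\BB^d$; hence the circumradius $\max_{x\in P^\circ}\|x\|$ of $P^\circ$ equals $1/R$, where $R=\sup\{r>0:r\widebar\BB^d\subseteq P\}$, and because $R\geq 1$ we get $P^\circ\subseteq\BB^d$ exactly when $R>1$ and $P^\circ\cap\SS^{d-1}\neq\emptyset$ exactly when $R=1$. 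Secondly, $P$ is a polytope if and only if $P^\circ$ is, by the standard fact that the polar of a polytope having the origin in its interior is again a polytope \cite{SchneiderBook}, combined with the bipolar identity $P=(P^\circ)^\circ$. Since ``being a polytope contained in $\BB^d$'' and ``meeting $\SS^{d-1}$ and not being a polytope'' are Borel properties of closed convex sets, the distributional identity $Z_{d,\lambda,\beta}\overset{d}{=}P^\circ$ allows the corresponding probabilities to be transferred.

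Combining these observations, the event that $Z_{d,\lambda,\beta}$ is a polytope contained in $\BB^d$ has the same probability as the event that $P$ is a polytope with $R>1$, and the event that $Z_{d,\lambda,\beta}$ meets $\SS^{d-1}$ and is not a polytope has the same probability as the event that $P$ is not a polytope and $R=1$. By Theorem~\ref{thm:PolytopeOrNot}(i) the first event has probability $1$ when $\beta>(d+1)/2$ (any $\alpha>0$) or when $\beta=(d+1)/2$ and $\alpha>(d-1)\pi$; by Theorem~\ref{thm:PolytopeOrNot}(ii) the second event has positive probability when $d/2<\beta<(d+1)/2$ (any $\alpha>0$) or when $\beta=(d+1)/2$ and $0<\alpha<(d-1)\pi$. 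It thus remains to check that, with $\alpha=\lambda/(\tilde c_{d,(d+1)/2}\,\omega_d)$, the condition $\alpha>(d-1)\pi$ is equivalent to $\lambda>\lambda^{\text{crit}}_d$ (and similarly with the inequalities reversed). Using $\tilde c_{d,(d+1)/2}=\Gamma(\tfrac{d+1}{2})/\pi^{(d+1)/2}$ and $\omega_d=2\pi^{d/2}/\Gamma(\tfrac d2)$ one obtains $\tilde c_{d,(d+1)/2}\,\omega_d=2\Gamma(\tfrac{d+1}{2})/(\sqrt\pi\,\Gamma(\tfrac d2))$, so that $\alpha>(d-1)\pi$ becomes $\lambda>2(d-1)\sqrt\pi\,\Gamma(\tfrac{d+1}{2})/\Gamma(\tfrac d2)$; the identity $\Gamma(\tfrac{d+1}{2})=\tfrac{d-1}{2}\Gamma(\tfrac{d-1}{2})$ turns the right-hand side into $(d-1)^2\sqrt\pi\,\Gamma(\tfrac{d-1}{2})/\Gamma(\tfrac d2)=\lambda^{\text{crit}}_d$.

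I do not expect a genuine obstacle here: once Theorems~\ref{thm:ZeroCell} and~\ref{thm:PolytopeOrNot} are available, the proof is essentially bookkeeping. The points that need care are the clean correspondence between the inradius of $P$ and the circumradius of $P^\circ$ (so that $R=1$ translates into ``$P^\circ$ touches $\SS^{d-1}$'' rather than merely $P^\circ\subseteq\widebar\BB^d$), and ensuring that one works throughout on the almost sure event where $P$ is a genuine convex body, so that polarity is well behaved. As a consistency check, $d=2$ gives $\lambda^{\text{crit}}_2=\pi$, which agrees with the critical value known from the literature on stationary Poisson line tessellations in the hyperbolic plane.
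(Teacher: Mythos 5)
Your proposal is circular relative to the paper's logical structure. You deduce Theorem~\ref{thm:PolytopeOrNot_Tessellation} from Theorem~\ref{thm:PolytopeOrNot} via the duality of Theorem~\ref{thm:ZeroCell}, but in the paper the implication runs in exactly the opposite direction: Theorem~\ref{thm:PolytopeOrNot_Tessellation} is proved first, directly, and Theorem~\ref{thm:PolytopeOrNot} is then obtained from it by the very same polarity bookkeeping you carry out (including the identical conversion of the threshold $\alpha^{\text{crit}}_d=(d-1)\pi$ into $\lambda^{\text{crit}}_d$). Your reduction is correct as far as it goes — the inradius/circumradius correspondence $R\mapsto 1/R$ under polarity, the equivalence ``$P$ polytope $\Leftrightarrow$ $P^\circ$ polytope'' for bodies with $0$ in the interior, and the constant computation are all fine — but it establishes only the \emph{equivalence} of the two theorems, not either one of them. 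Since the paper offers no independent proof of Theorem~\ref{thm:PolytopeOrNot}, your argument has no base case.

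The missing substance is the random covering argument. Writing $Z_{d,\lambda,\beta}=\{x\in\widebar\BB^d:\langle x,u_n\rangle\leq r_n\ \forall n\}$, one observes that $Z_{d,\lambda,\beta}\subset\BB^d$ if and only if the open spherical caps $S(u_n,r_n)=\{x\in\SS^{d-1}:\langle u_n,x\rangle>r_n\}$ cover the sphere (and then compactness extracts finitely many caps, giving a polytope). The paper first derives the almost sure asymptotics $1-r_n\sim c\, n^{-1/(\beta-1)}$ for the Poisson process of distances, deduces $\sigma_{d-1}(S(u_n,r_n))\sim C\, n^{-(d-1)/(2\beta-2)}$, and then invokes the covering theorem of Hoffmann-J{\o}rgensen (Theorem~\ref{theo:hoffmann_joergensen}): if $n\,\sigma_{d-1}(S(u_n,r_n))\to a$, then $a>1$ forces almost sure covering while $a<1$ leaves uncovered points with positive probability. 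The three regimes of the theorem correspond to $a=\infty$ (for $\beta>(d+1)/2$), $a=0$ (for $\beta<(d+1)/2$), and $a=\lambda/\lambda^{\text{crit}}_d$ in the critical case $\beta=(d+1)/2$, which is where $\lambda^{\text{crit}}_d$ actually comes from. In case (ii) one must additionally argue that touching $\SS^{d-1}$ precludes being a polytope (a finite union of closed caps failing to cover leaves an open cap uncovered). None of this appears in your proposal; you would need to supply it, either for the zero cell or for the beta$^*$ set directly, before the duality step can do any work.
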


The second claim of Part~(i) was known from~\cite[Lemma~5.3.3]{HeroldDiss}. It remains open to investigate the presumably fractal structure of the set $\SS^{d-1}\cap Z_{d,\lambda,\beta}$ in case (ii), for example to compute its Hausdorff dimension on the event that this set is non-empty (see~\cite{FengEtAl} for general results in this direction).
The doubly critical case when $\beta=(d+1)/2$ and $\lambda  = \lambda^{\text{crit}}_d$ remains open except in dimension $d=2$, where the following result is known; see~\cite{PorretBlanc}, \cite[Section~6]{BenjaminiJonassonEtAL} and~\cite{TykessonCalka}.

\begin{theorem}[Boundedness of the planar zero cell in the doubly critical case]\label{thm:PolytopeOrNot_critical_tessellation}
%Let $d=2$, $\beta = 3/2$ and $\lambda = \pi$, then
With probability $1$, $Z_{2,\pi, 3/2}$ is a (Euclidean) polygon contained in the open unit disk $\BB^2$.
\end{theorem}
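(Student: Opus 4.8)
The plan is to deduce this from the beta$^*$ side, exactly the way the two non-critical regimes of Theorem~\ref{thm:PolytopeOrNot_Tessellation} are deduced from Theorem~\ref{thm:PolytopeOrNot} via convex duality. First I would apply Theorem~\ref{thm:ZeroCell} in the case $d=2$, $\beta=3/2$: it identifies $Z_{2,\pi,3/2}$ in distribution with the convex dual $P_{2,\alpha,3/2}^\circ$ of the beta$^*$ set $P_{2,\alpha,3/2}$, where $\alpha=\lambda/(\tilde c_{2,3/2}\,\omega_2)$ and $\lambda=\pi$. From the definition of $\tilde c_{d,\beta}$ in~\eqref{eq:DensityZeta} one computes $\tilde c_{2,3/2}=\Gamma(3/2)/(\pi\,\Gamma(1/2))=1/(2\pi)$, and since $\omega_2=2\pi$ this gives $\tilde c_{2,3/2}\,\omega_2=1$, hence $\alpha=\pi=(d-1)\pi$. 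In other words $\lambda^{\text{crit}}_2=\pi$, and the doubly critical parameter pair $(\lambda,\beta)=(\pi,3/2)$ for the tessellation corresponds precisely to the doubly critical pair $(\alpha,\beta)=(\pi,3/2)$ for the beta$^*$ set; so everything reduces to understanding $P_{2,\pi,3/2}^\circ$.

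Next I would invoke Theorem~\ref{thm:PolytopeOrNot_critical}: almost surely $P_{2,\pi,3/2}$ is a polygon with $R>1$, i.e.\ there is a random radius $R>1$ with $R\,\widebar\BB^2\subset P_{2,\pi,3/2}$; in particular $P_{2,\pi,3/2}$ is a bounded polytope with the origin in its interior. Two standard facts about polar duality then finish the argument. Writing $P_{2,\pi,3/2}=\conv\{v_1,\dots,v_n\}$ with $0\in\interior P_{2,\pi,3/2}$, its dual $P_{2,\pi,3/2}^\circ=\{x:\langle x,v_i\rangle\le 1\text{ for all }i\}$ is a polyhedron, and it is bounded (hence a polygon, again with $0$ in its interior) precisely because $0\in\interior P_{2,\pi,3/2}$. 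Moreover polar duality reverses inclusions, so from $R\,\widebar\BB^2\subset P_{2,\pi,3/2}$ we obtain
$$
P_{2,\pi,3/2}^\circ\subset\bigl(R\,\widebar\BB^2\bigr)^\circ=\tfrac1R\,\widebar\BB^2\subset\BB^2,
$$
the last inclusion holding because $R>1$ forces $1/R<1$. Transporting this back through the distributional identity of Theorem~\ref{thm:ZeroCell} yields that $Z_{2,\pi,3/2}$ is almost surely a Euclidean polygon contained in the open disk $\BB^2$.

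I do not expect a genuine obstacle here, as all the substantive work is already carried out in Theorems~\ref{thm:ZeroCell} and~\ref{thm:PolytopeOrNot_critical}. The two points requiring a moment of care are the bookkeeping of the constants so that the doubly critical cases match up on the two sides (which is exactly what the identity $\tilde c_{2,3/2}\,\omega_2=1$ guarantees), and the observation that it is the strict inequality $R>1$ in Theorem~\ref{thm:PolytopeOrNot_critical}---and not merely the polygon property---that is needed to push the dual strictly inside $\BB^2$, i.e.\ to obtain the hyperbolic boundedness of the zero cell.
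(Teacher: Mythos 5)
Your reduction is circular. In the paper, Theorem~\ref{thm:PolytopeOrNot_critical} (that $P_{2,\pi,3/2}$ is a.s.\ a polygon with $R>1$) is not proved independently: it is itself \emph{deduced} from Theorem~\ref{thm:PolytopeOrNot_critical_tessellation} via exactly the duality of Theorem~\ref{thm:ZeroCell} that you invoke, just run in the opposite direction. So quoting Theorem~\ref{thm:PolytopeOrNot_critical} to establish Theorem~\ref{thm:PolytopeOrNot_critical_tessellation} assumes the statement to be proven. Your bookkeeping of constants is correct ($\tilde c_{2,3/2}\,\omega_2=1$, hence $\alpha=\lambda=\pi$ and the two doubly critical cases do correspond), and the polarity step ($R\,\widebar\BB^2\subset P$ with $R>1$ implies $P^\circ\subset R^{-1}\widebar\BB^2\subset\BB^2$, and the dual of a polytope with $0$ in its interior is a polytope) is fine --- but these are precisely the easy steps, and they constitute the paper's derivation of Theorem~\ref{thm:PolytopeOrNot_critical} \emph{from} the present theorem, not a proof of the present theorem.

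The missing substance is a direct argument that the zero cell is a.s.\ a polygon strictly inside $\BB^2$, i.e.\ that the random open arcs $\{x\in\SS^1:\langle x,u_n\rangle>r_n\}$ almost surely cover the circle. This cannot be extracted from the general covering dichotomy of Theorem~\ref{theo:hoffmann_joergensen}: in the doubly critical case one has $n\,\sigma_1(S(u_n,r_n))\to 1$, which is exactly the case $a=1$ left open there (see Remark~\ref{remark:doubly_critical_Hoffmann}); note also that $\EE f_1(P_{2,\pi,3/2})=\infty$ here, so no first-moment bound can yield a.s.\ finiteness of the number of active arcs. The paper instead verifies Shepp's sharp criterion~\eqref{eq:shepp_crit}, which is special to $d=2$: writing $r_n=P_n/\sqrt{P_n^2+\pi^2}$ with $P_1<P_2<\ldots$ the arrivals of a unit-rate Poisson process, the normalized arc lengths satisfy $\ell_n=1/P_n+O(P_n^{-2})$, so that $\ell_1+\ldots+\ell_n=\log n+O(1)$ a.s.\ and $\sum_{n\geq 1} n^{-2}e^{\ell_1+\ldots+\ell_n}=\infty$, giving a.s.\ covering; compactness then extracts a finite subcover, whence the zero cell is a polygon contained in $\BB^2$. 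Without this step (or some other independent proof of Theorem~\ref{thm:PolytopeOrNot_critical}), your argument does not close.
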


\subsubsection{Expected \texorpdfstring{$f$}{f}-vector of the hyperbolic Poisson zero cell}
In the rest of the present Section~\ref{sec:Hyperplane} we concentrate on the case $\beta = (d+1)/2$  (which corresponds to the stationary hyperplane tessellation mentioned in Section~\ref{sec:Motivation}) and state explicit formulas for the expected $f$-vector of the corresponding beta$^*$ polytope. These can be translated to yield the expected $f$-vector of the hyperbolic Poisson zero cell $Z_{d,\lambda}^0 = Z_{d,\lambda, (d+1)/2}$ via the distributional identity
\begin{equation}\label{eq:duality_zero_cell_beta_star}
f_k(Z_{d,\lambda}^0) \stackrel{d}{=} f_{d-k-1} (P_{d,\alpha, \frac{d+1}{2}}), \qquad k = 0,\ldots, d-1,
\qquad
\alpha = \lambda \, \frac{\sqrt \pi \, \Gamma (\frac d2)}{2 \Gamma(\frac {d+1}{2})}.
\end{equation}
%The next corollary gives a formula for the expected $f$-vector of $P_{d,\alpha,\frac{d+1}{2}}$.
The formulas involve an array of numbers $A[n,k]$ which appeared in~\cite{KabluchkoZeroPolytope} and are defined in terms of the polynomials $Q_n(x)$, $n\in \NN_0$,  given by $Q_0(x) := Q_1(x) := 1$ and
\begin{align*}
	Q_n(x) :=
	\begin{cases}
		(1+(n-1)^2x^2)(1+(n-3)^2x^2)\cdot\ldots\cdot (1+3^2x^2)(1+1^2x^2)	&:\text{$n=2,4,6,\ldots$},\\
		(1+(n-1)^2x^2)(1+(n-3)^2x^2)\cdot\ldots\cdot (1+4^2x^2)(1+2^2x^2)	&:\text{$n=3,5,7,\ldots$}.
	\end{cases}
\end{align*}
The numbers $A[n,k]$, indexed by $n\in\NN_0$ and $k\in\ZZ$ with $n\geq k$, are then  defined by
\begin{equation}\label{eq:A-Terme}
	A[n,k]:=
	\begin{cases}
		[x^k]Q_n(x)		&:\text{$k$ even},\\
		[x^k]\big( \tanh(\frac{\pi}{2x})\cdot Q_n(x)\big)		&:\text{$k$ odd and $n$ even},\\
		[x^k]\big( \cotanh(\frac\pi {2x})\cdot Q_n(x)\big)		&:\text{$k$ odd and $n$ odd},
	\end{cases}
\end{equation}
where $[x^k]H(x)$ denotes the coefficient of $x^k$ in a formal power series $H(x)$.  Terms of the form $A[n,k]$ with $n<k$, whenever they appear, should be interpreted as $0$.

\begin{theorem}[Expected $f$-vector of the zero cell] \label{thm:f-vector,beta=(d+1)/2}
Suppose that $d\geq 2$,  $\alpha>(d-1)\pi$ and let $\ell\in\{1,\ldots,d\}$. Then
\begin{equation}\label{eq:thm:f-vector,beta=(d+1)/2}
\EE f_{\ell-1}\big(P_{d,\alpha,\frac{d+1}{2}}\big)
=
\frac{\pi^{\ell}}{\ell!} \sum_{\substack{m \in \{\ell,\ldots, d\}\\ m \equiv d \Mod 2}}
\bigg(\frac{\alpha}{2\pi}\bigg)^{m}
\frac{\Gamma(\frac{\alpha}{2\pi}-\frac{m-1}{2})}{\Gamma(\frac{\alpha}{2\pi}+\frac{m+1}{2})}
(A[m,\ell]-A[m-2,\ell]).
\end{equation}
\end{theorem}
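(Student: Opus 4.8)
The plan is to specialise Theorem~\ref{thm:f-vector_beta_star} to $\beta=(d+1)/2$ and then to evaluate the two kinds of building blocks, $\mathbb I^*_{\alpha,m}$ and $\tilde{\mathbb J}_{m,\ell}$, in closed form at the parameters that occur. When $\beta=(d+1)/2$ we have $2\beta-d=1$ and $\beta-s-\tfrac12=\tfrac{d-2s}{2}$, so with $\ell=k+1$ and after the reindexing $m=d-2s$ (whose image is precisely $\{m\in\{\ell,\dots,d\}:m\equiv d\Mod 2\}$) Theorem~\ref{thm:f-vector_beta_star} reads
\[
\EE f_{\ell-1}\big(P_{d,\alpha,(d+1)/2}\big)
=2\sum_{\substack{m\in\{\ell,\dots,d\}\\ m\equiv d\Mod 2}}\mathbb I^*_{\alpha,m}(1)\,\tilde{\mathbb J}_{m,\ell}\big(\tfrac m2\big).
\]
The standing hypothesis $\alpha>(d-1)\pi$ is what makes Theorem~\ref{thm:f-vector_beta_star} applicable, and it also gives $\alpha>(m-1)\pi$ for every $m\le d$, so each $\mathbb I^*_{\alpha,m}(1)$ is a well-defined finite number.

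Next I would compute $\mathbb I^*_{\alpha,m}(1)$ explicitly. Using the representation~\eqref{eq:I*_alt_expression_sinh_negative_powers} with $\lambda=1$, together with $\tilde c_{1,1}=1/\pi$ and $\int_\varphi^\infty(\sinh\theta)^{-1}\,\dint\theta=-\log\tanh(\varphi/2)$, the exponential factor becomes $(\tanh(\varphi/2))^{\alpha/\pi}$. The substitution $t=\tanh(\varphi/2)$ (so $(\sinh\varphi)^{-1}=(1-t^2)/(2t)$ and $\dint\varphi=2\,\dint t/(1-t^2)$) followed by $u=t^2$ turns the integral into an Euler Beta integral, yielding
\[
\mathbb I^*_{\alpha,m}(1)
=\frac{\alpha^m}{m\,2^m}\cdot\frac{\Gamma(\frac{m+1}{2})}{\sqrt\pi\,\Gamma(\frac m2)}\cdot\frac{\Gamma\big(\frac{\alpha}{2\pi}-\frac{m-1}{2}\big)}{\Gamma\big(\frac{\alpha}{2\pi}+\frac{m+1}{2}\big)},
\]
the integral converging exactly because $\frac{\alpha}{2\pi}-\frac{m-1}{2}>0$.

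The remaining and hardest ingredient is the closed form of the beta$'$ internal angle sum $\tilde{\mathbb J}_{m,\ell}(m/2)$, i.e.\ formula~\eqref{eq:internal_angle_sum_J_formula} at $\lambda=1$; this is where the array $A[m,\ell]$ from~\eqref{eq:A-Terme} appears. One evaluates the inner integral $\int_{-\infty}^{\mathrm i\varphi}(\cosh\theta)^{-1}\,\dint\theta$ in~\eqref{eq:internal_angle_sum_J_formula} via the Gudermannian function, expands the $(m-\ell)$-th power, and integrates the resulting trigonometric expression against $(\cos\varphi)^{m-2}$ over $(-\pi/2,\pi/2)$; the parity distinction and the $\tanh(\pi/2x)$, $\cotanh(\pi/2x)$ factors in the definition of $A[n,k]$ are precisely what this computation produces, and it was carried out in~\cite{KabluchkoAngles,KabluchkoZeroPolytope}. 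The result is
\[
\tilde{\mathbb J}_{m,\ell}\big(\tfrac m2\big)
=\frac{m\sqrt\pi\,\Gamma(\frac m2)}{2\,\Gamma(\frac{m+1}{2})}\cdot\frac{\pi^{\ell}}{\ell!\,\pi^{m}}\big(A[m,\ell]-A[m-2,\ell]\big),
\]
the conventions $\tilde{\mathbb J}_{1,1}(1/2)=1$ and $A[n,k]=0$ for $n<k$ making the case $m=1$ consistent. Plugging the last two displays into the sum, the powers of $2$ and all the $\Gamma(\frac m2)$, $\Gamma(\frac{m+1}{2})$ and $\sqrt\pi$ factors cancel, and what survives is exactly the right-hand side of~\eqref{eq:thm:f-vector,beta=(d+1)/2}. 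The main obstacle is this last step, namely evaluating $\tilde{\mathbb J}_{m,\ell}(m/2)$ and matching it with $A[m,\ell]-A[m-2,\ell]$; the reduction and the evaluation of $\mathbb I^*_{\alpha,m}(1)$ are routine.
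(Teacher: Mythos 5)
Your proposal is correct and follows essentially the same route as the paper's proof: specialise Theorem~\ref{thm:f-vector_beta_star} to $\beta=(d+1)/2$, evaluate $\mathbb I^*_{\alpha,m}(1)$ as an Euler Beta integral (your substitution $t=\tanh(\varphi/2)$ in the $\sinh$-representation is equivalent to the paper's substitution $z=2/(y-1)$ in the original form~\eqref{eq:def_I_star}), and invoke the closed form of $\tilde{\mathbb J}_{m,\ell}(m/2)$ in terms of $A[m,\ell]-A[m-2,\ell]$ from the cited reference rather than rederiving it. Both of your intermediate formulas agree with the paper's Lemma~\ref{lem:I_star_(1)} and with the quoted identity~\eqref{eq:J_tilde_m/2}, and the final cancellation is the same.
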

%ZK: Both formulas in the above theorem and the following remark give numerical values that are consistent with the general formula for the expected f-vector of beta$^*$ polytopes.  %Beta Star Polytopes.nb
\begin{remark}
It is known that $A[n,n]=2^{-n}(n!)^2/\Gamma(\frac n2 +1)^2$ for all $n\in\NN_0$, see~\cite[Proposition~1.2]{KabluchkoZeroPolytope}. Hence, the expected number of facets of $P_{d,\alpha,(d+1)/2}$ is given by
	$$
\EE f_{d-1}(P_{d,\alpha,{d+1\over 2}}) = {d!\,\alpha^d\over 4^d\Gamma(1+{d\over 2})^2}{\Gamma({\alpha\over 2\pi}-{d-1\over 2})\over\Gamma({\alpha\over 2\pi}+{d+1\over 2})}.
$$
\end{remark}
It is straightforward to check that the right-hand side of~\eqref{eq:thm:f-vector,beta=(d+1)/2} is always a rational function of $\alpha$. In the next corollary we specialize~\eqref{eq:thm:f-vector,beta=(d+1)/2} to small space dimensions $d\in\{2,3,4\}$.
%Taking $\alpha=\lambda$ yields the the expected face numbers of $Z_0(\lambda)$.
Numerical values are shown in Figure~\ref{fig:VertexNumberHyperplaneMosaic} (recall~\eqref{eq:duality_zero_cell_beta_star}).

\begin{figure}[t]
\centering
\includegraphics[width=0.3\columnwidth]{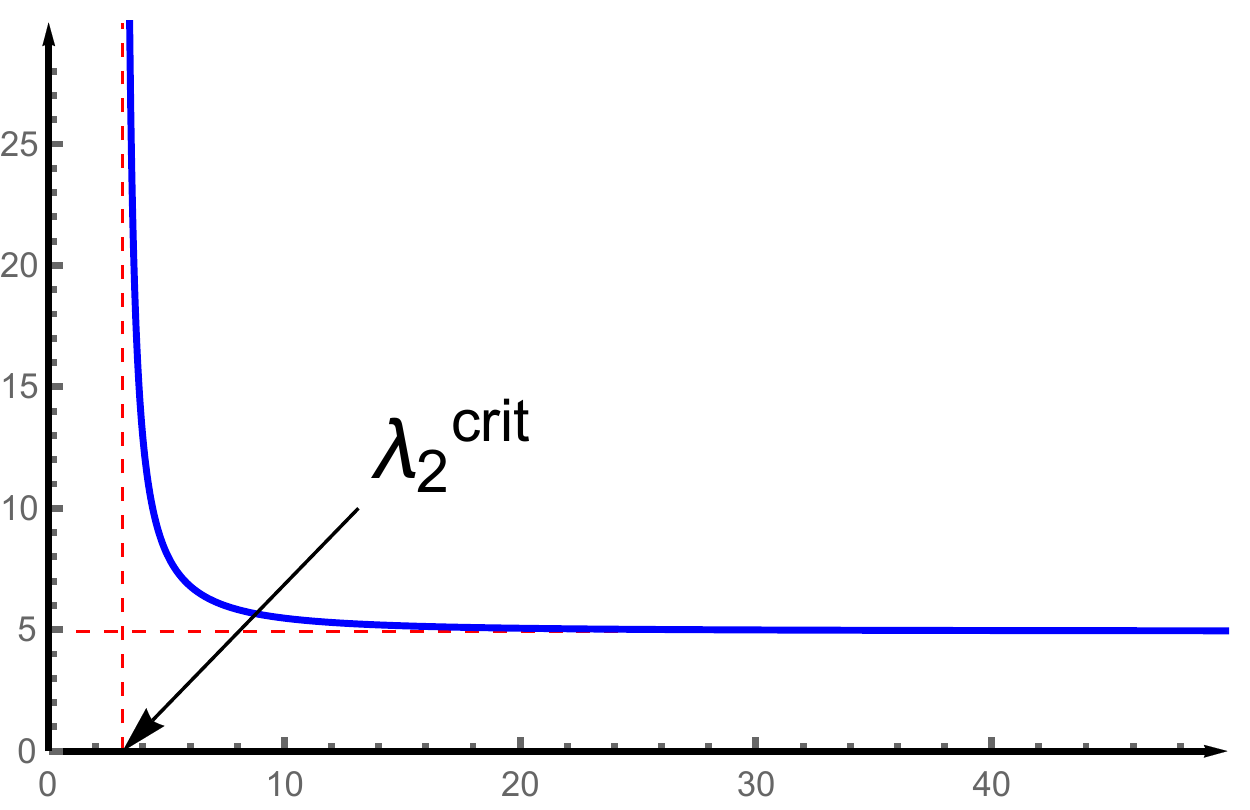}\quad
\includegraphics[width=0.3\columnwidth]{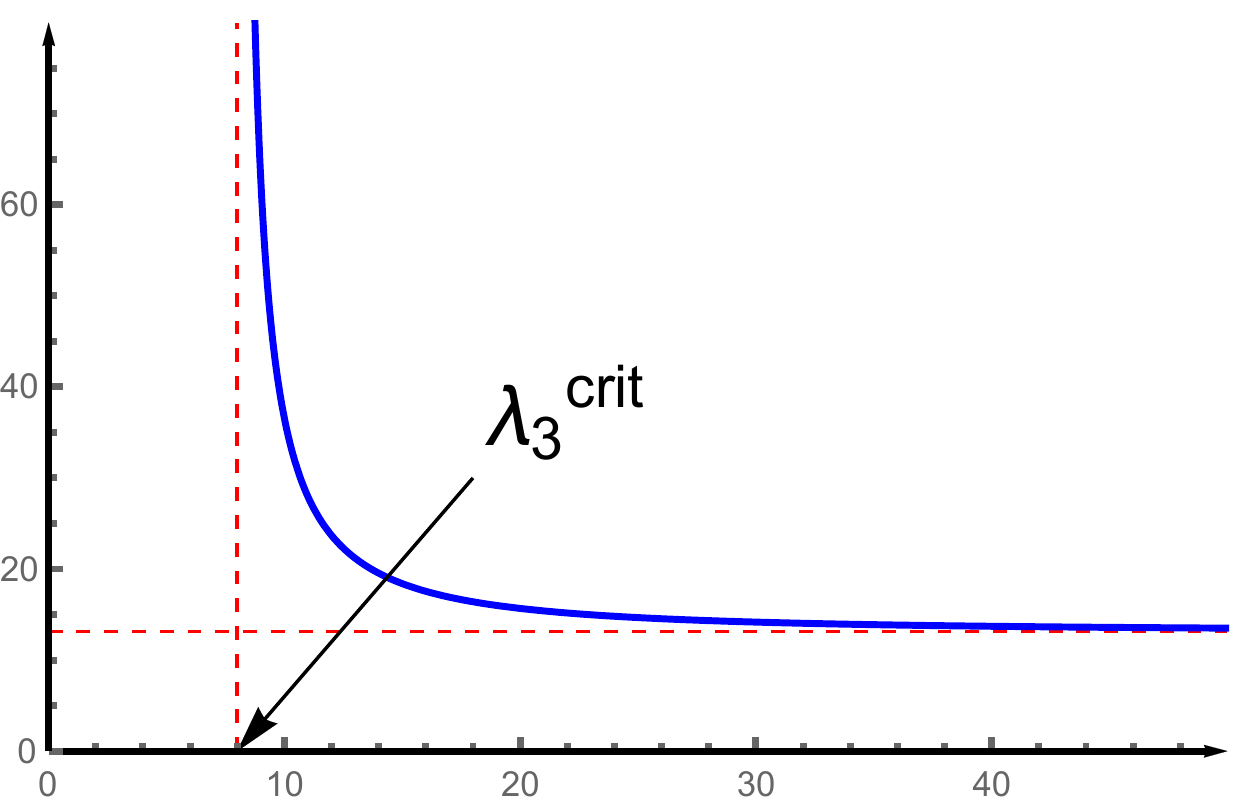}\quad
\includegraphics[width=0.3\columnwidth]{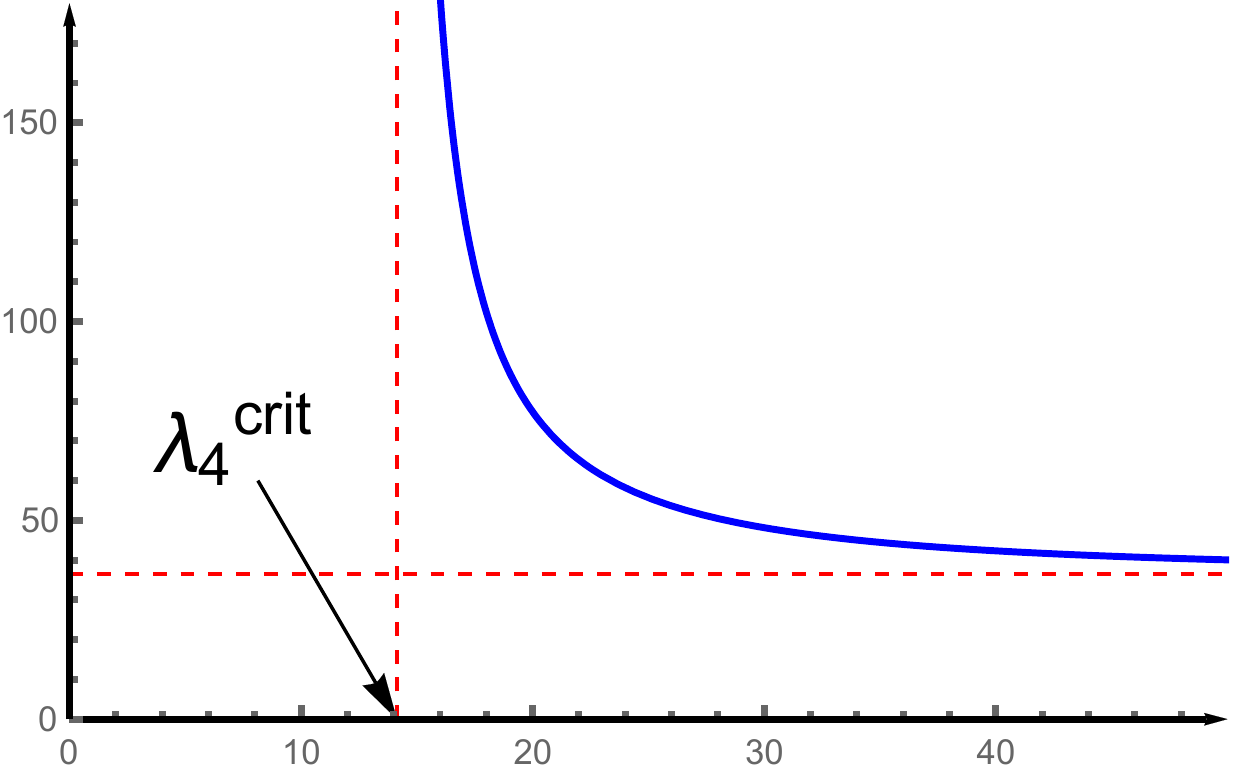}
\caption
{
The expected vertex number of the zero cell of the stationary hyperbolic Poisson hyperplane tessellation $Z_{d,\lambda,(d+1)/2}=Z^{0}_{d,\lambda}$ in dimension $d=2$ (left panel), $d=3$ (middle panel) and $d=4$ (right panel) as a function of $\lambda >\lambda^{\text{crit}}_d$, where  $\lambda^{\text{crit}}_d$ is shown as a dashed vertical line. The dashed horizontal line represents the expected vertex number of the zero cell of the stationary and isotropic \textit{Euclidean} Poisson hyperplane tessellation; see Section~\ref{sec:Convergence}.  %After rescaling the horizontal axis with $\alpha = \frac 12 \lambda\sqrt \pi \, \Gamma (\frac d2)/ \Gamma(\frac {d+1}{2})$, this shows the expected facet number of the beta$^*$  polytope $P_{d,\alpha,{d+1\over 2}}$.
}
\label{fig:VertexNumberHyperplaneMosaic}
\end{figure}

\begin{corollary}\label{cor:ZeroCellSmallDimensions}
\begin{itemize}
\item[(i)] 	For $d=2$ and $\alpha>\pi$ one has that
\begin{align*}
	\EE f_0(P_{2,\alpha,\frac 32})=\EE f_1(P_{2,\alpha,\frac 32})={\alpha^2\pi^2\over 2(\alpha^2-\pi^2)}.
\end{align*}
\item[(ii)] 	For $d=3$ and $\alpha>2\pi$ one has that
\begin{align*}
	\EE f_k(P_{3,\alpha,2})=
	\begin{cases}
		{2\alpha^2\pi^2\over 3(\alpha^2-4\pi^2)}+2	&:k=0,\\
		{2\alpha^2\pi^2\over \alpha^2-4\pi^2}	&:k=1,\\
		{4\alpha^2\pi^2\over 3(\alpha^2-4\pi^2)}	&:k=2.
	\end{cases}
\end{align*}

\item[(iii)] For $d=4$ and $\alpha>3\pi$ one has that
\begin{align*}
	\EE f_k(P_{4,\alpha,\frac 52})=
	\begin{cases}
		\frac{40 \alpha^4 \pi^2 - 36 \alpha^2 \pi^4 - 3 \alpha^4 \pi^4}{8(\alpha^4-10\alpha^2\pi^2+9\pi^4)}	&:k=0,\\
		\frac{10 \alpha^4 \pi^2 - 9 \alpha^2 \pi^4}{2(\alpha^4-10\alpha^2\pi^2+9\pi^4)}	&:k=1,\\
		\frac{3\alpha^4\pi^4}{4(\alpha^4-10\alpha^2\pi^2+9\pi^4)}	&:k=2,\\
		\frac{3\alpha^4\pi^4}{8(\alpha^4-10\alpha^2\pi^2+9\pi^4)}	&:k=3.
	\end{cases}
\end{align*}
\end{itemize}
\end{corollary}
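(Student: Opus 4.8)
The plan is to specialise the formula of Theorem~\ref{thm:f-vector,beta=(d+1)/2} to $d\in\{2,3,4\}$, which reduces to three routine tasks: (a) evaluating the array entries $A[m,\ell]$ that actually occur, (b) simplifying the ratio of Gamma functions, and (c) combining the finitely many resulting terms over a common denominator. Throughout I abbreviate $t:=\alpha/(2\pi)$, so that the hypothesis $\alpha>(d-1)\pi$ becomes $t>(d-1)/2$, and I use that $A[n,k]=0$ for $n<k$, so that the contributions $A[m-2,\ell]$ with $m=\ell$ vanish automatically.

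For task (b) I would apply the functional equation $\Gamma(z+1)=z\Gamma(z)$ repeatedly to telescope
$$
\frac{\Gamma\bigl(t-\tfrac{m-1}{2}\bigr)}{\Gamma\bigl(t+\tfrac{m+1}{2}\bigr)}
$$
into $1/t$ for $m=1$, $1/(t^2-\tfrac14)$ for $m=2$, $1/\bigl(t(t^2-1)\bigr)$ for $m=3$, and $1/\bigl((t^2-\tfrac14)(t^2-\tfrac94)\bigr)$ for $m=4$. Multiplying by the prefactor $(\alpha/(2\pi))^m=t^m$ and clearing denominators via $t^2-\tfrac14=(\alpha^2-\pi^2)/(4\pi^2)$ and $t^2-\tfrac94=(\alpha^2-9\pi^2)/(4\pi^2)$, these become exactly the rational functions of $\alpha$ appearing in the statement, with common denominators $\alpha^2-\pi^2$, $\alpha^2-4\pi^2$ and $(\alpha^2-\pi^2)(\alpha^2-9\pi^2)=\alpha^4-10\alpha^2\pi^2+9\pi^4$ for $d=2,3,4$ respectively.

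For task (a) I would compute the needed $A[m,\ell]$ straight from the definition. The polynomials involved are $Q_0=Q_1=1$, $Q_2(x)=1+x^2$, $Q_3(x)=1+4x^2$ and $Q_4(x)=(1+9x^2)(1+x^2)=1+10x^2+9x^4$. For even $\ell$ one simply reads off a coefficient, giving $A[2,2]=1$, $A[4,2]=10$ and $A[4,4]=9$ (the latter two being consistent with the closed form $A[n,n]=2^{-n}(n!)^2/\Gamma(\tfrac n2+1)^2$ recorded above). For odd $\ell$ one multiplies $Q_m$ by $\tanh(\pi/(2x))$ when $m$ is even, respectively $\cotanh(\pi/(2x))$ when $m$ is odd, expanded as a Laurent series in $1/x$ from $\tanh z=z-\tfrac13 z^3+\cdots$ and $\cotanh z=z^{-1}+\tfrac13 z-\cdots$, and extracts the coefficient of $x^\ell$; only the leading one or two terms of these expansions matter for $m\le 4$. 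This yields $A[1,1]=2/\pi$, $A[2,1]=\pi/2$, $A[3,1]=2/\pi+2\pi/3$, $A[3,3]=8/\pi$, $A[4,1]=5\pi-3\pi^3/8$ and $A[4,3]=9\pi/2$.

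Finally, for each $d\in\{2,3,4\}$ and each $\ell$ I would substitute these values together with the simplified Gamma ratios into Theorem~\ref{thm:f-vector,beta=(d+1)/2}, sum over the at most two admissible indices $m\equiv d\Mod 2$ with $\ell\le m\le d$, and reduce over the common denominator; this produces the claimed formulas. For example, for $d=4$ and $\ell=1$ one obtains $\pi\bigl[t^2(t^2-\tfrac14)^{-1}A[2,1]+t^4\bigl((t^2-\tfrac14)(t^2-\tfrac94)\bigr)^{-1}(A[4,1]-A[2,1])\bigr]$, and inserting $A[2,1]=\pi/2$ and $A[4,1]-A[2,1]=\tfrac{9\pi}{2}-\tfrac{3\pi^3}{8}$ reproduces $\frac{40\alpha^4\pi^2-36\alpha^2\pi^4-3\alpha^4\pi^4}{8(\alpha^4-10\alpha^2\pi^2+9\pi^4)}$. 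There is no genuine obstacle here; the only point requiring care is the bookkeeping of the odd-index entries $A[m,\ell]$, which rest on the tangent/cotangent (equivalently Bernoulli-number) expansions, whereas the remainder is elementary algebra.
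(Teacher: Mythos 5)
Your proposal is correct and is exactly the paper's intended route: the corollary is obtained by specialising Theorem~\ref{thm:f-vector,beta=(d+1)/2} to $d\in\{2,3,4\}$, and I have checked that your Gamma-ratio telescopings, your values of the array entries $A[m,\ell]$ (including the odd-index ones from the $\tanh$/$\cotanh$ expansions), and the final rational simplifications all reproduce the stated formulas.
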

%%%ZK: Die aktuellen Formeln sind numerisch konsistent mit Theorem~\ref{thm:f-vector,beta=(d+1)/2}. Die Grenzwerte für $\alpha\to\infty$ stimmen mit dem Euklidschen Fall überein.

Part (i) recovers a formula of Santal{\'o} and Ya\~{n}ez~\cite[Equation~(6.9) on p.~163]{SantaloYanez}. Note that their $\lambda$ corresponds to our
$\alpha/\pi = \lambda/\pi$
(by~\eqref{eq:duality_zero_cell_beta_star} these quantities are equal for $d=2$); see the paragraph after Equation~(2.12) in~\cite{SantaloYanez}.

The papers~\cite{SantaloYanez} and~\cite{santalo_average} contain  many other explicit formulas related to Poisson line tessellations in the hyperbolic plane, but cases (ii) and (iii) seem to be new.

Let us finally discuss the asymptotic behaviour of $\EE f_k(Z_{d,\lambda}^0)$ in the large intensity limit, i.e.\ as  $\lambda\to\infty$. By~\eqref{eq:duality_zero_cell_beta_star}, we may pass to the convex dual and  look at $\EE f_{d-k-1}(P_{d,\alpha,(d+1)/2})$ as $\alpha\to\infty$.
The terms appearing in Theorem~\ref{thm:f-vector,beta=(d+1)/2} satisfy
$$
T_{m}(\alpha)
:=
\left(\frac{\alpha}{2\pi}\right)^{m} {\Gamma({\alpha\over 2\pi}-{m-1\over 2})\over\Gamma({\alpha\over 2\pi}+{m+1\over 2})}
=
1 + \frac{\pi^2 m (m^2-1)}{6 \alpha^2} + O(\alpha^{-4})
\;\;
\text{ as } \alpha\to\infty,
$$
see~\cite{tricomi_erdelyi} for the asymptotic expansion of the quotient of Gamma functions.
%Eigentlich ist es eine Mathematica-Rechnung.
As a result,
\begin{align*}
	\EE f_{\ell-1}\big(P_{d,\alpha,\frac{d+1}{2}}\big)
	&=\frac{\pi^{\ell}}{\ell!}\sum_{\substack{m\in \{\ell,\ldots, d\}\\ m\equiv d \Mod 2}} (A[m,\ell]-A[m-2,\ell])+ {Q_{d,\ell}\over\alpha^2} + O(\alpha^{-4})\\
	&=\frac{\pi^{\ell}}{\ell!}A[d,\ell]+ {Q_{d,\ell}\over\alpha^2} + O(\alpha^{-4})
\;\;
\text{ as }
\alpha\to\infty,
\end{align*}
where $Q_{d,\ell}>0$ is a constant only depending on $d$ and $\ell$.
Not surprisingly, the first term is the expected number of $(\ell-1)$-faces of the convex dual of the \emph{flat} (i.e.\ Euclidean) Poisson zero cell, which is visible as a dashed horizontal line in Figure~\ref{fig:VertexNumberHyperplaneMosaic}; see~\cite[Theorem~2.1]{KabluchkoZeroPolytope}. The large intensity limit will be treated in more generality in Section~\ref{sec:Convergence}.
%the result is consistent with that of Theorem \ref{thm:convergence_beta_star}.

\subsection{Poisson-Voronoi tessellations in the hyperbolic space}\label{sec:Voronoi}
Next we turn to the study of the typical cell in the $d$-dimensional hyperbolic Poisson-Voronoi tessellation. We shall define a more general model, indexed by two parameters $\lambda$ and $\beta$, which reduces to the special case we are interested in if $\beta = d$.

\subsubsection{Reduction to \texorpdfstring{beta$^*$}{beta*} polytopes}
We start with a model-independent definition which is for convenience stated in the hyperboloid model. Recall that $e= (1,0,\ldots,0)$ is the apex of $\HH^d$. Fix some parameters $\lambda>0$ and $\beta>\max(d/2,1)$ and consider a Poisson process $\eta_{d,\lambda,\beta}$ on $\HH^d$ whose intensity with respect to the hyperbolic volume measure $\nu_{\text{hyp}}$ on $\HH^d$ is given by
$$
x\mapsto  \lambda \cdot \left(\sinh \frac {d_{\text{hyp}}(e,x)} 2 \right)^{2\beta - 2 d},
\qquad
x\in \HH^d,
$$
where we recall that $d_{\text{hyp}}(\cdot, \cdot)$ denotes the hyperbolic distance on $\HH^d$.  The \textit{Voronoi cell} of $e$ in the point process $\eta_{d,\lambda,\beta}\cup\{e\}$ is the random set
$$
V_{d,\lambda,\beta} := \{x\in \HH^d: d_{\text{hyp}}(x,e) \leq d_{\text{hyp}}(y,e) \text{ for all } y\in \eta_{d,\lambda,\beta}\}.
$$
Actually, we are mostly interested in the special case $\beta=d$ in which the intensity measure is $\lambda$ times the Riemannian volume measure $\nu_{\text{hyp}}$, the Poisson process $\eta_{d,\lambda,d}$ is stationary under isometries of $\HH^d$, and $V_{d,\lambda,d}$ has the same distribution as the typical cell of the stationary Poisson-Voronoi tessellation generated by $\eta_{d,\lambda,d}$, as explained in Section~\ref{sec:Motivation}.

\begin{figure}[t]
%\begin{center}
\centering
	\includegraphics[width=0.35\columnwidth]{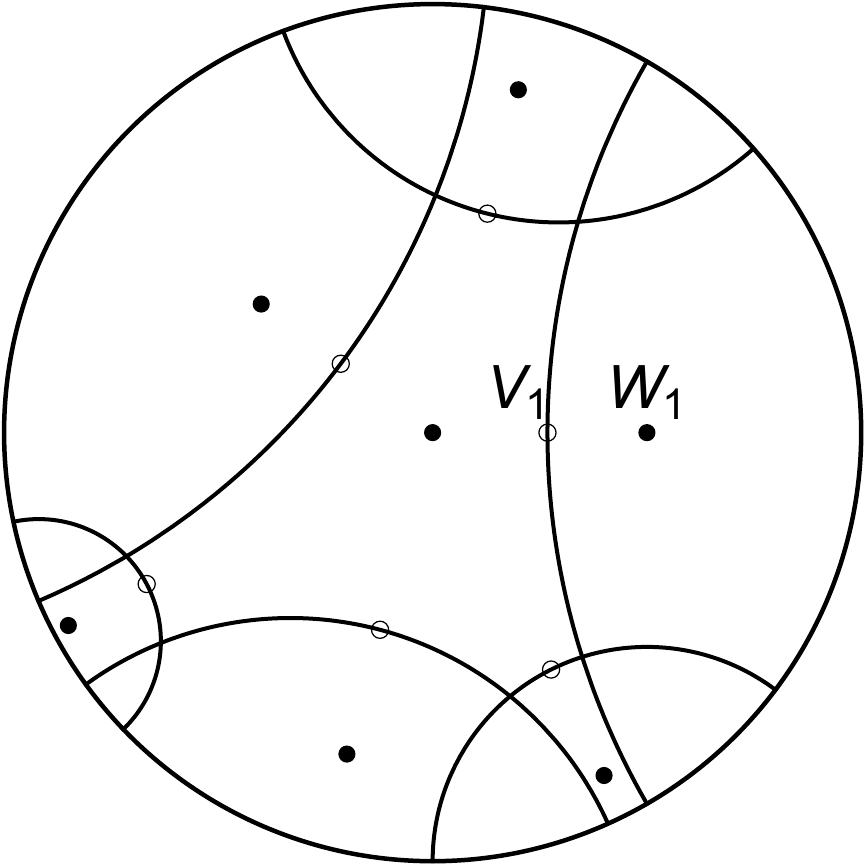}
\hspace*{1cm}
	\includegraphics[width=0.35\columnwidth]{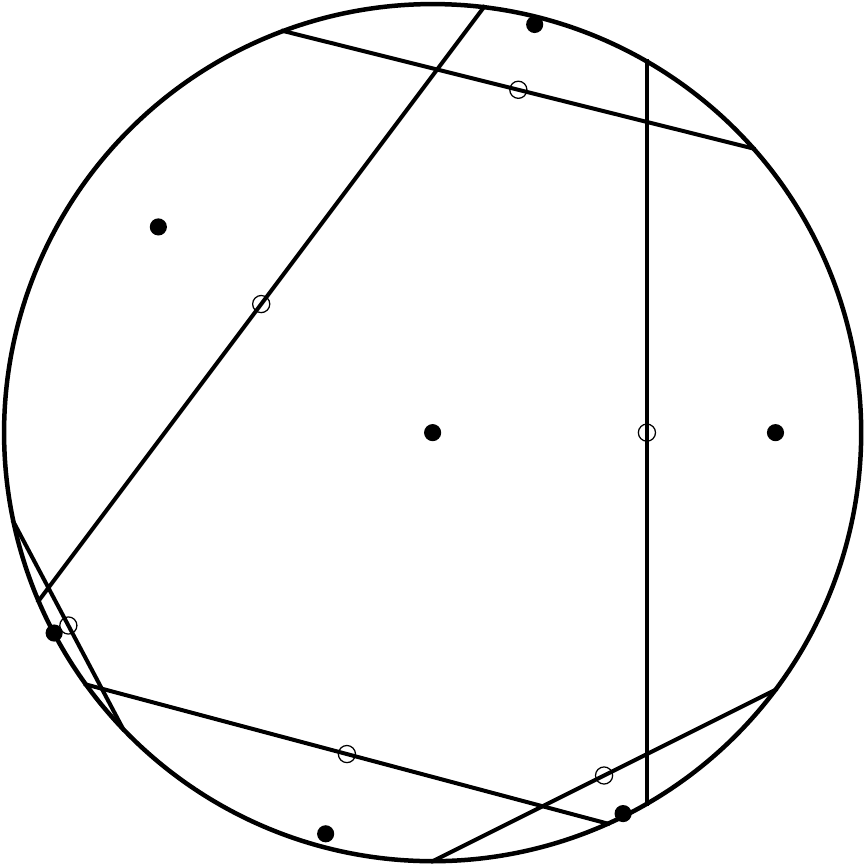}
%\end{center}
	\caption{Proof of Theorem~\ref{thm:TypicalVoronoiCellGeneralized}. The left panel shows the construction of the Voronoi cell in the Poincar\'e model. The right panel is the image of the left one under the map $\varphi_{\text{Poi$\to$Kl}}$.}
\label{fig:voronoi_proof}
\end{figure}

\begin{theorem}[Reduction of the typical Voronoi cell to beta$^*$ sets, I]\label{thm:TypicalVoronoiCellGeneralized}
Let $d\in \NN$, $\lambda>0$ and $\beta>\max(d/2,1)$.
Then the closure of the gnomonic projection $\Pi_{\text{gn}}(V_{d,\lambda,\beta})$ has the same distribution as the convex dual $P_{d,\alpha,\beta}^\circ$ of the beta$^*$ set $P_{d,\alpha,\beta}$ with $\alpha =2^d \lambda /\tilde c_{d,\beta}$.
%and the same distribution as the set $Z_{d,2^d \lambda \omega_d,\beta}$ defined in Section~\ref{sec:Hyperplane}.
\end{theorem}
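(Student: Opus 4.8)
The plan is to project the whole construction into the Klein model, where the Voronoi cell of the apex $e$ (which satisfies $\Pi_{\text{gn}}(e)=\Pi_{\text{st}}(e)=0$) turns into an intersection of \emph{Euclidean} half-spaces of exactly the shape appearing in the description~\eqref{eq:beta_star_set_dual} of $P_{d,\alpha,\beta}^\circ$. Having established this, it only remains to identify the underlying marked Poisson process and to match its intensity with that of the reciprocal-radius process governing $P_{d,\alpha,\beta}^\circ$; the distributional identity then follows from the mapping theorem for Poisson processes.

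The geometric heart of the argument is the following claim: if $y=(\cosh\theta,u\sinh\theta)$ is an atom of $\eta_{d,\lambda,\beta}$, lying at hyperbolic distance $\theta$ from $e$ in direction $u\in\SS^{d-1}$, then $\Pi_{\text{gn}}$ carries the perpendicular bisector of the geodesic segment $[e,y]$ to the Euclidean hyperplane $\{x\in\BB^d:\langle x,u\rangle=\tanh(\theta/2)\}$, and the hyperbolic half-space of points closer to $e$ than to $y$ to $\{x\in\BB^d:\langle x,u\rangle\le\tanh(\theta/2)\}$. To see this, note that the bisector is a totally geodesic hypersurface, hence a Euclidean hyperplane in the Klein model, and that it contains the hyperbolic midpoint of $[e,y]$; since $\Pi_{\text{gn}}$ maps geodesics to chords, by~\eqref{eq:GnomonicProjection} and~\eqref{eq:dist_0_x_klein} this midpoint is the point $u\tanh(\theta/2)$ of the diameter in direction $u$ at hyperbolic distance $\theta/2$ from $0$. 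Moreover the bisector is invariant under the isometries of $\HH^d$ fixing $e$ and $y$, which act on the Klein ball as the orthogonal linear maps fixing $u$; being a hyperplane invariant under this group and passing through $u\tanh(\theta/2)$, it must equal $\{x:\langle x,u\rangle=\tanh(\theta/2)\}$ (for $d=2$ the only competing invariant line through $u\tanh(\theta/2)$ is $\RR u$, which is ruled out because the bisector differs from the geodesic through $e$ and $y$). Since $\langle 0,u\rangle=0<\tanh(\theta/2)$, the half-space containing $e$ is the one with ``$\le$''. Alternatively, the same formula follows by computing in the conformal Poincar\'e model---where the bisector is the sphere orthogonal to $\SS^{d-1}$ centred on $\RR u$ through $u\tanh(\theta/4)=\Pi_{\text{st}}(\text{midpoint})$---and applying $\varphi_{\text{Poi$\to$Kl}}$ together with $\tanh(2t)=2\tanh t/(1+\tanh^2 t)$.

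Granting the claim, $\Pi_{\text{gn}}(V_{d,\lambda,\beta})$ is the intersection over the atoms of $\eta_{d,\lambda,\beta}$ of the half-spaces $\{x\in\BB^d:\langle x,u_n\rangle\le h_n\}$ with $h_n:=\tanh(\theta_n/2)\in(0,1)$, and passing to closures inside $\widebar\BB^d$ yields $\overline{\Pi_{\text{gn}}(V_{d,\lambda,\beta})}=\{x\in\widebar\BB^d:\langle x,u_n\rangle\le h_n\ \forall n\}$, which has the same form as~\eqref{eq:beta_star_set_dual}. By isotropy of the intensity of $\eta_{d,\lambda,\beta}$ the directions $u_1,u_2,\dots$ are i.i.d.\ uniform on $\SS^{d-1}$ and independent of the radii, while the distances $\theta_n$ form a Poisson process on $(0,\infty)$; using the geodesic polar volume element $\sinh^{d-1}\theta\,\dint\theta$ and $\sinh\theta=2\sinh(\theta/2)\cosh(\theta/2)$ its intensity equals $2^{d-1}\lambda\omega_d(\sinh(\theta/2))^{2\beta-d-1}(\cosh(\theta/2))^{d-1}$. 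Pushing forward under $h=\tanh(\theta/2)$, so that $\sinh(\theta/2)=h/\sqrt{1-h^2}$, $\cosh(\theta/2)=1/\sqrt{1-h^2}$ and $\dint\theta=2(1-h^2)^{-1}\dint h$, gives the intensity $2^d\lambda\omega_d\,h^{2\beta-d-1}(1-h^2)^{-\beta}$ on $(0,1)$. In the proof of Theorem~\ref{thm:ZeroCell} it was shown that the reciprocal radii of the atoms of $\zeta_{d,\alpha,\beta}$ form a Poisson process on $(0,1)$ with intensity $\alpha\,\tilde c_{d,\beta}\omega_d\,r^{2\beta-d-1}(1-r^2)^{-\beta}$ and i.i.d.\ uniform directions, so the two marked processes agree in law precisely when $2^d\lambda=\alpha\,\tilde c_{d,\beta}$, i.e.\ $\alpha=2^d\lambda/\tilde c_{d,\beta}$. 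Comparing with~\eqref{eq:beta_star_set_dual} we conclude $\overline{\Pi_{\text{gn}}(V_{d,\lambda,\beta})}\stackrel{d}{=}P_{d,\alpha,\beta}^\circ$.

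The main obstacle is the geometric claim of the second paragraph: one must take care that the Klein model is not conformal away from its centre, pin down the bisector \emph{hyperplane} rather than merely a point on it, deal separately with $d=2$, and verify that the closure of the projected cell is genuinely the intersection of the \emph{closed} half-spaces taken inside $\widebar\BB^d$, exactly as in~\eqref{eq:beta_star_set_dual}. The remaining steps---the polar change of variables and the invocation of the mapping theorem---are routine.
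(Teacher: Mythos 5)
Your argument is correct, and its overall architecture coincides with the paper's: reduce the projected Voronoi cell to an intersection of Euclidean half-spaces $\{x:\langle x,u_n\rangle\le \tanh(\theta_n/2)\}$, check that the resulting marked Poisson process on $\SS^{d-1}\times(0,1)$ has intensity $2^d\lambda\omega_d\,h^{2\beta-d-1}(1-h^2)^{-\beta}\,\dint h\,\sigma_{d-1}(\dint u)$, and then invoke the duality of Theorem~\ref{thm:ZeroCell}. The one place where you genuinely deviate is the key geometric lemma identifying the image of the perpendicular bisector of $[e,y]$: the paper establishes it by first working in the Poincar\'e model (where bisectors are honest Euclidean-orthogonal bisectors through the hyperbolic midpoint) and then transporting to the Klein model via $\varphi_{\text{Poi}\to\text{Kl}}$ and the relation $d_{\text{Poi}}=2d_{\text{Kl}}$, whereas you argue directly in the Klein model by combining total geodesy of the bisector with invariance under the stabilizer of $\{e,y\}$, handling $d=2$ separately. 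Your route is more self-contained (no second model needed) but leans on the unproved assertion that equidistant sets are totally geodesic; this is easily supplied, e.g.\ by noting that in the hyperboloid model $d_{\text{hyp}}(x,e)=d_{\text{hyp}}(x,y)$ reads $B(x,e-y)=0$, a linear condition, which in fact yields $\langle v,u\rangle=(\cosh\theta-1)/\sinh\theta=\tanh(\theta/2)$ directly under gnomonic projection and would shortcut the symmetry argument entirely. The intensity computations (push-forward under $h=\tanh(\theta/2)$ versus the paper's computation of the stereographic image intensity followed by the polar formula) are equivalent and both yield the matching condition $\alpha=2^d\lambda/\tilde c_{d,\beta}$.
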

\begin{proof}
The proof is divided into $3$ steps. The main idea is shown on Figure~\ref{fig:voronoi_proof}.

\medskip
\noindent
\textit{Step 1.}
First we pass  to the Poincar\'e model on $\BB^d$ by the stereographic projection $\Pi_{\text{st}}$. Note that it maps the apex $e$ to $0$. The image of $\eta_{d,\lambda,\beta}$ under $\Pi_{\text{st}}$ is again a Poisson process denoted by $\Pi_{\text{st}}(\eta_{d,\beta, d})$.  Recalling the formula for the volume element~\eqref{eq:metric_volume_poincare} as well as  the formulas $d_{\text{Poi}} (0, w) = 2 \artanh \|w\|$ and $\sinh (\artanh \|w\|) = \|w\| /\sqrt{1-\|w\|^2}$, we conclude that  the intensity of  $\Pi_{\text{st}}(\eta_{d,\lambda,\beta})$ with respect to the Lebesgue measure on $\BB^d$ is given by
\begin{equation}\label{eq:intensity_W}
w\mapsto 2^d \lambda  \frac{\|w\|^{2\beta - 2 d}}{(1-\|w\|^2)^{\beta}},
\qquad
w\in \BB^d.
\end{equation}
Let $W_1,W_2,\ldots \in \BB^d$ be the points of $\Pi_{\text{st}}(\eta_{d,\lambda,\beta})$.
The typical Voronoi cell in the Poincar\'e model is given by
$$
\Pi_{\text{st}}(V_{d,\lambda,\beta}) = \{w\in \BB^d: d_{\text{Poi}}(w,0) \leq d_{\text{Poi}}(W_i,0) \text{ for all } i\in \NN\}.
$$
Let us give a more explicit description of this cell as an intersection of hyperbolic half-spaces.
For every point $W_i$ let $V_i$ be midpoint, in the sense of the Poincar\'e metric, of the segment joining $0$ to $W_i$, that   is  $2 d_{\text{Poi}}(0,V_i) = d_{\text{Poi}}(0,W_i)$. Let $H_i^-$ be a hyperbolic half-space, still in the sense of the Poincar\'e model, which contains $0$ and whose bounding hyperbolic hyperplane $H_i$ passes through $V_i$ and is orthogonal to the segment joining $0$ to $W_i$; see the left panel of Figure~\ref{fig:voronoi_proof}.  Then,
$$
\Pi_{\text{st}}(V_{d,\lambda,\beta}) = \bigcap_{i=1}^\infty H_i^-.
$$
%Note that, in fact, $H_i$ is a sphere orthogonal to the unit sphere.

\medskip
\noindent
\textit{Step 2.}
And now let us map everything to the Klein model by the map $\varphi_{\text{Poi$\to$Kl}}:\BB^d\to\BB^d$. Using the relation
$$
d_{\text{Poi}} (0, V_i) =  \frac 12 d_{\text{Poi}} (0, W_i) =  d_{\text{Kl}} (0, W_i)
$$
mentioned in~\eqref{eq:dist_0_x_poincare}, we conclude that $\varphi_{\text{Poi$\to$Kl}}(V_i) = W_i$. It follows that $\varphi_{\text{Poi$\to$Kl}}(H_i^-)$ is a half-space $G_i^-$, in the sense of the Klein model, which contains $0$ and whose boundary (which is a Euclidean affine hyperplane) passes through the point $W_i$ and is orthogonal to the segment $[0, W_i]$; see the right panel of Figure~\ref{fig:voronoi_proof}.  Summarizing, we have
$$
\Pi_{\text{gn}}(V_{d,\lambda,\beta}) := \varphi_{\text{Poi$\to$Kl}}(\Pi_{\text{st}}(V_{d,\lambda,\beta}))
=
\{v\in \BB^d: \langle v, W_i \rangle \leq \|W_i\|^2 \text{ for all } i\in \NN\}.
$$

\medskip
\noindent
\textit{Step 3.}
Note that the Poisson process formed by the points $W_1,W_2,\ldots$ is isotropic and their radial parts $\|W_1\|, \|W_2\|,\ldots$ form a Poisson process on $(0,1)$ with intensity
$$
r\mapsto 2^d \lambda \omega_d \frac{r^{2\beta -  d - 1}}{(1-r^2)^{\beta}},
\qquad
0<r<1,
$$
which follows from~\eqref{eq:intensity_W} together with the polar integration formula.
Recalling the notation introduced in Section~\ref{subsubsec:hyperbolic_hyperplane_tess_def} we conclude that the closure of $\Pi_{\text{gn}}(V_{d,\lambda,\beta})$ has the same distribution as the zero cell $Z_{d,2^d \lambda \omega_d,\beta}$ defined in~\eqref{eq:zero_cell_def}. By Theorem~\ref{thm:ZeroCell}, the latter has the same distribution as the convex dual of the beta$^*$ set $P_{d,\alpha,\beta}$ with $\alpha = 2^d \lambda \omega_d / (\tilde c_{d,\beta}\omega_d) =2^d \lambda / \tilde c_{d,\beta}$, which completes the proof.
\end{proof}

\begin{remark}
From the above proof and Theorem~\ref{thm:PolytopeOrNot_Tessellation} we see that $V_{d,\lambda,\beta}$ is a.s.\ bounded in the hyperbolic metric provided that $\beta>(d+1)/2$ or $\beta= (d+1)/2$ and $2^d \lambda \omega_d > \lambda^{\text{crit}}_d$. In these cases, the word ``closure'' can be removed from the statement of Theorem~\ref{thm:TypicalVoronoiCellGeneralized}. %In particular, the Voronoi cell is bounded in the case $\beta = d$.
\end{remark}

\subsubsection{Expected \texorpdfstring{$f$}{f}-vector of the typical hyperbolic Poisson-Voronoi cell}
Next we turn to the special case of the stationary Poisson-Voronoi tessellation discussed in Section~\ref{sec:Motivation}.
Specializing Theorem~\ref{thm:TypicalVoronoiCellGeneralized} to $\beta=d$ (and noting that the case $\beta=d=1$ not covered by it can be proven in the same way)  we obtain the following result.

%Let us recall from Section \ref{sec:Motivation} that $V^{\text{typ}}_{d,\lambda}$ stands for the typical cell of a stationary Poisson-Voronoi tessellation with intensity $\lambda>0$ in the hyperbolid model of the hyperbolic $d$-space $\HH^d$. We also recall that $\Pi_{\text{gn}}:\HH^d\to\BB^d$ denotes the gnomonic projection, which relates the hyperbolid model with the Klein model of the $d$-dimensional hyperbolic space. For a convex set $K\subset\RR^d$, containing the origin in its interior, we let $K^\circ$ be the usual convex dual of $K$.

\begin{theorem}[Reduction of the typical Voronoi cell to beta$^*$ sets, II]\label{thm:TypicalVoronoiCell}
The gnomonic projection of the typical cell $V^{\text{typ}}_{d,\lambda}$ of the stationary hyperbolic Poisson-Voronoi tessellation with intensity $\lambda>0$ on $\HH^d$, $d\in\NN$, has the same distribution as the convex dual $P_{d,\alpha,d}^\circ$ of the beta$^*$ polytope $P_{d,\alpha,d}$ with
%Let $V^{\text{typ}}_{d,\lambda}$ be the typical cell of a hyperbolic Poisson-Voronoi tessellation of intensity $\lambda$ and $P_{d,\tilde c_{d,d}^{-1}2^d\lambda,d}$ be the beta$^*$ polytope with parameter $d$ and intensity
$$
\alpha = 2^d\lambda/\tilde c_{d,d} = \frac{2^d \pi^{d/2}  \lambda \, \Gamma(d/2)}{\Gamma(d)}.
$$
%In particular, we have
\end{theorem}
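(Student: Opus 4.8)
The plan is to read this off as the special case $\beta=d$ of Theorem~\ref{thm:TypicalVoronoiCellGeneralized}, after two bookkeeping steps: first, identifying the typical cell of the \emph{stationary} hyperbolic Poisson--Voronoi tessellation with the cell $V_{d,\lambda,d}$ of the general model, and second, covering the endpoint case $d=1$ which is excluded from Theorem~\ref{thm:TypicalVoronoiCellGeneralized}. For the first step, recall that the Poisson process $\eta_{d,\lambda,\beta}$ has intensity $x\mapsto \lambda\,(\sinh(d_{\text{hyp}}(e,x)/2))^{2\beta-2d}$ with respect to $\nu_{\text{hyp}}$; at $\beta=d$ the exponent $2\beta-2d$ vanishes, so $\eta_{d,\lambda,d}$ is precisely the stationary Poisson process on $\HH^d$ of intensity $\lambda\,\nu_{\text{hyp}}$, which is invariant under all isometries of $\HH^d$. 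By the Slivnyak-type description of the typical cell recalled in Section~\ref{sec:Motivation} (cf.~\eqref{eq:DefTypicalCell}), the Voronoi cell $V_{d,\lambda,d}$ of the apex $e$ in $\eta_{d,\lambda,d}\cup\{e\}$ therefore has the same distribution as $V^{\text{typ}}_{d,\lambda}$.

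For $d\geq 2$ I would then simply invoke Theorem~\ref{thm:TypicalVoronoiCellGeneralized} with $\beta=d$, which is admissible since $d>\max(d/2,1)$. It yields that the closure of $\Pi_{\text{gn}}(V_{d,\lambda,d})$ is distributed as $P_{d,\alpha,d}^{\circ}$ with $\alpha=2^d\lambda/\tilde c_{d,d}$. Moreover $d>(d+1)/2$ for $d\geq 2$, so Theorem~\ref{thm:PolytopeOrNot}(i) guarantees that $P_{d,\alpha,d}$ is a.s.\ a $d$-dimensional polytope with $R>1$; by the Remark following Theorem~\ref{thm:TypicalVoronoiCellGeneralized}, $V_{d,\lambda,d}$ is then a.s.\ hyperbolically bounded, hence $\Pi_{\text{gn}}(V_{d,\lambda,d})$ is already a compact subset of $\BB^d$ and the word ``closure'' may be dropped. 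It only remains to rewrite the constant: from~\eqref{eq:DensityZeta}, $\tilde c_{d,d}=\Gamma(d)/(\pi^{d/2}\Gamma(d/2))$, so $\alpha=2^d\lambda/\tilde c_{d,d}=2^d\pi^{d/2}\lambda\,\Gamma(d/2)/\Gamma(d)$, as claimed.

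For $d=1$ (so $\beta=d=1$, the only value where $\beta>\max(d/2,1)$ fails) I would rerun the three steps of the proof of Theorem~\ref{thm:TypicalVoronoiCellGeneralized} verbatim: pass from $\HH^1$ to the Poincar\'e model by $\Pi_{\text{st}}$, so the image process has Lebesgue intensity $w\mapsto 2\lambda/(1-w^2)$ on $(-1,1)$; realize the Voronoi cell as an intersection of Poincar\'e half-spaces through the midpoints of the segments from $0$ to the atoms; map to the Klein model by $\varphi_{\mathrm{Poi}\to\mathrm{Kl}}$, which sends those midpoints to the atoms $W_i$ and turns the cell into $\{v:\langle v,W_i\rangle\leq\|W_i\|^2\}$; and finally pass to the radial Poisson process of the $\|W_i\|$ and invert, exactly as in the proof of Theorem~\ref{thm:ZeroCell}. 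The only place the hypothesis $\beta>\max(d/2,1)$ was used is to ensure the beta$^*$ set is a.s.\ a compact convex set with a well-behaved convex dual; but the Remark after Proposition~\ref{prop:poi_process_properties} tells us that for $d=\beta=1$ the atoms of $\zeta_{1,\alpha,1}$ are a.s.\ infinitely many, accumulate only at $\pm 1$, and are a.s.\ finite in number outside $r\widebar\BB^1$ for each $r>1$, so $P_{1,\alpha,1}$ is a.s.\ a bounded segment $[m,M]$ with $m<-1<1<M$ whose dual $[1/m,1/M]\subset(-1,1)$ is again a genuine segment, and the transformation $\psi(R)\mapsto g(r)=r^{-2}\psi(1/r)$ from the proof of Theorem~\ref{thm:ZeroCell} goes through unchanged. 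This gives the same identification with $\alpha=2\lambda/\tilde c_{1,1}=2\pi\lambda$, which is the general formula evaluated at $d=1$.

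The computations involved are entirely routine (polar coordinates, the identities for $\cosh$, $\sinh$ under $\artanh$, and the transformation formula for Poisson intensities), all of which are already carried out in the proofs of Theorems~\ref{thm:ZeroCell} and~\ref{thm:TypicalVoronoiCellGeneralized}. The only point that genuinely requires care is the $d=1$ case, and specifically verifying that the convex-duality step in the reduction remains valid when the beta$^*$ set degenerates to a random bounded segment rather than a full-dimensional polytope; once that is checked, the statement follows.
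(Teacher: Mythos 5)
Your proposal is correct and follows essentially the same route as the paper: the paper also obtains this theorem by specializing Theorem~\ref{thm:TypicalVoronoiCellGeneralized} to $\beta=d$ and noting that the excluded case $\beta=d=1$ can be proven by running the same three-step argument, which you spell out in slightly more detail (including the harmless degeneration of the beta$^*$ set to a bounded segment and its dual). Your additional observations — the identification of $V^{\text{typ}}_{d,\lambda}$ with $V_{d,\lambda,d}$ via Slivnyak, and the removal of the closure since $d>(d+1)/2$ for $d\geq 2$ — match the paper's remarks.
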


Theorem~\ref{thm:TypicalVoronoiCell} together with Theorem~\ref{thm:f-vector_beta_star} imply an explicit formula for the expected number of $k$-faces of the typical hyperbolic Voronoi cell $V^{\text{typ}}_{d,\lambda}$.
\begin{theorem}[Expected $f$-vector of the typical Voronoi cell]\label{thm:TypicalVoronoiCellExpectedFaceNums}
For all $d\in \NN$, $\lambda>0$ and $k\in\{0,\ldots,d-1\}$ we have
\begin{align}\label{eq:k-faces_typ_Voronoi}
	\EE f_k(V^{\text{typ}}_{d,\lambda})
	=\EE f_{d-k-1}\big(P_{d,2^d\lambda/\tilde c_{d,d},d}\big)
	=2\sum_{s=0}^{\lfloor k/2 \rfloor} \mathbb I^*_{2^d\lambda/\tilde c_{d,d},d-2s}(d)\cdot\tilde{\mathbb J}_{d-2s,d-k}\Big(d-s-\frac 12\Big).
\end{align}
\end{theorem}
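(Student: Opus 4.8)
The plan is to derive this from Theorem~\ref{thm:TypicalVoronoiCell} together with the $f$-vector formula for beta$^*$ polytopes in Theorem~\ref{thm:f-vector_beta_star}, bridging the two by convex duality and the fact that the gnomonic projection respects face lattices. Throughout write $\alpha := 2^d\lambda/\tilde c_{d,d}$. The whole argument is essentially bookkeeping once those two theorems are available.

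First I would reduce $\EE f_k(V^{\text{typ}}_{d,\lambda})$ to $\EE f_k(P_{d,\alpha,d}^\circ)$. By Theorem~\ref{thm:TypicalVoronoiCell}, $\Pi_{\text{gn}}(V^{\text{typ}}_{d,\lambda})$ and $P_{d,\alpha,d}^\circ$ have the same law, so it suffices to check that $\Pi_{\text{gn}}$ preserves the $f$-vector of $V^{\text{typ}}_{d,\lambda}$. For $d\ge 2$ we have $\beta=d>(d+1)/2$, and the remark following Theorem~\ref{thm:TypicalVoronoiCellGeneralized} (via Theorem~\ref{thm:PolytopeOrNot_Tessellation}) shows that $V^{\text{typ}}_{d,\lambda}$ is a.s.\ hyperbolically bounded, so $\Pi_{\text{gn}}(V^{\text{typ}}_{d,\lambda})$ is a compact convex subset of $\BB^d$; the case $d=1$ (where $V^{\text{typ}}_{1,\lambda}$ is a.s.\ a bounded interval) is handled directly. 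Since $\Pi_{\text{gn}}$ is the restriction of a central projection and carries hyperbolic hyperplanes to intersections of affine hyperplanes of $\RR^d$ with $\BB^d$, it maps the hyperbolic polytope $V^{\text{typ}}_{d,\lambda}$ onto a Euclidean polytope with the same face lattice; hence $f_k(V^{\text{typ}}_{d,\lambda}) = f_k(\Pi_{\text{gn}}(V^{\text{typ}}_{d,\lambda}))$ for every $k$, and $\EE f_k(V^{\text{typ}}_{d,\lambda}) = \EE f_k(P_{d,\alpha,d}^\circ)$.

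Next I would pass from the polar to the beta$^*$ polytope itself. For $d\ge 2$, Theorem~\ref{thm:PolytopeOrNot}(i) applies (as $\beta=d>(d+1)/2$) and shows that $P_{d,\alpha,d}$ is a.s.\ a $d$-dimensional polytope with $R>1$; for $d=1$ one checks directly from Proposition~\ref{prop:poi_process_properties} that $P_{1,\alpha,1}$ is a.s.\ a bounded interval strictly containing $[-1,1]$. In every case $0$ lies in the interior of $P_{d,\alpha,d}$, hence also in the interior of $P_{d,\alpha,d}^\circ$, and the standard order-reversing bijection between the faces of a polytope and the faces of its polar gives $f_k(P_{d,\alpha,d}^\circ) = f_{d-1-k}(P_{d,\alpha,d})$ for all $k\in\{0,\dots,d-1\}$. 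Taking expectations yields the first equality in the statement. (Alternatively, this reduction can also be read off from the Remark after Theorem~\ref{thm:ZeroCell} together with Step~3 of the proof of Theorem~\ref{thm:TypicalVoronoiCellGeneralized}.)

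Finally I would apply Theorem~\ref{thm:f-vector_beta_star} with $\beta=d$ and face dimension $k':=d-1-k$; its hypotheses hold since $\beta=d>(d+1)/2$ for $d\ge 2$, and $\beta=(d+1)/2$, $\alpha>0=(d-1)\pi$ for $d=1$. Using $2\beta-d=d$, $\lfloor(d-k'-1)/2\rfloor=\lfloor k/2\rfloor$, $k'+1=d-k$ and $\beta-s-\tfrac12=d-s-\tfrac12$, the formula of Theorem~\ref{thm:f-vector_beta_star} becomes
\[
\EE f_{d-1-k}(P_{d,\alpha,d}) = 2\sum_{s=0}^{\lfloor k/2\rfloor}\mathbb I^*_{\alpha,d-2s}(d)\cdot\tilde{\mathbb J}_{d-2s,\,d-k}\Bigl(d-s-\tfrac12\Bigr),
\]
which, with $\alpha=2^d\lambda/\tilde c_{d,d}$, is exactly the asserted identity~\eqref{eq:k-faces_typ_Voronoi}. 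I do not expect a genuine obstacle: the only mildly delicate points are the a.s.\ hyperbolic boundedness of $V^{\text{typ}}_{d,\lambda}$ (needed so that its gnomonic image is an honest polytope with unchanged face numbers) and the containment $0\in\interior P_{d,\alpha,d}$ (needed to invoke polar duality of face lattices), both of which are already supplied by Theorem~\ref{thm:PolytopeOrNot} and the remark after Theorem~\ref{thm:TypicalVoronoiCellGeneralized}.
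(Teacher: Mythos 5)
Your proposal is correct and follows essentially the same route as the paper, which presents this theorem as a direct consequence of Theorem~\ref{thm:TypicalVoronoiCell} combined with Theorem~\ref{thm:f-vector_beta_star} via the polar-duality relation $f_k(K^\circ)=f_{d-k-1}(K)$. The extra care you take about the gnomonic projection preserving the face lattice and about $0$ lying in the interior of $P_{d,\alpha,d}$ (so that the face-lattice anti-isomorphism with the polar applies) only makes explicit what the paper leaves implicit; the index bookkeeping is also right.
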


For small values of $d$, we will now present explicit formulas for the expected number of $k$-faces of $P_{d,\alpha,d}$, and thus, also for the expected face numbers of $V^{\text{typ}}_{d,\lambda}$; see the first equality in~\eqref{eq:k-faces_typ_Voronoi}. We remark that for $d=2$ our result is in line with that of Isokawa~\cite[Theorem 1]{IsokawaPlane} and for $d=3$ with the one obtained in~\cite[Theorem 1.1]{Isokawa}.  Some numerical values are shown in Figure~\ref{fig:VertexNumber}.

\begin{figure}[t]
\centering
\includegraphics[width=0.3\columnwidth]{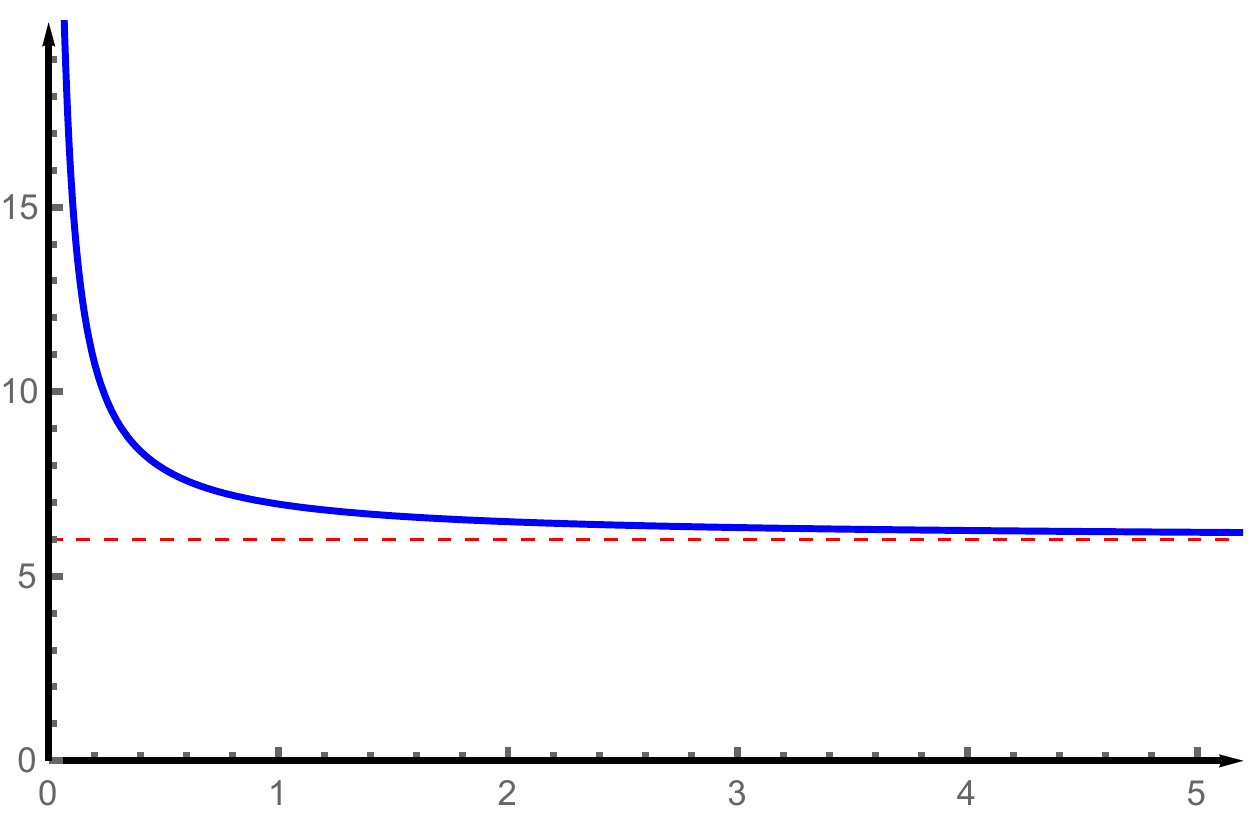}\quad
\includegraphics[width=0.3\columnwidth]{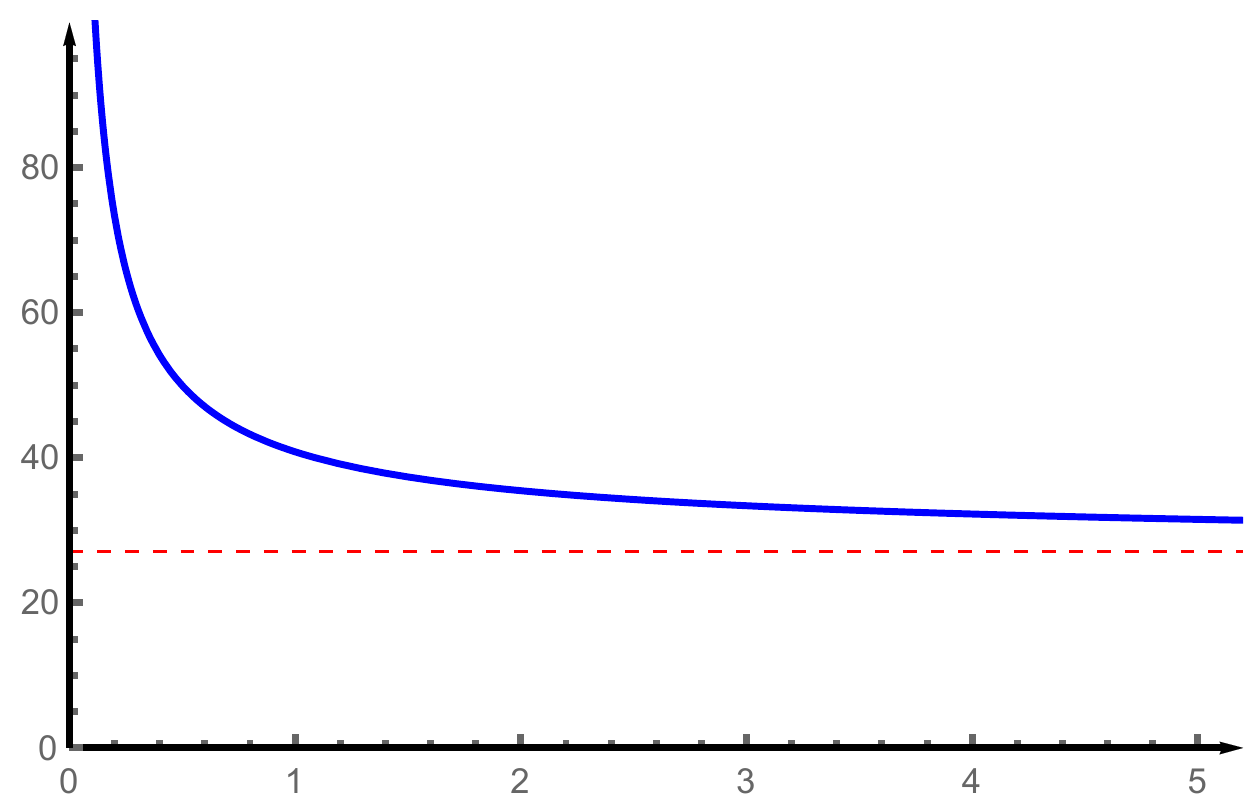}\quad
\includegraphics[width=0.3\columnwidth]{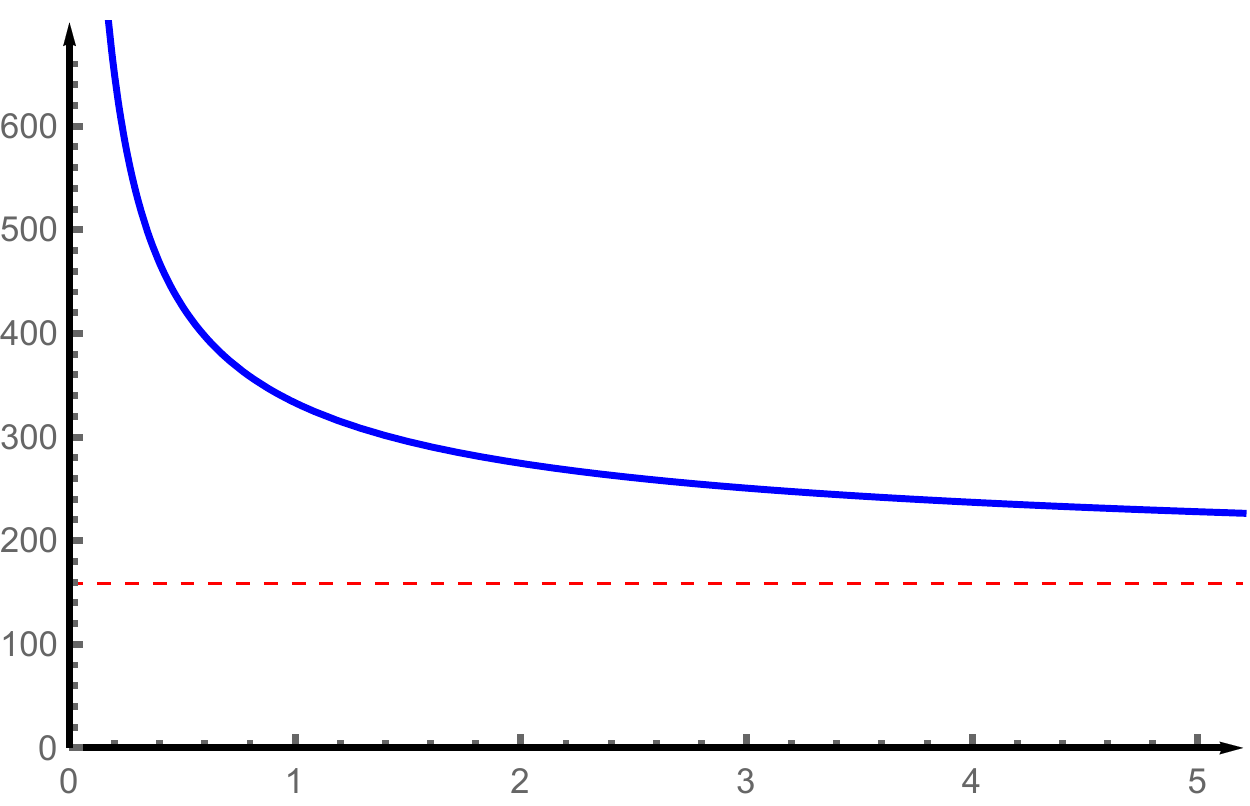}
\caption
{
The expected vertex number of the typical hyperbolic Voronoi cell $V^{\text{typ}}_{d,\lambda}$ in dimension $d=2$ (left panel), $d=3$ (middle panel) and $d=4$ (right panel) for intensities $0<\lambda\leq 5$. The dashed horizontal line represents  the expected vertex number of the typical \textit{Euclidean} Voronoi cell; see  Section~\ref{sec:Convergence}.  %After rescaling the horizontal axis with $\alpha = 2^d \pi^{d/2}  \lambda  \Gamma(d/2)/\Gamma(d)$, this shows the expected facet number of the beta star polytope $P_{d,\alpha,d}$  }.
}
\label{fig:VertexNumber}
\end{figure}

\begin{corollary}\label{cor:k-faces_beta=d=234}
	Let $\alpha>0$.
\begin{itemize}
\item[(i)] For $\beta=d=2$ one has that
\begin{align*}
	\EE f_0(P_{2,\alpha,2}) = \EE f_1(P^{}_{2,\alpha,2}) = 6\Big(1+\frac{2}{\alpha}\Big).
\end{align*}

\item[(ii)] For $\beta=d=3$ one has that
\begin{align*}
	\EE f_k(P_{3,\alpha,3})	=
	\begin{cases}
		\mathbb I^*_{\alpha,3}(3)+2	&:k=0,\\
		3\,\mathbb I^*_{\alpha,3}(3)	&:k=1,\\
		2\,\mathbb I^*_{\alpha,3}(3) 	&:k=2,
	\end{cases}
\end{align*}
where
\begin{align*}
	\mathbb I^*_{\alpha,3}(3)
	&	=\frac{64\alpha^3}{105\pi}\int\limits_0^\infty \sinh^8 (\varphi)  e^{-\frac{\alpha}{2\pi}(\sinh(2\varphi)-2\varphi)}\,\dint\varphi=\frac{64\alpha^3}{105\pi}\int\limits_0^\infty \sinh^8 (\varphi) e^{-\frac{\alpha}{\pi}(\sinh(\varphi) \cosh (\varphi)-\varphi)}\,\dint\varphi.
\end{align*}

\item[(iii)] For $\beta=d=4$ one has that
\begin{align*}
	\EE f_k(P_{4,\alpha,4})
	=
	\begin{cases}
		\frac{54}{143}\,\mathbb I^*_{\alpha,4}(4)+2\mathbb I^*_{\alpha,2}(4)	&:k=0,\\
		\frac{340}{143}\,\mathbb I^*_{\alpha,4}(4)+2\mathbb I^*_{\alpha,2}(4)	&:k=1,\\
		4\,\mathbb I^*_{\alpha,4}(4) 	&:k=2,\\
		2\,\mathbb I^*_{\alpha,4}(4) 	&:k=3,
	\end{cases}
\end{align*}
where
\begin{align*}
	\mathbb I^*_{\alpha,4}(4)
	&	=\frac{2145\alpha^4}{32768}\int\limits_0^\infty \sinh^{15} (\varphi)  e^{-\alpha(2+\cosh \varphi)\sinh^4(\frac\varphi 2)}\,\dint\varphi,\\
	\mathbb I^*_{\alpha,2}(4)
	&	=\frac{35\alpha^2}{64}\int\limits_0^\infty \sinh^7 (\varphi)  e^{-\alpha(2+\cosh \varphi)\sinh^4(\frac\varphi 2)}\,\dint\varphi.
\end{align*}
\end{itemize}
\end{corollary}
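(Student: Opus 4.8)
The plan is to deduce all three cases by specialising the general $f$-vector formula of Theorem~\ref{thm:f-vector_beta_star} (equivalently, the second equality in~\eqref{eq:k-faces_typ_Voronoi}) to $\beta=d$ and then evaluating the finitely many resulting instances of $\mathbb I^*_{\alpha,m}(\cdot)$ and $\tilde{\mathbb J}_{m,\ell}(\cdot)$. Since $\beta=d>(d+1)/2$ for every $d\ge 2$, the hypotheses of Theorem~\ref{thm:f-vector_beta_star} are met for all $\alpha>0$, and it yields
$$
\EE f_k(P_{d,\alpha,d})=2\sum_{s=0}^{\lfloor(d-k-1)/2\rfloor}\mathbb I^*_{\alpha,d-2s}(d)\cdot\tilde{\mathbb J}_{d-2s,k+1}\Big(d-s-\tfrac12\Big),\qquad d\in\{2,3,4\}.
$$
So the only quantities that enter are $\mathbb I^*_{\alpha,m}(d)$ and $\tilde{\mathbb J}_{m,\ell}(d-s-\tfrac12)$ with $m\le d$, and it remains to compute each of them.

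For the internal-angle factors I would first record the deterministic values: $\tilde{\mathbb J}_{m,m}(\beta)=1$ and $\tilde{\mathbb J}_{m,m-1}(\beta)=m/2$ for every $\beta$ (the tangent cone at a relative-interior point of the simplex itself, resp. of a facet, is the whole space, resp. a halfspace), together with $\tilde{\mathbb J}_{2,1}(\beta)=1$, $\tilde{\mathbb J}_{3,1}(\beta)=\tfrac12$ and $\tilde{\mathbb J}_{3,2}(\beta)=\tfrac32$, which are just the statements that the internal angles of a segment, resp. of a planar triangle, sum to $1$, resp. to $\tfrac12$ and $\tfrac32$. For $d=2$ and $d=3$ this already accounts for every internal-angle factor appearing above. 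For $d=4$ the only genuinely random quantities are $\tilde{\mathbb J}_{4,1}(\tfrac72)$ and $\tilde{\mathbb J}_{4,2}(\tfrac72)$; these I would extract from the explicit representation~\eqref{eq:internal_angle_sum_J_formula} (or quote directly from~\cite{KabluchkoAngles}), obtaining $\tilde{\mathbb J}_{4,1}(\tfrac72)=\tfrac{27}{143}$ and $\tilde{\mathbb J}_{4,2}(\tfrac72)=\tfrac{170}{143}$, the two values being moreover tied by the Gram--Euler relation $\tilde{\mathbb J}_{4,1}-\tilde{\mathbb J}_{4,2}+\tilde{\mathbb J}_{4,3}-\tilde{\mathbb J}_{4,4}=0$ for the $3$-simplex (with $\tilde{\mathbb J}_{4,3}=2$, $\tilde{\mathbb J}_{4,4}=1$), which gives a convenient consistency check.

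For the external-angle factors $\mathbb I^*_{\alpha,m}(d)=\frac{\alpha^m}{m!}I^*_{\alpha,m}(d)$ I would work from the $\sinh$-power representation in the Remark after Theorem~\ref{thm:exp_ext_anlge_sums}, i.e. $I^*_{\alpha,m}(\lambda)=\int_0^\infty\tilde c_{1,\frac{\lambda m+1}{2}}(\sinh\varphi)^{\lambda m-1}\exp\{-\alpha\int_0^\varphi\tilde c_{1,\frac{\lambda+1}{2}}(\sinh\theta)^{\lambda-1}\,\dint\theta\}\,\dint\varphi$. Setting $\lambda=d$, the inner integral $\int_0^\varphi(\sinh\theta)^{d-1}\,\dint\theta$ is an elementary hyperbolic polynomial: $\cosh\varphi-1$ for $d=2$; $\tfrac12(\sinh\varphi\cosh\varphi-\varphi)=\tfrac14(\sinh 2\varphi-2\varphi)$ for $d=3$; and $\tfrac13(\cosh\varphi-1)^2(\cosh\varphi+2)=\tfrac43\sinh^4(\tfrac\varphi2)(\cosh\varphi+2)$ for $d=4$. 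Evaluating the Gamma-factor constants $\tilde c_{1,(dm+1)/2}$ and $\tilde c_{1,(d+1)/2}$ at the relevant half-integers then reproduces verbatim the stated integral expressions for $\mathbb I^*_{\alpha,3}(3)$, $\mathbb I^*_{\alpha,2}(4)$ and $\mathbb I^*_{\alpha,4}(4)$. The case $d=2$ is the only one where the outer integral itself is elementary: substituting $u=\cosh\varphi-1$ turns $\sinh^3\varphi\,\dint\varphi$ into $(u^2+2u)\,\dint u$, so $I^*_{\alpha,2}(2)=\tfrac34\int_0^\infty(u^2+2u)e^{-\alpha u/2}\,\dint u=\tfrac{12}{\alpha^3}+\tfrac{6}{\alpha^2}$, whence $\mathbb I^*_{\alpha,2}(2)=3+\tfrac6\alpha$ and $\EE f_0(P_{2,\alpha,2})=\EE f_1(P_{2,\alpha,2})=6(1+\tfrac2\alpha)$. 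I would also note the degenerate identity $\mathbb I^*_{\alpha,1}(\lambda)=1$, needed at $s=1$ when $d$ is odd, which holds because $\int_1^\infty\tilde c_{1,\frac{\lambda+1}{2}}(t^2-1)^{-\frac{\lambda+1}{2}}\,\dint t=\infty$ collapses the $y$-integral to $1/\alpha$.

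Finally, assembling: substituting the evaluated angle data into the displayed formula for each $k$ and each of $d=2,3,4$, the products $2\,\tilde{\mathbb J}_{m,m}=2$, $2\,\tilde{\mathbb J}_{m,m-1}=m$, $2\,\tilde{\mathbb J}_{3,1}=1$, $2\,\tilde{\mathbb J}_{3,2}=3$, $2\,\tilde{\mathbb J}_{2,1}=2$, $2\,\tilde{\mathbb J}_{4,1}(\tfrac72)=\tfrac{54}{143}$ and $2\,\tilde{\mathbb J}_{4,2}(\tfrac72)=\tfrac{340}{143}$ produce exactly the coefficients in (i)--(iii). The main — and essentially the only non-routine — step is the evaluation of the beta$'$-simplex internal angle sum $\tilde{\mathbb J}_{4,1}(\tfrac72)$: in contrast to the planar angles this is a true expectation, so its closed form has to be squeezed out of the complex-analytic integral~\eqref{eq:internal_angle_sum_J_formula}, whose evaluation at half-integer parameters relies on the $\tanh$/$\cotanh$ expansions and residue manipulations developed in~\cite{KabluchkoAngles}; the remainder of the argument is bookkeeping together with a single elementary integral.
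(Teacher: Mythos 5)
Your proposal is correct and follows essentially the same route as the paper: specialise Theorem~\ref{thm:f-vector_beta_star} to $\beta=d$, insert the elementary angle sums $\tilde{\mathbb J}_{2,1}=\tilde{\mathbb J}_{2,2}=1$, $\tilde{\mathbb J}_{3,1}=\tfrac12$, $\tilde{\mathbb J}_{3,2}=\tfrac32$ and the values $\tilde{\mathbb J}_{4,1}(\tfrac72)=\tfrac{27}{143}$, $\tilde{\mathbb J}_{4,2}(\tfrac72)=\tfrac{170}{143}$ from~\eqref{eq:internal_angle_sum_J_formula}, and evaluate the $\mathbb I^*_{\alpha,m}$ integrals via the $\sinh$-representations, all of which check out numerically. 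The only (harmless) deviation is that you obtain $\mathbb I^*_{\alpha,1}(3)=1$ by direct integration of the exact differential $f(y)e^{-\alpha\bar F(y)}$, whereas the paper deduces it a posteriori from Euler's relation $f_0-f_1+f_2=2$; your route is if anything more self-contained.
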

%%%ZK: Die Formeln scheinen numerisch konsistent mit der Formel für den erwarteten $f$-Vektor des beta$^*$-Polytops  zu sein.

\subsection{Asymptotics of \texorpdfstring{beta$^*$}{beta*} polytopes for large intensities and monotonicity}\label{sec:Convergence}
Consider a Poisson-Voronoi tessellation or a Poisson hyperplane tessellation of the hyperbolic space whose intensity parameter $\alpha$ increases to $\infty$. The cells of such tessellations become smaller and since the role of the curvature becomes negligible on small scales, it is natural to conjecture that the hyperbolic cells become close to their Euclidean counterparts as $\alpha\to\infty$. Similar conclusion should apply to the corresponding cells in the spherical geometry.
In this section we shall confirm this conjecture by studying the  asymptotic behaviour of  beta$^*$ sets $P_{d,\alpha,\beta}$ with fixed $\beta>d/2$, as $\alpha\to\infty$. Observe that the rescaled beta$^*$ set $\alpha^{-1/(2\beta-d)}P_{d,\alpha,\beta}$ is generated by a Poisson process  with Lebesgue intensity
\begin{align}\label{eq:convergence_density_beta$^*$}
\alpha^{\frac d{2\beta-d}}\cdot f_{d,\alpha,\beta}\big(x\cdot \alpha^{\frac{1}{2\beta-d}}\big)=\tilde c_{d,\beta}\big(\|x\|^2-\alpha^{-\frac{2}{2\beta-d}}\big)^{-\beta}
\ind_{\{\|x\|>\alpha^{-1/(2\beta-d)}\}}
\overset{}{\underset{\alpha\to\infty}\longrightarrow}
\tilde c_{d,\beta}\cdot \|x\|^{-2\beta},
\end{align}
where the limit holds pointwise for all $x\in\RR^d\backslash \{0\}$ and we used that $2\beta-d>0$ by assumption. This motivates us to consider a Poisson process $\chi_{d,\mu,\beta}$ on $\RR^d\backslash \{0\}$ whose intensity (with respect to the Lebesgue measure) is given by
$$
x\mapsto \frac{\mu \, \tilde c_{d,\beta}}{\|x\|^{2\beta}},
\qquad
x\in\RR^d\backslash\{0\}.
$$
Here, $\mu>0$ and $\beta> d/2$ are parameters. The atoms of $\chi_{d,\mu,\beta}$ cluster at $0$ but not at $\infty$. The convex hull of the atoms of $\chi_{d,\mu,\beta}$ is a random polytope, denoted here by $\conv \chi_{d,\mu,\beta}$,  which already appeared in~\cite{KabluchkoAngles,KabluchkoMarynychTemesvariThaele,KabluchkoThaeleZaporozhets} (with a different parametrization) and is referred to as a Poisson polytope in the sequel. Its convex dual object appeared earlier in~\cite{HoermannHugReitznerThaele,HugSchneiderLargeCells}, see also the references cited therein. The next theorem is not surprising in view of~\eqref{eq:convergence_density_beta$^*$}.

%To this end, for $\lambda>0$ let $\chi_{d,\lambda}$ be a Poisson process on $\RR^d\backslash \{0\}$ with Lebesgue density $\|x\|^{-(d+\lambda)}$
%This suggests that, as $\alpha\to\infty$, the beta$^*$ set $P_{d,\alpha,\beta}$ converges, after suitable rescaling to the convex hull of the Poisson process $\chi_{d,2\beta-d}$ (also suitably rescaled).

%We can further address the suitable scaling parameters. Consider the Poisson polytope $\conv\chi_{d,2\beta-d}$, rescaled by a parameter $\mu>0$. Then $\mu\cdot \conv\chi_{d,2\beta-d}$ is generated by a Poisson process on $\RR^d\backslash\{0\}$ whose density is
%\begin{align*}
%\Big\|\frac x\mu\Big\|^{-2\beta}\cdot \frac{1}{\mu^d}=\|x\|^{-2\beta}\mu^{2\beta-d},\qquad x\in\RR^d\backslash\{0\}.
%\end{align*}
%Choosing $\mu=(\tilde c_{d,\beta})^{\frac{1}{2\beta-d}}$ yields that $\mu\cdot\conv \chi_{d,2\beta-d}$ has density $\tilde c_{d,\beta}\cdot \|x\|^{-2\beta}$.

%In view of ~\eqref{eq:convergence_density_beta$^*$}, this suggests that the rescaled beta$^*$ polytope $\alpha^{-\frac{1}{2\beta-d}}P_{d,\alpha,\beta}$ converges weakly as $\alpha\to\infty$ to the rescaled Poisson polytope $(\tilde c_{d,\beta})^{\frac{1}{2\beta-d}}\conv\chi_{d,2\beta-d}$ on the space of compact convex subsets of $\RR^d$ equipped with the Hausdorff metric. Our next theorem shows that this is indeed the case.

\begin{theorem}[Convergence of beta$^*$ sets]\label{thm:PolytopesConverge}
Fix $\beta > d/2$. Then, as $\alpha\to\infty$, the random set $\alpha^{-1/(2\beta-d)}P_{d,\alpha,\beta}$ converges weakly on the space of compact convex subsets of $\RR^d$ equipped with the Hausdorff metric to the  Poisson polytope $\conv \chi_{d,1,\beta}$.
\end{theorem}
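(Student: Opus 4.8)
The plan is to exploit the pointwise convergence of intensities recorded in~\eqref{eq:convergence_density_beta$^*$}, to upgrade it to weak convergence of the underlying Poisson processes via Kallenberg's theorem, and then to transfer this to the level of convex hulls through a Skorokhod coupling followed by an elementary argument about Hausdorff convergence of convex bodies.

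\textbf{Rescaling.} Since dilations commute with the convex hull operation, the change of variables behind~\eqref{eq:convergence_density_beta$^*$} shows that $\alpha^{-1/(2\beta-d)}P_{d,\alpha,\beta}$ has the same law as $\overline{\conv}\,\zeta^{(\alpha)}$, where $\zeta^{(\alpha)}$ is a Poisson process on $\RR^d\setminus\{0\}$ with Lebesgue intensity $g_\alpha(x)=\tilde c_{d,\beta}(\|x\|^2-\varepsilon_\alpha^2)^{-\beta}\ind_{\{\|x\|>\varepsilon_\alpha\}}$ and $\varepsilon_\alpha:=\alpha^{-1/(2\beta-d)}\to 0$. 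I would also record at the outset that, almost surely, $\conv\chi_{d,1,\beta}$ is a genuine polytope: as in Proposition~\ref{prop:poi_process_properties} its atoms accumulate only at the origin and their directions are dense in $\SS^{d-1}$, so the origin lies in the interior of $\conv\chi_{d,1,\beta}$ and only the finitely many atoms of norm exceeding its inradius can be extreme points.

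\textbf{Convergence of the Poisson processes.} Since $2\beta>d$, the limit intensity $g(x)=\tilde c_{d,\beta}\|x\|^{-2\beta}$, as well as every $g_\alpha$, defines a locally finite measure on $\RR^d\setminus\{0\}$. On $\{\|x\|\geq 2\varepsilon_\alpha\}$ one has $\|x\|^2-\varepsilon_\alpha^2\geq\tfrac34\|x\|^2$ and hence $g_\alpha\leq(4/3)^\beta g$ there; as every compact subset of $\RR^d\setminus\{0\}$ lies in this region for all large $\alpha$, dominated convergence together with $g_\alpha\to g$ pointwise gives vague convergence on $\RR^d\setminus\{0\}$ of the intensity measures of $\zeta^{(\alpha)}$ to that of $\chi_{d,1,\beta}$. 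By Kallenberg's criterion for Poisson processes this yields $\zeta^{(\alpha)}\to\chi_{d,1,\beta}$ weakly in the space of locally finite counting measures on $\RR^d\setminus\{0\}$ with the vague topology, and by the Skorokhod representation theorem I may assume the convergence to be almost sure; write $\chi:=\chi_{d,1,\beta}$.

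\textbf{Passage to convex hulls.} On the coupling space I would show $\overline{\conv}\,\zeta^{(\alpha)}\to\conv\chi$ in the Hausdorff metric almost surely, which gives the theorem because almost sure convergence implies convergence in distribution and $\overline{\conv}\,\zeta^{(\alpha)}\stackrel{d}{=}\alpha^{-1/(2\beta-d)}P_{d,\alpha,\beta}$. I verify the two inclusions characterising Hausdorff convergence of convex bodies. For $\conv\chi\subseteq\liminf_\alpha\overline{\conv}\,\zeta^{(\alpha)}$, write $\conv\chi=\conv\{v_1,\dots,v_N\}$ for the finitely many atoms $v_1,\dots,v_N$ of $\chi$ that are its vertices (possible by the first step); vague convergence forces each $v_j$ to be a limit of atoms $v_j^{(\alpha)}$ of $\zeta^{(\alpha)}$, and $\conv\{v_1^{(\alpha)},\dots,v_N^{(\alpha)}\}\subseteq\overline{\conv}\,\zeta^{(\alpha)}$ converges to $\conv\{v_1,\dots,v_N\}=\conv\chi$. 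For the reverse inclusion, suppose $x_\alpha\in\overline{\conv}\,\zeta^{(\alpha)}$ with $x_\alpha\to x\notin\conv\chi$ and separate strictly: there are $u$ and $c>0$ (here $c>0$ since $0\in\conv\chi$) with $\langle x,u\rangle>c>\sup_{y\in\conv\chi}\langle y,u\rangle$. For large $\alpha$ we have $\langle x_\alpha,u\rangle>c$, so some atom $z_\alpha$ of $\zeta^{(\alpha)}$ satisfies $\langle z_\alpha,u\rangle\geq\langle x_\alpha,u\rangle>c$. The first-moment bound $\EE\,\#\{z\in\zeta^{(\alpha)}:\|z\|\geq r\}=\int_{\|x\|\geq r}g_\alpha\,\dint x\leq(4/3)^\beta\int_{\|x\|\geq r}g\,\dint x\to 0$ as $r\to\infty$, uniformly in large $\alpha$, makes the norms of such atoms tight; hence along a subsequence $z_\alpha\to z^*$ with $\langle z^*,u\rangle\geq c>0$, so $z^*\neq 0$, and vague convergence forces $z^*$ to be an atom of $\chi$, whence $z^*\in\conv\chi$ and $\langle z^*,u\rangle<c$, a contradiction.

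\textbf{Main obstacle.} The rescaling and the vague convergence of intensities are routine; the delicate step is the passage to convex hulls, where the map ``counting measure $\mapsto$ closed convex hull of its support'' fails to be continuous for the vague topology, so the two set inclusions must be established by hand. Within this, the genuinely substantive ingredient is the uniform-in-$\alpha$ control of atoms of large norm (preventing escape of mass to infinity), furnished by the first-moment bound and the domination $g_\alpha\leq(4/3)^\beta g$ off a shrinking ball about the origin, together with the fact that $\conv\chi$ is almost surely a polytope, which reduces the $\liminf$ inclusion to the convergence of finitely many vertices.
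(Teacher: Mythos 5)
Your route is genuinely different from both of the paper's arguments. The paper's first proof is a Scheff\'e-type argument on the joint densities of the vertex tuples (and is therefore confined to the regime where $P_{d,\alpha,\beta}$ is a.s.\ a polytope), while its second proof exploits the monotonicity in $\alpha$ of the rescaled intensity $x\mapsto \tilde c_{d,\beta}(\|x\|^2-\alpha^{-2/(2\beta-d)})^{-\beta}$ to build one explicit coupling on $\RR^d\times[0,\infty)$ under which the rescaled sets contain $\conv\chi_{d,1,\beta}$ for every $\alpha$ and actually \emph{coincide} with it beyond a random threshold $\alpha_0(\omega)$ --- a strictly stronger conclusion. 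Your plan (vague convergence of intensities, Kallenberg, Skorokhod, then the two Hausdorff inclusions by hand) is a legitimate third way, valid like the paper's second proof in the full range $\beta>d/2$. The rescaling, the domination $g_\alpha\le(4/3)^\beta g$ off the ball of radius $2\varepsilon_\alpha$, the a.s.\ polytopality of $\conv\chi_{d,1,\beta}$, and the $\liminf$ inclusion via convergence of the finitely many vertices are all sound.

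There is, however, a gap in the reverse inclusion. The Skorokhod coupling gives you almost sure \emph{vague} convergence on $\RR^d\setminus\{0\}$, and vague convergence there controls neither a neighbourhood of the origin nor a neighbourhood of infinity; in particular it does not prevent, for a fixed outcome $\omega$, atoms of $\zeta^{(\alpha_n)}(\omega)$ from escaping to infinity along a sequence $\alpha_n\to\infty$. Your bound $\EE\,\#\{z\in\zeta^{(\alpha)}:\|z\|\ge r\}\le(4/3)^\beta\int_{\|x\|\ge r}g\,\dint x$ is a statement about the laws, uniform in $\alpha$; it yields tightness of the \emph{distributions} of the far atoms, not boundedness of the realizations $(z_{\alpha_n}(\omega))_n$, so the step ``hence along a subsequence $z_\alpha\to z^*$'' is not justified on the coupling space --- and the same issue already affects the extraction of a convergent sequence $x_\alpha\to x$ in your contradiction setup, since the negation of the inclusion only provides points at distance $\ge\epsilon$ from $\conv\chi$, not bounded ones. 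The repair is routine but must be made explicit: either downgrade to convergence in probability of the Hausdorff distance (which still yields the claimed weak convergence), estimating the probability that some atom lies outside $\conv\chi+\epsilon\widebar\BB^d$ by $\PP[\zeta^{(\alpha)}(K_r)\ge1]+\delta(r)$, where $K_r:=\{\|x\|\le r\}\setminus(\conv\chi+\epsilon\interior\BB^d)$ is a compact subset of $\RR^d\setminus\{0\}$ charged with no atoms of $\chi$ (so the first term vanishes as $\alpha\to\infty$ by the portmanteau theorem for vague convergence) and $\delta(r)\to0$ as $r\to\infty$ uniformly in $\alpha$; or abandon the abstract Skorokhod coupling in favour of the paper's monotone coupling, under which the far atoms are controlled pathwise.
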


Continuity arguments similar to those in~\cite{KabluchkoMarynychTemesvariThaele} allow to conclude from Theorem~\ref{thm:PolytopesConverge} the convergence of the $f$-vector of the beta$^*$ polytope $P_{d,\alpha,\beta}$ to that of the Poisson polytope $\conv\chi_{d,1,\beta}$, both in distribution and in expectation. For example, it is possible to show that
$$
(f_k(P_{d,\alpha,\beta}))_{k=0}^{d-1}
\overset{d}{\underset{\alpha\to\infty}\longrightarrow}
(f_k(\conv\chi_{d,1,\beta}))_{k=0}^{d-1}
\quad
\text{ in distribution, }
\quad
\beta \geq \frac{d+1}2.
$$
We choose a different method based on the explicit formula of Theorem~\ref{thm:f-vector_beta_star}  and present a result in which also the speed of convergence of the expected $f$-vector to its limit is addressed.
%However, the explicit description of $\EE f_k(P_{d,\alpha,\beta})$ as presented in Theorem \ref{thm:f-vector_beta_star} yields the following stronger result

\begin{theorem}[Expected $f$-vector for large intensities]\label{thm:convergence_beta_star}
	Fix $k\in\{0,\ldots,d-1\}$ and suppose that $\beta\ge (d+1)/2$. Then, as $\alpha\to\infty$,
	\begin{align*}
		\EE f_k(P_{d,\alpha,\beta})
		%&	=2\sum_{s=0}^\infty\tilde{\mathbb I}_{\infty,d-2s}(2\beta-d)\cdot\tilde{\mathbb{J}}_{d-2s,k+1}\Big(\beta-s-\frac 12\Big) + O(\alpha^{...})\\
		&	= \EE f_k(\conv \chi_{d,1,\beta})+ C_{d,k,\beta}\alpha^{-{2\over 2\beta-d}}+O(\alpha^{-{4\over 2\beta-d}}),
	\end{align*}
where $C_{d,k,\beta}$ is a constant which may be given explicitly and only depends on the parameters $d$, $k$ and $\beta$.
%where $\chi_{d,2\beta-d}$ is a Poisson process on $\RR^d\backslash\{0\}$ with density $\|x\|^{-2\beta}$ and $c_{d,\beta}>0$ is a constant only depending on $d$ and $\beta$.
\end{theorem}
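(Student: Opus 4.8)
The plan is to feed the explicit formula of Theorem~\ref{thm:f-vector_beta_star} into a Laplace/Watson-type asymptotic analysis. Since the internal-angle sums $\tilde{\mathbb J}_{d-2s,k+1}(\beta-s-\tfrac12)$ appearing there do not depend on $\alpha$, the problem reduces to the asymptotics, as $\alpha\to\infty$, of the external-angle-sum quantities $\mathbb I^*_{\alpha,m}(\lambda)=\frac{\alpha^m}{m!}I^*_{\alpha,m}(\lambda)$ for a fixed $m\in\NN$ and fixed $\lambda:=2\beta-d\ge 1$ (the hypothesis $\beta\ge(d+1)/2$ is exactly $\lambda\ge 1$, and the values of $m$ that occur are $d,d-2,\dots$, all of the parity of $d$). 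Concretely I would prove the one-variable expansion
\[
\mathbb I^*_{\alpha,m}(\lambda)=a_{m,\lambda}+b_{m,\lambda}\,\alpha^{-2/\lambda}+O(\alpha^{-4/\lambda})\qquad(\alpha\to\infty)
\]
with explicit constants $a_{m,\lambda},b_{m,\lambda}$, and then substitute back: the leading terms assemble into $2\sum_{s}a_{d-2s,\lambda}\,\tilde{\mathbb J}_{d-2s,k+1}(\beta-s-\tfrac12)$, which I identify with $\EE f_k(\conv \chi_{d,1,\beta})$ (this is the known closed form for the expected $f$-vector of the Poisson polytope; equivalently it follows from Theorem~\ref{thm:PolytopesConverge} together with convergence in expectation), while the $\alpha^{-2/\lambda}$-terms assemble into the claimed explicit constant $C_{d,k,\beta}$.

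For the one-variable asymptotics I would start from the third integral representation of $I^*_{\alpha,m}(\lambda)$ recorded in the remark after Theorem~\ref{thm:exp_ext_anlge_sums},
\[
I^*_{\alpha,m}(\lambda)=\tilde c_{1,\frac{\lambda m+1}{2}}\int_0^\infty(\sinh\varphi)^{\lambda m-1}\exp\bigl\{-\alpha\,\tilde c_{1,\frac{\lambda+1}{2}}\,\Phi(\varphi)\bigr\}\,\dint\varphi,\qquad \Phi(\varphi):=\int_0^\varphi(\sinh\theta)^{\lambda-1}\,\dint\theta,
\]
and perform the rescaling $\varphi=\alpha^{-1/\lambda}v$. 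Since $\alpha^{-1/\lambda}\cdot\alpha^{-(\lambda m-1)/\lambda}=\alpha^{-m}$, the prefactor $\alpha^m/m!$ is absorbed and one gets
\[
\mathbb I^*_{\alpha,m}(\lambda)=\frac{\tilde c_{1,\frac{\lambda m+1}{2}}}{m!}\int_0^\infty\Bigl(\tfrac{\sinh(\alpha^{-1/\lambda}v)}{\alpha^{-1/\lambda}v}\Bigr)^{\lambda m-1}v^{\lambda m-1}\exp\bigl\{-\alpha\,\tilde c_{1,\frac{\lambda+1}{2}}\,\Phi(\alpha^{-1/\lambda}v)\bigr\}\,\dint v.
\]
Using the analyticity of $\sinh$ one has $(\sinh\varphi)^{\lambda m-1}=\varphi^{\lambda m-1}\bigl(1+\tfrac{\lambda m-1}{6}\varphi^2+\dots\bigr)$ and $\alpha\,\Phi(\alpha^{-1/\lambda}v)=\tfrac{v^\lambda}{\lambda}+\tfrac{\lambda-1}{6(\lambda+2)}\alpha^{-2/\lambda}v^{\lambda+2}+O(\alpha^{-4/\lambda}v^{\lambda+4})$; both are $v^{\#}$ times a power series in $\alpha^{-2/\lambda}v^2$, which is why only powers of $\alpha^{-2/\lambda}$ occur. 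Expanding, the integrand equals $\frac{\tilde c_{1,(\lambda m+1)/2}}{m!}v^{\lambda m-1}e^{-\tilde c_{1,(\lambda+1)/2}v^\lambda/\lambda}\bigl(1+\alpha^{-2/\lambda}g_m(v)+O(\alpha^{-4/\lambda})\bigr)$ with $g_m$ an explicit polynomial of degree $\lambda+2$. The leading integral is a Gamma integral: the substitution $w=\tilde c_{1,(\lambda+1)/2}v^\lambda/\lambda$ turns $\int_0^\infty v^{\lambda m-1}e^{-\tilde c_{1,(\lambda+1)/2}v^\lambda/\lambda}\,\dint v$ into $\lambda^{-1}\bigl(\lambda/\tilde c_{1,(\lambda+1)/2}\bigr)^m\Gamma(m)$, and the $g_m$-integral is likewise elementary, giving the closed forms for $a_{m,\lambda}$ and $b_{m,\lambda}$.

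The delicate point — and the main obstacle — is to justify that integration and the $\alpha$-expansion commute, uniformly enough to keep the $O(\alpha^{-4/\lambda})$ rate. I would split the integral at $v=\delta\alpha^{1/\lambda}$ (i.e. $\varphi=\delta$) for a small fixed $\delta>0$. On $\{v>\delta\alpha^{1/\lambda}\}$ one has $\Phi(\varphi)\ge\Phi(\delta)>0$, so the exponential contributes a factor $e^{-\alpha\,\mathrm{const}}$; since for large $\varphi$ the function $\Phi$ grows (double-exponentially when $\lambda>1$, linearly when $\lambda=1$, where one uses $\alpha>(m-1)\pi$, automatic for large $\alpha$), this tail is $o(\alpha^{-N})$ for every $N$ even after the polynomial factor $\alpha^m$. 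On $\{v\le\delta\alpha^{1/\lambda}\}$ the Taylor expansions above hold with remainders bounded uniformly by a fixed polynomial in $v$ times $e^{-cv^\lambda}$ — the needed domination uses that $t\mapsto\sinh t/t$ is increasing (bounding $(\sinh(\alpha^{-1/\lambda}v)/(\alpha^{-1/\lambda}v))^{\lambda m-1}$ for $\alpha\ge\alpha_0$) together with $\Phi(\varphi)\ge\varphi^\lambda/\lambda$ since $\sinh\theta\ge\theta$ — so termwise integration is legitimate with error $O(\alpha^{-4/\lambda})$, and completing each integral back to $[0,\infty)$ costs only super-polynomially small tails. Assembling the two regions and summing over the relevant $m$ with the $\tilde{\mathbb J}$-weights yields the asserted expansion, with $C_{d,k,\beta}=2\sum_s b_{d-2s,\lambda}\,\tilde{\mathbb J}_{d-2s,k+1}(\beta-s-\tfrac12)$ in closed form via the Gamma-function evaluations.
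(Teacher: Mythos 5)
Your proposal is correct and takes essentially the same route as the paper: both reduce via Theorem~\ref{thm:f-vector_beta_star} to the single-variable expansion $\mathbb I^*_{\alpha,m}(2\beta-d)=\tilde{\mathbb I}_{\infty,m}(2\beta-d)+K\,\alpha^{-2/\lambda}+O(\alpha^{-4/\lambda})$ starting from the $\sinh$-representation of $I^*_{\alpha,m}(\lambda)$ (reached in the paper by the substitution $y=\coth w$), and both identify the assembled leading terms with the known closed formula for $\EE f_k(\conv\chi_{d,1,\beta})$. The only inessential difference is that the paper obtains the Laplace-type expansion by citing Wong's Theorem~II.1.1 with the local data $h(w)=a_0w^{\lambda}+a_2w^{\lambda+2}+O(w^{\lambda+4})$ and $\varphi(w)=b_0w^{\lambda m-1}+b_2w^{\lambda m+1}+O(w^{\lambda m+3})$, whereas you rederive that expansion by hand via the rescaling $\varphi=\alpha^{-1/\lambda}v$ together with a split-and-dominate argument.
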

%\begin{remark}
%%%% ZK: Ich glaube nicht, dass die Formel für die Konstante $c_{d,\beta}$ stimmt. Es fehlt der J-Term. Deshalb habe ich die Bemerkung abgekürzt.
%It is in principle possible to compute the precise value of $c_{d,\beta}$. In fact, from Theorem \ref{thm:f-vector_beta_star} and Proposition~\ref{prop:asymptotics_I_star} below it follows that $c_{d,\beta}=2\sum_{s=0}^\infty K(2\beta-d,d-2s)$ with the constants $K(2\beta-d,d-2s)$ given by $\eqref{eq:const(l,n)}$. However, since the exact expression is rather involved, we refrain from presenting the formula for $c_{d,\beta}$. \ZK{Ich glaube, wir sollten diese Konstante umbenennen.}
%\end{remark}
\begin{proof}%[Proof of Theorem~\ref{thm:convergence_beta_star}.]
Fix some $k\in\{0,\ldots,d-1\}$. We recall from Theorem~\ref{thm:f-vector_beta_star}  that
$$
\EE f_k(P_ {d,\alpha,\beta})=2\sum_{s=0}^{\lfloor (d-k-1)/2 \rfloor}   \mathbb I^*_{\alpha,d-2s}(2\beta-d)\cdot\tilde {\mathbb J}_{d-2s,k+1}\Big(\beta-s-\frac 12\Big),
$$
where in the case $\beta= (d+1)/2$ we assume that $\alpha$ is sufficiently large.
On the other hand,  $\EE f_k(\conv \chi_{d,1,\beta})$  has been computed in~\cite[Theorem~1.21]{KabluchkoThaeleZaporozhets} and is given by
\begin{align*}
\EE f_k(\conv\chi_{d,1,\beta})
&=
2\sum_{s=0}^{\lfloor (d-k-1)/2 \rfloor} \tilde{\mathbb I}_{\infty,d-2s}(2\beta-d)\cdot\tilde{\mathbb J}_{d-2s,k+1}\Big(\beta-s-\frac 12\Big),
\end{align*}
where
\begin{align*}
\tilde{\mathbb I}_{\infty,n}(\lambda)=\frac{\lambda^{n-1}}{n}\frac{\tilde c_{1,\frac{\lambda n+1}{2}}}{(\tilde c_{1,\frac{\lambda+1}{2}})^n}.
\end{align*}
The notation $\tilde{\mathbb I}_{\infty,n}(\lambda)$, as already introduced in~\cite{KabluchkoAngles}, is motivated by the fact that
$\tilde{\mathbb I}_{\infty,n}(\lambda)$ is the limit of the quantities $\tilde{\mathbb I}_{\alpha,n}(\lambda)$ as $\alpha\to\infty$, see the proof of Theorem~1.21 in~\cite{KabluchkoThaeleZaporozhets}, as well as of the quantities $\mathbb I^*_{\alpha,n}(\lambda)$, see the following Proposition~\ref{prop:asymptotics_I_star}. This proposition immediately yields the desired asymptotic expansion in Theorem~\ref{thm:convergence_beta_star}. Its   proof is postponed to Section~\ref{sec:large_intensities_proofs}.  We refrain from presenting an explicit formula for the constant $C_{d,k,\beta}$ since it is rather involved.
\end{proof}

\begin{proposition} \label{prop:asymptotics_I_star}
Suppose that $\lambda\ge 1$ and $n\in\NN$. Then, as $\alpha\to\infty$, we have the asymptotic expansion
\begin{align*}
\mathbb I^*_{\alpha,n}(\lambda)=\frac{\lambda^{n-1}}{n}\frac{\tilde c_{1,\frac{\lambda n+1}{2}}}{(\tilde c_{1,\frac{\lambda+1}{2}})^n}+\frac{K_1(\lambda,n)}{\alpha^{\frac 2\lambda}}+O\big(\alpha^{-\frac 4\lambda}\big) = \tilde{\mathbb I}_{\infty,n}(\lambda)+\frac{K_1(\lambda,n)}{\alpha^{\frac 2\lambda}}+O\big(\alpha^{-\frac 4\lambda}\big),
\end{align*}
where the formula for the constant $K_1(\lambda,n)$ can be found in~\eqref{eq:const(l,n)} below.
\end{proposition}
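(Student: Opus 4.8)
The plan is to extract the asymptotics of $\mathbb I^*_{\alpha,n}(\lambda)=\frac{\alpha^n}{n!}I^*_{\alpha,n}(\lambda)$ directly from the integral representation~\eqref{eq:I*_alt_expression_sinh_negative_powers}, i.e.
$$
I^*_{\alpha,n}(\lambda)=\int_0^\infty \tilde c_{1,\frac{\lambda n+1}{2}}(\sinh\varphi)^{-\lambda n}\exp\Bigl\{-\alpha\int_\varphi^\infty \tilde c_{1,\frac{\lambda+1}{2}}(\sinh\theta)^{-\lambda}\,\dint\theta\Bigr\}\dint\varphi .
$$
Since $\lambda\ge 1$, the inner integral $G(\varphi):=\tilde c_{1,\frac{\lambda+1}{2}}\int_\varphi^\infty(\sinh\theta)^{-\lambda}\dint\theta$ diverges as $\varphi\downarrow 0$, so the factor $e^{-\alpha G(\varphi)}$ forces the mass of the integral towards $\varphi=0$ as $\alpha\to\infty$. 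The natural move is the substitution $s=\alpha G(\varphi)$, equivalently $\varphi=G^{-1}(s/\alpha)$, which turns $I^*_{\alpha,n}(\lambda)$ into $\int_0^\infty h_\alpha(s)e^{-s}\dint s$ for an explicit $h_\alpha$; one then Taylor-expands $h_\alpha$ in powers of $1/\alpha$ (equivalently, expands $\varphi\mapsto(\sinh\varphi)^{-\lambda n}/G'(\varphi)$ near $\varphi=0$ using $\sinh\varphi=\varphi+\varphi^3/6+\cdots$ and $G(\varphi)=\tilde c_{1,\frac{\lambda+1}{2}}\bigl(\frac{\varphi^{1-\lambda}}{\lambda-1}+\cdots\bigr)$ for $\lambda>1$, with a separate $\log$-type expansion when $\lambda=1$) and integrates term by term against $e^{-s}$, using $\int_0^\infty s^k e^{-s}\dint s=k!$. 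Multiplying by $\alpha^n/n!$ and collecting the leading term should reproduce $\tilde{\mathbb I}_{\infty,n}(\lambda)=\frac{\lambda^{n-1}}{n}\,\tilde c_{1,\frac{\lambda n+1}{2}}/(\tilde c_{1,\frac{\lambda+1}{2}})^n$, which one can verify either directly from the expansion or by invoking the known limit $\mathbb I^*_{\alpha,n}(\lambda)\to\tilde{\mathbb I}_{\infty,n}(\lambda)$ already used in the proof of Theorem~\ref{thm:convergence_beta_star}. The correction term is the next coefficient, which gives the explicit $K_1(\lambda,n)$ displayed in~\eqref{eq:const(l,n)}, and the remainder is $O(\alpha^{-4/\lambda})$ because the $1/\alpha$-expansion of $h_\alpha$ proceeds in powers of $\varphi^2\sim(s/\alpha)^{2/(\lambda-1)}\cdot\text{const}$—here care is needed to see that the relevant small parameter really is $\alpha^{-2/\lambda}$ and not $\alpha^{-2/(\lambda-1)}$, which comes out once one tracks how $\varphi^2$ scales against $s/\alpha$ through $G^{-1}$.

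Concretely, the key steps in order are: (1) change variables $s=\alpha G(\varphi)$ to write $I^*_{\alpha,n}(\lambda)=\alpha^{-1}\int_0^\infty \Phi_\alpha(s)\,e^{-s}\,\dint s$ with $\Phi_\alpha(s)=\tilde c_{1,\frac{\lambda n+1}{2}}(\sinh\varphi)^{-\lambda n}/G'(\varphi)\big|_{\varphi=G^{-1}(s/\alpha)}$; (2) establish the asymptotics of $G(\varphi)$ and $G^{-1}(u)$ as $\varphi,u\downarrow 0$, treating $\lambda>1$ and $\lambda=1$ separately (for $\lambda=1$, $G(\varphi)\sim -\tilde c_{1,1}\log\varphi$, so $\varphi\sim e^{-s/(\alpha\tilde c_{1,1})}$ and the expansion is in powers of $e^{-cs/\alpha}$, still yielding an $\alpha^{-2}$-type correction since $2/\lambda=2$); (3) Taylor-expand $\Phi_\alpha(s)$ to second order in the small parameter, obtaining $\Phi_\alpha(s)=c_0\alpha+c_1(s)\alpha^{1-2/\lambda}+O(\alpha^{1-4/\lambda})$ with $c_1(s)$ an explicit affine-in-$s$ (or affine-in-polynomial-in-$s$) function; (4) integrate against $e^{-s}$, multiply by $\alpha^n/n!$, and read off the leading term and $K_1(\lambda,n)$; (5) justify interchanging the expansion with the integral and control the tail $\varphi\to\infty$ (where $e^{-\alpha G(\varphi)}$ is uniformly tiny and $(\sinh\varphi)^{-\lambda n}$ is integrable, so this contributes a super-polynomially small error).

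The main obstacle I expect is Step (3) together with the error control in Step (5): one must show that the formal Taylor expansion of $\Phi_\alpha(s)$ in powers of $\varphi^2$ can be integrated termwise against $e^{-s}\dint s$ with a genuine $O(\alpha^{1-4/\lambda})$ remainder, uniformly in $s$. The difficulty is that the substitution $\varphi=G^{-1}(s/\alpha)$ makes $\varphi$ grow (logarithmically, or like a small power of $s$) as $s\to\infty$, so the "small parameter" $\varphi$ is not uniformly small in $s$; one needs to split the $s$-integral at, say, $s=\alpha^{\delta}$ for a suitable $\delta\in(0,1)$, apply the uniform expansion on $[0,\alpha^\delta]$ where $\varphi$ is genuinely small, and bound the contribution of $[\alpha^\delta,\infty)$ crudely using $e^{-s}$ decay together with a polynomial bound on $\Phi_\alpha(s)$. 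The bookkeeping of which power of $\alpha$ each term carries—and in particular confirming that the exponent is $2/\lambda$ rather than $2/(\lambda-1)$—is the delicate point; everything else is routine manipulation of the integrals and Gamma-function identities, and the final identification of $K_1(\lambda,n)$ is a matter of carefully substituting the expansions of $\sinh$, $G$ and $G^{-1}$ and simplifying, as recorded in~\eqref{eq:const(l,n)}.
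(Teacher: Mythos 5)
There is a genuine gap: you have localized the Laplace integral at the wrong end. In the representation~\eqref{eq:I*_alt_expression_sinh_negative_powers} the function $G(\varphi)=\tilde c_{1,\frac{\lambda+1}{2}}\int_\varphi^\infty(\sinh\theta)^{-\lambda}\,\dint\theta$ decreases from $+\infty$ at $\varphi=0$ to $0$ at $\varphi=\infty$, so the factor $e^{-\alpha G(\varphi)}$ \emph{annihilates} a neighbourhood of $\varphi=0$ rather than concentrating mass there; the integrand peaks where $G(\varphi)\asymp 1/\alpha$, i.e.\ at $\varphi\approx\lambda^{-1}\log\alpha\to\infty$. (Your later remark that $e^{-\alpha G(\varphi)}$ is ``uniformly tiny'' in the tail $\varphi\to\infty$ is reversed for the same reason: $G(\varphi)\to 0$ there, so the factor tends to $1$.) Consequently the expansions you plan to feed into the computation --- $\sinh\varphi=\varphi+\varphi^3/6+\cdots$ and $G(\varphi)=\tilde c_{1,\frac{\lambda+1}{2}}\big(\tfrac{\varphi^{1-\lambda}}{\lambda-1}+\cdots\big)$ near $\varphi=0$ --- are the irrelevant ones; indeed the equation $G(\varphi)=s/\alpha$ with $s$ fixed is incompatible with $\varphi\to 0$, since there $G(\varphi)\to+\infty$. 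What is actually needed is the expansion as $\varphi\to\infty$, where $G(\varphi)=\tilde c_{1,\frac{\lambda+1}{2}}\tfrac{2^\lambda}{\lambda}e^{-\lambda\varphi}\big(1+O(e^{-2\varphi})\big)$ and $(\sinh\varphi)^{-\lambda}\asymp s/\alpha$, so the correction parameter is $e^{-2\varphi}\asymp(s/\alpha)^{2/\lambda}$. That is the only way the exponent $2/\lambda$ can emerge; the $\alpha^{-2/(\lambda-1)}$ you flag is not a bookkeeping subtlety to be ``tracked through $G^{-1}$'' but the symptom of expanding at the wrong endpoint, and step (3) of your plan cannot be carried out as written.

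For comparison, the paper sidesteps this by substituting $y=\coth w$ in~\eqref{eq:def_I_star} (equivalently, using the second alternative expression with \emph{positive} powers of $\sinh$), which maps the critical region to $w=0$ and gives $\mathbb I^*_{\alpha,n}(\lambda)=\frac{\alpha^n\tilde c_{1,\frac{\lambda n+1}{2}}}{n!}\int_0^\infty(\sinh w)^{\lambda n-1}e^{-\alpha h(w)}\,\dint w$ with $h(w)=a_0w^{\lambda}+a_2w^{\lambda+2}+O(w^{\lambda+4})$ and $(\sinh w)^{\lambda n-1}=w^{\lambda n-1}+O(w^{\lambda n+1})$ as $w\to0$. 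This is the standard Watson's-lemma form, and a direct application of~\cite[Theorem II.1.1]{Wong} yields $\Gamma(n)c_0\alpha^{-n}+\Gamma(n+\tfrac2\lambda)c_2\alpha^{-n-2/\lambda}+O(\alpha^{-n-4/\lambda})$ with explicit $c_0,c_2$, whence $K_1(\lambda,n)$. Your substitution $s=\alpha G(\varphi)$ could be salvaged, but only after relocating every expansion to $\varphi\to\infty$, at which point it becomes a reparametrization of the paper's argument.
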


Let us consider the special cases $\beta= (d+1)/2$ and $\beta=d$. From~\cite[Theorem~1.23]{KabluchkoThaeleZaporozhets} it follows that one can identify the Euclidean Poisson zero cell and the Euclidean typical Poisson-Voronoi cell with the convex duals of $\conv \chi_{d, \mu, (d+1)/2}$ and $\conv \chi_{d,\mu, d}$, respectively, for suitably chosen intensities $\mu$ that are responsible for rescaling only and do not influence the $f$-vector. It follows from Theorem~\ref{thm:convergence_beta_star}, combined with~\eqref{eq:duality_zero_cell_beta_star} and~\eqref{eq:k-faces_typ_Voronoi}, that the expected numbers of $k$-faces of $Z_{d,\lambda}^0$ and $V^{\text{typ}}_{d,\lambda}$ converge, as $\lambda\to\infty$, to the expected numbers of $k$-faces of the corresponding Euclidean objects, which are shown in Figures~\ref{fig:VertexNumberHyperplaneMosaic} and~\ref{fig:VertexNumber} as dashed horizontal lines.  Moreover, the speed of convergence for the typical Poisson-Voronoi cell is $\text{const}(d,k)\cdot  \lambda^{-2/d}  + O(\lambda^{-4/d})$. The distributional convergence of the $f$-vectors can also be obtained by following the approach of~\cite{KabluchkoMarynychTemesvariThaele}.

\medspace

Numerical simulations (see, in particular Figures~\ref{fig:VertexNumber} and~\ref{fig:VertexNumberHyperplaneMosaic}) suggest that $\EE f_k (P_{d,\alpha,\beta})$ is a strictly decreasing function of $\alpha$. Together with the results of the present section, this would imply that the hyperbolic typical Voronoi cell and the hyperbolic Poisson zero cell are expected to have \textit{more} $k$-dimensional faces than their Euclidean analogues, for all $k\in \{0,\ldots, d-1\}$.  Our next theorem shows that this is indeed the case. On the other hand, let us remark that in the spherical setting, the expected $f$-vectors are strictly increasing functions of the number of points (or, in the Poisson process setting, intensity); see~\cite{Bonnet_etal} and~\cite[Theorem~1.5]{KabluchkoThaeleZaporozhets}. In particular, spherical cells are expected to have \textit{less} $k$-faces than their Euclidean analogues.

\begin{theorem}[Monotonicity of the expected $f$-vector]\label{thm:Monotonicity}
Fix $k\in\{0,\ldots,d-1\}$ and $\beta\geq (d+1)/2$. Then the function $\alpha \mapsto \EE f_k(P_{d,\alpha,\beta})$
%\begin{align*}
%\alpha &\mapsto \EE f_k(P_{d,\alpha,\beta})\\
%((d-1)\pi,\infty) & \to (0,\infty)\qquad\qquad\qquad\text{if }\beta={d+1\over 2}\\
%(0,\infty) & \to (0,\infty)\qquad\qquad\qquad\text{if }\beta>{d+1\over 2}
%\end{align*}
is strictly monotone decreasing in the range $\alpha>0$ (if $\beta>(d+1)/2$) or $\alpha>(d-1)\pi$ (if $\beta=(d+1)/2$).
\end{theorem}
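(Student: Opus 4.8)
The plan is to deduce the statement from the explicit formula of Theorem~\ref{thm:f-vector_beta_star} by reducing it to the monotonicity in $\alpha$ of the individual quantities $\mathbb I^*_{\alpha,m}(\lambda)$. By that theorem,
\[
\EE f_k(P_{d,\alpha,\beta})=2\sum_{s=0}^{\lfloor(d-k-1)/2\rfloor}\mathbb I^*_{\alpha,d-2s}(2\beta-d)\cdot\tilde{\mathbb J}_{d-2s,k+1}\Big(\beta-s-\tfrac12\Big),
\]
and the coefficients $\tilde{\mathbb J}_{d-2s,k+1}(\beta-s-\tfrac12)$ do not depend on $\alpha$, are non-negative (being multiples of expected internal angles), and for every index $s$ occurring here one has $1\le d-2s\le d$ since $2s\le d-k-1$. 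Moreover the $s=0$ coefficient equals $\binom{d}{k+1}\tilde J_{d,k+1}(\beta-\tfrac12)$, which is \emph{strictly} positive when $d\ge2$, because the beta$'$ simplex $\conv(Z_1,\dots,Z_d)\subset\RR^{d-1}$ is a.s.\ non-degenerate and all internal angles of a non-degenerate simplex at its faces are positive. (For $d=1$ the right-hand side is the constant $2$, so we assume $d\ge2$.) Consequently it suffices to show that $\alpha\mapsto\mathbb I^*_{\alpha,m}(\lambda)$ is non-increasing on its admissible range for every $m\in\{1,\dots,d\}$ and $\lambda=2\beta-d\ge1$, and strictly decreasing when $m\ge2$.

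For this I would start from~\eqref{eq:def_I_star_sum} together with the second of the two alternative expressions for $I^*_{\alpha,m}(\lambda)$ displayed in the Remark after Theorem~\ref{thm:exp_ext_anlge_sums}, namely
\[
\mathbb I^*_{\alpha,m}(\lambda)=\frac{\alpha^m}{m!}\int_0^\infty\tilde c_{1,\frac{\lambda m+1}{2}}(\sinh\varphi)^{\lambda m-1}\,e^{-\alpha G(\varphi)}\,\dint\varphi,\qquad G(\varphi):=\tilde c_{1,\frac{\lambda+1}{2}}\int_0^\varphi(\sinh\theta)^{\lambda-1}\,\dint\theta.
\]
Because $\lambda\ge1$, the function $G$ is a continuous strictly increasing bijection of $[0,\infty)$ onto $[0,\infty)$, of class $C^1$ with positive derivative on $(0,\infty)$, so the substitution $u=G(\varphi)$ is licit and gives $\mathbb I^*_{\alpha,m}(\lambda)=\frac{\alpha^m}{m!}\int_0^\infty\psi(u)\,e^{-\alpha u}\,\dint u$ with $\psi(u)=\frac{\tilde c_{1,(\lambda m+1)/2}}{\tilde c_{1,(\lambda+1)/2}}\big(\sinh G^{-1}(u)\big)^{\lambda(m-1)}$. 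Writing $\psi(u)=h(u)\,u^{m-1}$, i.e.
\[
h(u)=\frac{\tilde c_{1,(\lambda m+1)/2}}{\tilde c_{1,(\lambda+1)/2}}\bigg(\frac{(\sinh\varphi)^{\lambda}}{G(\varphi)}\bigg)^{m-1},\qquad \varphi=G^{-1}(u),
\]
and then substituting $u=t/\alpha$, all powers of $\alpha$ cancel and one arrives at the clean identity
\[
\mathbb I^*_{\alpha,m}(\lambda)=\frac1{m!}\int_0^\infty h(t/\alpha)\,t^{m-1}e^{-t}\,\dint t .
\]
The point of this two-step change of variables is that the entire dependence on $\alpha$ has been pushed into the argument of the function $h$. (The integral converges precisely under the admissibility hypotheses on $\alpha$; in particular the requirement $\alpha>(m-1)\pi$ when $\lambda=1$, hence $\alpha>(d-1)\pi$ for the $s=0$ term when $\beta=(d+1)/2$, is exactly what is needed here.)

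The heart of the matter is then to show that $h$ is non-decreasing on $(0,\infty)$, and strictly increasing when $m\ge2$. Since $G^{-1}$ and $x\mapsto x^{m-1}$ are increasing, this reduces to the elementary claim that $\varphi\mapsto(\sinh\varphi)^{\lambda}/G(\varphi)$ is strictly increasing on $(0,\infty)$. A direct differentiation shows that the sign of the derivative of this quotient equals that of $F(\varphi):=\lambda\cosh\varphi\int_0^\varphi(\sinh\theta)^{\lambda-1}\,\dint\theta-(\sinh\varphi)^{\lambda}$, and since $F(0)=0$ while $F'(\varphi)=\lambda\sinh\varphi\int_0^\varphi(\sinh\theta)^{\lambda-1}\,\dint\theta>0$ for $\varphi>0$, we get $F>0$ on $(0,\infty)$, proving the claim. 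This is the only genuine computation in the argument and I expect it to be routine; the only point requiring a little care is the justification of the change of variables near $\varphi=0$ when $\lambda>1$ (where $G'(0)=0$), but $G$ remains a bijection there, so no real difficulty arises. Finally, monotonicity of $h$ gives the conclusion without any differentiation under the integral sign: for $0<\alpha_1<\alpha_2$ in the admissible range one has $h(t/\alpha_1)\ge h(t/\alpha_2)$ for all $t>0$, with strict inequality when $m\ge2$, whence $\mathbb I^*_{\alpha_1,m}(\lambda)\ge\mathbb I^*_{\alpha_2,m}(\lambda)$, strictly so when $m\ge2$ because the two finite integrals then differ by the integral over $(0,\infty)$ of a strictly positive function. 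Inserting this into the displayed formula for $\EE f_k(P_{d,\alpha,\beta})$, the $s=0$ summand is strictly decreasing in $\alpha$ (there $m=d\ge2$ and its coefficient is positive) while every other summand is non-increasing, so $\alpha\mapsto\EE f_k(P_{d,\alpha,\beta})$ is strictly decreasing on the stated ranges.
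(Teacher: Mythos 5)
Your proof is correct, and it reaches the conclusion by a genuinely different route than the paper. Both arguments make the same first reduction: via Theorem~\ref{thm:f-vector_beta_star} it suffices to control the quantities $\mathbb I^*_{\alpha,m}(\lambda)$. The paper then differentiates $\mathbb I^*_{\alpha,m}(\lambda)$ in $\alpha$, substitutes $z=\bar F(y)$ with $\bar F(y)=\int_y^\infty \tilde c_{1,\frac{\lambda+1}{2}}(t^2-1)^{-\frac{\lambda+1}{2}}\,\dint t$, and shows the derivative is negative by comparing the strictly \emph{convex} function $L(z)=f(\bar F^{-1}(z))^{\lambda/(\lambda+1)}$ with the chord through the origin and $(m/\alpha,L(m/\alpha))$, using the identity $\int_0^\infty z^{m-1}e^{-\alpha z}(1-\tfrac{\alpha}{m}z)\,\dint z=0$. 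You avoid differentiation under the integral sign altogether: your two-step change of variables concentrates the entire $\alpha$-dependence into $h(t/\alpha)$ against the fixed Gamma weight $t^{m-1}e^{-t}$, after which monotonicity is a pointwise comparison. The two arguments are closely related: under $y=\coth\varphi$ one has $\bar F(\coth\varphi)=G(\varphi)$, and your $(\sinh G^{-1}(u))^{\lambda}$ is the paper's $L(u)$ up to a constant, so "$h$ increasing" is the statement that $L(u)/u$ is increasing, which is strictly weaker than the convexity the paper establishes but still suffices; your verification via $F(0)=0$, $F'(\varphi)=\lambda\sinh\varphi\int_0^\varphi(\sinh\theta)^{\lambda-1}\dint\theta>0$ is elementary and correct. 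Two further points in your favour: you explicitly observe that $\mathbb I^*_{\alpha,1}(\lambda)$ is constant (it equals $1$) and therefore isolate the strictly decreasing $s=0$ term together with its strictly positive internal-angle coefficient, whereas the paper asserts strict decrease of $\mathbb I^*_{\alpha,m}(\lambda)$ for all $m\in\NN$, which fails at $m=1$ where $L(z)^{m-1}\equiv1$ kills the strict inequality in its chain of estimates; and your convergence remark (exponential growth of $h$ when $\lambda=1$ forcing $\alpha>(m-1)\pi$) correctly accounts for the admissible range in the critical case $\beta=(d+1)/2$.
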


\subsection{Stochastic geometry in de Sitter space}

In this section we shall present hyperbolic counterparts of the results on  random polytopes in a half-sphere obtained in~\cite{BaranyHugReitznerSchneider,Bonnet_etal,KabluchkoZeroPolytope,KabluchkoMarynychTemesvariThaele}. Our approach will provide an interpretation of beta$^*$ polytopes as quite natural random objects in the de Sitter half-space (whose points are related by duality to hyperplanes in $\HH^d$, as we shall see).   Let us begin by recalling the results on the $d$-dimensional upper half-sphere in $\RR^{d+1}$, which is denoted by
$$
\SS^d_+:=\{x = (x_0,\ldots,x_{d})\in \RR^{d+1}: x_0\geq 0, x_0^2 + x_1^2 + \ldots + x_d^2 = 1\}.
$$
Let $U_1,\ldots,U_n$ be random vectors drawn uniformly and independently from $\SS^d_+$. The polyhedral convex cone generated by these vectors (also called their positive hull) is denoted by
$$
C_{d,n} = \pos(U_1,\ldots,U_n) = \left\{\lambda_1 U_1 + \ldots + \lambda_n U_n: \lambda_1,\ldots,\lambda_n\geq 0\right\}.
$$
The study of the random cone $C_{d,n}$ and the random spherical polytope $C_{d,n}\cap \SS^{d}_+$ has been initiated in the work of B\'ar\'any, Hug, Reitzner and Schneider~\cite{BaranyHugReitznerSchneider} and continued in~\cite{Bonnet_etal,KabluchkoMarynychTemesvariThaele} and~\cite{KabluchkoZeroPolytope}. It is convenient to replace $C_{d,n}$ by its horizontal cross-section $P := C_{d,n} \cap \{x_0=1\}$ as follows.  By definition, the gnomonic projection $\Pi_{\text{gn}}: \{x_0>0\} \to \RR^d$ maps a point $x\in \RR^{d+1}$ with $x_0>0$ to the intersection of the ray spanned by $x$ with the hyperplane $\{x_0=1\}$, that is
\begin{equation}\label{eq:de_sitter_gnomonic}
\Pi_{\text{gn}} (x_0,x_1,\ldots,x_d) = \left(\frac{x_1}{x_0},\ldots, \frac {x_d}{x_0}\right),
\end{equation}
see Figure \ref{fig:gnomproj}, left panel. Here and in the following  we  identify the hyperplane  $\{x_0=1\}$ with $\RR^d$ via the map $(1, v) \mapsto v$, $v\in \RR^d$.
It is well known that the gnomonic projection maps the uniform distribution on $\SS^d_+$ to the $d$-dimensional Cauchy distribution (which is a special case of the beta' distribution) with the density
\begin{equation}\label{eq:def_cauchy}
\tilde f (v)=\tilde{c}_{d,\frac{d+1}{2}} ( 1+\|v\|^2)^{-\frac{d+1}2},
\qquad
v\in\RR^d.
\end{equation}
Hence, the horizontal cross-section $P = C_{d,n} \cap\{x_0=1\}$ is a beta' polytope  in $\{x_0=1\} \equiv \RR^d$, that is the convex hull of $n$ independent Cauchy-distributed random  points $V_1:= \Pi_{\text{gn}}(U_1),\ldots, V_n:= \Pi_{\text{gn}}(U_n)$.  This observation from~\cite{KabluchkoMarynychTemesvariThaele} makes it possible to derive explicit formulae for $\EE f_k(C_{d,n})$, the expected  number of $k$-dimensional faces, in terms of the quantities $A[n,k]$ which already appeared in Section~\ref{sec:Hyperplane} and a certain array $B\{n,k\}$; see~\cite[Theorem~2.2]{KabluchkoZeroPolytope} and also~\cite[Theorem~1.1]{Kabluchko_SimplifiedFormulas}. Further quantities of interest include the solid angle $\measuredangle (C_{d,n})$ and the Grassmann angles $\gamma_{\ell}(C_{d,n})$ with $\ell\in \{1,\ldots, d\}$, which may be defined by
\begin{align}
\measuredangle(C_{d,n})
&=
\frac 12 \int\limits_{P} \tilde f (v) \dint v
=
\frac 12 \int\limits_{P} \tilde{c}_{d,\frac{d+1}{2}} ( 1+\|v\|^2)^{-\frac{d+1}2} \,\dint v,
\label{eq:angle_def_cauchy}\\
\gamma_{\ell}(C_{d,n})
&=
\int\limits_{(\RR^d)^{d+1-\ell}}  \ind_{\{P \cap  \aff (v_1,\ldots, v_{d+1-\ell}) \neq \varnothing\}}  \tilde f (v_1) \ldots \tilde f (v_{d+1-\ell}) \,\dint v_1 \ldots \dint v_{d+1-\ell}.\label{eq:angle_grassmann_def_cauchy}
\end{align}
The expectations of both quantities satisfy Efron-type identities, see~\cite[Equation~(4.13)]{KabluchkoZeroPolytope} and~\cite[Theorem~2.7]{KabluchkoMarynychTemesvariThaele}, which allows to express them through the expected $f$-vector.

\begin{figure}[t]
	\centering
	\begin{minipage}{0.45\textwidth}
			\begin{tikzpicture}
			\clip (-1,0) rectangle (5.5,4);
			\node at (2,3) {\includegraphics[width=\textwidth]{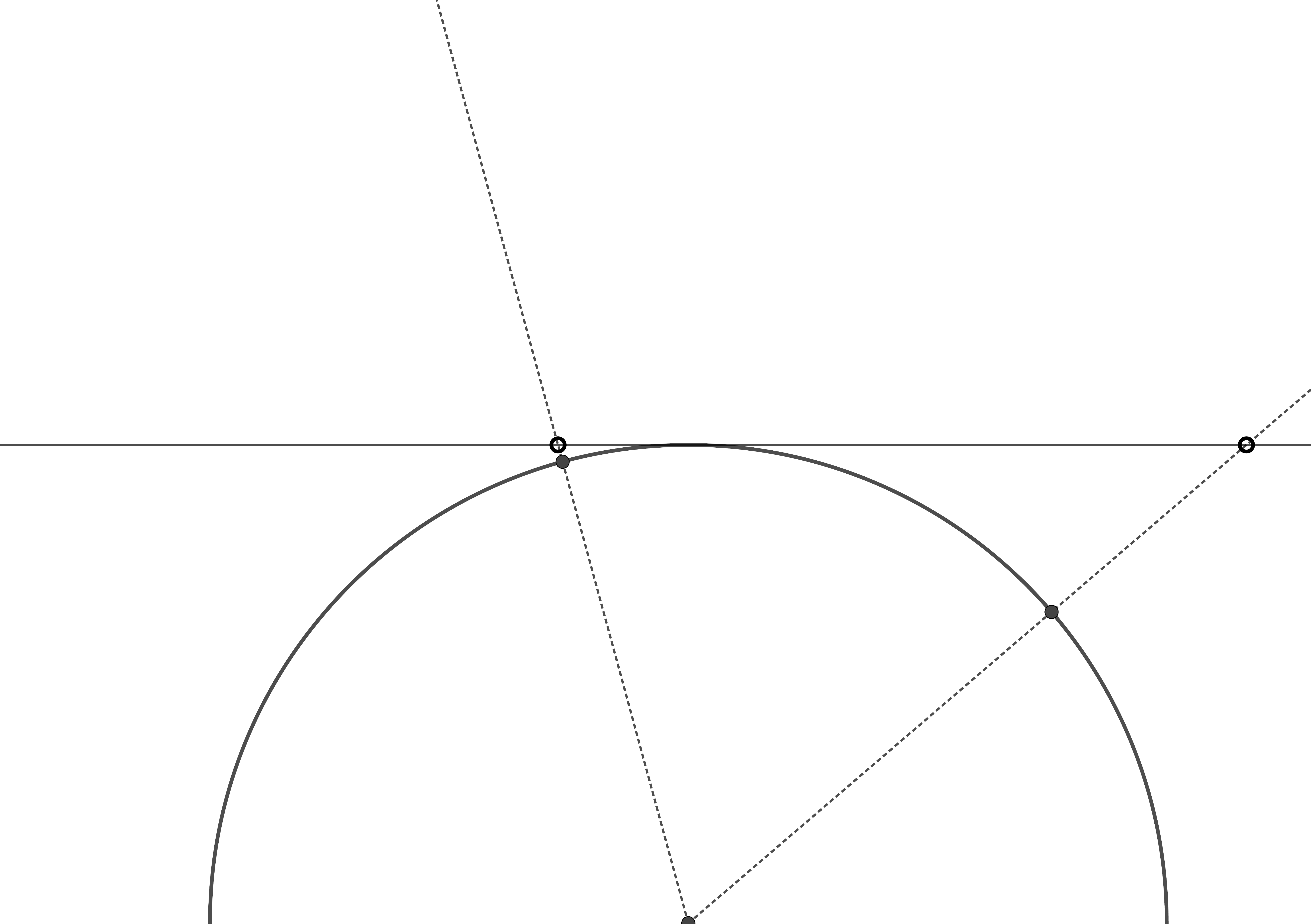}};
					\node at (-0.1,0.55) {\footnotesize $\mathbb{S}_+^d$};
					\node at (0,3.35) {\footnotesize $\mathbb{R}^d$};
					\node at (2,3.3) {\footnotesize $\Pi_{\rm gn}(x)$};
					\node at (4.7,3.3) {\footnotesize $\Pi_{\rm gn}(y)$};
					\node at (1.7,2.9) {\footnotesize $x$};
					\node at (4.2,1.9) {\footnotesize $y$};

		\end{tikzpicture}
	\end{minipage}
	\qquad
	\begin{minipage}{0.45\textwidth}
	\begin{tikzpicture}
		\clip (-1,0) rectangle (5.5,4);
		\node at (2,3) {\includegraphics[width=\textwidth]{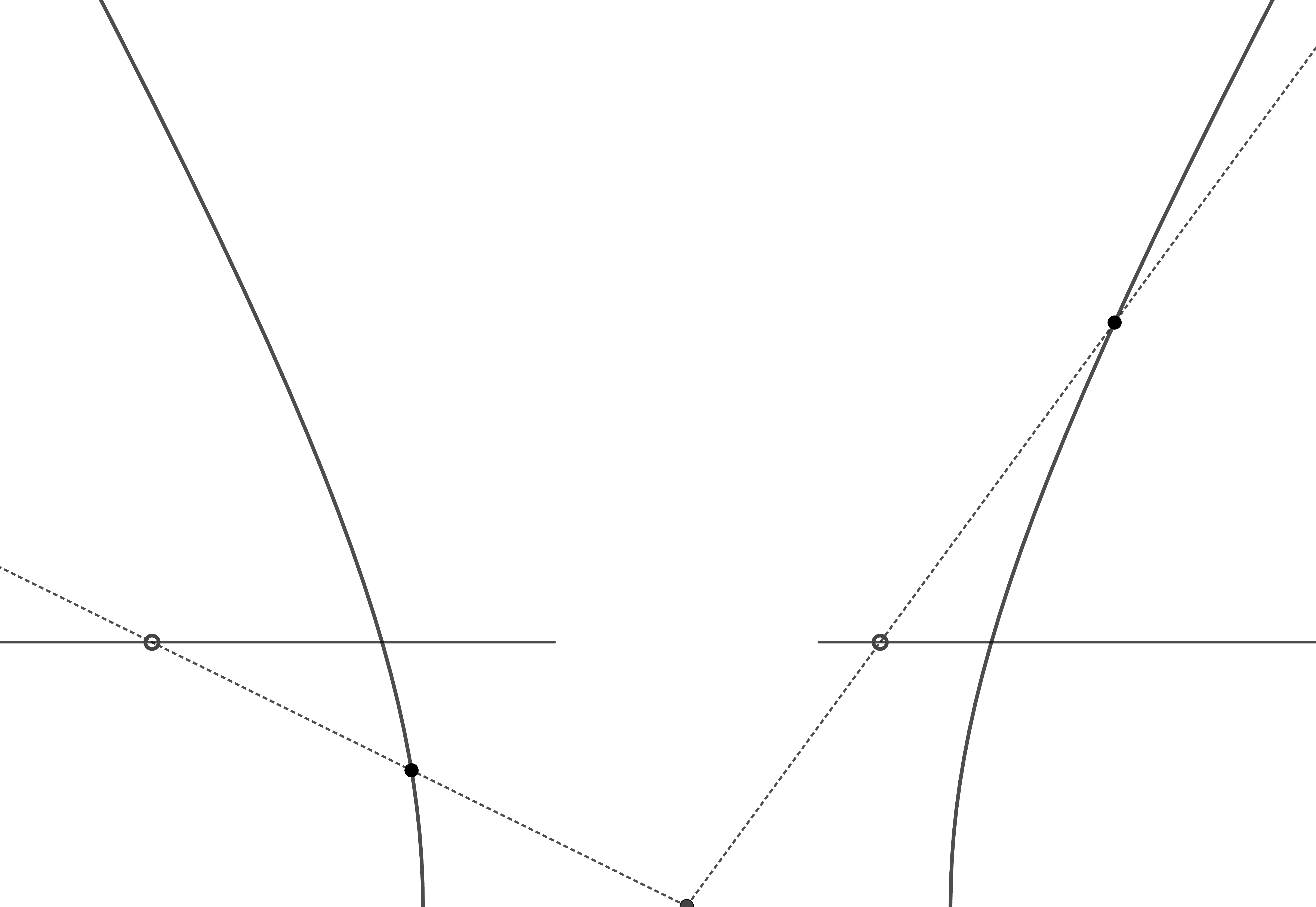}};	
				\node at (4,0.6) {\footnotesize $\mathbb{L}_+^d$};
				\node at (1.2,2.2) {\footnotesize $\mathbb{R}^d\setminus\widebar\BB^d$};
				\node at (-0.5,2.2) {\footnotesize $\Pi_{\rm gn}(x)$};
				\node at (2.8,2.2) {\footnotesize $\Pi_{\rm gn}(y)$};
				\node at (0.5,1.1) {\footnotesize $x$};
				\node at (4.6,3.5) {\footnotesize $y$};
		
	\end{tikzpicture}
\end{minipage}
	\caption{Illustration of the gnomonic projections $\Pi_{\rm gn}:\SS_+^d\to\RR^d$ and $\Pi_{\rm gn}:\LL_+^d\to\RR^d\setminus\bar\BB^d$ on the half-sphere $\SS^{d-1}_+$ (left panel) and the de Sitter half-space $\LL_+^{d}$ (right panel) for $d=1$.}
	\label{fig:gnomproj}
\end{figure}

%it through $\EE f_1(C_{d,n+1})$.  More generally, formulas for the expected Grassmann angles of $C_{d,n}$ can be obtained via.

%Formula for the Poissonized cone. Problem: n can be smaller than d and I don't know whether the formula for the expected f-vector  still holds.
%$$
%\EE f_{\ell - 1} (C_{d,\text{Poi}(\lambda)})
%=
%\frac{\pi^{\ell}}{\ell!} \sum_{\substack{m \in \{\ell,\ldots, d\}\\ m \equiv d \Mod 2}} \frac{A[m,\ell] - A[m-2,\ell]}{(m-1)!} %\left(\frac{\lambda}{\pi}\right)^m \int_0^\pi (\sin y)^{m-1} e^{-\lambda/\pi} \dint y.
%$$

Let us now turn to the hyperbolic analogues of these results. We shall work in the space $\RR^{d+1}$. Recall that the  Minkowski product of two vectors $x= (x_0,x_1,\ldots, x_d)$ and $x' = (x_0',x_1',\ldots,x_d')$ in $\RR^{d+1}$ is defined by
$$
B(x,x') = x_0 x_0' - x_1 x_1' - \ldots - x_d x_d'.
$$
The \textit{de Sitter space} $\LL^d$ is the hyperboloid defined by the equation $B(x,x) = - 1$, that is
$$
\LL^d = \{x = (x_0,x_1,\ldots,x_d)\in \RR^{d+1}: x_0^2 - x_1^2 - \ldots - x_d^2 = -1 \}.
$$
For comparison, recall that the hyperboloid model $\HH^d$ of the hyperbolic geometry is defined by the equation $B(x,x)= 1$ together with $x_0>0$.  The Minkowski product
induces on $\LL^d$ a pseudo-Riemannian metric which is invariant under the action of the Lorentz group $O(1,d)$ on $\LL^d$. Even though this metric is not positive-definite, it defines a volume measure on $\LL^d$ as follows. If, in some local coordinates $z_1,\ldots, z_d$, the pseudo-Riemannian metric is given by a matrix $g_{ij}(z)$, then the corresponding volume element is given by
$$
\dint \nu_{\text{dS}}(\dint z_1,\ldots, \dint z_d) = |\det (g_{ij}(z))|^{1/2} \, \dint z_1\ldots \dint z_d.
$$

In our setting, the de Sitter space is the analogue of the unit sphere $\SS^d$, while the analogue of the upper half-sphere $\SS^{d}_+$ is the \textit{upper de Sitter half-space} defined by
$$
\LL^d_+ = \{x \in \LL^d: x_0>0\}.
$$
We can identify $\LL^d_+$ with its ``Klein model'' $\RR^d\backslash \widebar\BB^d$ via the gnomonic projection $\Pi_{\text{gn}}: \LL^d_+ \to \RR^d \backslash \widebar\BB^d$ defined in the same way as in~\eqref{eq:de_sitter_gnomonic}, see Figure \ref{fig:gnomproj}, right panel.
%This identification is very similar to the identification between the hyperboloid and the Klein models $\HH^d$ and $\BB^d$.
The next simple proposition shows that the pseudo-Riemannian metric and the volume element induced on $\RR^d \backslash \widebar\BB^d$ by this identification are very similar to what is known for the Klein model of hyperbolic geometry; see~\eqref{eq:metric_volume_klein}.
\begin{proposition}\label{prop:metric_de_sitter}
The images of the pseudo-Riemannian metric and the corresponding volume element of the upper de Sitter half-space under the gnomonic projection are given by
\begin{equation}\label{eq:metric_volume_de_sitter}
\dint s^2 =  \frac{\|\dint u\|^2}{1- \| u \|^2} +   \frac{\langle u, \dint u \rangle^2}{(\| u \|^2 - 1)^2},
\quad
\nu_{\text{dS}}(\dint u_1,\ldots, \dint u_d)  = \frac{\dint u_1\ldots \dint u_d}{(\|u\|^2 - 1)^{\frac{d+1}2}},
\quad
u\in \RR^d\backslash \widebar\BB^d.
\end{equation}
\end{proposition}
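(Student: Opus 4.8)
The plan is to make the inverse gnomonic parametrization explicit and pull the Minkowski form back to $\RR^d\setminus\widebar\BB^d$. A point $u$ with $\|u\|>1$ is the image under $\Pi_{\text{gn}}$ of a unique point $x(u)=(x_0,x_0u)\in\LL_+^d$, and the defining relation $B(x(u),x(u))=x_0^2(1-\|u\|^2)=-1$ together with $x_0>0$ forces
\[
x(u)=\frac{1}{\sqrt{\|u\|^2-1}}\,(1,u),\qquad \|u\|>1 .
\]
This is a diffeomorphism $\RR^d\setminus\widebar\BB^d\to\LL_+^d$ whose inverse is $\Pi_{\text{gn}}$, so the metric and volume element on $\RR^d\setminus\widebar\BB^d$ in question are by definition the pullbacks along $x(\cdot)$ of the pseudo-Riemannian metric $B|_{\LL_+^d}$ and of $\nu_{\text{dS}}$. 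Hence everything reduces to differentiating $x(u)$.

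First I would compute the differentials. Writing $\rho:=\|u\|^2-1>0$ one obtains $\dint x_0=-\rho^{-3/2}\langle u,\dint u\rangle$ and $\dint x_j=\rho^{-1/2}\dint u_j-\rho^{-3/2}u_j\langle u,\dint u\rangle$ for $j=1,\dots,d$. Substituting into $B(\dint x,\dint x)=(\dint x_0)^2-\sum_{j=1}^d(\dint x_j)^2$, expanding, and using $\|u\|^2=\rho+1$ to combine the $\langle u,\dint u\rangle^2$-terms (at which point the $\rho^{-3}\langle u,\dint u\rangle^2$ contributions cancel) yields
\[
\dint s^2=-\frac{1}{\rho}\,\|\dint u\|^2+\frac{1}{\rho^2}\,\langle u,\dint u\rangle^2=\frac{\|\dint u\|^2}{1-\|u\|^2}+\frac{\langle u,\dint u\rangle^2}{(\|u\|^2-1)^2},
\]
which is the first assertion; note that the quadratic form is indefinite, as one expects for the de Sitter metric.

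For the volume element I would read off the metric tensor $g_{ij}(u)=-\rho^{-1}\delta_{ij}+\rho^{-2}u_iu_j$ from the previous display. This is a rank-one perturbation of $-\rho^{-1}$ times the identity, so the matrix determinant lemma $\det(A+vv^{\top})=\det(A)\bigl(1+v^{\top}A^{-1}v\bigr)$ applied with $A=-\rho^{-1}I_d$ and $v=\rho^{-1}u$ gives, using $\|u\|^2=\rho+1$ once more,
\[
\det\bigl(g_{ij}(u)\bigr)=(-\rho^{-1})^d\bigl(1-\rho^{-1}\|u\|^2\bigr)=(-1)^{d+1}\rho^{-(d+1)} .
\]
Therefore $|\det(g_{ij}(u))|^{1/2}=(\|u\|^2-1)^{-(d+1)/2}$, and the definition of $\nu_{\text{dS}}$ recalled immediately before the proposition yields the second assertion.

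The argument is an elementary computation, so there is no genuine obstacle; the only points requiring care are the sign bookkeeping — the de Sitter metric is Lorentzian, hence $\det(g_{ij})$ is negative and the absolute value in the definition of the volume element is essential — and the correct application of the determinant lemma to the rank-one update.
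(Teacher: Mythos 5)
Your proof is correct and takes essentially the same route as the paper's: write down the inverse gnomonic projection explicitly, differentiate it, pull back the Minkowski form to get $\dint s^2$, and then compute the determinant of the resulting rank-one perturbation of a multiple of the identity. The only cosmetic difference is in that last step, where the paper diagonalizes using the orthogonal projection onto $\lin(u)$ while you invoke the matrix determinant lemma; these are the same computation.
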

\begin{proof}
The inverse of the gnomonic projection is given by
$$
\Pi^{-1}_{\text{gn}} (u) = \left(\frac{1}{\sqrt{\|u\|^2-1}}, \frac{u}{\sqrt{\|u\|^2-1}} \right),
\qquad
u\in \RR^d\backslash\widebar\BB^d.
$$
Given this, one checks that the differential of the map $\Pi^{-1}_{\text{gn}}$ is given by
$$
\dint \Pi^{-1}_{\text{gn}}(u)
=
\Pi^{-1}_{\text{gn}}(u + \dint u) - \Pi^{-1}_{\text{gn}}(u)
=
\left(
 -\frac{\langle u, \dint u \rangle}{(\|u\|^2-1)^{3/2}},
\frac{ (\|u\|^2-1)\dint u - \langle u, \dint u \rangle u}{(\|u\|^2-1)^{3/2}}
\right).
%+
%o(\dint u).
$$
Computing the squared Minkowski pseude-norm $B(\dint \Pi^{-1}_{\text{gn}}(u),\dint \Pi^{-1}_{\text{gn}}(u))$ of the infinitesimal vector on the right-hand side, we arrive at the formula for $\dint s^2$ stated in~\eqref{eq:metric_volume_de_sitter}. Let us now turn to the volume element. The matrix of the pseudo-Riemannian metric $\dint s^2 = \sum_{i,j=1}^d g_{ij}\dint u_i \dint u_j$ in the coordinates $u_1,\ldots, u_d$ is therefore given by
$$
g_{ij} = g_{ij}(u) = \frac{u_i u_j}{(\|u\|^2-1)^2} - \frac{\delta_{ij}}{\|u\|^2-1},
\qquad
i,j=1,\ldots, d,
$$
where $\delta_{ij}$ denotes the Kronecker delta. To compute the determinant of this matrix, we denote by $I_{d}$ the $(d\times d)$-identity matrix, and let $J:\RR^d \to \RR^d$ be an orthogonal projection on the line spanned by $u$. Then, using the shorthand  $p:= 1/(\|u\|^2-1)$, we can write
$$
\det (g_{ij})_{i,j=1}^d
=
p^d \det (p u_iu_j - \delta_{ij})
=
p^d \det (p \|u\|^2 J - I_{d\times d})
=
(-p)^d (1 - p \|u\|^2)
=
\frac{(-1)^{d+1}}{(1 -\|u\|^2)^{d+1}}.
$$
It follows that the volume element is given by the formula stated in~\eqref{eq:metric_volume_de_sitter}.
\end{proof}

Consider now a Poisson process on the upper de Sitter half-space $\LL^d_+$ whose intensity measure is given by the standard volume element $\dint V$ times $\alpha  \tilde c_{d,(d+1)/2}$, where $\alpha > 0$ is some constant. Denote the atoms of this point process (listed in some order) by $U_1^*,U_2^*,\ldots$ and consider the convex cone $C_{d,\alpha}^*= \pos (U_1^*,U_2^*,\ldots)\subset \RR^{d+1}$ spanned by these vectors.
\begin{proposition}
The horizontal cross-section $P^* := C^*_{d,\alpha} \cap \{x_0=1\}$ of $C^*_{d,\alpha}$ has the same distribution as the beta$^*$ polytope $P_{d, \alpha, (d+1)/2}$.
\end{proposition}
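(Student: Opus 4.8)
The plan is to transport the de Sitter Poisson process to $\RR^d\setminus\widebar\BB^d$ via the gnomonic projection, recognise the resulting point process as $\zeta_{d,\alpha,(d+1)/2}$, and then check that intersecting the cone $C^*_{d,\alpha}$ with the hyperplane $\{x_0=1\}$ amounts, after the standard identification $\{x_0=1\}\equiv\RR^d$, to forming the convex hull of the projected generators.

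First I would invoke the mapping theorem for Poisson processes. By Proposition~\ref{prop:metric_de_sitter}, the push-forward of the de Sitter volume measure $\nu_{\text{dS}}$ under $\Pi_{\text{gn}}:\LL^d_+\to\RR^d\setminus\widebar\BB^d$ has Lebesgue density $u\mapsto(\|u\|^2-1)^{-(d+1)/2}$ on $\RR^d\setminus\widebar\BB^d$. Hence the image under $\Pi_{\text{gn}}$ of the Poisson process on $\LL^d_+$ with intensity measure $\alpha\,\tilde c_{d,(d+1)/2}\,\nu_{\text{dS}}$ is a Poisson process on $\RR^d\setminus\widebar\BB^d$ with Lebesgue intensity $u\mapsto\alpha\,\tilde c_{d,(d+1)/2}\,(\|u\|^2-1)^{-(d+1)/2}$, which is precisely $f_{d,\alpha,(d+1)/2}$ from~\eqref{eq:DensityZeta}. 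Writing $u_i:=\Pi_{\text{gn}}(U_i^*)$, the family $(u_i)_{i\in\NN}$ is therefore a realisation of $\zeta_{d,\alpha,(d+1)/2}$, and $P_{d,\alpha,(d+1)/2}$ is by definition its closed convex hull.

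Next I would carry out the cross-section computation using $\Pi_{\text{gn}}^{-1}(u)=(\|u\|^2-1)^{-1/2}(1,u)$, so that each generator reads $U_i^*=(\|u_i\|^2-1)^{-1/2}(1,u_i)$. A point $(1,v)$ lies in $C^*_{d,\alpha}=\pos(U_1^*,U_2^*,\ldots)$ precisely when $(1,v)=\sum_i\lambda_iU_i^*$ with all $\lambda_i\ge 0$; substituting $\mu_i:=\lambda_i(\|u_i\|^2-1)^{-1/2}$ turns this into $\sum_i\mu_i=1$ and $v=\sum_i\mu_iu_i$, i.e.\ $v\in\conv(u_1,u_2,\ldots)$. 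Under the identification $\{x_0=1\}\equiv\RR^d$ this gives $P^*=\conv(u_1,u_2,\ldots)$, and passing to closures (which is harmless for the distributional statement, and for $\beta=(d+1)/2$, $\alpha>(d-1)\pi$ the set is already a polytope by Theorem~\ref{thm:PolytopeOrNot}) yields $\overline{P^*}=\overline{\conv}(\zeta_{d,\alpha,(d+1)/2})=P_{d,\alpha,(d+1)/2}$ in distribution.

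The one point requiring care is the interchange of ``closure'' and ``cross-section'': one has to know that $\overline{C^*_{d,\alpha}}\cap\{x_0=1\}$ still coincides with $\overline{\conv}(u_1,u_2,\ldots)$, i.e.\ that the closed cone does not acquire extra directions lying in $\{x_0=0\}$. This is where Proposition~\ref{prop:poi_process_properties} enters: only finitely many atoms $u_i$ have norm exceeding any $r>1$, so the normalised generators $U_i^*/\|U_i^*\|=(1,u_i)/\sqrt{1+\|u_i\|^2}$ have $x_0$-coordinate $1/\sqrt{1+\|u_i\|^2}$ bounded away from $0$ apart from finitely many indices; consequently $\overline{C^*_{d,\alpha}}\cap\{x_0=0\}=\{0\}$ almost surely, and for such a cone the cross-section of the closure is the closure of the cross-section. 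Everything else is a routine change of variables, so I expect this ``non-escape to infinity'' verification to be the only genuine obstacle.
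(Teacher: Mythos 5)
Your argument is correct and follows the same route as the paper: push the de Sitter Poisson process forward under the gnomonic projection, use Proposition~\ref{prop:metric_de_sitter} to identify the image intensity as $f_{d,\alpha,(d+1)/2}$, and observe that the cross-section of the positive hull is the convex hull of the projected generators. The paper dismisses the cross-section identification and the closure issue with ``Clearly''; your explicit substitution $\mu_i=\lambda_i(\|u_i\|^2-1)^{-1/2}$ and the boundedness-of-norms argument are just fuller versions of the same steps.
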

\begin{proof}
Clearly, $C^*_{d,\alpha} \cap \{x_0=1\}$ is the convex hull of the points $V_1^*:=\Pi_{\text{gn}}(U_1^*), V_2^*:=\Pi_{\text{gn}}(U_2^*),\ldots$ According to Proposition~\ref{prop:metric_de_sitter}, the points $V_1^*,V_2^*,\ldots$ form a Poisson process on $\RR^d \backslash \widebar\BB^d$ with intensity $\alpha \tilde c_{d,(d+1)/2} (\|u\|^2 - 1)^{-(d+1)/2}$, and the claim follows.
\end{proof}

Let $\zeta := \sum_{j=1}^\infty \delta_{V_j^*}$ be the point process on $\RR^d\equiv \{x_0=1\}$ formed by the points $V_1^*,V_2^*,\ldots$. Note that $\zeta$ has the same law as $\zeta_{d,\alpha, (d+1)/2}$ and its intensity is $f_{d,\alpha, (d+1)/2}$; see Section~\ref{subsec:def_and_existence_of_beta_star}.
Motivated by~\eqref{eq:angle_def_cauchy}, the analogue of the solid angle for the cone $C^*_{d,\alpha}$ is defined by
%For a convex set $P\subset \RR^d$ containing $\widebar\BB^d$ in its interior, we define
$$
\measuredangle^*(C^*_{d,\alpha})
:=
\frac 12 \int\limits_{\RR^d \backslash P^*}   f_{d,1,\frac{d+1}2} (v) \dint v
=
\frac 12 \int\limits_{\RR^d \backslash P^*}   \tilde c_{d,\frac{d+1}{2}}(\|v\|^2-1)^{-\frac{d+1}{2}} \dint v.
$$
Note that in contrast to~\eqref{eq:angle_def_cauchy} the integral is taken over $\RR^d \backslash P^*$. For $\alpha > (d-1)\pi$ the integral over $P^*$ is a.s.\ infinite because $P^* \supset r \BB^d$ with some random $r>1$ by Theorem~\ref{thm:PolytopeOrNot}.
The next proposition is an Efron-type identity which, combined with Theorem~\ref{thm:f-vector,beta=(d+1)/2}, yields an explicit formula for $\EE \measuredangle^*(P)$.
\begin{proposition}
We have $\EE \measuredangle^*(C^*_{d,\alpha}) = \frac {1}{2\alpha} \EE f_1(C^*_{d,\alpha})$.
\end{proposition}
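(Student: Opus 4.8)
The plan is to obtain the identity from a Poisson analogue of Efron's formula, i.e.\ from the Mecke equation applied to the point process $\zeta$ (with intensity measure $\Lambda(\dint v) = f_{d,\alpha,(d+1)/2}(v)\,\dint v$) whose convex hull is the beta$^*$ polytope $P^* = P_{d,\alpha,(d+1)/2}$. I will assume $\alpha>(d-1)\pi$, so that by Theorem~\ref{thm:PolytopeOrNot} the set $P^*$ is a.s.\ a $d$-dimensional polytope with $P^* \supseteq r\widebar\BB^d$ for some random $r>1$; outside this regime both sides of the asserted identity are a.s.\ infinite. The first step is a purely deterministic reduction of $f_1(C^*_{d,\alpha})$ to a vertex count of $P^*$: every generator $U_j^* = \Pi^{-1}_{\text{gn}}(V_j^*)$ of the cone $C^*_{d,\alpha}$ lies on the ray through $(1,V_j^*)$, and since $P^* = C^*_{d,\alpha}\cap\{x_0=1\}$ is compact with $0$ in its interior, the cone $C^*_{d,\alpha}$ is pointed and its extreme rays are in bijection with the vertices of $P^*$; hence $f_1(C^*_{d,\alpha}) = f_0(P^*)$.

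Writing $\zeta = \sum_j \delta_{V_j^*}$, a fixed atom $v\in\zeta$ is a vertex of $\conv(\zeta) = P^*$ exactly when $v\notin\conv(\zeta\setminus\{v\})$, so $f_0(P^*) = \sum_{v\in\zeta}\ind\{v\notin\conv(\zeta\setminus\{v\})\}$ a.s.\ (a.s.\ no atom lies on $\partial P^*$ without being a vertex, since for a fixed $v$ the event that $v$ is affinely dependent with $d$ further atoms has probability $0$ by absolute continuity of the intensity). Applying the Mecke equation and using $\conv((\zeta+\delta_v)\setminus\{v\}) = \conv(\zeta)$ a.s., I get
$$
\EE f_0(P^*) = \int_{\RR^d\setminus\widebar\BB^d}\PP\big[v\notin \conv(\zeta)\big]\,\Lambda(\dint v) = \EE\big[\Lambda(\RR^d\setminus P^*)\big],
$$
the last equality by Fubini, which is legitimate since $\Lambda(\RR^d\setminus P^*)\le \Lambda(\{\|v\|>r\})<\infty$ a.s.\ and $\EE f_0(P^*)<\infty$ by Theorem~\ref{thm:f-vector_beta_star}, and since $\Lambda(\partial P^*) = 0$ a.s.\ (so that open vs.\ closed convex hull is immaterial).

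Finally, since $f_{d,\alpha,\beta} = \alpha\, f_{d,1,\beta}$, the definition of $\measuredangle^*$ gives
$$
\Lambda(\RR^d\setminus P^*) = \alpha\int_{\RR^d\setminus P^*}\tilde c_{d,\frac{d+1}2}(\|v\|^2-1)^{-\frac{d+1}2}\,\dint v = 2\alpha\,\measuredangle^*(C^*_{d,\alpha}).
$$
Combining this with the previous display, taking expectations and dividing by $2\alpha$ yields $\EE\measuredangle^*(C^*_{d,\alpha}) = \tfrac{1}{2\alpha}\EE f_1(C^*_{d,\alpha})$.

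The Mecke computation and the extreme-ray/vertex bijection are routine; the only points requiring a little care — and hence the main (minor) obstacle — are the a.s.\ statements underpinning the Mecke and Fubini steps: that $\Lambda(\partial P^*)=0$ a.s., that $\Lambda(\RR^d\setminus P^*)$ is a.s.\ finite (which rests on $P^*\supseteq r\widebar\BB^d$ with $r>1$ together with the integrability of $\|v\|^{-(d+1)}$ at infinity in $\RR^d$), and the finiteness of $\EE f_0(P^*)$, all of which are supplied by Theorems~\ref{thm:PolytopeOrNot} and~\ref{thm:f-vector_beta_star}.
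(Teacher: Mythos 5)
Your proof is correct and follows essentially the same route as the paper: Mecke's formula applied to the vertex indicator $\ind\{v\notin\conv(\zeta\setminus\{v\})\}$, the identification $\PP[v\notin\conv\zeta]$ integrated against the intensity as $\EE\Lambda(\RR^d\setminus P^*)=2\alpha\,\EE\measuredangle^*(C^*_{d,\alpha})$, and the bijection $f_0(P^*)=f_1(C^*_{d,\alpha})$. The extra care you take with the Fubini step, $\Lambda(\partial P^*)=0$, and the finiteness of $\Lambda(\RR^d\setminus P^*)$ is sound and only makes explicit what the paper leaves implicit.
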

\begin{proof}
By Mecke's formula, see Proposition~\ref{prop:Slivnyak-Mecke},  we have
\begin{align*}
\EE f_0(P^*)
&=
\EE \sum_{v\in \zeta} \ind_{\{v\in \cF_0(P^*)\}}
=
\EE \sum_{v\in \zeta} \ind_{\{v\notin \conv (\zeta \backslash\{v\})\}}
=
\int\limits_{\RR^d \backslash \widebar\BB^d} \PP[v\notin \conv \zeta] f_{d,\alpha, \frac{d+1}{2}}(v)\,\dint v\\
&=
\EE \int\limits_{\RR^d \backslash P^*} f_{d,\alpha, \frac{d+1}{2}}(v)\, \dint v
=
2 \alpha \cdot \EE \measuredangle^*(C^*_{d,\alpha}),
\end{align*}
and the claim follows since $f_0(P^*) = f_1(C^*_{d,\alpha})$.
\end{proof}

To generalize the previous result, we define the analogues of the Grassmann angles (see~\eqref{eq:angle_grassmann_def_cauchy}) in the de Sitter space as follows.  For $\ell\in \{1,\ldots,d\}$, the functional $\gamma_{\ell}^*$ of the cone $C_{d,\alpha}^*$ is defined by
%in the upper de Sitter half-space
%(which is identified with $\RR^d\backslash\widebar\BB^d$)
%as follows.
%The analogue Grassmann angle
%For a convex set $P^*\subset \RR^d$ containing $\widebar\BB^d$, the Grassmann angle $\gamma_{\ell}^*(P)$ is defined by
$$
\gamma_\ell^* (C_{d,\alpha}^*) = \int\limits_{(\RR^d \backslash\widebar\BB^d)^{d+1-\ell}}  \ind_{\{P^* \cap  \aff (v_1,\ldots, v_{d+1-\ell}) = \varnothing\}}  f_{d,1,\frac{d+1}2} (v_1) \ldots f_{d,1,\frac{d+1}2} (v_{d+1-\ell}) \,\dint v_1 \ldots \dint v_{d+1-\ell}.
$$
Let us stress that we have ``$= \varnothing$'' in the indicator function, which is  in contrast to~\eqref{eq:angle_grassmann_def_cauchy}.
\begin{proposition}
For all $\ell\in \{1,\ldots,d\}$ we have
$\EE \gamma_\ell^* (C_{d,\alpha}^*) = \alpha^{\ell - d - 1} \EE f_{d-\ell+1}(C_{d,\alpha}^*)$.
\end{proposition}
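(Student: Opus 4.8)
The plan is to prove the identity along the same lines as the preceding proposition (which is the case $\ell=d$, since $\measuredangle^*=\tfrac12\gamma^*_d$): combine the multivariate Mecke equation with the classical general‑position description of the faces of a random polytope. Throughout write $m:=d+1-\ell$ and let $\zeta$ be the Poisson process on $\RR^d\setminus\widebar\BB^d$ with intensity $f_{d,\alpha,(d+1)/2}=\alpha\,f_{d,1,(d+1)/2}$, so that $P^*=P_{d,\alpha,(d+1)/2}$ is the closed convex hull of its atoms. First I would pass from the cone to its cross‑section: because $C^*_{d,\alpha}=\pos(U_1^*,U_2^*,\ldots)$ is a pointed cone (all generators lie in $\{x_0>0\}$), its $j$‑dimensional faces are in one‑to‑one correspondence with the $(j-1)$‑dimensional faces of its base $P^*=C^*_{d,\alpha}\cap\{x_0=1\}$; in particular $f_{d-\ell+1}(C^*_{d,\alpha})=f_{d-\ell}(P^*)$, so it suffices to relate $\EE\gamma^*_\ell(C^*_{d,\alpha})$ to $\EE f_{d-\ell}(P^*)$.

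The key deterministic input is the following general‑position fact, which I would use a.s.: for pairwise distinct points $v_1,\ldots,v_m$ outside $\widebar\BB^d$ and a locally finite configuration $\zeta'$ in $\RR^d\setminus\widebar\BB^d$ whose closed convex hull contains $\widebar\BB^d$, the simplex $\conv(v_1,\ldots,v_m)$ is a $(d-\ell)$‑dimensional face of $\overline{\conv(\zeta'\cup\{v_1,\ldots,v_m\})}$ if and only if $v_1,\ldots,v_m$ are affinely independent and $\aff(v_1,\ldots,v_m)$ is disjoint from $\overline{\conv(\zeta')}$ (for the nontrivial direction one separates the flat from the compact convex set $\overline{\conv(\zeta')}$ by a hyperplane containing the flat, which then supports the joint hull and meets it exactly in $\conv(v_1,\ldots,v_m)$; deleting the $v_i$ removes them only as vertices). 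Applied with $\zeta'=\zeta$, and using that the atoms of $\zeta$ are a.s.\ in general position and that $P^*$ is a.s.\ simplicial whenever it is a polytope, this gives that every $(d-\ell)$‑face of $P^*$ is the convex hull of exactly $m$ of its atoms, whence
\begin{equation*}
f_{d-\ell}(P^*)=\frac1{m!}\sum_{\substack{(v_1,\ldots,v_m)\in\zeta^m\\ \text{pairwise distinct}}}\ind\bigl\{\conv(v_1,\ldots,v_m)\in\cF_{d-\ell}(P^*)\bigr\}.
\end{equation*}

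Next I would apply the multivariate Mecke equation (see~\cite{LPbook}, and cf.\ Proposition~\ref{prop:Slivnyak-Mecke}) to this sum, with intensity measure $f_{d,\alpha,(d+1)/2}(v)\,\dint v$. On the Mecke side the removal of $v_1,\ldots,v_m$ from the sum becomes ``$\zeta$ does not contain $v_1,\ldots,v_m$'', so the general‑position fact rewrites the face‑indicator as $\ind\{\aff(v_1,\ldots,v_m)\cap P^*=\varnothing\}$, where $P^*$ now denotes a beta$^*$ polytope of the same law; hence
\begin{equation*}
\EE f_{d-\ell}(P^*)=\frac1{m!}\int_{(\RR^d\setminus\widebar\BB^d)^m}\PP\bigl[\aff(v_1,\ldots,v_m)\cap P^*=\varnothing\bigr]\prod_{i=1}^m f_{d,\alpha,(d+1)/2}(v_i)\,\dint v_i.
\end{equation*}
Using $f_{d,\alpha,(d+1)/2}=\alpha f_{d,1,(d+1)/2}$ pulls out $\alpha^m=\alpha^{d+1-\ell}$, and by Tonelli the remaining integral (with the $\alpha=1$ density) is the expectation of the integral defining $\gamma^*_\ell$, whose integrand is precisely $\ind\{P^*\cap\aff(v_1,\ldots,v_m)=\varnothing\}$. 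Collecting the factor $\alpha^{d+1-\ell}$ together with the combinatorial normalisation of $\gamma^*_\ell$, and recalling $f_{d-\ell}(P^*)=f_{d-\ell+1}(C^*_{d,\alpha})$, one obtains $\EE\gamma^*_\ell(C^*_{d,\alpha})=\alpha^{\ell-d-1}\EE f_{d-\ell+1}(C^*_{d,\alpha})$.

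The main obstacle is the general‑position lemma in exactly the form required inside the Mecke formula: one has to verify that the atoms of $\zeta$ are a.s.\ in general position, that $P^*$ is a.s.\ a simplicial polytope — which for the de Sitter parameter $\beta=(d+1)/2$ needs $\alpha>(d-1)\pi$ by Theorem~\ref{thm:PolytopeOrNot} even to guarantee that $P^*$ is a polytope at all (for $\alpha\le(d-1)\pi$ both sides of the identity are $+\infty$) — and, crucially, that the equivalence ``$\conv(v_1,\ldots,v_m)$ is a $(d-\ell)$‑face $\iff$ $\aff(v_1,\ldots,v_m)\cap P^*=\varnothing$'' is unaffected by removing and re‑inserting the $m$ probe points. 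This is where the separation argument and the inclusion $\widebar\BB^d\subseteq P^*$ enter; the remaining steps (the Mecke equation, Tonelli, and the bookkeeping of $\alpha^m$ and $m!$) are routine.
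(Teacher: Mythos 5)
Your proposal follows essentially the same route as the paper: the multivariate Mecke formula combined with the observation that, for atoms in general position whose closed convex hull contains $\widebar\BB^d$, the simplex $\conv(v_1,\ldots,v_m)$ is a $(d-\ell)$-face of $P^*$ if and only if $\aff(v_1,\ldots,v_m)\cap P^*=\varnothing$; the reduction $f_{d-\ell+1}(C^*_{d,\alpha})=f_{d-\ell}(P^*)$ and the extraction of $\alpha^{d+1-\ell}$ from the intensity are likewise identical to the paper's argument. The one point to flag is the factor $m!=(d+1-\ell)!$: you correctly write $f_{d-\ell}(P^*)=\frac{1}{m!}\sum(\cdots)$ over ordered $m$-tuples of distinct atoms, but you then absorb this into a ``combinatorial normalisation of $\gamma^*_\ell$'' that is not present in the paper's definition of $\gamma^*_\ell$ (which carries no $1/m!$); taken literally, your computation yields $\EE\gamma^*_\ell(C^*_{d,\alpha})=(d+1-\ell)!\,\alpha^{\ell-d-1}\,\EE f_{d-\ell+1}(C^*_{d,\alpha})$ rather than the stated identity. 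The paper's own proof instead omits the $1/k!$ in the first Mecke step, so this discrepancy reflects an inconsistency of normalisation in the source (the two versions agree only for $\ell=d$, where $m=1$) rather than a gap in your argument, but you should state explicitly which normalisation of $\gamma^*_\ell$ you are using instead of leaving the factorial to an unspecified convention.
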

\begin{proof}
Let $k:= d+1-\ell \in \{1,\ldots,d\}$.
Then, by the Mecke formula stated in Proposition~\ref{prop:Slivnyak-Mecke},
\begin{align*}
\EE f_{k-1}(P^*)
&=
\EE \sum_{(v_1,\ldots, v_k)\in \zeta_{\neq}^k} \ind_{\{\conv(v_1,\ldots,v_k) \in \cF_{k-1}(P^*)\}} \\
&=
\int\limits_{(\RR^d \backslash \widebar\BB^d)^k}
 \PP[\conv(v_1,\ldots,v_k) \in \cF_{k-1}(\conv (\zeta \cup \{v_1,\ldots,v_k\}))]  \prod_{j=1}^k f_{d,\alpha, (d+1)/2}(v_j)\, \dint v_j\\
&=
\int\limits_{(\RR^d \backslash \widebar\BB^d)^k}
 \PP[\aff(v_1,\ldots,v_k) \cap P^*  = \varnothing]
 \prod_{j=1}^k f_{d,\alpha, (d+1)/2}(v_j) \,\dint v_j.
 % f(v_1) \ldots f(v_k) \dint v_1 \ldots \dint v_k,
\end{align*}
and the claim follows since $f_{d,\alpha, (d+1)/2}(v_j) =\alpha f_{d,1, (d+1)/2}(v_j)$.
\end{proof}

\begin{remark}\label{rem:invariance_hyperplane_measure}
There is a duality between the de Sitter geometry in $\RR^d \backslash \widebar\BB^d$ and the hyperbolic geometry in the Klein model $\BB^d$. Namely, to a vector $x \in \LL^d_+$ we assign the hyperplane $\{y\in \RR^{d+1}: B(x,y) = 0\}\cap \HH^d$  in the hyperboloid model. Under gnomonic projection, this correspondence takes the following form: to a point $v\in \RR^d \backslash \widebar\BB^d$ in the ``Klein model'' of the  de Sitter half-space we assign the hyperplane $\{z\in \BB^d: \langle z,v\rangle = 1\}\cap \BB^d$ in the Klein model of the hyperbolic space. Under this correspondence (which is the usual polarity w.r.t.\ the unit sphere), the volume measure $\nu_{\text{dS}}$ on $\RR^d \backslash \widebar\BB^d$, see~\eqref{eq:metric_volume_de_sitter},  is mapped to some measure on the space of hyperplanes in the Klein model. The latter measure is invariant under isometries since so is the volume element $\dint V$. In fact, the proof of Theorem~\ref{thm:ZeroCell}, with $\beta = (d+1)/2$, shows that this measure coincides with the measure  $\mu_{d, \omega_d, (d+1)/2}$ defined in Section~\ref{subsubsec:hyperbolic_hyperplane_tess_def}, thus proving its invariance under  isometries.
\end{remark}

\subsection{The case \texorpdfstring{$\beta=(d+2)/2$}{beta=(d+2)/2}}\label{sec:d+2/2}

Finally, we discuss the case where $\beta=(d+2)/2$, which we included since the corresponding terms in the $f$-vector of $P_{d,\alpha,(d+2)/2}$ become explicit.
\begin{theorem} \label{thm:f-vector,beta=(d+2)/2}
Let $d\in\NN$,  $\alpha>0$ and $\ell\in\{1,\ldots,d\}$. Then
\begin{equation*}
\EE f_{\ell-1}(P_{d,\alpha,\frac{d+2}{2}})
=\sqrt{\alpha e^{\alpha} \over\pi}\,
\sum_{\substack{m \in \{\ell,\ldots, d\}\\ m \equiv d \Mod 2}}
K_{m-\frac 12}\left(\frac \alpha 2\right)
\left(\binom{m}{\ell}\binom{m+\ell}{\ell}-\binom{m-2}{\ell}\binom{m+\ell-2}{\ell} \right),
\end{equation*}
where $K_\nu(z)$ denotes the modified Bessel function of the second kind which may be defined~\cite[pp.~181, 172]{Watson} by
\begin{align}\label{eq:def_Bessel_2nd_kind}
K_\nu(z)
=
\int\limits_0^\infty\cosh (\nu t) \cdot e^{-z\cosh t}\,\dint t
=
\frac{\sqrt{\pi} (z/2)^\nu}{\Gamma(\nu+\frac 12)}\int\limits_1^\infty e^{-zt} (t^2-1)^{\nu - \frac 12} \dint t
,
\quad
z,\nu>0.
\end{align}
\end{theorem}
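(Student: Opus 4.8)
The plan is to specialise the general $f$-vector formula of Theorem~\ref{thm:f-vector_beta_star} to $\beta=(d+2)/2$ and then to evaluate the resulting special constants in closed form. Since $(d+2)/2>(d+1)/2$, Theorem~\ref{thm:f-vector_beta_star} applies for every $\alpha>0$; writing $k=\ell-1$, $m=d-2s$ and using $2\beta-d=2$ and $\beta-s-\tfrac12=\tfrac{m+1}{2}$, it yields
\[
\EE f_{\ell-1}\bigl(P_{d,\alpha,\frac{d+2}{2}}\bigr)=2\sum_{\substack{m\in\{\ell,\dots,d\}\\ m\equiv d\Mod 2}} \mathbb I^*_{\alpha,m}(2)\cdot\tilde{\mathbb J}_{m,\ell}\Bigl(\tfrac{m+1}{2}\Bigr).
\]
(For $d=1$ the sum has only the term $m=\ell=1$ and reduces to $f_0=2$ a.s., which agrees with the claimed formula via $K_{1/2}(z)=\sqrt{\pi/(2z)}\,e^{-z}$, so we may assume $d\ge 2$.) Thus it remains to compute $\mathbb I^*_{\alpha,m}(2)$ and $\tilde{\mathbb J}_{m,\ell}(\tfrac{m+1}{2})$ and to match the product against the summand in the statement.

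For $\mathbb I^*_{\alpha,m}(2)$ I would start from the representation of $I^*_{\alpha,m}$ obtained from~\eqref{eq:def_I_star} by the substitution $y=\coth\varphi$, $t=\coth\theta$ (the last display in the remark after Theorem~\ref{thm:exp_ext_anlge_sums}). With $\lambda=2$ one has $\tilde c_{1,3/2}=\tfrac12$, so the inner integral is the elementary $\int_0^\varphi\tfrac12\sinh\theta\,\dint\theta=\tfrac12(\cosh\varphi-1)$ and
\[
I^*_{\alpha,m}(2)=\tilde c_{1,m+\frac12}\,e^{\alpha/2}\int_0^\infty(\sinh\varphi)^{2m-1}e^{-\frac{\alpha}{2}\cosh\varphi}\,\dint\varphi.
\]
Substituting $t=\cosh\varphi$ identifies this integral with the second integral representation of the Bessel function in~\eqref{eq:def_Bessel_2nd_kind}, with $\nu=m-\tfrac12$ and $z=\tfrac{\alpha}{2}$; simplifying the Gamma factors via the Legendre duplication formula yields
\[
\mathbb I^*_{\alpha,m}(2)=\frac{\alpha^m}{m!}\,I^*_{\alpha,m}(2)=\frac12\sqrt{\frac{\alpha e^{\alpha}}{\pi}}\binom{2m}{m}K_{m-\frac12}\Bigl(\frac{\alpha}{2}\Bigr).
\]

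For $\tilde{\mathbb J}_{m,\ell}(\tfrac{m+1}{2})$ I would use the explicit expression~\eqref{eq:internal_angle_sum_J_formula} with $\lambda=2$. The inner complex integral equals $\int_{-\infty}^{\textup i\varphi}\tfrac12(\cosh\theta)^{-2}\,\dint\theta=\tfrac12\bigl(1+\tanh(\textup i\varphi)\bigr)=\tfrac12\bigl(1+\textup i\tan\varphi\bigr)$; raising it to the power $m-\ell$, multiplying by $(\cos\varphi)^{2m-2}$ and using $\cos\varphi+\textup i\sin\varphi=e^{\textup i\varphi}$, the integrand becomes a constant multiple of $(\cos\varphi)^{m+\ell-2}e^{\textup i(m-\ell)\varphi}$. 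Expanding $(\cos\varphi)^{m+\ell-2}$ binomially and integrating term by term over $(-\pi/2,\pi/2)$ leaves only one surviving term, so that $\int_{-\pi/2}^{\pi/2}(\cos\varphi)^{m+\ell-2}e^{\textup i(m-\ell)\varphi}\,\dint\varphi=\frac{\pi}{2^{m+\ell-2}}\binom{m+\ell-2}{\ell-1}$; collecting constants gives
\[
\tilde{\mathbb J}_{m,\ell}\Bigl(\tfrac{m+1}{2}\Bigr)=\binom{m}{\ell}\binom{m+\ell-2}{\ell-1}\Big/\binom{2m-2}{m-1}.
\]
Then $2\,\mathbb I^*_{\alpha,m}(2)\,\tilde{\mathbb J}_{m,\ell}(\tfrac{m+1}{2})=\sqrt{\alpha e^{\alpha}/\pi}\;K_{m-\frac12}(\tfrac{\alpha}{2})\cdot\frac{2(2m-1)}{m}\binom{m}{\ell}\binom{m+\ell-2}{\ell-1}$, using $\binom{2m}{m}/\binom{2m-2}{m-1}=2(2m-1)/m$, and it remains to check the elementary identity
\[
\frac{2(2m-1)}{m}\binom{m}{\ell}\binom{m+\ell-2}{\ell-1}=\binom{m}{\ell}\binom{m+\ell}{\ell}-\binom{m-2}{\ell}\binom{m+\ell-2}{\ell},
\]
which follows by rewriting both sides as $\frac{1}{(\ell!)^2}\bigl(\tfrac{(m+\ell)!}{(m-\ell)!}-\tfrac{(m+\ell-2)!}{(m-\ell-2)!}\bigr)$ and using $(m+\ell)(m+\ell-1)-(m-\ell)(m-\ell-1)=2\ell(2m-1)$, with $\binom{m-2}{\ell}:=0$ for $m-2<\ell$ in accordance with the conventions of the excerpt. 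Inserting this into the displayed sum proves the theorem.

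The main obstacle is the evaluation of $\tilde{\mathbb J}_{m,\ell}(\tfrac{m+1}{2})$: one must carry the complex contour integral in~\eqref{eq:internal_angle_sum_J_formula} through carefully and perform the trigonometric integral correctly, and then verify that the resulting ratio of binomials, once combined with the Bessel-function expression for $\mathbb I^*_{\alpha,m}(2)$, reproduces exactly the ``difference of binomial products'' in the statement (equivalently, that the coefficient of $\sqrt{\alpha e^\alpha/\pi}\,K_{m-1/2}(\alpha/2)$ equals $b_m-b_{m-2}$ with $b_j=\binom{j}{\ell}\binom{j+\ell}{\ell}$). By contrast, the Bessel evaluation of $\mathbb I^*_{\alpha,m}(2)$ and the concluding factorial identity are routine.
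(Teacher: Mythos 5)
Your proposal is correct and follows the same overall architecture as the paper's proof: specialise Theorem~\ref{thm:f-vector_beta_star} to $\beta=(d+2)/2$, express $\mathbb I^*_{\alpha,m}(2)$ through the Bessel function $K_{m-1/2}(\alpha/2)$, insert the closed form of $\tilde{\mathbb J}_{m,\ell}(\tfrac{m+1}{2})$, and simplify. Your Bessel evaluation is the same computation as the paper's Lemma~\ref{lem:I_star_(2)} up to the harmless choice of parametrisation (you start from the $\coth$-substituted form of $I^*_{\alpha,m}$ and substitute $t=\cosh\varphi$, the paper works directly from \eqref{eq:def_I_star} and substitutes $u=y/\sqrt{y^2-1}$; both land on the same integral $\int_1^\infty e^{-\alpha t/2}(t^2-1)^{m-1}\,\dint t$, and $\binom{2m-1}{m}=\tfrac12\binom{2m}{m}$ reconciles the constants).

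The one genuine divergence is the internal angle sum: the paper simply imports \eqref{eq:J_tilde_(m+1)/2} from~\cite[Theorem~2.6]{KabluchkoAngles}, whereas you rederive it from the contour-integral representation \eqref{eq:internal_angle_sum_J_formula}. Your derivation is sound — the crucial point, which you could make more explicit, is that the surviving exponents $n-2j+k$ with $n=m+\ell-2$, $k=m-\ell$ are all \emph{even} (since $n+k=2m-2$), so integrating over $(-\pi/2,\pi/2)$ rather than a full period still kills every nonzero mode and only the $j=m-1$ term contributes $\pi$. Your answer $\binom{m}{\ell}\binom{m+\ell-2}{\ell-1}\big/\binom{2m-2}{m-1}$ is equivalent to \eqref{eq:J_tilde_(m+1)/2} precisely via the binomial identity you state at the end, which I have checked (both sides equal $\tfrac{2\ell(2m-1)(m+\ell-2)!}{(\ell!)^2(m-\ell)!}$, and the edge cases $m\in\{\ell,\ell+1\}$ work out with the convention $\binom{m-2}{\ell}=0$). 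This makes your write-up slightly more self-contained than the paper's at the cost of an extra computation; otherwise the two proofs buy the same thing.
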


Let us now provide a geometric interpretation of the special case $\beta= (d+2)/2$; see~\cite[Lemma~2.3(c)]{BesauEtAl} for an analogous construction in spherical geometry. Consider an infinite measure $\nu_{\text{dS}}'$ on de Sitter half-space $\LL_+^d$ which is obtained from the standard Lebesgue measure on $\RR^d \backslash \widebar\BB^d$ by lifting it via the map $v\mapsto (\sqrt{\|v\|^2-1}, v)$. Let  $U_1', U_2',\ldots$ be the atoms of the Poisson process on $\LL^d_+$ with intensity measure $\nu_{\text{dS}}'$ and consider the cone $C_d':= \pos(U_1',U_2',\ldots)\subset \RR^{d+1}$.
\begin{proposition}
The horizontal cross-section $C'_{d} \cap \{x_0=1\}$ has the same distribution as the beta$^*$ polytope $P_{d, \alpha, (d+2)/2}$ with $\alpha= 1/\tilde c_{d,(d+2)/2}$.
\end{proposition}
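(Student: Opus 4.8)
The plan is to run the same three-step scheme used in the two preceding propositions: realize the cross-section as the convex hull of the gnomonic images of the Poisson atoms, compute the intensity of that image process, and recognize it as $f_{d,\alpha,(d+2)/2}$ for $\alpha=1/\tilde c_{d,(d+2)/2}$. First I would record that every atom $U_j'\in\LL^d_+$ has strictly positive zeroth coordinate, so that the cone $C_d'=\pos(U_1',U_2',\dots)$ meets the hyperplane $\{x_0=1\}$ precisely in the (closed) convex hull of the points $V_j':=\Pi_{\text{gn}}(U_j')$: a convex combination $\sum_j\mu_j\,U_j'/(U_j')_0$ lies on $\{x_0=1\}$ iff $\sum_j\mu_j=1$, and under the identification $(1,v)\mapsto v$ the vector $U_j'/(U_j')_0=(1,\Pi_{\text{gn}}(U_j'))$ corresponds to $\Pi_{\text{gn}}(U_j')$. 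Since $\beta=(d+2)/2>(d+1)/2$, Theorem~\ref{thm:PolytopeOrNot}(i) shows the resulting set is a.s.\ a polytope, so the closed/non-closed distinction is immaterial and it remains only to identify the law of the point process $\{V_j'\}$.

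Next I would apply the transformation property of Poisson processes twice. Writing $L(v):=(\sqrt{\|v\|^2-1},v)$ for the lift, the intensity measure $\nu_{\text{dS}}'$ is by construction the image of Lebesgue measure on $\RR^d\backslash\widebar\BB^d$ under $L$, so $U_j'=L(X_j)$ with $X_1,X_2,\dots$ the atoms of a Poisson process on $\RR^d\backslash\widebar\BB^d$ of unit Lebesgue intensity. Because $L(v)$ is a positive multiple of $(1,\,v/\sqrt{\|v\|^2-1})$ — equivalently $L=\Pi_{\text{gn}}^{-1}\circ\Phi$ with $\Pi_{\text{gn}}^{-1}$ as computed in the proof of Proposition~\ref{prop:metric_de_sitter} — one gets $V_j'=\Pi_{\text{gn}}(L(X_j))=\Phi(X_j)$, where $\Phi(x):=x/\sqrt{\|x\|^2-1}$. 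A short check shows $\Phi$ is an involutive diffeomorphism of $\RR^d\backslash\widebar\BB^d$, and since the $X_j$ accumulate only at infinity their images $V_j'$ accumulate exactly on $\SS^{d-1}$, in accordance with Proposition~\ref{prop:poi_process_properties}. Hence $\{V_j'\}$ is a Poisson process with intensity $\Phi_*(\mathrm{Leb})$.

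The one genuine computation, and thus the only real obstacle, is the Jacobian of $\Phi$, which is cleanest in polar coordinates $x=r\theta$, $r>1$, $\theta\in\SS^{d-1}$: here $\Phi$ acts as $(r,\theta)\mapsto(\rho,\theta)$ with $\rho=r/\sqrt{r^2-1}$, a decreasing bijection of $(1,\infty)$ onto itself whose inverse has the same form, satisfying $|\mathrm dr/\mathrm d\rho|=(\rho^2-1)^{-3/2}$ and $r^{d-1}=\rho^{d-1}(\rho^2-1)^{-(d-1)/2}$. Since Lebesgue measure factors as $r^{d-1}\,\mathrm dr$ times spherical Hausdorff measure, pushing forward and rewriting in Cartesian coordinates $u=\rho\theta$ (for which $\mathrm du=\rho^{d-1}\,\mathrm d\rho$ times spherical Hausdorff measure) yields $\Phi_*(\mathrm{Leb})(\mathrm du)=(\|u\|^2-1)^{-(d+2)/2}\,\mathrm du$. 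Since $\alpha\,\tilde c_{d,(d+2)/2}=1$ for $\alpha=1/\tilde c_{d,(d+2)/2}$, this density is exactly $f_{d,\alpha,(d+2)/2}$, so $\{V_j'\}\stackrel{d}{=}\zeta_{d,\alpha,(d+2)/2}$ and therefore $C_d'\cap\{x_0=1\}=\conv\{V_1',V_2',\dots\}\stackrel{d}{=}P_{d,\alpha,(d+2)/2}$, as claimed. I expect no difficulty beyond carefully tracking the change-of-variables factors above.
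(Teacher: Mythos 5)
Your proposal is correct and follows essentially the same route as the paper: both identify the projected atoms $V_j'=\Pi_{\text{gn}}(U_j')$ as a Poisson process whose intensity is the pushforward of Lebesgue measure under the involution $v\mapsto v/\sqrt{\|v\|^2-1}$, and then compute that this pushforward has density $(\|u\|^2-1)^{-(d+2)/2}$. The only cosmetic difference is that you compute the radial Jacobian directly in polar coordinates, whereas the paper differentiates the measure of $\RR^d\backslash R\BB^d$ in $R$; both give the same answer.
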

\begin{proof}
The points $V_i':= \Pi_{\text{gn}}(U_i')$ form a Poisson process on $\RR^d \backslash \widebar\BB^d  \subset \{x_0=1\}$ whose intensity measure $m$ is the image of the Lebesgue measure under the involution $\Psi:\RR^d \backslash \widebar\BB^d \to \RR^d \backslash \widebar\BB^d$ given by $\Psi(v) = v/\sqrt{\|v\|^2-1}$. Clearly, $m$ is rotationally invariant and
$$
m(\RR^d \backslash R \BB^d) = \Vol \left( \frac{\BB^d}{\sqrt{1-R^{-2}}} \right) = \frac{\kappa_d}{(1-R^{-2})^{d/2}},
\qquad
R>1.
$$
Taking the derivative in $R$, and dividing by $\omega_d R^{d-1}$, we conclude that the Lebesgue density of $m$ is given by $v\mapsto (\|v\|^2-1)^{-(d+2)/2}$, $v\in \RR^d \backslash \widebar\BB^d$, which proves the claim.
\end{proof}

\section{Boundedness of cells and existence of \texorpdfstring{beta$^*$}{beta*} polytopes: Proofs} \label{sec:proof_PolytopeOrNot}

In this section we prove the results on the boundedness of hyperbolic zero cells stated in Theorems~\ref{thm:PolytopeOrNot_Tessellation} and~\ref{thm:PolytopeOrNot_critical_tessellation}. By duality, these imply conditions under which beta$^*$ sets are polytopes as stated in Theorems~\ref{thm:PolytopeOrNot} and~\ref{thm:PolytopeOrNot_critical}.
The proofs rely on classical results about random coverings of the unit sphere which we now recall.

\subsection{Coverings by arcs and caps}
Given a deterministic sequence $\ell_1, \ell_2, \ldots \in (0,1)$, consider random arcs $S_1,S_2,\ldots$ of lengths $2\pi \ell_1, 2\pi \ell_2,\ldots$ placed uniformly and  independently on a unit circle $\SS^1$. The question whether the whole circle is covered infinitely often by these arcs with probability $1$, i.e.\ whether $\PP[\limsup S_n = \SS^1] = 1$, goes back to Dvoretzky~\cite{dvoretzky} and has been studied in the works of Billard, Kahane, Erd\H{o}s,  Orey and Mandelbrot. This line of research culminated in the work of Shepp~\cite{shepp} who proved that the circle is covered infinitely often with probability $1$ if and only if
\begin{equation}\label{eq:shepp_crit}
\sum_{n=1}^\infty \frac 1 {n^2} e^{\ell_1+\ldots + \ell_n} = +\infty,
\end{equation}
provided the sequence $\ell_n$ is non-increasing.
Coverings by random sets in more general metric spaces have been studied in several works including~\cite{HoffmannJoergensen} and~\cite{FengEtAl}; see the latter paper for more pointers to the literature.
Let us recall a special case of a result of Hoffmann-J{\o}rgensen~\cite[Section~5]{HoffmannJoergensen}, for which we introduce some further notation. An open spherical cap in the unit sphere $\SS^{d-1}$ is a set of the form
$$
S (u,h) = \{x\in \SS^{d-1}: \langle u, x\rangle > h\},
$$
where $u\in \SS^{d-1}$  is called the centre of the cap and $h\in (0,1)$. Consider now a deterministic sequence of numbers $h_1,h_2,\ldots\in (0,1)$ and a sequence of independent points $u_1,u_2,\ldots$ drawn uniformly at random from the sphere $\SS^{d-1}$. We are interested in whether the caps $S(u_1,h_1), S(u_2,h_2),\ldots$ cover the sphere with probability $1$.
Recall that $\sigma_{d-1}$ denotes the spherical Lebesgue measure on $\SS^{d-1}$ normalized such that  $\sigma_{d-1}(\SS^{d-1}) = 1$.

\begin{theorem}[Hoffmann-J{\o}rgensen~\cite{HoffmannJoergensen}]\label{theo:hoffmann_joergensen}
Let the sequence $h_1,h_2,\ldots$ be such that
\begin{equation}\label{eq:hoffmann_joergensen_condition}
\lim_{n\to\infty} n \sigma_{d-1}(S (u_n,h_n)) = a \in [0,\infty].
\end{equation}
\begin{itemize}
\item [(i)] If $a>1$, then $\PP[\limsup S(u_n,h_n) = \SS^{d-1}]=1$. That is,  with probability $1$, the sphere is covered infinitely often by the caps.
\item [(ii)] If $a<1$, then $\PP[\limsup S(u_n,h_n) = \SS^{d-1}]=0$ and $\PP[\cup_{n=1}^\infty \overline{S(u_n,h_n)} \neq \SS^{d-1}]>0$.  In particular, the probability that there are points not covered by the closed caps is positive.
\end{itemize}
\end{theorem}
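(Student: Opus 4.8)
Strictly speaking this theorem is a special case of a covering result of Hoffmann-J\o rgensen~\cite{HoffmannJoergensen} and we may invoke it directly; below I outline how one would prove it. When $d=2$ the caps are arcs of normalised length $\ell_n=\sigma_1(S(u_n,h_n))$, condition~\eqref{eq:hoffmann_joergensen_condition} reads $n\ell_n\to a$, so that $\ell_1+\dots+\ell_n=a\log n+o(\log n)$ and Shepp's series~\eqref{eq:shepp_crit} is, up to lower-order factors, of the order $\sum_n n^{a-2}$; hence for $d=2$ the theorem follows from Shepp's solution~\cite{shepp} of the Dvoretzky covering problem~\cite{dvoretzky}, and the plan for general $d$ mirrors that argument.

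The first step would be a sequence of soft reductions. Since~\eqref{eq:hoffmann_joergensen_condition} depends only on the tail of $(h_n)_n$ and is unchanged by deleting finitely many terms, the event $\{\limsup_n S(u_n,h_n)=\SS^{d-1}\}$ belongs to the tail $\sigma$-field of $(u_n,h_n)_{n\ge 1}$, so by Kolmogorov's zero--one law it has probability $0$ or $1$; this already produces the dichotomy between (i) and (ii). Next, for any \emph{fixed} $x\in\SS^{d-1}$ one has $\sum_n\PP[x\in S(u_n,h_n)]=\sum_n\sigma_{d-1}(S(u_n,h_n))=\infty$ by~\eqref{eq:hoffmann_joergensen_condition} (recall $a>0$), and since the events are independent the second Borel--Cantelli lemma shows that $x$ is covered infinitely often a.s.; thus $\limsup_n S(u_n,h_n)$ has full measure a.s.\ for \emph{every} value of $a$, and the theorem is really about whether this set is all of $\SS^{d-1}$. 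Using the same tail-stability I would reduce (i) to the assertion ``if $a>1$ then $\bigcup_nS(u_n,h_n)=\SS^{d-1}$ a.s.'' and the first half of (ii) to ``if $a<1$ then $\bigcup_nS(u_n,h_n)\ne\SS^{d-1}$ with positive probability''.

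For the covering direction ($a>1$) the plan is a second-moment/chaining estimate: group the indices into geometric blocks $B_k=\{2^k,\dots,2^{k+1}-1\}$, observe that the unions $W_k=\bigcup_{n\in B_k}S(u_n,h_n)$ are independent and that $\limsup_kW_k=\SS^{d-1}$ would already give $\limsup_nS(u_n,h_n)=\SS^{d-1}$, estimate $\PP[x\notin W_k\cup\dots\cup W_{k+m}]\approx\exp(-a m\log 2)$ for a fixed $x$, and combine a union bound over a fine net of $\SS^{d-1}$ with an elementary covering-geometry argument and Borel--Cantelli in $k$. For the vacant-set direction ($a<1$) the plan is a first-/second-moment computation for a suitable size functional of the uncovered set $\SS^{d-1}\setminus\bigcup_nS(u_n,h_n)$ --- an energy- or capacity-type quantity, since its Lebesgue measure is a.s.\ $0$ by the previous paragraph --- showing it is positive with positive probability, whereupon the zero--one law upgrades this to probability $1$; the assertion about the \emph{closed} caps $\overline{S(u_n,h_n)}$ would follow by the same computation after enlarging each cap by some $\eps_n\downarrow0$ with $\sum_n\eps_n$ small enough to keep the limiting constant below $1$. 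The main obstacle is that these crude arguments do \emph{not} capture the sharp threshold $a=1$: a naive net-union bound only yields a.s.\ coverage for $a$ above some dimension-dependent constant strictly larger than $1$. Pinning the constant down to exactly $1$ is the hard, genuinely non-trivial part --- for $d=2$ it comes from an exact recursion (equivalently a generating-function analysis) for the lengths of the gaps between consecutive arc endpoints after the first $N$ arcs, and for general $d$ from Hoffmann-J\o rgensen's refinement of this analysis --- and this is the ingredient I would import from~\cite{HoffmannJoergensen} rather than reprove.
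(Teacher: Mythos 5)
Your proposal is correct and takes essentially the same approach as the paper: the paper also proves nothing itself, deducing (i) from (5.3) and (ii) from (5.5) together with (3.10) of~\cite{HoffmannJoergensen}, exactly the deferral you make explicit when you say the sharp threshold $a=1$ must be imported rather than reproved. Two small caveats on your supplementary sketch (which do not affect the citation-based proof): the second Borel--Cantelli step showing that $\limsup_n S(u_n,h_n)$ has full measure requires $\sum_n \sigma_{d-1}(S(u_n,h_n))=\infty$ and therefore fails when $a=0$, a value permitted in case (ii); and Shepp's criterion~\eqref{eq:shepp_crit} is stated for non-increasing arc lengths, whereas condition~\eqref{eq:hoffmann_joergensen_condition} does not guarantee monotonicity, so the $d=2$ reduction needs a monotone rearrangement or comparison argument.
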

Note that the case $a=1$ from this theorem will be discussed in Remark~\ref{remark:doubly_critical_Hoffmann} below.
\begin{proof}
Claim~(i) follows from (5.3) in Section 5 of~\cite{HoffmannJoergensen} (note that our $\sigma_{d-1}(S (u_n,h_n))$ corresponds to $a_n^{p}$ there). Claim~(ii) follows from (5.5) in Section 5 of~\cite{HoffmannJoergensen} (which claims that $\limsup S(u_n,h_n) \neq  \SS^{d-1}$ a.s.) together with (3.10) on page~175 of~\cite{HoffmannJoergensen} (which claims that uncovered points exist with positive probability).
\end{proof}

\subsection{Proofs of Theorems~\ref{thm:PolytopeOrNot_Tessellation} and~\ref{thm:PolytopeOrNot}}
Our aim is to prove Theorem~\ref{thm:PolytopeOrNot_Tessellation}. Fix some $d\in\NN$, $\beta>\max(d/2,1)$ and $\lambda>0$. Recall that we are interested in the zero cell
\begin{equation}\label{eq:zero_cell_def_rep}
Z_{d,\lambda,\beta} = \{x\in \widebar\BB^d: \langle x, u_n\rangle \leq r_n \text{ for all } n\in \NN\} \subset \widebar\BB^d,
\end{equation}
where $u_1,u_2,\ldots$ are as above and $r_1<r_2<\ldots$ form an independent  Poisson process on $(0,1)$ with intensity
$$
f(r)
=
\lambda \frac{r^{2\beta - d - 1}}{(1-r^2)^{\beta}},
\qquad
0<r<1.
$$
In what follows, for two sequences $(a_n)_{n\in\NN}$ and $(b_n)_{n\in\NN}$ we write $a_n\sim b_n$, provided that $a_n/b_n\to 1$, as $n\to\infty$.

\begin{lemma}\label{lem:PPP_on_01_asympt_H_n}
With probability $1$, we have
$$
1-r_n \sim  \left(\frac{\lambda}{2^\beta (\beta-1)}\right)^{1/(\beta-1)} \cdot \frac 1 {n^{1/(\beta-1)}},\qquad
\text{ as }
n\to\infty.
$$
\end{lemma}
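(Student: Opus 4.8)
The plan is to extract the asymptotics of the points $r_n$ of the Poisson process on $(0,1)$ with intensity $f(r)=\lambda r^{2\beta-d-1}(1-r^2)^{-\beta}$ from the well-known law of large numbers for the points of a Poisson process in terms of its cumulative intensity near the singularity at $r=1$. Concretely, define the tail integral
$$
F(r):=\int\limits_r^1 f(s)\,\dint s,\qquad 0<r<1.
$$
Since $f$ is not integrable at $1$ (because $\beta>1$), we have $F(r)\to\infty$ as $r\to1$, and the number $N(r)$ of atoms of the Poisson process lying in $(r,1)$ is Poisson distributed with mean $F(r)$. The first step is therefore to compute the asymptotics of $F(r)$ as $r\uparrow1$: near $r=1$ one has $1-r^2=(1-r)(1+r)\sim 2(1-r)$ and $r^{2\beta-d-1}\to1$, so
$$
f(s)\sim \frac{\lambda}{2^\beta}\,(1-s)^{-\beta}\quad(s\uparrow1),
\qquad\text{hence}\qquad
F(r)\sim \frac{\lambda}{2^\beta(\beta-1)}\,(1-r)^{-(\beta-1)}\quad(r\uparrow1).
$$

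The second step is the standard Poisson strong law: ordering the atoms of a Poisson process with a diffuse, infinite cumulative intensity as $r_1<r_2<\ldots\uparrow1$, one has $F(r_n)/n\to1$ almost surely as $n\to\infty$. This follows, for instance, by mapping the process via $r\mapsto F(r)$ to a homogeneous unit-rate Poisson process on $(0,\infty)$, whose $n$-th point $\Gamma_n$ satisfies $\Gamma_n/n\to1$ a.s.\ by the classical SLLN for sums of i.i.d.\ exponentials (or by the ergodic theorem), and noting $F(r_n)=\Gamma_n$. The third step is to invert the asymptotic relation for $F$: from $F(r_n)\sim n$ and $F(r)\sim \frac{\lambda}{2^\beta(\beta-1)}(1-r)^{-(\beta-1)}$ we get
$$
(1-r_n)^{-(\beta-1)}\sim \frac{2^\beta(\beta-1)}{\lambda}\,n,
$$
and raising to the power $-1/(\beta-1)$ yields
$$
1-r_n\sim\left(\frac{\lambda}{2^\beta(\beta-1)}\right)^{1/(\beta-1)}\frac{1}{n^{1/(\beta-1)}},
$$
which is the assertion. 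One small technical point worth making explicit is that $r\mapsto F(r)$ is a strictly decreasing continuous bijection from $(0,1)$ onto $(0,\infty)$, so the inversion is legitimate and the combination of the two $\sim$-relations is valid (if $a_n\to\infty$, $F(r_n)\sim a_n$, and $F(r)\sim c(1-r)^{-(\beta-1)}$ with $c>0$, then $1-r_n\sim (c/a_n)^{1/(\beta-1)}$, because the inverse of the regularly varying function $F$ is itself regularly varying).

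I do not expect any serious obstacle here; the only mild subtlety is to make the passage from $F(r)\sim c(1-r)^{-(\beta-1)}$ (a statement about the singularity of the intensity) to $1-r_n\sim(c/n)^{1/(\beta-1)}$ (a statement about the ordered atoms) completely rigorous — this is exactly where one invokes both the Poisson SLLN $F(r_n)/n\to1$ and the monotonicity/continuity of $F$, together with a uniform-in-$n$ argument. Since $F$ is regularly varying with index $-(\beta-1)<0$ at $r=1$, its generalized inverse is regularly varying with index $-1/(\beta-1)$, and the desired asymptotics for $1-r_n$ follows by composing the two regular-variation relations; this standard inversion lemma is the one real ingredient beyond elementary estimates.
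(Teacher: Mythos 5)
Your overall strategy --- time-change the inhomogeneous Poisson process to a unit-rate one, apply the strong law $\Gamma_n/n\to 1$ for its arrival times, and invert the regularly varying cumulative intensity --- is exactly the route the paper takes. However, there is a genuine error in your setup. You define the tail integral $F(r)=\int_r^1 f(s)\,\dint s$ and assert that the number of atoms in $(r,1)$ is Poisson with mean $F(r)$ and that $F(r)\to\infty$ as $r\to 1$. Since $f$ is \emph{not} integrable at $1$ (this is precisely the standing assumption $\beta>1$), one has $F(r)=+\infty$ for \emph{every} $r\in(0,1)$: the number of atoms in $(r,1)$ is a.s.\ infinite, $F$ is not a bijection onto $(0,\infty)$, and there is no ``$n$-th atom counting down from $1$'' to which $\Gamma_n$ could correspond. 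The atoms accumulate at $1$ from below, and $r_n$ is the $n$-th \emph{smallest} atom, so the relevant quantity is the cumulative intensity $G(r):=\int_0^r f(s)\,\dint s$, which is finite for $r<1$, strictly increasing and continuous, and tends to $+\infty$ as $r\uparrow 1$. The asymptotic formula you wrote down, $\frac{\lambda}{2^\beta(\beta-1)}(1-r)^{-(\beta-1)}$, is in fact the correct asymptotics of $G(r)$ as $r\uparrow 1$ (not of $F$, which is identically infinite), and the identity to use is $G(r_n)=\Gamma_n\sim n$ a.s.

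With $F$ replaced by $G$ throughout, the rest of your argument is correct and coincides with the paper's proof: the paper defines $\varphi$ by $\int_0^{\varphi(y)}f(r)\,\dint r=y$ (i.e.\ $\varphi=G^{-1}$), writes $r_n=\varphi(P_n)$ with $P_n$ the arrivals of a unit-rate Poisson process on $(0,\infty)$, computes $\int_0^z f\sim\frac{\lambda}{2^\beta(\beta-1)}(1-z)^{1-\beta}$ by L'Hospital, and concludes via $P_n\sim n$. Your closing remark on inverting regularly varying functions is a sound way to make the final step rigorous and is implicit in the paper's terser conclusion. So the defect is a directional confusion rather than a missing idea, but as written several consecutive intermediate claims are false and the proof does not parse until the integral is corrected.
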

\begin{proof}
We would like to have a representation $r_n = \varphi(P_n)$, where $P_1<P_2<\ldots$ are the arrivals of a homogeneous, unit intensity Poisson process on $(0,\infty)$, and $\varphi:[0,\infty) \to [0,1)$ is a suitable monotone increasing function with $\varphi(0) = 0$ and $\lim_{y\to +\infty} \varphi(y) = 1$. By the well-known transformation property of Poisson processes, such representation holds if
\begin{equation}\label{eq:transform_poi_proc}
\int\limits_0^{\varphi(y)} \frac{\lambda r^{2\beta - d - 1}}{(1-r^2)^{\beta}} \dint r = y
\qquad
\text{ for all }
y\in [0,\infty).
\end{equation}
Using the L'Hospital rule, one easily checks that
$$
\int\limits_0^{z} \frac{\lambda r^{2\beta - d - 1}}{(1-r^2)^{\beta}} \dint r \sim \frac{\lambda (1-z)^{1-\beta}}{2^\beta (\beta-1)},
\qquad
\text{ as }
z\uparrow 1.
$$
Since $\varphi(y) \uparrow  1$ as $y\to  +\infty$, it follows that
$$
\frac{\lambda (1-\varphi(y))^{1-\beta}}{2^\beta (\beta-1)}   \sim y,\qquad
\text{ as }
y\to +\infty.
$$
To complete the proof, recall that $1 - r_n = 1- \varphi(P_n)$ and $P_n \sim n$ a.s., as $n\to\infty$, by the law of large numbers.
\end{proof}

\begin{proof}[Proof of Theorem~\ref{thm:PolytopeOrNot_Tessellation}]
It is well known (and follows from general properties of beta distributions, see, e.g., Lemma~3.1 with $\beta=-1$ in~\cite{KabluchkoThaeleZaporozhets}) that the normalized spherical volume of a spherical cap $S(u,h) = \{x\in \SS^{d-1}: \langle u, x\rangle > h\}$ satisfies
$$
\sigma_{d-1} (S(u,h))
=
\frac{\Gamma(\frac d2)}{\sqrt \pi \, \Gamma (\frac{d-1}{2})}  \int\limits_{h}^1 (1-s^2)^{\frac{d-3}{2}} \dint s
\sim
\frac{\Gamma(\frac d2)}{\sqrt \pi \, \Gamma (\frac{d-1}{2})} \cdot \frac{2^{\frac{d-1}{2}} (1-h)^{\frac{d-1}{2}}}{d-1},\qquad
\text{ as } h\uparrow 1,
$$
where the asymptotic equivalence can be verified using the rule of L'Hospital.
Taking into account Lemma~\ref{lem:PPP_on_01_asympt_H_n} it follows that with probability $1$,
\begin{equation}\label{eq:vol_caps_asymptotics}
\sigma_{d-1} (S(u_n,r_n))
\sim
\frac{\Gamma(\frac d2)}{\sqrt \pi \, \Gamma (\frac{d-1}{2})}
\frac{2^{\frac{d-1}{2}}}{d-1}
\left(\frac{\lambda}{2^\beta (\beta-1)}\right)^{\frac{d-1}{2(\beta-1)}} \cdot n^{ - \frac{d-1}{2\beta-2}},
\qquad
\text{ as }
n\to\infty.
\end{equation}

Let now $\beta > (d+1)/2$. Then, $(d-1)/(2\beta-2) < 1$ and we can apply Theorem~\ref{theo:hoffmann_joergensen}~(i) with $a=\infty$  to almost every realization of $r_1<r_2<\ldots$ to conclude that $\SS^{d-1}$ is a.s.\ covered by the open caps $S(u_n, r_n)$, $n\in \NN$. By compactness, we can extract finitely many open caps that cover the sphere. It follows that the zero cell $Z_{d,\lambda,\beta}$ is a polytope contained in $\BB^d$ (and not intersecting $\SS^{d-1}$).

Let now $d/2< \beta < (d+1)/2$. Then, $(d-1)/(2\beta-2) > 1$ and we can apply  Theorem~\ref{theo:hoffmann_joergensen}~(ii) with $a=0$ to conclude that the closures of the caps $S(u_n, r_n)$, $n\in \NN$,  do not cover the sphere on certain event, $E$ say, of positive probability. On this event, the zero cell $Z_{d,\lambda,\beta}$ intersects $\SS^{d-1}$. On the same event $E$, $Z_{d,\lambda,\beta}$ is not a (Euclidean) polytope because if it were a polytope, it could be defined by finitely many inequalities of the form $\langle x, u_n\rangle \leq r_n$. However, the union of finitely many closed spherical caps is closed, which means that its complement (being non-empty) contains a spherical cap. This contradicts the assumption that $Z_{d,\lambda,\beta}\subset \widebar\BB^d$ is a polytope.

Consider now the critical case $\beta = (d+1)/2$. After some elementary transformations, \eqref{eq:vol_caps_asymptotics} takes the following form: with probability $1$,
$$
\sigma_{d-1} (S(u_n,r_n))
\sim
\frac{\lambda \Gamma(\frac d2)}{\sqrt \pi \, \Gamma (\frac{d-1}{2}) (d-1)^2}
\cdot \frac 1n
=
\frac{\lambda}{\lambda^{\text{crit}}_d} \cdot \frac 1n,
\qquad
\text{ as }
n\to\infty.
$$
If $\lambda > \lambda^{\text{crit}}_d$, respectively, $0 < \lambda < \lambda^{\text{crit}}_d$, then we can argue as above, applying Part~(i), respectively, Part~(ii), of Theorem~\ref{theo:hoffmann_joergensen}.
\end{proof}
\begin{remark}\label{remark:doubly_critical_Hoffmann}
The doubly critical case when $\beta = (d+1)/2$ and $\lambda=\lambda^{\text{crit}}_d$ corresponds to the missing case $a=1$ in Theorem~\ref{theo:hoffmann_joergensen}. In fact, for $a=1$, Hoffmann-J{\o}rgensen~\cite[Section~5]{HoffmannJoergensen} has a more refined result in which Condition~\eqref{eq:hoffmann_joergensen_condition}  is replaced by the assumption $n \sigma_{d-1}(S (u_n,h_n)) - 1  \sim b/\log n$ for some $b\in \RR$. Unfortunately, there is a gap in the range of $b$ not covered by the results of~\cite{HoffmannJoergensen} and (after slightly more involved computations than above) one can check that the doubly critical case falls precisely into this gap.
\end{remark}

\begin{proof}[Proof of Theorem~\ref{thm:PolytopeOrNot}]
The claim follows from Theorem~\ref{thm:PolytopeOrNot_Tessellation} via the duality relation $P_{d,\alpha,\beta}^\circ \stackrel{d}{=} Z_{d,\lambda, \beta}$ stated in Theorem~\ref{thm:ZeroCell}. Note that the critical value of $\alpha$ is given by
$$
\alpha^{\text{crit}}_d
=
\frac{\lambda^{\text{crit}}_d} {\tilde c_{d,\frac{d+1}{2}}\omega_{d}} = (d-1)\pi,
$$
where we used~\eqref{eq:DensityZeta} and the formula $\omega_d = d \kappa_d = d  \pi^{d/2} / \Gamma(\frac d2 + 1)$.
\end{proof}

\subsection{Proofs of Theorems~\ref{thm:PolytopeOrNot_critical_tessellation} and~\ref{thm:PolytopeOrNot_critical}}

Although Theorem~\ref{thm:PolytopeOrNot_critical_tessellation} is known from~\cite{PorretBlanc}, \cite[Section~6]{BenjaminiJonassonEtAL} and~\cite{TykessonCalka}, we shall provide a short proof. Then, Theorem~\ref{thm:PolytopeOrNot_critical} follows by duality stated in Theorem~\ref{thm:ZeroCell}.
To prove that $Z_{2,\pi, 3/2}$ is a (Euclidean) polygon that does not touch the unit sircle $\SS^1$ we shall verify Shepp's condition~\eqref{eq:shepp_crit}.  To this end, we use the distributional representation $r_n = \varphi(P_n)$, where $P_1<P_2<\ldots$ are the arrivals of a homogeneous, unit intensity Poisson process on $(0,\infty)$, and $\varphi:[0,\infty) \to [0,1)$ is a monotone increasing function with
$$
y = \pi \int\limits_0^{\varphi(y)} (1-r^2)^{-3/2} \dint r = \frac {\pi \varphi(y)}{\sqrt{1-\varphi^2(y)}},
\qquad
y\geq 0,
$$
which is a special case of~\eqref{eq:transform_poi_proc}.
Solving this equation yields $\varphi(y) = y / \sqrt{y^2 + \pi^2}$ and we arrive at
$$
r_n =  \frac{P_n}{\sqrt {P_n^2 + \pi^2}} = 1 - \frac{\pi^2}{2 P_n^2} + O(P_n^{-4}),
\qquad
\text{ as }
n\to\infty.
$$
The length of the arc $\{x\in \SS^1:\langle x, u_n\rangle > r_n\} $ is $2\pi \ell_n$, where
$$
\ell_n = \frac 1 {2\pi} \cdot 2 \arccos r_n = \frac 1 {P_n} + O(P_n^{-2}),
\qquad
\text{ as }
n\to\infty.
$$
By the law of the iterated logarithm, we have $P_k = k + o(\sqrt{k\log k})$ a.s., as $k\to\infty$.  It follows that
$$
\frac 1 {P_1} + \ldots +  \frac 1 {P_n}
=
\sum_{k=1}^n \frac 1 {k + o(\sqrt{k\log k})}
=
\sum_{k=1}^n \frac 1k \left (1  - o\left( \sqrt{\frac {\log k} {k}}\right)\right)
=
\log n + O(1),
$$
as $n\to\infty$.
Hence,
$$
\sum_{n=1}^\infty \frac 1 {n^2} e^{\ell_1+\ldots + \ell_n}
=
\sum_{n=1}^\infty \frac 1 {n^2} e^{\frac 1 {P_1} + \ldots +  \frac 1 {P_n} + O(1)}
=
\sum_{n=1}^\infty \frac 1 {n^2} e^{\log n + O(1)}
=
\sum_{n=1}^\infty \frac {e^{O(1)}} {n} = \infty.
$$
Hence, Shepp's criterium~\eqref{eq:shepp_crit} implies that the circle $\SS^1$ is covered by the arcs with probability $1$. By compactness, we can extract finitely many open arcs covering the circle and it follows that $Z_{2,\pi, 3/2}$  is a polygon with probability one.
\hfill $\Box$

\section{Properties of  \texorpdfstring{beta$^*$}{beta*} intensities}
In this section we shall state and prove two basic properties of the beta$^*$ intensities: invariance under projections and the canonical decomposition. We recall that for $d\in \NN$, $\beta>d/2$ and $\alpha>0$ we defined the beta$^*$ intensity $f_{d,\alpha,\beta}$ by
\begin{equation}\label{eq:f_d_alpha_beta_def_repeat}
f_{d,\alpha,\beta}(x) := \frac{\alpha\tilde c_{d,\beta}}{(\|x\|^2-1)^{\beta}}
,\qquad x\in\RR^d\backslash \widebar\BB^d,
\qquad \tilde c_{d,\beta} = {\Gamma(\beta)\over\pi^{d\over 2}\Gamma(\beta-{d\over 2})}.
\end{equation}
Let $\zeta_{d,\alpha,\beta}$ be a Poisson process on $\RR^d\backslash \widebar\BB^d$ with intensity  $f_{d,\alpha,\beta}$ with respect to the Lebesgue measure.

\subsection{Projections}
For $k\in\{1,\ldots,d\}$ we let $\pi_k:\RR^d\to\RR^k$ denote the orthogonal projection onto the first $k$ coordinates, that is $\pi_k(z_1,\ldots,z_d) = (z_1,\ldots,z_k)$.  The next lemma shows that Poisson processes with density of the form $f_{d,\alpha,\beta}$ are stable under projections, meaning that the orthogonal projections onto lower-dimensional subspaces of such Poisson processes are again of the same type with a suitably modified parameter $\beta$. We remark that this projection behaviour is similar to the one for beta and beta' densities as considered in~\cite[Lemma~3.1]{KabluchkoThaeleZaporozhets}.

\begin{figure}[t]
	\centering
	\includegraphics[width=0.8\columnwidth]{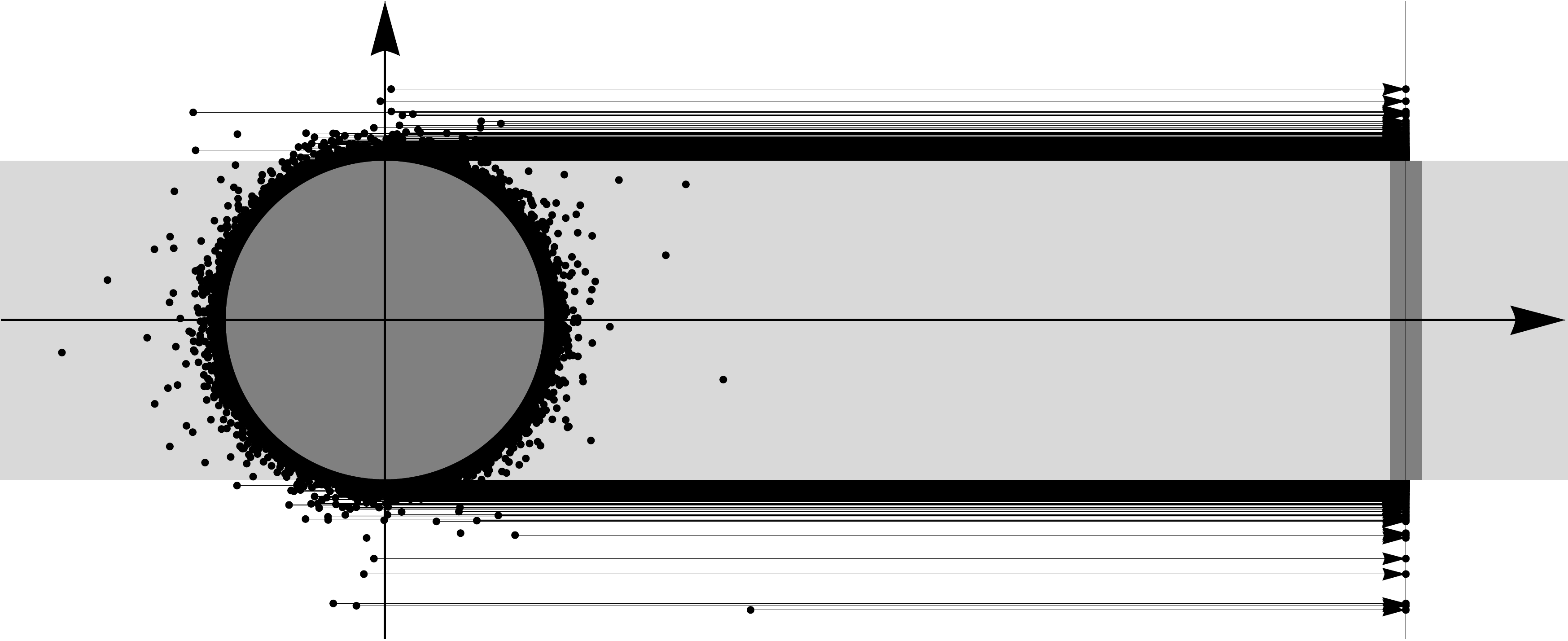}
	\caption{Projection property of beta$^*$ intensities; see Lemma~\ref{lem:Projection}.}
\label{fig:projection}
\end{figure}

\begin{lemma}\label{lem:Projection}
Let $\zeta_{d,\alpha,\beta}$ be a Poisson process in $\RR^d\backslash \widebar\BB^d$ with intensity $f_{d,\alpha,\beta}$. Then, its projection
$$
\pi_k(\zeta_{d,\alpha,\beta}) := \sum_{\substack{z \text{ is an atom of } \zeta_{d,\alpha,\beta}\\ \pi_k (z) \notin \widebar \BB^k}} \delta_{\pi_k(z)}
$$
has the same distribution as the Poisson process $\zeta_{k,\alpha,\beta-{d-k\over 2}}$ in $\RR^k\backslash \widebar \BB^k$.
\end{lemma}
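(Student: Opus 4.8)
The plan is to reduce the statement to a one-dimensional computation via the classical "thinning after projection" property of Poisson processes. Since $\zeta_{d,\alpha,\beta}$ is a Poisson process, its image under the measurable map $\pi_k$ (restricted to those atoms whose projection lands outside $\widebar\BB^k$) is again a Poisson process, whose intensity measure is the push-forward of $f_{d,\alpha,\beta}(x)\,\dint x$ under $\pi_k$, restricted to $\RR^k\setminus\widebar\BB^k$. So the entire content of the lemma is the identity of intensities
$$
\int_{\{x\in\RR^d\setminus\widebar\BB^d:\ \pi_k(x)=y\}} f_{d,\alpha,\beta}(x)\,\dint(x_{k+1},\dots,x_d)
=
f_{k,\alpha,\beta-\frac{d-k}{2}}(y),
\qquad y\in\RR^k,\ \|y\|>1.
$$
By iterating one coordinate at a time it suffices to treat the case $k=d-1$; the general case then follows by an obvious induction on $d-k$, with the parameter shift $\beta\mapsto\beta-\tfrac12$ at each step accumulating to $\beta\mapsto\beta-\tfrac{d-k}{2}$.

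For $k=d-1$ I would compute, for fixed $y\in\RR^{d-1}$ with $\|y\|>1$,
$$
\int_{-\infty}^{\infty} \frac{\alpha\,\tilde c_{d,\beta}}{(\|y\|^2+t^2-1)^{\beta}}\,\dint t,
$$
noting that for such $y$ the integrand is automatically supported on all of $\RR$ (the constraint $\|x\|>1$ is implied by $\|y\|>1$), so no truncation in $t$ is needed. Writing $a:=\|y\|^2-1>0$ and substituting $t=\sqrt{a}\,s$ gives $\alpha\,\tilde c_{d,\beta}\,a^{\frac12-\beta}\int_{-\infty}^\infty (1+s^2)^{-\beta}\,\dint s$. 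The remaining integral is a standard Beta integral: $\int_{-\infty}^\infty(1+s^2)^{-\beta}\,\dint s=\sqrt\pi\,\Gamma(\beta-\tfrac12)/\Gamma(\beta)$ (valid since $\beta>d/2\ge 1/2$). Plugging in $\tilde c_{d,\beta}=\Gamma(\beta)/(\pi^{d/2}\Gamma(\beta-\tfrac d2))$ and using $(\|y\|^2-1)^{\frac12-\beta}=(\|y\|^2-1)^{-(\beta-\frac12)}$, the whole expression collapses to
$$
\frac{\alpha\,\Gamma(\beta-\tfrac12)}{\pi^{(d-1)/2}\,\Gamma(\beta-\tfrac d2)}\cdot\frac{1}{(\|y\|^2-1)^{\beta-\frac12}}
=
\frac{\alpha\,\tilde c_{d-1,\,\beta-\frac12}}{(\|y\|^2-1)^{\beta-\frac12}}
=
f_{d-1,\alpha,\beta-\frac12}(y),
$$
since $\Gamma(\beta-\tfrac12-\tfrac{d-1}{2})=\Gamma(\beta-\tfrac d2)$; this is exactly where the convenient normalizing constant $\tilde c_{d,\beta}$ is designed to make the Gamma factors telescope. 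One also checks $\beta-\tfrac12>(d-1)/2$, so the reduced parameters are admissible.

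I do not expect any serious obstacle here — the argument is essentially a change of variables plus the Poisson mapping theorem — but two small points warrant care. First, one must justify that the map $\pi_k$ applied to a Poisson process with infinite intensity measure again yields a Poisson process supported on $\RR^k\setminus\widebar\BB^k$: this is immediate from the mapping theorem for Poisson processes (see~\cite{LPbook}), together with the observation that the resulting intensity measure is $\sigma$-finite on $\RR^k\setminus\widebar\BB^k$ (finite on each $\{\|y\|\ge r\}$ for $r>1$, by the Fubini computation above and Proposition~\ref{prop:poi_process_properties}), while the atoms of $\zeta_{d,\alpha,\beta}$ whose projection falls into $\widebar\BB^k$ are exactly the ones discarded in the definition of $\pi_k(\zeta_{d,\alpha,\beta})$. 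Second, the Fubini/Tonelli interchange used to identify the push-forward intensity is legitimate because the integrand is nonnegative. With these remarks the proof is complete.
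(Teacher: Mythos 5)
Your proof is correct and follows essentially the same route as the paper: apply the mapping theorem for Poisson processes and compute the marginal of $f_{d,\alpha,\beta}$ by a change of variables, with the constant $\tilde c_{d,\beta}$ chosen precisely so that the Gamma factors telescope. The only cosmetic difference is that you integrate out one coordinate at a time and induct --- which is legitimate, since $\|\pi_k(z)\|\leq\|\pi_{k+1}(z)\|\leq\ldots\leq\|z\|$ guarantees that the iterated thinning coincides with the one-shot thinning --- whereas the paper evaluates the $(d-k)$-dimensional integral in a single step via polar coordinates.
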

\begin{remark}
Note that the points of $\zeta_{d,\alpha,\beta}$ are located outside the closed unit ball $\widebar\BB^d$ and accumulate close to its boundary; see Figure~\ref{fig:projection}. In our context it is useful to think of $\BB^d$ as kind of `black hole' into which $\zeta_{d,\alpha,\beta}$ densely puts an infinite number of points at infinite intensity.
%It is convenient to think of $\widebar\BB^d$ as of a ``black hole'' which is filled with the points of $\zeta_{d,\alpha,\beta}$ with infinite intensity.
Under projection $\pi_k$, the points of $\zeta_{d,\alpha,\beta}$ belonging to the cylinder $\widebar \BB^k \times \RR^{d-k}$ (which contains $\widebar\BB^d$)  are mapped to the new `black hole' $\widebar \BB^k$, and fill it with infinite intensity. Note that infinitely many points which were initially not in the black hole may fall into it after projection.
In a rigorous treatment, such points have to be excluded from the definition of $\pi_k(\zeta_{d,\alpha,\beta})$; see Figure~\ref{fig:projection}.
\end{remark}
\begin{proof}[Proof of Lemma~\ref{lem:Projection}]
By the transformation property of Poisson processes, $\pi_k(\zeta_{d,\alpha,\beta})$ is a Poisson process in $\RR^k \backslash \widebar \BB^k$ and its intensity measure is the image of the intensity measure of $\zeta_{d,\alpha,\beta}$ under $\pi_k$ (provided the image measure is locally finite, which we shall see a posteriori). To determine the image measure, we represent the points from $\RR^d$ as $(x,y)$ with $x\in \RR^k$ and $y\in \RR^{d-k}$, so that $\pi_k((x,y)) = x$. Let us fix some $x\in\RR^k$ such that $h:=\|x\|>1$. Then the Lebesgue density of the intensity measure of $\pi_k(\zeta_{d,\alpha,\beta})$ at $x$ is given by
\begin{align*}
\psi(x) = \int\limits_{\RR^{d-k}}f_{d,\alpha,\beta}((x,y))\,\dint y
%&
=\alpha\tilde c_{d,\beta}\int\limits_{\RR^{d-k}}{\dint y\over( h^2+\|y\|^2-1)^\beta}
%&
%={\alpha\tilde c_{d,\beta}\over(\|x\|^2-1)^\beta}\int_{\RR^{d-k}}{\dint y\over\Big(1+{\|y\|^2\over\|x\|^2-1}\Big)^\beta}
= {\alpha\tilde c_{d,\beta}\over(h^2-1)^\beta}\int\limits_{\RR^{d-k}}{\dint y\over\Big(1+{\|y\|^2\over h^2-1}\Big)^\beta},
\end{align*}
where we used that $\|(x,y)\|^2 = h^2 + \|y\|^2$.
Applying the substitution $y/\sqrt{h^2-1}=z$ we see that
\begin{align*}
\psi(x)
%\int_{\RR^{d-k}}f_{d,\alpha,\beta}((x,y))\,\dint y
=
{\alpha\tilde c_{d,\beta}\over(h^2-1)^\beta}\int\limits_{\RR^{d-k}}{(h^2-1)^{d-k\over 2}\,\dint z\over(1+\|z\|^2)^\beta}
=
{\alpha\tilde c_{d,\beta}\over(h^2-1)^{\beta-{d-k\over 2}}}\int\limits_{\RR^{d-k}}{\dint z\over(1+\|z\|^2)^\beta}.
\end{align*}
It remains to compute the integral on the right-hand side. Using the polar integration in $\RR^{d-k}$ and the substitution $s=r^2$ we obtain
\begin{align*}
\int\limits_{\RR^{d-k}}{\dint z\over(1+\|z\|^2)^\beta} &= \omega_{d-k}\int\limits_0^\infty{r^{d-k-1}\over(1+r^2)^\beta}\,\dint r = {\omega_{d-k}\over 2}\int\limits_0^\infty{s^{{d-k\over 2}-1}\over(1+s)^{\beta}}\,\dint s\\
&={\pi^{d-k\over 2}\over\Gamma({d-k\over 2})}{\Gamma({d-k\over 2})\Gamma(\beta-{d-k\over 2})\over\Gamma(\beta)} ={1\over \tilde c_{d-k,\beta}}.
\end{align*}
Putting pieces together yields
\begin{align*}
\psi(x)
= {\tilde c_{d,\beta}\over \tilde c_{d-k,\beta}}{\alpha\over(h^2-1)^{\beta-{d-k\over 2}}}
= {\Gamma(\beta)\over\pi^{d-k\over 2}\Gamma(\beta-{d-k\over 2})}{\alpha\over(h^2-1)^{\beta-{d-k\over 2}}}
= {\alpha\tilde c_{k,\beta-{d-k\over 2}}\over(h^2-1)^{\beta-{d-k\over 2}}}.
\end{align*}
This proves that the two Poisson processes $\pi_k(\zeta_{d,\alpha,\beta})$ and $\zeta_{k,\alpha,\beta-{d-k\over 2}}$ have the same intensity measure and are hence identically distributed. The argument is thus complete.
\end{proof}

\subsection{Canonical decomposition}\label{sec:canonical_decomp}
%where $d\in\NN$, $\alpha>0$ and $\beta> d/2$.
We are going to state a property of beta$^*$ measures which can be regarded as an analogue of the canonical decomposition of beta and beta' distributions of Ruben and Miles~\cite{Ruben_Miles,MilesIsotropicSimplices}; see also Theorems~3.3 and 3.6 from~\cite{KabluchkoThaeleZaporozhets}.
Let $\Phi_{d,\alpha,\beta}$ be the (non-random) measure on $\RR^d\backslash\widebar\BB^d$ with Lebesgue density $f_{d,\alpha,\beta}$; see~\eqref{eq:f_d_alpha_beta_def_repeat}. Although the measure $\Phi_{d,\alpha,\beta}$ is infinite for $\beta > \max (d/2,1)$, it is convenient to think probabilistically and imagine that we sample a tuple $x=(x_1,\dots,x_k)$ whose components $x_1,\ldots,x_k$ are $k\in\{1,\ldots,d\}$ ``random vectors'' in $\RR^d\backslash\widebar\BB^d$  that are stochastically independent and ``distributed'' according this infinite measure.  In Part (b) of the following Theorem~\ref{thm:canonical_decomp} we shall provide a description of the ``distribution'' of their affine hull $A_x:=\aff(x_1,\dots,x_k)$.  By rotational invariance, it suffices to compute the ``distribution'' of the distance from $A_x$ to the origin (which is an infinite measure). Moreover, in Part~(a) we shall describe the distribution of the points $x_1,\ldots,x_k$ inside their own affine hull $A_x$. This requires clarification since the affine subspace $A_x$ is itself random.  To address this issue, we shall fix certain non-random identification $I_{A}: A \to \RR^{k-1}$ for every affine subspace $A\subset \RR^d$ with $\dim A = k-1$ (see below) and look at the points $I_{A_x}(x_1), \ldots, I_{A_x}(x_k)$ in $\RR^{k-1}$, after certain normalization.

%For every affine subspace $A\in A(d,k-1)$ we fix an isometry $I_A : A\to\RR^{k-1}$ such that $I_A(p(A))=0$, where $p(A)$ denotes the orthogonal projection of the origin onto $A$.
%If not explicitly stated otherwise,
For $\ell\in\{0,1,\ldots,d\}$ we write $G(d,\ell)$ for the Grassmannian of $\ell$-dimensional linear subspaces of $\RR^d$ and $A(d,\ell)$ for the Grassmannian of $\ell$-dimensional affine subspaces of $\RR^d$.
We denote by $\pi_E:\RR^d\to E$ the orthogonal projection onto an affine subspace $E\subset \RR^d$, and by $p(E)=\pi_E(0)=\text{argmin}_{x\in E}\|x\|$ the projection of the origin onto $E$.
For every affine subspace $E\in A(d,k-1)$ let us fix an isometry $I_E:E\to\RR^{k-1}$ such that $I_E(p(E))=0$. Additionally, we  require  that $(x,E)\mapsto I_E(\pi_E(x))$ defines a measurable map from $\RR^d\times A(d,k-1)$ to $\RR^{k-1}$, where the spaces $\RR^d$, $\RR^{k-1}$ and $A(d,k-1)$ are equipped with their standard Borel $\sigma$-algebras, see~\cite[Chapter~13.2]{SW} for the $\sigma$-algebra of $A(d,k-1)$. Note that the choice of $I_E$ is not unique.
%(Essentially due to rotation invariance of $f_{d,\alpha,\beta}$,)

\begin{theorem}[Canonical decomposition]\label{thm:canonical_decomp}
Let $\Phi_{d,\alpha,\beta}$ be the measure on $\RR^d\backslash \widebar\BB^d$ with Lebesgue density $f_{d,\alpha,\beta}$ and parameters $\alpha>0$ and $\beta> d/2$. Fix $k\in\{1,\dots,d\}$ and let $x=(x_1,\ldots,x_k)$ be a $k$-tuple of  points from $\RR^d\backslash\widebar\BB^d$. Let $A_x:=\aff(x_1,\dots,x_k)$ be the affine hull of the points $x_1,\dots,x_k$, and $p(A_x)$ be the orthogonal projection of the origin on $A_x$. Recall that $\dist(A_x)=\|p(A_x)\|$ denotes the distance from the origin to $A_x$. Define the set
$$
\mathcal{S}_{k,d}:=\{x=(x_1,\dots,x_k)\in (\RR^d)^k: A_x\cap\widebar\BB^d=\varnothing, \dim A_x = k-1\}
$$
and the transformation
\begin{align*}
\begin{array}{cccc}
T: & \mathcal S_{k,d} & \to & (\RR^{k-1})^k\times (\RR^{d-k+1}\backslash \widebar \BB^{d-k+1}) \\
 & (x_1,\dots,x_k) & \mapsto & (T_1(x),T_2(x))
\end{array}
\end{align*}
given by
\begin{align*}
T_1(x):=\left(\frac{I_{A_x}(x_1)}{\sqrt{\dist^2(A_x)-1}},\ldots,\frac{I_{A_x}(x_k)}{\sqrt{\dist^2(A_x)-1}}\right),
\quad
T_2(x):=I_{A_x^\perp}(p(A_x)).
\end{align*}
The restriction of a measure $\mu$ to a set $B$ is denoted by $\mu|_B$.  Then the following hold.
\begin{itemize}
\item[(a)] We have the decomposition
$$
\bigg(\bigotimes_{i=1}^k\Phi_{d,\alpha,\beta}\bigg)\bigg|_{\mathcal S_{k,d}} \circ T^{-1}=\phi_{k-1}\otimes \Bigg(\bigg(\bigotimes_{i=1}^k\Phi_{d,\alpha,\beta}\bigg)\bigg|_{\mathcal S_{k,d}} \circ T_2^{-1}\Bigg)
$$
as a product measure, where $\phi_{k-1}$ is the probability measure on $(\RR^{k-1})^k$ whose Lebesgue density is a constant multiple of
\begin{align*}
V_{k-1}^{d-k+1}(\conv(z_1,\dots,z_k))\,\prod_{i=1}^k \tilde f_{k-1,\beta}(z_i),\qquad z_1,\ldots,z_k\in\RR^{k-1}.
\end{align*}
%\TG{Das wurde vorher mit $\Delta^{d-k+1}(z_1,\dots,z_k)$ notiert. Ich habe das geändert damit es konsistent zur Notation im Beweis für die $T$-Funktionale ist! Das betrifft dann ebenfalls die ursprüngliche Notation der Normierungskonstanten $D(d,k,\beta)$ unten, jetzt $S_{k,\beta}(d-k)$ wie im Theorem zum $T$-funktional}
Here, $V_{k-1}(\conv(z_1,\dots,z_k))$ is the $(k-1)$-volume of the simplex $\conv(z_1,\ldots,z_k)$ and $\tilde f_{k-1,\beta}(z)$ is the beta' density on $\RR^{k-1}$ given by
$$
\tilde f_{k-1,\beta}(z) = \tilde c_{k-1,\beta} (1+\|z\|^2)^{-\beta},
\qquad
z\in \RR^{k-1}.
$$
%spanned by $k\in\{1,\ldots,d+1\}$ points in $\RR^d$;
\item[(b)] The Lebesgue density of the (usually, infinite) measure $\big(\bigotimes_{i=1}^k\Phi_{d,\alpha,\beta}\big)\big|_{\mathcal S_{k,d}} \circ T_2^{-1}$ on the space $\RR^{d-k+1}\backslash\widebar\BB^{d-k+1}$ is given by $f_{d-k+1,\alpha^k,k\beta-\frac{(d+1)(k-1)}{2}}$.
\end{itemize}
\end{theorem}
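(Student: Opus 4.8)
The plan is to use the classical Blaschke–Petkantschin formula to disintegrate the $k$-fold product measure $\bigotimes_{i=1}^k \Phi_{d,\alpha,\beta}$ according to the affine hull $A_x = \aff(x_1,\dots,x_k)$. Concretely, I would write
$$
\bigg(\bigotimes_{i=1}^k \Phi_{d,\alpha,\beta}\bigg)\Big|_{\mathcal S_{k,d}}
= (k-1)!^{\,d-k+1}\,\omega_{d-k+1}\cdots\omega_{d}\cdot (\text{const})\int_{A(d,k-1)} \int_{E^k}
V_{k-1}(\conv(y_1,\dots,y_k))^{d-k+1}\,\prod_{i=1}^k f_{d,\alpha,\beta}(y_i)\,\lambda_E^k(\dint(y_1,\dots,y_k))\,\mu_{k-1}(\dint E),
$$
where $\lambda_E$ is Lebesgue measure on the $(k-1)$-flat $E$ and $\mu_{k-1}$ is the (suitably normalized) motion-invariant measure on $A(d,k-1)$; the Jacobian factor $V_{k-1}(\cdot)^{d-k+1}$ is exactly what the affine Blaschke–Petkantschin formula produces (see e.g.\ Schneider–Weil~\cite[Theorem~7.2.1]{SW}). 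I would be careful to absorb all dimension-dependent normalizing constants into a single constant, since the statement only claims that $\phi_{k-1}$ has density proportional to the stated expression.

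The next step is to evaluate the integrand on a fixed flat $E\in A(d,k-1)$. Parametrize $E$ by its foot point $p(E)$ with $\|p(E)\| = h$ (the distance to the origin) and by the isometry $I_E:E\to\RR^{k-1}$; for $y_i\in E$ with $z_i := I_E(y_i)\in\RR^{k-1}$ one has $\|y_i\|^2 = h^2 + \|z_i\|^2$, hence $\|y_i\|^2 - 1 = (h^2-1)(1 + \|z_i\|^2/(h^2-1))$ when $h>1$ (which holds on $\mathcal S_{k,d}$). Therefore
$$
\prod_{i=1}^k f_{d,\alpha,\beta}(y_i)
= \frac{(\alpha\tilde c_{d,\beta})^k}{(h^2-1)^{k\beta}}\prod_{i=1}^k\Big(1 + \tfrac{\|z_i\|^2}{h^2-1}\Big)^{-\beta}.
$$
Now rescale $z_i = \sqrt{h^2-1}\,w_i$, which is precisely the map $T_1$. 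This substitution contributes a Jacobian $(h^2-1)^{(k-1)k/2}$ from the $k$ copies of $\RR^{k-1}$ and a further $(h^2-1)^{(k-1)(d-k+1)/2}$ from the volume factor $V_{k-1}(\conv(z_1,\dots,z_k))^{d-k+1} = (h^2-1)^{(k-1)(d-k+1)/2}\,V_{k-1}(\conv(w_1,\dots,w_k))^{d-k+1}$. Collecting the powers of $(h^2-1)$ and recognizing $\prod_i (1+\|w_i\|^2)^{-\beta}$ as (up to a constant) $\prod_i \tilde f_{k-1,\beta}(w_i)$, the integrand factors as
$$
(\text{const})\cdot\Big[V_{k-1}(\conv(w_1,\dots,w_k))^{d-k+1}\prod_{i=1}^k \tilde f_{k-1,\beta}(w_i)\Big]\cdot (h^2-1)^{-k\beta + \frac{(k-1)k}{2} + \frac{(k-1)(d-k+1)}{2}}.
$$
The bracketed term depends only on $w=(w_1,\dots,w_k)$ and, once normalized to a probability measure, is exactly $\phi_{k-1}$; this proves part~(a), since the $w$-part has split off as an independent factor that does not depend on $E$. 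For part~(b), the remaining factor is $(h^2-1)$ raised to the exponent above. Simplifying $-k\beta + \tfrac{(k-1)k}{2} + \tfrac{(k-1)(d-k+1)}{2} = -\big(k\beta - \tfrac{(k-1)(d+1)}{2}\big)$, and noting that $T_2$ identifies the foot point $p(E)$ with a point of norm $h$ in $\RR^{d-k+1}\setminus\widebar\BB^{d-k+1}$ via $I_{A_x^\perp}$ (with $\mu_{k-1}$ disintegrating into Lebesgue measure on the foot points times the invariant measure on directions), we conclude that the $T_2$-pushforward has Lebesgue density proportional to $(h^2-1)^{-(k\beta - (k-1)(d+1)/2)}$ on $\RR^{d-k+1}\setminus\widebar\BB^{d-k+1}$; matching the constant $\alpha^k \tilde c_{d-k+1,\,k\beta - (d+1)(k-1)/2}$ then gives exactly $f_{d-k+1,\alpha^k,\,k\beta - (d+1)(k-1)/2}$.

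The main obstacle I anticipate is bookkeeping rather than conceptual: tracking all the normalizing constants through the Blaschke–Petkantschin formula, the rescaling Jacobians, and the definition of $\tilde c_{\cdot,\cdot}$ so that the final constant in part~(b) comes out to be exactly $\tilde c_{d-k+1,\,k\beta-(d+1)(k-1)/2}$ (this is where the somewhat artificial-looking normalization $\tilde c_{d,\beta}$ pays off, as remarked after~\eqref{eq:DensityZeta}). A secondary point requiring care is the measurability/consistency of the isometries $I_E$ and the disintegration of $\mu_{k-1}$ over foot points and directions; but since the statement only asserts equality of measures up to the stated densities, and everything is rotation-invariant, this reduces to the standard fact that the invariant measure on $A(d,k-1)$ disintegrates as Lebesgue measure on the orthogonal complement times the invariant measure on $G(d,k-1)$. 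I would also verify that the exponent $k\beta - (d+1)(k-1)/2$ indeed exceeds $(d-k+1)/2$ under the hypothesis $\beta > d/2$, so that $f_{d-k+1,\cdot,\cdot}$ is a legitimate beta$^*$ intensity and $\tilde c$ is finite; this is a short inequality check.
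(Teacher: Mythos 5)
Your proposal follows essentially the same route as the paper: apply the affine Blaschke--Petkantschin formula to disintegrate the $k$-fold product of $\Phi_{d,\alpha,\beta}$ over $A(d,k-1)$, use $\|x_i\|^2=\dist^2(A)+\|I_A(x_i)\|^2$ and the rescaling by $\sqrt{\dist^2(A)-1}$ to split off the beta$'$ factor (your exponent arithmetic for $(h^2-1)$ and the positivity check $\gamma>(d-k+1)/2$ are both correct), and then decompose $\mu_{k-1}$ over foot points to identify the $T_2$-marginal. The one piece of ``bookkeeping'' that is a genuine external input is the explicit value of the $(d-k+1)$-st moment of the volume of a beta$'$ simplex (Miles' formula), which the paper uses to verify that the product of constants collapses to $1$ and hence that the density in part~(b) is exactly, not merely proportionally, $f_{d-k+1,\alpha^k,\gamma}$.
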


In the proof of Theorem~\ref{thm:canonical_decomp} we will rely on an integral-geometric transformation formula for which we refer to~\cite[Theorem 7.2.7]{SW}. For $\ell \in \{0,\ldots, d\}$ we use $\nu_\ell$ to denote the invariant Haar probability measure on the linear Grassmannian $G(d,\ell)$. Moreover, let
\begin{equation}\label{eq:defmuk}
\mu_\ell(\,\cdot\,) := \int\limits_{G(d,\ell)}\int\limits_{L^\perp}\ind_{\{L+x\in\,\cdot\,\}}\,\dint x\, \nu_\ell(\dint L)
\end{equation}
be the invariant measure on the affine Grassmannian $A(d,\ell)$; see~\cite[Chapter 13.2]{SW}.
We use the convention that whenever we integrate over an affine subspace  of $\RR^d$ (such as $L^\bot$ in the above formula), the corresponding differential $\dint x$ refers to the integration with respect to the Lebesgue measure in that subspace, which will always be clear from the context.

\begin{proposition}[Affine Blaschke-Petkantschin formula]\label{prop:BlaschkePetkantschin}
Let $k\in\{1,\dots,d+1\}$ and $h:(\RR^d)^k\to\RR$ be a non-negative measurable function. Then
\begin{align*}
&\int\limits_{(\RR^d)^k}h(x_1,\dots,x_k)\,\dint(x_1,\dots,x_k)\\
&\qquad\qquad=B(d,k)\int\limits_{A(d,k-1)}\int\limits_{E^k}h(x_1,\dots,x_k)V_{k-1}^{d-k+1}(\conv(x_1,\dots,x_k))\,\dint (x_1,\dots,x_k)\,\mu_{k-1}(\dint E),
\end{align*}
where, recalling  that $\omega_\ell:=2\pi^{\ell/2}/\Gamma(\ell/2)$ is the surface area of the unit sphere in $\RR^\ell$,
\begin{align*}
B(d,k):=((k-1)!)^{d-k+1}\frac{\omega_{d-k+2}\cdot\ldots\cdot \omega_d}{\omega_1\cdot\ldots\cdot\omega_{k-1}},\qquad B(d,1):=1.
\end{align*}
\end{proposition}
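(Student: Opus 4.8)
Up to relabelling $q:=k-1$, this is exactly the affine Blaschke--Petkantschin formula as stated in~\cite[Theorem~7.2.7]{SW} (there phrased for $q+1$ points spanning a $q$-dimensional affine subspace, with the exponent of the simplex volume equal to $d-q=d-k+1$), so the quickest proof simply consists in quoting that theorem and reconciling the two normalising constants. For completeness I would instead give a short self-contained derivation from the \emph{linear} Blaschke--Petkantschin formula~\cite[Theorem~7.2.1]{SW}; that is the plan sketched below. Throughout, one may discard the Lebesgue-null set of tuples with $\dim\aff(x_1,\dots,x_k)<k-1$.

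Assume first $2\le k\le d$. \emph{Pivoting.} In the left-hand integral substitute $x_i=x_k+y_i$ for $i=1,\dots,k-1$; by translation invariance of Lebesgue measure this turns it into $\int_{\RR^d}\!\int_{(\RR^d)^{k-1}}h(x_k+y_1,\dots,x_k+y_{k-1},x_k)\,\dint(y_1,\dots,y_{k-1})\,\dint x_k$. \emph{Linear Blaschke--Petkantschin.} For fixed $x_k$, apply~\cite[Theorem~7.2.1]{SW} to the $k-1$ vectors $y_1,\dots,y_{k-1}\in\RR^d$: the inner integral becomes $b_{d,k-1}\int_{G(d,k-1)}\!\int_{L^{k-1}}h(\cdots)\,\nabla_{k-1}(y_1,\dots,y_{k-1})^{\,d-k+1}\,\dint(y_1,\dots,y_{k-1})\,\nu_{k-1}(\dint L)$, where $\nabla_{k-1}$ denotes the $(k-1)$-volume of the parallelepiped spanned by the $y_i$, $b_{d,k-1}$ is the explicit constant of that theorem, and (up to the null set above) $L=\lin(y_1,\dots,y_{k-1})$. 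Since $y_i=x_i-x_k$ one has $\nabla_{k-1}(y_1,\dots,y_{k-1})=(k-1)!\,V_{k-1}(\conv(x_k,x_1,\dots,x_{k-1}))=(k-1)!\,V_{k-1}(\conv(x_1,\dots,x_k))$, so the Jacobian factor equals $((k-1)!)^{d-k+1}V_{k-1}(\conv(x_1,\dots,x_k))^{d-k+1}$. \emph{Reassembly.} Exchange the $x_k$- and $L$-integrations (Tonelli; all integrands non-negative and measurable) and, for fixed $L$, write $x_k=s+t$ with $s=\pi_L(x_k)\in L$, $t=\pi_{L^\perp}(x_k)\in L^\perp$, so $\dint x_k=\dint s\,\dint t$. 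Then $\aff(x_1,\dots,x_k)=x_k+L=E:=t+L$, hence $\int_{G(d,k-1)}\!\int_{L^\perp}(\,\cdot\,)\,\dint t\,\nu_{k-1}(\dint L)=\int_{A(d,k-1)}(\,\cdot\,)\,\mu_{k-1}(\dint E)$ by~\eqref{eq:defmuk}; moreover, for fixed $E$, the map $(s,y_1,\dots,y_{k-1})\mapsto(s+t+y_1,\dots,s+t+y_{k-1},s+t)$ is a volume-preserving affine bijection of $L^k$ onto $E^k$, so $\int_L\!\int_{L^{k-1}}(\,\cdot\,)\,\dint(y_1,\dots,y_{k-1})\,\dint s=\int_{E^k}(\,\cdot\,)\,\dint(x_1,\dots,x_k)$. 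Collecting the constants gives the claim with $B(d,k)=((k-1)!)^{d-k+1}b_{d,k-1}$, which equals the displayed value once $b_{d,k-1}=\omega_{d-k+2}\cdots\omega_d/(\omega_1\cdots\omega_{k-1})$ is inserted. The boundary cases are immediate: for $k=1$ there is no pivot ($A(d,0)=\RR^d$, $\mu_0$ is Lebesgue measure, $V_0\equiv1$, $B(d,1)=1$), and for $k=d+1$ one has $G(d,d)=\{\RR^d\}$ and exponent $d-k+1=0$, so the identity is a tautology with $B(d,d+1)=1$.

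\textbf{Main obstacle.} There is no genuine conceptual difficulty, the statement being a known integral-geometric identity. In the self-contained derivation the only points requiring care are the bookkeeping of constants (matching $((k-1)!)^{d-k+1}$ times the precise normalising constant of~\cite[Theorem~7.2.1]{SW} with $B(d,k)$), the measurability statements needed to invoke Tonelli and to make sense of the fibrewise identification $I_E$ of $E\cong\RR^{k-1}$ (this is exactly where the measurability hypothesis imposed on $I_E$ in the set-up is used), and the routine verification that the degenerate configurations contribute nothing.
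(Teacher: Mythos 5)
Your proposal matches the paper exactly: the paper gives no proof of this proposition and simply refers to \cite[Theorem~7.2.7]{SW}, which is precisely your primary route (with $q=k-1$ there, so that the exponent is $d-q=d-k+1$ and $B(d,k)=((k-1)!)^{d-k+1}b_{d,k-1}$). Your supplementary self-contained derivation from the linear Blaschke--Petkantschin formula --- pivoting at $x_k$, applying \cite[Theorem~7.2.1]{SW} to the differences $y_i=x_i-x_k$, and reassembling the $x_k$-integral into $\mu_{k-1}$ via the decomposition \eqref{eq:defmuk} --- is also correct, including the identification $\nabla_{k-1}(y_1,\dots,y_{k-1})=(k-1)!\,V_{k-1}(\conv(x_1,\dots,x_k))$ and the boundary cases $k=1$ and $k=d+1$.
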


\begin{proof}[Proof of Theorem~\ref{thm:canonical_decomp}]
Let $g:(\RR^{k-1})^k\to[0,\infty]$ and $h:\RR^{d-k+1}\backslash\widebar \BB^{d-k+1}\to [0,\infty]$ be non-negative measurable functions. Our interest lies in the following quantity:
\begin{align*}
B_{g,h}:=
&	\int\limits_{(\RR^{k-1})^k\times (\RR^{d-k+1}\backslash\widebar \BB^{d-k+1})}g(z_1,\dots,z_k)h(y)		\left(\bigg(\bigotimes_{i=1}^k\Phi_{d,\alpha,\beta}\bigg)\bigg|_{\mathcal S_{k,d}} \circ T^{-1}\right)(\dint(z_1,\dots,z_k,y))\\
=&	\int\limits_{\mathcal S_{k,d}}g\Bigg(\frac{I_{A_x}(x_1)}{\sqrt{\dist^2(A_x)-1}},\ldots,\frac{I_{A_x}(x_k)}{\sqrt{\dist^2(A_x)-1}}\Bigg)h\big(I_{A_x^\perp}(p(A_x))\big)\bigg(\prod_{i=1}^k\Phi_{d,\alpha,\beta}(\dint x_i)\bigg).
\end{align*}
Using  the affine Blaschke-Petkantschin formula from Proposition~\ref{prop:BlaschkePetkantschin} and recalling~\eqref{eq:defmuk} %that $\mu_{k-1}$ denotes the invariant measure on $A(d,k-1)$,
we obtain
\begin{align*}
B_{g,h}
&	=B(d,k)\int\limits_{A(d,k-1)}\int\limits_{A^k}\ind_{\{A\cap \widebar\BB^d=\varnothing\}}\cdot g\Bigg(\frac{I_{A}(x_1)}{\sqrt{\dist^2(A)-1}},\ldots,\frac{I_{A}(x_k)}{\sqrt{\dist^2(A)-1}}\Bigg)\\
&	\qquad\times h\big(I_{A^\perp}(p(A))\big)V_{k-1}^{d-k+1}(\conv(x_1,\dots,x_k))\bigg(\prod_{i=1}^kf_{d,\alpha,\beta}(x_i)\,\dint x_i\bigg)\mu_{k-1}(\dint A)\\
&	=\alpha^k\cdot(\tilde c_{d,\beta})^k\cdot B(d,k)\int\limits_{A(d,k-1)}\int\limits_{(\RR^{k-1})^k}\ind_{\{\dist(A)>1\}}\cdot g\Bigg(\frac{y_1}{\sqrt{\dist^2(A)-1}},\ldots,\frac{y_k}{\sqrt{\dist^2(A)-1}}\Bigg)\\
&	\qquad\times V_{k-1}^{d-k+1}(\conv(y_1,\dots,y_k))\bigg(\prod_{i=1}^k\ind_{\{\dist^2(A)+\|y_i\|^2>1\}}\frac{\dint y_i}{(\dist^2(A)+\|y_i\|^2-1)^{\beta}}\bigg)\\
&	\qquad\times h\big(I_{A^\perp}(p(A))\big)\mu_{k-1}(\dint A).
\end{align*}
In the second step, we used the substitution $y_i=I_A(x_i)\in\RR^{k-1},\,1\le i\le k$, together with the fact that $\|x_i\|^2=\dist^2(A)+\|y_i\|^2$, and the definition of $f_{d,\alpha,\beta}$. Recall that $I_A:A\to\RR^{k-1}$ is an isometry satisfying $I_A(p(A))=0$. Applying the substitution
$$
z_i:=y_i/\sqrt{\dist^2(A)-1}\in\RR^{k-1}, \qquad 1\le i\le k,
$$
we observe the following:
\begin{align*}
(\dist^2(A)+\|y_i\|^2-1)^{-\beta}
&	=(\dist^2(A)-1)^{-\beta}\bigg(\frac{\|y_i\|^2}{\dist^2(A)-1}+1\bigg)^{-\beta}\\
&	=(\dist^2(A)-1)^{-\beta}(\|z_i\|^2+1)^{-\beta},\\
\dint y_i
&	=(\dist^2(A)-1)^{\frac{k-1}{2}}\,\dint z_i,\\
V_{k-1}(\conv(y_1,\dots,y_k))
&	=(\dist^2(A)-1)^{\frac{k-1}{2}}V_{k-1}(\conv(z_1,\dots,z_k)).
\end{align*}
Hence, $B_{g,h}$ transforms into
\begin{align}
&\alpha^k(\tilde c_{d,\beta})^k B(d,k)\int\limits_{A(d,k-1)}\int\limits_{(\RR^{k-1})^k}\ind_{\{\dist(A)>1\}}\cdot g(z_1,\dots,z_k)h\big(I_{A^\perp}(p(A))\big) V_{k-1}^{d-k+1}(\conv(z_1,\dots,z_k))\notag\\
&	\qquad\times(\dist^2(A)-1)^{\frac{(d-k+1)(k-1)}{2}+\frac{k(k-1)}{2}-k\beta}\bigg(\prod_{i=1}^k(1+\|z_i\|^2)^{-\beta}\,\dint z_i\bigg)\mu_{k-1}(\dint A)\notag\\
&	=\frac{\alpha^k(\tilde c_{d,\beta})^k B(d,k)}{(\tilde c_{k-1,\beta})^k}\left(\,\int\limits_{A(d,k-1)}h\big(I_{A^\perp}(p(A))\big)\ind_{\{\dist(A)>1\}}(\dist^2(A)-1)^{-\gamma}\,\mu_{k-1}(\dint A) \right)\label{eq:int1}\\
&	\qquad\times\left(\,\int\limits_{(\RR^{k-1})^k}g(z_1,\dots,z_k)V_{k-1}^{d-k+1}(\conv(z_1,\dots,z_k))\bigg(\prod_{i=1}^k\tilde f_{k-1,\beta}(z_i)\,\dint z_i\bigg)\right)\label{eq:int2},
\end{align}
where we also used the definition~\eqref{eq:BetaprimeDensity} of the beta' density $\tilde f_{k-1,\beta}$ and set
$$
\gamma:=-\frac{(d-k+1)(k-1)}{2}-\frac{k(k-1)}{2}+k\beta=k\beta-\frac{(d+1)(k-1)}{2}.
$$
Note that $\gamma>(d-k+1)/2$ since $\beta>d/2$ by assumption. We observe that the above formula already exhibits the desired product structure for part (a) and that the integral in line~\eqref{eq:int2} is already in the desired form for $\phi_{k-1}$ in part (a). It remains  to rewrite the integral in line~\eqref{eq:int1} using the definition~\eqref{eq:defmuk} of the invariant measure $\mu_{k-1}$ on $A(d,k-1)$. This yields
\begin{align*}
&\int\limits\limits_{A(d,k-1)}h\big(I_{A^\perp}(p(A))\big)\ind_{\{\dist(A)>1\}}(\dist^2(A)-1)^{-\gamma}\,\mu_{k-1}(\dint A)\\
&	\quad=\int\limits_{G(d,k-1)}\int\limits_{L^\perp}h\big(I_{L^\perp}(x))\big)\ind_{\{\|x\|>1\}}(\|x\|^2-1)^{-\gamma}\,\dint x\,\nu_{k-1}(\dint L)\\
&	\quad=\int\limits_{\RR^{d-k+1}}h(y)\ind_{\{\|y\|>1\}}(\|y\|^2-1)^{-\gamma}\,\dint y\\
&	\quad=\frac{1}{\tilde c_{d-k+1,\gamma}}\int\limits_{\RR^{d-k+1}\backslash \widebar \BB^{d-k+1}}h(y)f_{d-k+1,1,\gamma}(y)\,\dint y,
\end{align*}
where in the first step, we used that for every linear subspace $L\in G(d,k-1)$ and point $x\in L^\perp$ we have $(L+x)^\perp=L^\perp$, $p(L+x)=x$ and $\dist(L+x)=\|x\|$. In the second step, we applied the substitution $y:=I_{L^\perp}(x)\in\RR^{d-k+1}$ together with the fact that $\|y\|=\|x\|$, while the last equation follows from the definition of the beta$^*$ intensity. Finally, this yields
\begin{multline*}
B_{g,h}
	=\left(\frac{(\tilde c_{d,\beta})^k\cdot B(d,k)\cdot S_{k,\beta}(d-k)}{\tilde c_{d-k+1,\gamma}}\int\limits_{\RR^{d-k+1}\backslash \widebar \BB^{d-k+1}}h(y)f_{d-k+1,\alpha^k,\gamma}(y)\,\dint y\right)\\
\times\left(\frac{1}{S_{k,\beta}(d-k)(\tilde c_{k-1,\beta})^k}\int\limits_{(\RR^{k-1})^k}g(z_1,\dots,z_k)V_{k-1}^{d-k+1}(\conv(z_1,\dots,z_k))\bigg(\prod_{i=1}^k\tilde f_{k-1,\beta}(z_i)\,\dint z_i\bigg)\right),
\end{multline*}
since its turns out that the normalization constant for the second integral satisfies
\begin{align}
&\int\limits_{(\RR^{k-1})^k}V_{k-1}^{d-k+1}(\conv(z_1,\dots,z_k))\bigg(\prod_{i=1}^k \tilde f_{k-1,\beta}(z_i)\,\dint z_i\bigg)\label{eq:def_norm_const}\\
&\quad	=\frac{1}{((k-1)!)^{d-k+1}}\frac{\Gamma(\beta-\frac d2)^k\Gamma(k\beta-\frac{(d+1)(k-1)}{2})}{\Gamma(\beta-\frac{k-1}{2})^k\Gamma(k\beta-\frac{dk}{2})}\prod_{j=1}^{k-1}\left[\frac{\Gamma(\frac j2+\frac{d-k+1}{2})}{\Gamma(\frac j2)} \right]=S_{k,\beta}(d-k)(\tilde c_{k-1,\beta})^k,\notag
\end{align}
where $S_{k,\beta}(d-k)$ is defined in~\eqref{eq:def_Sdbeta}. The integral in the first line is the $(d-k+1)$-st moment of the volume of a beta' simplex in $\RR^{k-1}$, whose value is known, see e.g.~\cite[Proposition~2.8]{KabluchkoTemesvariThaele} or~\cite[Theorem 2.3 (c)]{GroteKabluchkoThaele} and follows from a formula due to Miles~\cite{MilesIsotropicSimplices}.

Taking the explicit expressions of the constants $\tilde c_{d,\beta}$, $B(d,k)$ and $S_{k,\beta}(d-k)$ into consideration, elementary manipulations yield
\begin{align*}
\frac{(\tilde c_{d,\beta})^k\cdot B(d,k)\cdot S_{k,\beta}(d-k)}{\tilde c_{d-k+1,\gamma}}=1.
\end{align*}
Recalling the definition of the probability measure $\phi_{k-1}$  from the statement of the theorem finally gives the identity
\begin{align*}
B_{g,h}
	&=\left(\hspace*{2pt}\int\limits_{\RR^{d-k+1}\backslash \widebar \BB^{d-k+1}}h(y)f_{d-k+1,\alpha^k,\gamma}(y)\,\dint y \right)\left(\,\int\limits_{(\RR^{k-1})^k}g(z_1,\dots,z_k)\,\phi_{k-1}(\dint(z_1,\dots,z_k))\right).
\end{align*}
Since both measures on the right-hand side are $\sigma$-finite (the second one is even a probability measure), the form of both integrals and the product structure proves claim (a). Inserting $g=1$ also yields (b), and thus, completes the proof.
\end{proof}

\section{Expected \texorpdfstring{$T$}{T}-functional and intrinsic volumes: Proofs}\label{sec:proof_Tfunctional}

Our first aim is to prove Theorem~\ref{thm:Tfunctional} on the expected $T$-functional of beta$^*$ sets. For this, we start by recalling  a version of the multivariate Mecke formula for Poisson processes that can be found in~\cite[Theorem 4.4]{LPbook} or~\cite[Corollary~3.2.3]{SW}. Let $\XX$ be a Polish space supplied with a non-atomic locally finite measure $\lambda$. By ${\bf N}(\XX)$ we denote the space of counting measures on $\XX$, which can be endowed with a canonical $\sigma$-field, see~\cite{LPbook,SW} for details.

\begin{proposition}[Mecke's formula]\label{prop:Slivnyak-Mecke} Let $\eta$ be a Poisson process on $\XX$ with intensity measure $\lambda$. Fix $m\in\NN$ and let $h:\XX^m\times{\bf N}(\XX)\to\RR$ be a non-negative measurable function. Then
\begin{align*}%\label{eq:Slivnyak-Mecke}
\EE \sum_{(x_1,\ldots,x_m)\in\eta_{\neq}^m}h(x_1,\ldots,x_m,\eta)
= \int\limits_{\XX^m}\EE h(x_1,\ldots,x_m,\eta\cup\{x_1,\ldots,x_m\})\,\lambda^m(\dint(x_1,\ldots,x_m)),
\end{align*}
where $\eta_{\neq}^m$ is the collection of all $m$-tuples of pairwise distinct points of $\eta$.
\end{proposition}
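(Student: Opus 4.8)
\emph{Plan.} The plan is to deduce the general $m$-point identity from the single-point Mecke equation (the case $m=1$) by induction on $m$, and to prove the latter by first handling a finite intensity measure and then passing to the general case through a countable partition of $\XX$.

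\emph{Step 1: the case $m=1$ for finite $\lambda$.} Assume $c:=\lambda(\XX)\in(0,\infty)$ (the case $c=0$ is trivial). Then $\eta$ has the explicit representation $\eta\stackrel{d}{=}\sum_{i=1}^{N}\delta_{Y_i}$ with $N$ Poisson of parameter $c$ and $Y_1,Y_2,\ldots$ i.i.d.\ of law $\lambda/c$, independent of $N$. Conditioning on $N$ and using exchangeability of the $Y_i$,
\[
\EE\sum_{x\in\eta}h(x,\eta)=\sum_{n\ge1}\PP[N=n]\cdot n\cdot\EE\Bigl[h\Bigl(Y_1,\textstyle\sum_{j=1}^{n}\delta_{Y_j}\Bigr)\Bigr].
\]
Using the identity $n\,\PP[N=n]=c\,\PP[N=n-1]$, reindexing by $m=n-1$, splitting off the summand $\delta_{Y_1}$ and recognising $\sum_{j=2}^{m+1}\delta_{Y_j}$ as an independent copy $\eta'$ of $\eta$, this equals $c\,\EE\bigl[h(Y_1,\eta'+\delta_{Y_1})\bigr]$ with $Y_1$ (of law $\lambda/c$) independent of $\eta'$, hence $=\int_{\XX}\EE[h(y,\eta+\delta_y)]\,\lambda(\dint y)$, which is Mecke's equation for $m=1$.

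\emph{Step 2: from finite to general $\lambda$.} Partition $\XX=\bigsqcup_{k\ge1}\XX_k$ with $\lambda(\XX_k)<\infty$; the restrictions $\eta_k:=\eta|_{\XX_k}$ are then independent Poisson processes with intensities $\lambda|_{\XX_k}$ and $\eta=\sum_{k}\eta_k$. Writing $\sum_{x\in\eta}=\sum_k\sum_{x\in\eta_k}$, conditioning on the independent configuration $\sum_{j\ne k}\eta_j$, applying Step 1 to each $\eta_k$, and summing over $k$ gives the $m=1$ identity in full generality; all interchanges of sums, integrals and expectations are justified by Tonelli's theorem since $h\ge0$.

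\emph{Step 3: induction on $m$ and the main obstacle.} Assuming the formula for $m-1$, split off the last argument,
\[
\sum_{(x_1,\ldots,x_m)\in\eta_{\neq}^{m}}h(x_1,\ldots,x_m,\eta)=\sum_{(x_1,\ldots,x_{m-1})\in\eta_{\neq}^{m-1}}\ \sum_{\substack{x_m\in\eta\\x_m\notin\{x_1,\ldots,x_{m-1}\}}}h(x_1,\ldots,x_m,\eta),
\]
apply the induction hypothesis to the outer sum, and then the case $m=1$ (Steps 1--2), with $x_1,\ldots,x_{m-1}$ treated as frozen parameters, to the inner sum; collecting the integrals by Tonelli yields the statement for $m$. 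The only place where care is needed is the use of non-atomicity of $\lambda$: for $\lambda^{m-1}$-almost every tuple $(x_1,\ldots,x_{m-1})$ the points are pairwise distinct and almost surely none of them is an atom of $\eta$, so that after inserting $\delta_{x_1}+\cdots+\delta_{x_{m-1}}$ the remaining atoms of the configuration are precisely those of $\eta$, and the constraint ``$x_m\notin\{x_1,\ldots,x_{m-1}\}$'' becomes vacuous. This bookkeeping of the distinctness constraints --- rather than any analytic difficulty --- is the main obstacle; everything else is the standard finite-to-$\sigma$-finite routine together with the explicit Poisson representation.
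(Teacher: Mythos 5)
The paper does not prove this proposition at all: it is imported as a known result, with explicit pointers to Last--Penrose~\cite[Theorem~4.4]{LPbook} and Schneider--Weil~\cite[Corollary~3.2.3]{SW}, so there is no in-paper argument to compare yours against. What you have written is the standard textbook derivation --- essentially the proof in the first of those references --- and it is correct: the mixed binomial representation together with the identity $n\,\PP[N=n]=c\,\PP[N=n-1]$ gives the univariate Mecke equation for a finite intensity measure; restricting to a countable partition $\XX=\bigsqcup_k\XX_k$ and using the independence of the restrictions $\eta|_{\XX_k}$ (conditioning on the complementary configuration and invoking Tonelli for the nonnegative integrand) extends it to the locally finite case; and induction on $m$ then yields the multivariate statement. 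You also put your finger on the one place where the hypothesis of non-atomicity is genuinely used: for $\lambda^{m-1}$-a.e.\ tuple the inserted points are pairwise distinct and a.s.\ not atoms of $\eta$, so the atoms of $\eta\cup\{x_1,\ldots,x_{m-1}\}$ other than the $x_i$ are exactly the atoms of $\eta$, and the constraint $x_m\notin\{x_1,\ldots,x_{m-1}\}$ is $\lambda$-negligible in the final integral. The only points you gloss over are routine: the measurability of $y\mapsto\EE\,h(y,\eta+\delta_y)$ (needed to make sense of the right-hand side) and the observation that in Step~1 the Poisson mixture over $m$ of the binomial processes $\sum_{j=2}^{m+1}\delta_{Y_j}$ reconstitutes an independent copy of $\eta$; both are standard and do not affect the validity of the argument. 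In short, your proposal supplies a complete, self-contained proof of a result the paper deliberately outsources to the literature.
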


We can now turn to the proof of Theorem \ref{thm:Tfunctional}.

\begin{proof}[Proof of Theorem \ref{thm:Tfunctional}]
Let us begin with the special case $d=1$. By definition of the functional $T_{a,b}$ we can write
\begin{align*}
\EE T_{a,b}(P_{1,\alpha,\beta}) &= \EE[|\min\zeta_{1,\alpha,\beta}|^a+ (\max\zeta_{1,\alpha,\beta})^a]
=
2\EE(\max\zeta_{1,\alpha,\beta})^a.
%&= 2a\int\limits_{1}^\infty h^{a-1}\PP[|\min\zeta_{1,\alpha,\beta}|\geq h]\,\dint h\\
%&= 2a\int\limits_1^\infty h^{a-1}(1-\PP[\zeta_{1,\alpha,\beta}\cap(-\infty,-h)=\varnothing])\,\dint h.
\end{align*}
Since $\zeta_{1,\alpha,\beta}$ is a Poisson process with intensity $f_{1,\alpha,\beta}$ given by \eqref{eq:DensityZeta}, for every $h>1$ we have
$$
\PP[\max\zeta_{1,\alpha,\beta}\leq h]
=
\PP[\zeta_{1,\alpha,\beta}\cap(h,\infty)=\varnothing]
=
%\exp\Big\{-\alpha\tilde{c}_{1,\beta}\int\limits_{-\infty}^{-h}{\dint r\over(r^2-1)^\beta}\Big\}
%=
\exp\Big\{-\alpha\tilde{c}_{1,\beta}\int\limits_{h}^{\infty}{\dint r\over(r^2-1)^\beta}\Big\}.
$$
Differentiating in $h$ yields the density of $\max\zeta_{1,\alpha,\beta}$, which implies  the formula for $\EE T_{a,b}(P_{1,\alpha,\beta})$ in Theorem \ref{thm:Tfunctional}. To check whether $\EE T_{a,b}(P_{1,\alpha,\beta})<\infty$, we need to analyse the integrability at $h=1$ and at $h=\infty$. For $\beta>1$ the function is clearly integrable at $h=1$ and is asymptotically equivalent to a multiple of $h^{a}(h^2-1)^{-\beta}$ at $h=\infty$, which is integrable provided that $a<2\beta-1$.
%For $\beta=1$, the integrand simplifies to $h^{a-1}(1-({h-1\over h+1})^{\alpha/(2\pi)})$. This function behaves like a multiple of $(h-1)^{\alpha/(2\pi)}$ at $h=1$ and like a multiple of $h^{a-2}$ at $h=\infty$ and hence integrability follows in this case as long as $a<1$.

For $d\geq 2$ we start by observing that any facet, that is, any $(d-1)$-face, of a beta$^*$ set $P_{d,\alpha,\beta}$ is a $(d-1)$-dimensional simplex, which means that almost surely every facet $F\in\cF_{d-1}(P_{d,\alpha,\beta})$ arises as convex hull $F=\conv(x_1,\ldots,x_d)$ of $d$ distinct points $(x_1,\ldots,x_d)\in(\zeta_{d,\alpha,\beta})_{\neq}^d$ of the underlying Poisson process $\zeta_{d,\alpha,\beta}$. This allows us to rewrite $\EE T_{a,b}(P_{d,\alpha,\beta})$ as
\begin{align*}
\EE T_{a,b}(P_{d,\alpha,\beta}) &= {1\over d!}\EE\sum_{(x_1,\ldots,x_d)\in(\zeta_{d,\alpha,\beta})_{\neq}^d}\ind\{\conv(x_1,\ldots,x_d)\in\cF_{d-1}(\conv(\zeta_{d,\alpha,\beta}))\}\\
&\qquad\times\dist(\aff(x_1,\ldots,x_d))^a\,V_{d-1}(\conv(x_1,\ldots,x_d))^b\\
&={(\tilde{c}_{d,\beta}\alpha)^d\over d!}\int\limits_{(\RR^d\backslash \widebar\BB^d)^d}\PP[\conv(x_1,\ldots,x_d)\in\cF_{d-1}(\conv(\zeta_{d,\alpha,\beta}\cup\{x_1,\ldots,x_d\}))]\\
&\qquad\times\dist(\aff(x_1,\ldots,x_d))^a\,V_{d-1}(\conv(x_1,\ldots,x_d))^b\,\prod_{i=1}^d{\dint x_i\over(\|x_i\|^2-1)^\beta},
\end{align*}
where we applied the multivariate Mecke formula for Poisson processes from Proposition~\ref{prop:Slivnyak-Mecke}. In a next step, we apply the affine Blaschke-Petkantschin formula, which we rephrased in Proposition~\ref{prop:BlaschkePetkantschin}, to deduce that $\EE T_{a,b}(P_{d,\alpha,\beta})$ is the same as
\begin{align*}
& {(\tilde{c}_{d,\beta}\alpha)^d\over d}{\omega_d\over 2}\int\limits_{A(d,d-1)}\int\limits_{H^d}\PP[\conv(x_1,\ldots,x_d)\in\cF_{d-1}(\conv(\zeta_{d,\alpha,\beta}\cup\{x_1,\ldots,x_d\}))]\\
&\qquad\times \dist(H)^a\,V_{d-1}(\conv(x_1,\ldots,x_d))^{b+1}\ind\{H\cap \BB^d=\varnothing\}\prod_{i=1}^d{\dint x_i\over(\|x_i\|^2-1)^\beta}\,\mu_{d-1}(\dint H)\\
&= {(\tilde{c}_{d,\beta}\alpha)^d\over d}{\omega_d\over 2}\int\limits_{G(d,d-1)}\int\limits_{E^\perp}\int\limits_{(E+h)^d}\PP[\conv(x_1,\ldots,x_d)\in\cF_{d-1}(\conv(\zeta_{d,\alpha,\beta}\cup\{x_1,\ldots,x_d\}))]\\
&\qquad\times \|h\|^a\,V_{d-1}(\conv(x_1,\ldots,x_d))^{b+1}\,\ind\{\|h\|>1\}\prod_{i=1}^d{\dint x_i\over(\|x_i\|^2-1)^\beta}\,\dint h\,\nu_{d-1}(\dint E),
\end{align*}
where in the last step we used the decomposition of the invariant measure $\mu_{d-1}$ from~\eqref{eq:defmuk}. We also used that $\conv \zeta_{d,\alpha,\beta}$ a.s.\ contains $\widebar\BB^d$ by Proposition~\ref{prop:poi_process_properties}. For a linear subspace $L\subset \RR^d$ we let $\pi_L:\RR^d\to L$ denote the orthogonal projection onto $L$.
Observe that $\pi_{E^\perp}(x_1)=\ldots=\pi_{E^\perp}(x_d)$ and $\conv(x_1,\ldots,x_d)$ is a facet of $\conv(\zeta_{d,\alpha,\beta}\cup\{x_1,\ldots,x_d\})$ if and only if $\pi_{E^\perp}(x_1)\notin\pi_{E^\perp}(\conv(\zeta_{d,\alpha,\beta}))$. Next, we define the non-absorption probability
$$
p_{d,\alpha,\beta}(r) := \PP[re_1\notin \pi_{\lin(e_1)}(\conv(\zeta_{d,\alpha,\beta}))],\qquad r>1,
$$
where $e_1,\ldots, e_d$ is the standard orthonormal basis of $\RR^d$.
Each point $x_i\in E+h$ can be represented as $y_i + h$ for some uniquely determined $y_i\in E$. Applying for $i\in\{1,\ldots,d\}$ this change of variables and using the rotational symmetry of the integral (which allows us to identify the hyperplane $E$ with the coordinate subspace $\RR^{d-1}$ and $E^\perp$ with $\RR$) we arrive at
\begin{align}
\nonumber\EE T_{a,b}(P_{d,\alpha,\beta}) &= {(\tilde{c}_{d,\beta}\alpha)^d\over d}{\omega_d}\int\limits_1^\infty\int\limits_{(\RR^{d-1})^d}p_{d,\alpha,\beta}(h)\,h^a\,V_{d-1}(\conv(y_1,\ldots,y_d))^{b+1}\\
\nonumber&\hspace{4cm}\times\prod_{i=1}^d{\dint y_i\over(\|y_i\|^2+h^2-1)^\beta}\,\dint h\\
\nonumber&={(\tilde{c}_{d,\beta}\alpha)^d\over d}{\omega_d}\int\limits_1^\infty\int\limits_{(\RR^{d-1})^d}p_{d,\alpha,\beta}(h)\,h^a\,V_{d-1}(\conv(z_1,\ldots,z_d))^{b+1}(h^2-1)^{(d-1)(b+1)\over 2}\\
\nonumber&\hspace{4cm}\times\prod_{i=1}^d{(h^2-1)^{d-1\over 2}\,\dint z_i\over(h^2-1)^\beta(1+\|z_i\|^2)^\beta}\,\dint h\\
&={(\tilde{c}_{d,\beta}\alpha)^d\over d}{\omega_d}S_{d,\beta}(b)\int\limits_1^\infty p_{d,\alpha,\beta}(h)\,h^a\,(h^2-1)^{{(d-1)(b+1)\over 2}-d\big(\beta-{d-1\over 2}\big)}\,\dint h,
\label{eq:08-07a}
\end{align}
where we used the substitution $z_i=y_i/\sqrt{h^2-1}$ for $i\in\{1,\ldots,d\}$ and the abbreviation
$$
S_{d,\beta}(b) := \int\limits_{(\RR^{d-1})^d}V_{d-1}(\conv(z_1,\ldots,z_d))^{b+1}\,\prod_{i=1}^d{\dint z_i\over(1+\|z_i\|^2)^\beta}.
$$
As already observed in~\eqref{eq:def_norm_const}, $S_{d,\beta}(b)$ is -- up to the normalization constant $\tilde{c}_{d-1,\beta}^{d}$ -- the $(b+1)$-st moment of the volume of a beta' simplex in $\RR^{d-1}$ and given by
\begin{align*}
S_{d,\beta}(b) = {\tilde{c}_{d-1,\beta}^{-d}\over ((d-1)!)^{b+1}}{\Gamma(d(\beta-{d-1\over 2})-{(d-1)(b+1)\over 2})\over\Gamma(d(\beta-{d+b\over 2}))}\bigg({\Gamma(\beta-{d+b\over 2})\over\Gamma(\beta-{d-1\over 2})}\bigg)^{d}\prod_{i=1}^{d-1}{\Gamma({i+b+1\over 2})\over\Gamma({i\over 2})}.
\end{align*}
It remains to determine the probability $p_{d,\alpha,\beta}$ in the integral term in~\eqref{eq:08-07a}.
Since by Lemma~\ref{lem:Projection} the projected point process $\pi_{\lin(e_1)}(\conv(\zeta_{d,\alpha,\beta}))$ has the same distribution as the Poisson process $\zeta_{1,\alpha,\beta-{d-1\over 2}}$ on $\RR\backslash[-1,1]$ (whose points cluster at $\pm 1$), we can express $p_{d,\alpha,\beta}(h)$ as
\begin{align*}
p_{d,\alpha,\beta}(h) = \PP[\zeta_{1,\alpha,\beta-{d-1\over 2}}\cap[h,\infty)=\varnothing]
=
\exp\Bigg\{-\alpha \tilde{c}_{1,{\beta-{d-1\over 2}}}\int\limits_h^\infty{\dint r\over(r^2-1)^{\beta-{d-1\over 2}}}\Bigg\}.
\end{align*}
This proves the formula in Theorem \ref{thm:Tfunctional}.

In order to complete the proof, we need to verify that $\EE T_{a,b}(P_{d,\alpha,\beta})$ is finite under the assumptions of the theorem. Since $b<2\beta-d$, the prefactor $S_{d,\beta}(b)$ is finite and we only need to ensure that the integral in the expression for $\EE T_{a,b}(P_{d,\alpha,\beta})$ converges. For this it is sufficient to ensure that the integrand is integrable at $h=1$ and at $h=\infty$. Let us start with the case $\beta>(d+1)/2$. Integrability at $h=1$ follows, since at $h=1$ the integrand behaves like a multiple of $\exp\big\{-\alpha c(d,\beta)(h-1)^{-\beta+{d+1\over 2}}\big\}(h^2-1)^{-\gamma}$, for some constant $c(d,\beta)>0$ and where
$$
\gamma:=d\left(\beta-{d-1\over 2}\right)-{(d-1)(b+1)\over 2}
$$
is the negative exponent of the term $h^2-1$. On the other hand, at $h=\infty$ the integrand behaves like a multiple of $h^ah^{-2\gamma}=h^{a-2\gamma}$ and the exponent $a-2\gamma$ is less than $-1$ by our assumption on $a$, which in turn yields the desired integrability at $h=\infty$. Summarizing, this shows that $\EE T_{a,b}(P_{d,\alpha,\beta})<\infty$ for any $\alpha>0$ under the mentioned constraints on $a$ and $b$ for $\beta>(d+1)/2$ (and in fact only in these cases).

On the other hand, if $\beta=(d+1)/2$, the argument leading to integrability at $h=\infty$ remains the same and we can concentrate on integrability at $h=1$. To this end, we note that, since $\int\limits_h^\infty{\dint r\over r^2-1}={1\over 2}\log{h+1\over h-1}$ and $\tilde{c}_{1,1}=1/\pi$, it follows that $p_{d,\alpha,{d+1\over 2}}(h)=({h-1\over h+1})^{\alpha\over 2\pi}$ for $h>1$. This shows that at $h=1$ the integrand behaves like a multiple of $(h-1)^{{\alpha\over 2\pi}-\gamma}$, where again $\gamma$ is the negative exponent of the term $h^2-1$ as above. So, integrability at $h=1$ holds if (and only if)  ${\alpha\over 2\pi}-\gamma>-1$ or, equivalently, $\alpha>2\pi(\gamma-1)$. Using that in our case $\gamma=d-{(d-1)(b+1)\over 2}$, it follows that
\begin{align*}
2\pi(\gamma-1) = \pi(2d-(d-1)(b+1)-2) = \pi(d-db+b-1) = \pi(d-1)(1-b),
\end{align*}
which implies integrability at $h=1$ if (and only if) $\alpha>\pi(d-1)(1-b)$.

Finally, if the constraints on $\alpha$, $\beta$, $a$ and $b$ mentioned in the statement of the theorem are not satisfied, integrability either at $h=1$ or $h=\infty$ does not hold, implying that in these cases $\EE T_{a,b}(P_{d,\alpha,\beta})=\infty$. The proof is thus complete. %\hfill $\Box$
\end{proof}

\begin{proof}[Proof of Proposition~\ref{prop:IntrinsicVolume}]
The case $k=d$ has already been discussed before the statement of Proposition~\ref{prop:IntrinsicVolume}. To obtain the formula for $\EE V_k(P_{d,\alpha,\beta})$ for $k\in\{1,\ldots,d-1\}$ we use Kubota's formula~\cite[Equations (6.11) and (5.5)]{SW} (also known as mean projection formula) from integral geometry and Fubini's theorem. The combination of these two results shows that
$$
\EE V_k(P_{d,\alpha,\beta}) = {d\choose k}{\kappa_d\over\kappa_k\kappa_{d-k}}\int\limits_{G(d,k)}\EE V_k(\pi_LP_{d,\alpha,\beta})\,\nu_k(\dint L),
$$
where we recall that $\pi_L:\RR^d\to L$ stands for the orthogonal projection on to $L$.
However, by Lemma~\ref{lem:Projection} the projected polytope $\pi_LP_{d,\alpha,\beta}$ is a beta$^*$ polytope in $L$ with parameters $\alpha$ and $\beta-{d-k\over 2}$. Identifying $L$ with $\RR^k$ and using rotational symmetry, this yields
$$
\EE V_k(P_{d,\alpha,\beta}) = {d\choose k}{\kappa_d\over\kappa_k\kappa_{d-k}}\EE V_k(P_{k,\alpha,\beta-{d-k\over 2}}) = {1\over d}{d\choose k}{\kappa_d\over\kappa_k\kappa_{d-k}}\,\EE T_{1,1}(P_{k,\alpha,\beta-{d-k\over 2}})
$$
and finishes the proof.
\end{proof}

\section{Expected \texorpdfstring{$f$}{f}-vector and external angle sums: Proofs}
\subsection{Expected external angle sums: Proof of Theorem~\ref{thm:exp_ext_anlge_sums}}\label{sec:proof_ext_angles}

Suppose that either $\beta>(d+1)/2$ and $\alpha>0$ or $\beta=(d+1)/2$ and $\alpha > (d-1)\pi$. Our goal is to prove that
\begin{align*}
\EE\Bigg[\sum_{G\in\cF_k(P_{d,\alpha,\beta})}\gamma(G,P_{d,\alpha,\beta})\Bigg]
=\mathbb I^*_{\alpha,k+1}(2\beta-d)
\end{align*}
for $k\in\{0,\dots,d-1\}$, where $\mathbb I^*_{\alpha,m}(\lambda)$ is defined as in~\eqref{eq:def_I_star} and~\eqref{eq:def_I_star_sum}. In order to do this, we need to find a suitable description of the tangent cones $T_G(P_{d,\alpha,\beta})$ and their duals, the normal cones $N_G(P_{d,\alpha,\beta})$.
%It will be more convenient to do this in a deterministic case.

\paragraph{Description of the external angles:}
Let $x_1,\dots,x_{k+1}\in\RR^d$, for $k\in\{0,\dots,d-1\}$, be affinely independent points and denote their affine hull by $A_x:=\aff(x_1,\dots,x_{k+1})$. Then, we have $\dim A_x=k$ while the linear subspace $A_x^\perp$ has dimension $d-k$.
Furthermore, let $\pi_{A_x^\perp}:\RR^d\to A_x^\perp$ be the orthogonal projection onto $A_x^\perp$ and recall that $I_{A_x^\perp}:A_x^\perp\to\RR^{d-k}$ is an isometry such that $I_{A_x^\perp}(0)=0$.
Now, we consider a realization $x_{k+2},x_{k+3},\ldots\in\RR^{d}\backslash\widebar\BB^d$ of the Poisson process $\zeta_{d,\alpha,\beta}$ and define the points
\begin{align*}
y:=\pi_{A_x^\perp}(x_1)=\ldots=\pi_{A_x^\perp}(x_{k+1}),
\qquad
y_i:=\pi_{A_x^\perp}(x_{k+i+1})\in A_x^\perp,\quad i\in\NN.
\end{align*}
We are interested in the polytope $P:=\conv(x_i:i\in\NN)$. At first, assume $G:=\conv(x_1,\dots,x_{k+1})$ is a $k$-face of $P$ and note that this already implies that $A_x\cap \widebar\BB^d=\varnothing$ a.s.\ because  $P$ contains $r\widebar\BB^d$ for some $r>1$ by Proposition~\ref{thm:PolytopeOrNot}.
Defining $\bar x=(x_1+\ldots+x_{k+1})/(k+1)$, which lies in the relative interior of $G$, we observe that the tangent cone of $P$ at $G$ is given by
\begin{align*}
T_G(P)
%&	=\{v\in\RR^d:\bar x+\eps v\in P\text{ for some $\eps>0$}\}\\
%&	=\{v\in\RR^d: \eps v\in P-\bar x\text{ for some $\eps>0$}\}\\
=\pos(x_i-\bar x: i\in\NN).
\end{align*}
But since the positive hull of $x_1-\bar x,\dots,x_{k+1}-\bar x$ equals $A_x-\bar x$, we can write the tangent cone as an orthogonal sum
\begin{align*}
T_G(P)=(A_x-\bar x)\oplus \pos(y_i-y:i\in\NN).
\end{align*}
For convenience we map the points $y_i$ and $y$ to $\RR^{d-k}$ by considering $y_i':=I_{A_x^\perp}(y_i)\in\RR^{d-k}$, $i\in\NN$, and $y':=I_{A_x^\perp}(y)\in\RR^{d-k}$. Using the isometry property of $I_{A_x^\perp}$ and the fact that $(A_x-\bar x)$ is a linear subspace, we obtain that the external angle $\gamma(G,P)$ is given by
\begin{align}\label{eq:ext_angle}
\gamma(G,P)
&	=\measuredangle\big((\pos(y_i'-y':i\in\NN))^\circ\big)
	=\PP\big[\langle y_1'-y',N\rangle \le 0,\langle y_2'-y',N\rangle \le 0,\dots\big],
\end{align}
where $N$ is standard normal random vector in $\RR^{d-k}$ and $\measuredangle (C)$ denotes the solid angle of a cone $C$ (normalized so that the full-space angle is $1$).
The above holds if $G$ is a face of $P$.
In the case where $G$ is \emph{not} a face of $P$ the external angle $\gamma(G,P)$ vanishes by definition, as does the probability on the right-hand side of~\eqref{eq:ext_angle}. To prove the latter claim,  observe that $G$ is not a face of $P$ if and only if $\pi_{A_x^\perp}(x_1)=y$ is not a vertex of $\pi_{A_x^\perp}(P)=\conv(y,y_1,y_2,\dots)$, which holds if and only if $\pos(y_i-y:i\in\NN) = A_x^\perp$  since $y_1-y,y_2-y,\ldots$ are in general position a.s.

\begin{proof}[Proof of Theorem~\ref{thm:exp_ext_anlge_sums}]
Since each $k$-face of $P_{d,\alpha,\beta}$ is of the form $\conv(X_1,\dots,X_{k+1})$ for affinely independent  points $X_1,\dots,X_{k+1}$ from the Poisson process $\zeta_{d,\alpha,\beta}$, we can write, using the Mecke formula from Proposition~\ref{prop:Slivnyak-Mecke},
\begin{align}
&\EE\Bigg[\sum_{G\in\cF_k(P_{d,\alpha,\beta})}\gamma(G,P_{d,\alpha,\beta})\Bigg]\notag\\
&	\quad=\frac{1}{(k+1)!}\EE\Bigg[\sum_{(X_1,\dots,X_ {k+1})\in(\zeta_{d,\alpha,\beta})^{k+1}_{\neq}}\gamma\big(\conv(X_1,\dots,X_{k+1}),P_{d,\alpha,\beta}\big)\notag\\
&\notag\hspace{7cm}\times\ind_{\big\{\conv(X_1,\dots,X_{k+1})\in\cF_k\big(P_{d,\alpha,\beta}\big)  \big\}}\Bigg]\notag\\
&	\quad=\frac{1}{(k+1)!}\int\limits_{(\RR^d\backslash\widebar\BB^d)^{k+1}}\EE\Big[\gamma\big(\conv(x_1,\dots,x_{k+1}),P\big)\notag\\
&\hspace{7cm}\times\ind_{\big\{\conv(x_1,\dots,x_{k+1})\in\cF_k(P)  \big\}}\Big]\,\bigg(\prod_{i=1}^{k+1}\Phi_{d,\alpha,\beta}(\dint x_i)\bigg),\label{eq:integra_ext_angles}
\end{align}
where we used the notation $P:=\conv(\zeta_{d,\alpha,\beta}\cup\{x_1,\dots,x_{k+1}\})$ and recall that $\Phi_{d,\alpha,\beta}$ denotes the infinite measure on $\RR^d\backslash\widebar\BB^d$ with Lebesgue density $f_{d,\alpha,\beta}$. Without changing the integral, we can restrict the integration limits to the set $\mathcal S_{k+1,d}=\{(x_1,\dots,x_{k+1})\in(\RR^{d})^{k+1}: A_x\cap\widebar \BB^{d}=\varnothing\}$. Thus, following the arguments from the beginning of Section~\ref{sec:proof_ext_angles}, we can rewrite the expectation inside the integral as follows:
\begin{align*}
&\EE\left[\gamma\big(\conv(x_1,\dots,x_{k+1}),P\big)\,\ind_{\big\{\conv(x_1,\dots,x_{k+1})\in\cF_k(P)  \big\}}\right]\\
&	\qquad=\EE\bigg[\,\int\limits_{\RR^{d-k}}\ind_{\big\{\forall z\in I_{A_x^\perp}(\pi_{A_x^\perp}(\zeta_{d,\alpha,\beta})):\langle z-I_{A_x^\perp}(\pi_{A_x^\perp}(x_1)),g\rangle \le 0\big\}}\, (2\pi)^{-\frac{d-k}{2}} e^{-\|g\|^2/2} \dint g\bigg].
\end{align*}
%where $\text{N}(0,\mathbbm 1)$ denotes the standard normal distribution on $\RR^{d-k}$.
Now, we observe that the Poisson process $I_{A_x^\perp}(\pi_{A_x^\perp}(\zeta_{d,\alpha,\beta}))$ on $\RR^{d-k}$ has the same distribution as $\zeta_{d-k,\alpha,\beta-\frac{k}2}$ due to Lemma~\ref{lem:Projection}. In particular, it is rotationally invariant. Note that the distribution does not depend on the initial subspace $A_x$, and hence, not on the points $x_1,\dots,x_{k+1}$. Also, according to Theorem~\ref{thm:canonical_decomp} (b), we know that the measure $\big(\bigotimes_{i=1}^{k+1}\Phi_{d,\alpha,\beta}\big)\big|_{\mathcal S_{k+1,d}} \circ T_2^{-1}$ on $\RR^{d-k}\backslash\widebar \BB^{d-k}$ is the beta$^*$ measure  $\Phi_{d-k,\alpha^{k+1},(k+1)\beta-\frac{(d+1)k}{2}}$.
Here, we used the notation $T_2(x)=I_{A_x^\perp}(\pi_{A_x^\perp}(x_1))$ from Theorem~\ref{thm:canonical_decomp}. Inserting all of this into~\eqref{eq:integra_ext_angles} yields
\begin{align*}
&\EE\Bigg[\sum_{G\in\cF_k(P_{d,\alpha,\beta})}\gamma(G,P_{d,\alpha,\beta})\Bigg]\\
&	\qquad=\frac{1}{(k+1)!}\int\limits_{\RR^{d-k}\backslash\widebar \BB^{d-k}}\EE\bigg[\,\int\limits_{\RR^{d-k}}\ind_{\Big\{\forall z\in \zeta_{d-k,\alpha,\beta-\frac{k}2}:\,\langle z-y,g\rangle \le 0\Big\}}\, (2\pi)^{-\frac{d-k}{2}} e^{-\|g\|^2/2} \dint g\bigg]\\
&\hspace*{10.5cm}\times\Phi_{d-k,\alpha^{k+1},(k+1)\beta-\frac{(d+1)k}{2}}(\dint y)\\
&	\qquad=\frac{1}{(k+1)!}\int\limits_{\RR^{d-k}\backslash\widebar \BB^{d-k}}\PP\Big[\forall z\in \zeta_{d-k,\alpha,\beta-\frac{k}2}:\langle z-y,e\rangle \le 0\Big]\,\Phi_{d-k,\alpha^{k+1},(k+1)\beta-\frac{(d+1)k}{2}}(\dint y),
\end{align*}
where $e$ is some arbitrary unit vector in $\RR^{d-k}$. In the last step we used the rotational invariance of the Poisson process $\zeta_{d-k,\alpha,\beta-\frac{k}2}$ and of the beta$^*$ measure $\Phi_{d-k,\alpha^{k+1},(k+1)\beta-\frac{(d+1)k}{2}}$.
%The last step follows from the rotational invariance of the standard normal distribution and the Poisson process $\zeta_{d-k,\alpha,\beta-\frac{k}2}$.
%and the $\Phi_{d-k,\alpha^{k+1},(k+1)\beta-\frac{(d+1)k}{2}}$.
Choosing $e=(1,0,\dots,0)\in\RR^{d-k}$, we obtain, using again Lemma~\ref{lem:Projection} in the second step,
\begin{align*}
&\EE\Bigg[\sum_{G\in\cF_k(P_{d,\alpha,\beta})}\gamma(G,P_{d,\alpha,\beta})\Bigg]\\
&	\quad=\frac{1}{(k+1)!}\int\limits_{\RR^{d-k}\backslash\widebar \BB^{d-k}}\PP\big[\forall z\in\zeta_{d-k,\alpha,\beta-{k\over 2}}:\langle z,e\rangle\le\langle y,e\rangle \big]\,\Phi_{d-k,\alpha^{k+1},(k+1)\beta-\frac{(d+1)k}{2}}(\dint y)\\
& \quad	=\frac{1}{(k+1)!}\int\limits_{\RR\backslash [-1,1]}\PP\big[\forall z'\in \pi_1(\zeta_{d-k,\alpha,\beta-{k\over 2}}):z'\le y' \big]\,\Phi_{1,\alpha^{k+1},(k+1)\beta-\frac{d(k+1)-1}{2}}(\dint y'),
\end{align*}
where $\pi_1:\RR^{d-k}\to\RR$ denotes the projection onto the first coordinate. The same lemma also implies that the projected Poisson process $\pi_{1}(\zeta_{d-k,\alpha,\beta-\frac{k}2})$ has the same distribution as $\zeta_{1,\alpha,\beta-\frac{d-1}{2}}$. This process has atoms clustering at $\pm1$. Therefore, the probability under the integral sign vanishes for $y'\leq 1$. Let in the following $y'>1$. Then,
\begin{align}
\PP\big[\forall z'\in \pi_{1}(\zeta_{d-k,\alpha,\beta-\frac{k}2}):z'\le y' \big]
&	=\PP\left[\zeta_{1,\alpha,\beta-\frac{d-1}{2}}\cap (y',\infty)=\varnothing \right]\notag\\
&	=\exp\Bigg\{-\int\limits_{y'}^\infty f_{1,\alpha,\beta-\frac{d-1}{2}}(t)\,\dint t \Bigg\}
\notag\\&	=\exp\Bigg\{-\alpha\,\tilde c_{1,\beta-\frac{d-1}{2}}\int\limits_{y'}^\infty (t^2-1)^{-(\beta-\frac{d-1}{2})} \,\dint t\Bigg\}.\label{eq:exponential_term}
\end{align}
Setting $\gamma:=(k+1)\beta-\frac{d(k+1)-1}{2}$, and using the definitions~\eqref{eq:def_I_star_sum} and~\eqref{eq:def_I_star} in the second step yields
\begin{align*}
&\EE\Bigg[\sum_{G\in\cF_k(P_{d,\alpha,\beta})}\gamma(G,P_{d,\alpha,\beta})\Bigg]\\
&	\qquad=\frac{\alpha^{k+1}}{(k+1)!}\int\limits_{1}^\infty\tilde c_{1,\gamma} \, (y^2-1)^{-\gamma}\exp\bigg\{-\alpha\cdot\tilde c_{1,\beta-\frac{d-1}{2}}\int\limits_{y}^\infty (t^2-1)^{-(\beta-\frac{d-1}{2})} \,\dint t \bigg\}\,\dint y\\
&	\qquad=\mathbb I^*_{\alpha,k+1}(2\beta-d).
\end{align*}
This completes the proof.
\end{proof}

\subsection{Expected \texorpdfstring{$f$}{f}-vector: Proof of Theorem~\ref{thm:f-vector_beta_star}}\label{sec:proof_f-vector}

This section contains the proof of Theorem~\ref{thm:f-vector_beta_star} on the expected $f$-vector of $P_{d,\alpha,\beta}$. We divide the proof into three parts. In the first part, we sketch the idea of the proof and use a formula by Affentranger and Schneider~\cite{AffentrangerSchneider} to reduce the expected $f$-vector to a formula containing external and internal angles. In the second step we use the canonical decomposition from Theorem~\ref{thm:canonical_decomp} to separate the expectations of the said angles. In the third part, we finally compute the formula for $\EE f_k(P_{d,\alpha,\beta})$.

\paragraph{Idea of the proof:}
Suppose that either $\beta>(d+1)/2$ and $\alpha>0$ or $\beta=(d+1)/2$ and $\alpha > (d-1)\pi$. In order to compute the expected $f$-vector of the beta$^*$-polytope $P_{d,\alpha,\beta}$, which is defined as the convex hull of the Poisson process $\zeta_{d,\alpha,\beta}$ in $\RR^d\backslash \widebar\BB^d$, we shall represent this polytope as a random uniform projection of a higher dimensional polytope. The projection property from Lemma~\ref{lem:Projection} implies that, for some $\ell\in\NN$, the projection of the Poisson process $\zeta_{d+\ell,\alpha,\beta+\frac\ell 2}$ in $\RR^{d+\ell}\backslash \widebar \BB^{d+\ell}$ to $\RR^d\backslash \widebar\BB^d$ has the same distribution as the Poisson process $\zeta_{d,\alpha,\beta}$. Now, taking a random and uniformly distributed $d$-dimensional subspace $L_d\in G(d+\ell,d)$ and denoting by $\pi_{L_d}$ the orthogonal projection onto $L_d$, we observe that
\begin{align*}
f_k\big(\pi_{L_d} P_{d+\ell,\alpha,\beta+\frac \ell 2}\big)\overset{d}{=}f_k\big(P_{d,\alpha,\beta}\big),
\end{align*}
where $\overset{d}{=}$ stands for equality in distribution.
Thus, we can reduce the expectation of the right-hand side to the expectation of the left-hand side, which can be computed by using the following formula due to Affentranger and Schneider~\cite{AffentrangerSchneider}. For any polytope $P$ of dimension $\dim P$, $d\in\{1,\dots,\dim P\}$ and $k\in\{0,\dots,d-1\}$ one has that
\begin{align}\label{eq:AffentrangerSchneider}
\EE f_k(\pi_{L_d} P)=2\sum_{s=0}^\infty\:\sum_{G\in\cF_{d-1-2s}(P)}\gamma(G,P)\sum_{F\in\cF_k(G)}\beta(F,G),
\end{align}
where we recall that $\gamma(G,P)$ denotes the external angle of $P$ at its face $G$ while $\beta(F,G)$ denotes the internal angle of $G$ at its face $F$.
Applying formula~\eqref{eq:AffentrangerSchneider} to our case yields
\begin{align*}
\EE\Big[f_k\big(\pi_{L_d} P_{d+\ell,\alpha,\beta+\frac \ell 2}\big)\Big|P_{d+\ell,\alpha,\beta+\frac \ell 2}\Big]
	=2\sum_{s=0}^\infty\:\sum_{G\in\cF_{d-1-2s}(P_{d+\ell,\alpha,\beta+\frac \ell 2})}\gamma\big(G,P_{d+\ell,\alpha,\beta+\frac \ell 2}\big)\sum_{F\in\cF_k(G)}\beta(F,G).
\end{align*}
Hence, we can take the expectation with respect to the random set $P_{d+\ell,\alpha,\beta+\frac \ell 2}$ of both sides to arrive at
\begin{align}
\EE\big[f_k\big(P_{d,\alpha,\beta}\big)\big]
&	=2\sum_{s=0}^\infty \EE\left[\sum_{G\in\cF_{d-1-2s}(P_{d+\ell,\alpha,\beta+\frac \ell 2})}\gamma\big(G,P_{d+\ell,\alpha,\beta+\frac \ell 2}\big)\sum_{F\in\cF_k(G)}\beta(F,G)  \right].\label{eq:sum_int_ext1}
\end{align}
It turns out that, in some sense,  we separate the sum over the external angles and the sum over the internal angles.  Clearly, we cannot argue that both angle sums are independent, since the sum over the internal angles depends on $G$.  The main ingredient in separating the external and internal angles is the canonical decomposition in Theorem~\ref{thm:canonical_decomp}.

\paragraph{Separating the internal and external angles:}
With the same arguments as in the proof of Theorem~\ref{thm:exp_ext_anlge_sums} (with $k$ replaced by $d-2s-1$, $d$ replaced by $d+\ell$ and $\beta$ replaced by $\beta+\frac \ell 2$), we can rewrite the expectation in line~\eqref{eq:sum_int_ext1} as follows:
\begin{multline*}
\frac{1}{(d-2s)!}\EE \Bigg[\sum_{(X_1,\dots,X_{d-2s})\in(\zeta_{d+\ell,\alpha,\beta+\frac\ell 2})^{d-2s}_{\neq}} \gamma\Big(\conv(X_1,\dots,X_{d-2s}),P_{d+\ell,\alpha,\beta+\frac \ell 2}\Big)\\
	\times \ind_{\Big\{\conv(X_1,\dots,X_{d-2s})\in\cF_{d-1-2s}\big(P_{d+\ell,\alpha,\beta+\frac \ell 2}\big)\Big\}} \sum_{F\in\cF_k(\conv(X_1,\dots,X_{d-2s}))}\beta(F,\conv(X_1,\dots,X_{d-2s}))\Bigg].
\end{multline*}
The Mecke formula from Proposition~\ref{prop:Slivnyak-Mecke} applied to the expectation above yields that $\EE[f_k(P_{d,\alpha,\beta})]$ is equal to
\begin{align}
&\sum_{s=0}^\infty \frac{2}{(d-2s)!}\int\limits_{(\RR^{d+\ell})^{d-2s}}\EE\Bigg[\gamma\big(\conv(x_1,\dots,x_{d-2s}),P\big)\ind_{\{\conv(x_1,\dots,x_{d-2s})\in\cF_{d-1-2s}(P) \}}\notag\\
&	\quad\times\sum_{F\in\cF_k(\conv(x_1,\dots,x_{d-2s}))}\beta(F,\conv(x_1,\dots,x_{d-2s}))\Bigg]\bigg(\prod_{i=1}^{d-2s}\Phi_{d+\ell,\alpha,\beta+\frac\ell 2}(\dint x_i)\bigg)\notag\\
&	=\sum_{s=0}^\infty \frac{2}{(d-2s)!}\int\limits_{(\RR^{d+\ell})^{d-2s}}\sum_{F\in\cF_k(\conv(x_1,\dots,x_{d-2s}))}\beta(F,\conv(x_1,\dots,x_{d-2s}))\label{eq:prod_ext_int_angles}\\
&	\quad\times \EE\Bigg[\gamma\Big(\conv(x_1,\dots,x_{d-2s}),P\Big)\ind_{\{\conv(x_1,\dots,x_{d-2s})\in\cF_{d-1-2s}(P) \}}\Bigg]\bigg(\prod_{i=1}^{d-2s}\Phi_{d+\ell,\alpha,\beta+\frac\ell 2}(\dint x_i)\bigg),\notag
\end{align}
where we used the notation $P:=\conv(\zeta_{d+\ell,\alpha,\beta+\frac \ell 2}\cup\{x_1,\dots,x_{d-2s}\})$. Up to the sum of the internal angles, the above summands  already occurred in line~\eqref{eq:integra_ext_angles} and were evaluated in the subsequent proof. Following the same reasoning, we can rewrite~\eqref{eq:prod_ext_int_angles} to obtain
\begin{multline*}
\EE\big[f_k\big(P_{d,\alpha,\beta}\big)\big]
	=2\sum_{s=0}^\infty \frac{1}{(d-2s)!}\int\limits_{\mathcal S_{d-2s,d+\ell}}\sum_{F\in\cF_k(\conv(x_1,\dots,x_{d-2s}))}\beta(F,\conv(x_1,\dots,x_{d-2s}))\\
	\times \exp\Bigg\{-\alpha\int\limits_{\langle T_2(x_1,\dots,x_{d-2s}),e_1\rangle}^\infty \tilde c_{1,\beta-\frac{d-1}{2}}(t^2-1)^{-(\beta-\frac{d-1}{2})}\,\dint t\Bigg\} \bigg(\prod_{i=1}^{d-2s}\Phi_{d+\ell,\alpha,\beta+\frac\ell 2}(\dint x_i)\bigg),
\end{multline*}
with $T_2(x_1,\dots,x_{d-2s}):=I_{A_x^\perp}(\pi_{A_x^\perp}(x_1))=I_{A_x^\perp}(p(A_x))$.
Since the internal angle is invariant under isometries and rescalings, the sum
$
\sum_{F\in\cF_k(\conv(x_1,\dots,x_{d-2s}))}\beta(F,\conv(x_1,\dots,x_{d-2s}))
$
does not change if we replace $(x_1,\dots,x_{d-2s})$ by
\begin{align*}
T_1(x_1,\dots,x_{d-2s})=\left(\frac{I_{A_x}(x_1)}{\sqrt{\dist^2(A_x)-1}},\dots,\frac{I_{A_x}(x_{d-2s})}{\sqrt{\dist^2(A_x)-1}}\right),
\end{align*}
where we recall that the two functions $T_1$ and $T_2$ are defined in the same way as in Theorem~\ref{thm:canonical_decomp}. Recalling that $T=(T_1,T_2)$, we obtain
\begin{align*}
\EE\big[f_k\big(P_{d,\alpha,\beta}\big)\big]
&	=2\sum_{s=0}^\infty \frac{1}{(d-2s)!}\int\limits_{\mathcal S_{d-2s,d+\ell}}\sum_{F\in\cF_k(\conv(T_1(x_1,\dots,x_{d-2s}))}\beta(F,\conv(T_1(x_1,\dots,x_{d-2s}))\\
&	\quad\times \exp\Bigg\{-\alpha\int\limits_{\langle T_2(x_1,\dots,x_{d-2s}),e_1\rangle}^\infty \hspace*{-10pt}\tilde c_{1,\beta-\frac{d-1}{2}}(t^2-1)^{-(\beta-\frac{d-1}{2})}\,\dint t\Bigg\} \bigg(\prod_{i=1}^{d-2s}\Phi_{d+\ell,\alpha,\beta+\frac\ell 2}(\dint x_i)\bigg)\\
&	=2\sum_{s=0}^\infty \frac{1}{(d-2s)!}\int\limits_{(\RR^{d-2s-1})^{d-2s}\times\RR^{\ell+2s+1}} \sum_{F\in\cF_k(\conv(z_1,\dots,z_{d-2s}))}\hspace*{-1pt}\beta(F,\conv(z_1,\dots,z_{d-2s}))\notag\\
&	\quad\times \exp\Bigg\{-\alpha\int\limits_{\langle y,e_1\rangle}^\infty \tilde c_{1,\beta-\frac{d-1}{2}}(t^2-1)^{-(\beta-\frac{d-1}{2})}\,\dint t\Bigg\}\\
&	\quad\times\Bigg(\bigg(\prod_{i=1}^{d-2s}\Phi_{d+\ell,\alpha,\beta+\frac\ell 2}\bigg)\bigg|_{\mathcal S_{d-2s,d+\ell}}\circ T^{-1}\Bigg)(\dint (z_1,\dots,z_{d-2s},y)).
\end{align*}
Hence, part (a) of Theorem~\ref{thm:canonical_decomp} yields
\begin{align}
&\EE\big[f_k\big(P_{d,\alpha,\beta}\big)\big]\notag\\
&	\quad=2\sum_{s=0}^\infty \Bigg(\,\int\limits_{(\RR^{d-2s-1})^{d-2s}}\sum_{F\in\cF_{k}(\conv(z_1,\dots,z_{d-2s}))}\beta(F,\conv(z_1,\dots,z_{d-2s}))\phi_{d-2s-1}(\dint(z_1,\dots,z_{d-2s})\Bigg)\label{eq:integral_internal_angle}\\
&	\qquad\times\Bigg(\frac{1}{(d-2s)!}\int\limits_{\RR^{\ell+2s+1}\backslash\widebar \BB^{\ell+2s+1}}\exp\Bigg\{-\int\limits_{\langle y,e_1\rangle}^{\infty}f_{1,\alpha,\beta-\frac{d-1}{2}}(t)\,\dint t\Bigg\}f_{\ell+2s+1,\alpha^{d-2s},\gamma}(y)\,\dint y\Bigg),\label{eq:integral_external_angle}
\end{align}
where
\begin{align*}
\gamma:=(d-2s)\Big(\beta+\frac \ell 2\Big)-\frac{(d+\ell+1)(d-2s-1)}{2},
\end{align*}
and $\phi_{d-2s-1}$ is the probability measure on $(\RR^{d-2s-1})^{d-2s}$ from Theorem~\ref{thm:canonical_decomp} whose Lebesgue density is a constant multiple of
\begin{align*}
V_{d-2s-1}^{\ell+2s+1}(\conv(z_1,\dots,z_{d-2s}))\bigg(\prod_{i=1}^{d-2s} \tilde f_{d-2s-1,\beta+\frac \ell 2}(z_i)\bigg).
\end{align*}
We have thus achieved our goal to separate the internal and external angles.

\paragraph{Expected $f$-vector:}
Finally, we need to evaluate the integrals in lines~\eqref{eq:integral_internal_angle} and~\eqref{eq:integral_external_angle}.
As already explained in the proof of Theorem~\ref{thm:exp_ext_anlge_sums}, the integral in line~\eqref{eq:integral_external_angle} can be simplified as follows. Since the image measure of $\Phi_{\ell+2s+1,\alpha,\gamma}$ under orthogonal projection onto $\RR\times\{0\}^{\ell+2s}$ is given by $\Phi_{1,\alpha,\beta(d-2s)-\frac{d(d-2s)-1}{2}}$, due to Lemma~\ref{lem:Projection},
we obtain
\begin{align*}
&\frac{1}{(d-2s)!}\int\limits_{\RR^{\ell+2s+1}\backslash \widebar \BB^{\ell+2s+1}}\exp\Bigg\{-\int\limits_{\langle y,e_1\rangle}^{\infty}f_{1,\alpha,\beta-\frac{d-1}{2}}(y)\,\dint t\Bigg\}f_{\ell+2s+1,\alpha^{d-2s},\gamma}(y)\,\dint y\\
&\quad=\frac{1}{(d-2s)!}\int\limits_{\RR\backslash [-1,1]}\exp\Bigg\{-\int\limits_s^\infty f_{1,\alpha,\beta-\frac{d-1}{2}}(t)\,\dint t\Bigg\}f_{1,\alpha^{d-2s},\beta(d-2s)-\frac{d(d-2s)-1}{2}}(s)\,\dint s\\
&	\quad=\frac{\alpha^{d-2s}}{(d-2s)!}\int\limits_{1}^\infty \exp\Bigg\{-\int\limits_s^\infty f_{1,\alpha,\beta-\frac{d-1}{2}}(t)\,\dint t\Bigg\}f_{1,1,\beta(d-2s)-\frac{d(d-2s)-1}{2}}(s)\,\dint s\\
&	\quad=\mathbb I_{\alpha,d-2s}^*(2\beta-d).
\end{align*}
Now, we want to evaluate the integral in line~\eqref{eq:integral_internal_angle}. Since $\phi_{d-2s-1}$ is a probability measure we will denote this integral as an expectation. Let $Z_1,\dots,Z_{d-2s}$ be random vectors in $\RR^{d-2s-1}$ whose joint distribution is $\phi_{d-2s-1}$. Using the exchangeability of $(Z_1,\dots,Z_{d-2s})$ and the fact that each $k$-face of $\conv(Z_1,\dots,Z_{d-2s})$ is almost surely of the form $\conv(Z_{i_1},\dots,Z_{i_{k+1}})$ for some indices $1\le i_1<\ldots<i_{k+1}\le {d-2s}$, we obtain
\begin{align*}
&\int\limits_{(\RR^{d-2s-1})^{d-2s}}\sum_{F\in\cF_k(z_1,\dots,z_{d-2s})}\beta(F,\conv(z_1,\dots,z_{d-2s}))\phi_{d-2s-1}(\dint(z_1,\dots,z_{d-2s}))\\
&	\quad=\binom{d-2s}{k+1}\EE\big[\beta\big(\conv(Z_1,\dots,Z_{k+1}),\conv(Z_1,\dots,Z_{d-2s})\big)\big].
\end{align*}
The analogue to Theorem~\ref{thm:canonical_decomp} for beta'-densities, which was proven in~\cite[Theorem~3.6]{KabluchkoThaeleZaporozhets}, states that $(Z_1,\dots,Z_{d-2s})$ has the same distribution as
\begin{align*}
\left(\frac{I_{A}(X_1)}{\sqrt{1+\dist^2(A)}},\dots,\frac{I_{A}(X_{d-2s})}{\sqrt{1+\dist^2(A)}}\right),
\end{align*}
where $X_1,\dots,X_{d-2s}$ are i.i.d. points in $\RR^{d+\ell}$ with density $\tilde f_{d+\ell,\beta+\frac\ell 2}$ and $A:=\aff(X_1,\dots,X_{d-2s})$. But the internal angles do not change under rescaling and isometry, which yields
\begin{align*}
&\binom{d-2s}{k+1}\EE\big[\beta\big(\conv(Z_1,\dots,Z_{k+1}),\conv(Z_1,\dots,Z_{d-2s})\big)\big]\\
&	\quad=\binom{d-2s}{k+1}\EE\big[\beta\big(\conv(X_1,\dots,X_{k+1}),\conv(X_1,\dots,X_{d-2s})\big)\big]\\
&	\quad=\mathbb{\tilde J}_{d-2s,k+1}\Big(\beta-s-\frac 12\Big).
\end{align*}
The last equation follows from~\cite[Theorem~4.1]{KabluchkoThaeleZaporozhets} (with $d$ replaced by $d-2s-1$). This completes the proof of Theorem~\ref{thm:f-vector_beta_star}. \hfill $\Box$

\section{Asymptotics for large intensities and monotonicity: Proofs} \label{sec:large_intensities_proofs}

%\begin{proof}[Proof of Theorem~\ref{thm:PolytopesConverge}]
\subsection{Proof of Theorem~\ref{thm:PolytopesConverge}}
We shall provide two proofs. The first one has the advantage that it could be applied in a much more general setting, but it requires that we are dealing with polytopes. The second proof uses special monotonicity features of the model and is valid for all $\beta>d/2$.

\begin{proof}[First proof of Theorem~\ref{thm:PolytopesConverge}]
In this proof it is assumed that the intensity $\alpha$ and the parameter $\beta$ are such that $P_{d,\alpha,\beta}$ is almost surely a polytope.

In order to prove Theorem~\ref{thm:PolytopesConverge} we use a Scheff\'e-type lemma addressing the convergence of random polytopes from~\cite[Proposition~2.3]{KabluchkoTemesvariThaeleConvergence}, and in what follows we also use the same notation as in that paper in order to simplify comparison. To this end, let $\cP_m^d$ be the set of $d$-dimensional polytopes with exactly $m\geq d+1$ vertices whose last coordinates are all distinct. Each such polytope $P$ can uniquely be represented as convex hull of $m$ points $x_1,\ldots,x_m\in\RR^d$ with $x_{i,d} < x_{j,d}$ for $1\leq i<j\leq m$ and we write $\iota^{-1}P$ for the vertex representation $(x_1,\ldots,x_m)$ of $P$. Also, put $\widetilde{\cP}_m^d:=\iota^{-1}(\cP_m^d)\subset(\RR^d)^m$ and let $\mu_m^d$ be the measure on $\widetilde{\cP}_m^d$ arising as the restriction of the $m$-fold product of the $d$-dimensional Lebesgue measure. Clearly, with probability one, the vertex representations of the rescaled beta$^*$ polytopes
$$
Q_\alpha:=\iota^{-1}(\alpha^{-{1\over 2\beta-d}}P_{d,\alpha,\beta}),
$$
where $\alpha>0$ if $\beta>(d+1)/2$ and $\alpha > (d-1)\pi$ if $\beta=(d+1)/2$, belong to the disjoint union $\widetilde{\cP}_\infty^d:=\coprod_{m=d+1}^\infty\widetilde{\cP}_m^d$, which we supply with the infinite sum $\mu_\infty^d$ of the measures $\mu_m^d$. For every $m\geq d+1$ the density $\varphi_\alpha$ of $Q_\alpha$ with respect to $\mu_\infty^d$ is given by
\begin{align*}
\varphi_\alpha(x_1,\ldots,x_m) &= \Big(\prod_{i=1}^m\alpha^{d\over 2\beta-d}f_{d,\alpha,\beta}(\alpha^{1\over 2\beta-d}x_i)\ind_{\{\|\alpha^{1/(2\beta-d)}x_i\|>1\}}\Big)\\
&\quad\times\exp\Bigg\{-\int\limits_{\RR^d\backslash\conv(x_1,\ldots,x_m)}\alpha^{d\over 2\beta-d}f_{d,\alpha,\beta}(\alpha^{1\over 2\beta-d}y)\ind_{\{\|\alpha^{1/(2\beta-d)}y\|>1\}}\,\dint y\Bigg\},
\end{align*}
recall~\eqref{eq:convergence_density_beta$^*$}. Here, the first factor reflects the part of the density corresponding to the $m$ vertices $x_1,\ldots,x_m$ of $Q_\alpha$, while the second factor is the probability that all other points of the Poisson process generating $Q_\alpha$ belong to the convex hull of these vertices. Using the definition of $f_{d,\alpha,\beta}$ this can be rewritten as
\begin{align*}
\varphi_\alpha(x_1,\ldots,x_m) &= \Big(\prod_{i=1}^m \alpha^{{d\over 2\beta-d}+1}\tilde{c}_{d,\beta}(\|\alpha^{1\over 2\beta-d}x_i\|^2-1)^{-\beta}\ind_{\{\|\alpha^{1/(2\beta-d)}x_i\|>1\}}\Big)\\
&\quad\times\exp\Bigg\{-\alpha^{{d\over 2\beta-d}+1}\tilde{c}_{d,\beta}\int\limits_{\RR^d\backslash\conv(x_1,\ldots,x_m)}(\|\alpha^{1\over 2\beta-d}y\|^2-1)^{-\beta}\ind_{\{\|\alpha^{1/( 2\beta-d)}y\|>1\}}\,\dint y\Bigg\}\\
&=\Big(\prod_{i=1}^m \tilde{c}_{d,\beta}(\|x_i\|^2-\alpha^{-{2\over 2\beta-d}})^{-\beta}\ind_{\{\|\alpha^{1/(2\beta-d)}x_i\|>1\}}\Big)\\
&\quad\times\exp\Bigg\{-\tilde{c}_{d,\beta}\int\limits_{\RR^d\backslash\conv(x_1,\ldots,x_m)}(\|y\|^2-\alpha^{-{2\over 2\beta-d}})^{-\beta}\ind_{\{\|\alpha^{1/( 2\beta-d)}y\|>1\}}\,\dint y\Bigg\},
\end{align*}
see again~\eqref{eq:convergence_density_beta$^*$}. Letting $\alpha\to\infty$, this converges to
\begin{align*}
\varphi(x_1,\ldots,x_m) &:= \Big(\prod_{i=1}^m \tilde{c}_{d,\beta}\|x_i\|^{-2\beta}\Big)\exp\Bigg\{-\tilde{c}_{d,\beta}\int\limits_{\RR^d\backslash\conv(x_1,\ldots,x_m)}\|y\|^{-2\beta}\,\dint y\Bigg\}
\end{align*}
for $(x_1,\ldots,x_m)\in\cP_m^d$ with $x_i\neq 0$ for all $i\in\{1,\ldots,m\}$, which shows the pointwise convergence of $\varphi_\alpha$ to $\varphi$ on $\widetilde{\cP}_\infty^d$. However, $\varphi$ is precisely the density on $\widetilde{\cP}_\infty^d$ with respect to $\mu_\infty^d$ of the convex hull of a Poisson process with Lebesgue intensity $\tilde{c}_{d,\beta}\|x_i\|^{-2\beta}$, $x\in\RR^d\backslash\{0\}$, under the mapping $\iota^{-1}$. In Section~\ref{sec:Convergence}, we denoted this random convex hull by $\conv \chi_{d,1,\beta}$. Thus, an application of the Scheff\'e-type lemma for random polytopes~\cite[Proposition~2.3]{KabluchkoTemesvariThaeleConvergence} shows that, as $\alpha\to\infty$, $\alpha^{-1/(2\beta-d)}P_{d,\alpha,\beta}$ weakly converges to the Poisson polytope $\conv \chi_{d,1,\beta}$ on the space of compact convex subsets of $\RR^d$ endowed with the Hausdorff metric. This completes the proof of Theorem~\ref{thm:PolytopesConverge}.
\end{proof}
%\hfill$\Box$

\begin{proof}[Second proof of Theorem~\ref{thm:PolytopesConverge}]
The following proof of Theorem \ref{thm:PolytopesConverge} applies in the full range $\beta>d/2$. The convex set $\alpha^{-1/(2\beta-d)}P_{d,\alpha, \beta}$ is generated by a Poisson process with intensity
$$
h_{d,\alpha, \beta}(x) := \tilde c_{d,\beta} (\|x\|^2 - \alpha^{-\frac 2{2\beta-d}})^{-\beta}, \qquad
\|x\|>\alpha^{-\frac{1}{2\beta-d}}.
$$
Let us extend this definition by putting $h_{d,\alpha,\beta}(x) = +\infty$ if $\|x\|\leq \alpha^{-\frac{1}{2\beta-d}}$. Then, for every $x\in \RR^d$ we have
$$
\lim_{\alpha\to\infty} h_{d,\alpha, \beta}(x) = \tilde c_{d,\beta} \|x\|^{-2\beta} =: h_{d,\infty,\beta}(x).
$$
Moreover, observe that the function $\alpha \mapsto h_{d,\alpha,\beta}(x)$ is non-increasing in $\alpha>0$. This feature allows us to construct a coupling of all random sets of interest on a common probability space. To this end, let $(x_i,y_i)$, $i\in \NN$, be the atoms of a Poisson process on $\RR^d \times [0,\infty)$ whose intensity is the Lebesgue measure. Define the following random convex sets in $\RR^d$:
\begin{align*}
Q_{d,\alpha,\beta}
&=
\overline{\conv} (x_i: y_i \leq h_{d,\alpha, \beta}(x_i))\stackrel{d}{=}\alpha^{-1/(2\beta-d)}P_{d,\alpha, \beta},\\
Q_{d,\infty,\beta}
&=
\conv (x_i: y_i \leq h_{d,\infty, \beta}(x_i))\stackrel{d}{=} \conv \chi_{d,1,\beta}.
\end{align*}
We now claim that in our coupling there is an a.s.\ finite random variable $\alpha_0>0$ such that $Q_{d,\alpha,\beta} = Q_{d,\infty,\beta}$ for all $\alpha > \alpha_0$, which is stronger than the claim of Theorem~\ref{thm:PolytopesConverge}. In the rest of this proof, $\omega$ denotes some outcome of our random experiment and we write $\alpha_0=\alpha_0(\omega)$ to indicate that $\alpha_0$ is random. By definition, we have $Q_{d,\alpha,\beta} \supseteq Q_{d,\infty,\beta}$ for every $\alpha>0$. On the other hand, it is known~\cite[Corollary~4.2]{KabluchkoMarynychTemesvariThaele} that, with probability $1$, the random convex set  $Q_{d,\infty,\beta}$ contains a ball $r(\omega) \widebar\BB^d$ of certain random radius $r(\omega)>0$ and is a convex hull of finitely many points $x_i$, $i\in I(\omega)$, located outside  $r(\omega) \widebar\BB^d$. Since $h_{d,\alpha,\beta}(x)\downarrow h_{d,\infty,\beta}(x)$ as $\alpha\to\infty$, the number of atoms $(x_i, y_i)$, $i\in \NN$, that satisfy $\|x_i\| \geq r(\omega)$ and $h_{d,\infty,\beta}(x_i) \leq y_i \leq h_{d,\alpha,\beta}(x_i)$, converges to $0$ a.s. It follows that for sufficiently large $\alpha>\alpha_0(\omega)$, we have $Q_{d,\alpha,\beta} = Q_{d,\infty,\beta}$.
\end{proof}

%\begin{proof}[Proof of Proposition~\ref{prop:asymptotics_I_star}]
\subsection{Proof of Proposition~\ref{prop:asymptotics_I_star}}
We use that $\mathbb I^*_{\alpha,n}(\lambda)=\frac{\alpha^n}{n!} I^*_{\alpha,n}(\lambda)$ and substitute $y=\coth(w)$ in the definition~\eqref{eq:def_I_star} of $I^*_{\alpha,n}(\lambda)$ to obtain
	\begin{align}
		\mathbb I^*_{\alpha,n}(\lambda)
		&	=\frac{\alpha^n}{n!}\int\limits_0^\infty \tilde c_{1,\frac{\lambda n+1}{2}}(\coth^2w-1)^{-\frac{\lambda n+1}{2}+1}\exp\Bigg\{-\alpha\int\limits_{\coth w}^\infty \tilde c_{1,\frac{\lambda+1}{2}}(t^2-1)^{-\frac{\lambda+1}{2}}\,\dint t \Bigg\}\dint w.\notag\\
		&	=\frac{\alpha^n}{n!}\int\limits_0^\infty \tilde c_{1,\frac{\lambda n+1}{2}}(\sinh w)^{\lambda n-1}\exp\Bigg\{-\alpha\int\limits_{\coth w}^\infty \tilde c_{1,\frac{\lambda+1}{2}}(t^2-1)^{-\frac{\lambda+1}{2}}\,\dint t\Bigg\}\dint w\notag\\
		&	=:\frac{\alpha^n\tilde c_{1,\frac{\lambda n+1}{2}}}{n!}\int\limits_0^\infty \varphi(w)\cdot e^{-\alpha\cdot h(w)}\,\dint w.\label{eq:h_and_phi}
	\end{align}
	We are now applying~\cite[Theorem II.1.1]{Wong} to deduce the first two terms of the asymptotic expansion of the integral in the last line. To this end we need to check especially assumptions (II.1.9) -- (II.1.11) in~\cite{Wong}. Since the function $h:[0,\infty)\to\RR$ defined by~\eqref{eq:h_and_phi} attains its minimum $h(0)=0$ at $w=0$, we have to determine the asymptotic expansions of $h(w)$ and $\varphi(w)$, also defined by~\eqref{eq:h_and_phi}, at $w=0$. Using the same notation as in~\cite{Wong} it holds that
	\begin{align*}
		h(w)=\int\limits_{\coth w}^\infty \tilde c_{1,\frac{\lambda+1}{2}}(t^2-1)^{-\frac{\lambda+1}{2}}\,\dint t
		&	=\frac{\tilde c_{1,\frac{\lambda+1}{2}}}{\lambda}\cdot w^\lambda+\frac{(\lambda-1)\tilde c_{1,\frac{\lambda+1}{2}}}{6(\lambda+2)}\cdot w^{\lambda+2}+O\big(w^{\lambda+4}\big)\\
		&	=:a_0 \cdot w^\lambda+a_2\cdot w^{\lambda+2}+O\big(w^{\lambda+4}\big)
	\end{align*}
	and
	\begin{align*}
		\varphi(w)=(\sinh w)^{\lambda n-1}
		&	=w^{\lambda n-1}+\frac{\lambda n-1}{6}\cdot w^{\lambda n+1} + O\big(w^{\lambda n+3}\big)\\
		&	=:b_0\cdot w^{\lambda n-1}+b_2\cdot w^{\lambda +1}+O\big(w^{\lambda n+3}\big),
	\end{align*}
	as $w\to 0$.
	Putting $\mu:=\lambda$ and $\tilde {\alpha}:=\lambda n$  for the parameters $\mu$ and $\tilde \alpha$ defined as in (II.1.9) and (II.1.10) in~\cite{Wong} (our $\tilde \alpha$ corresponds to Wong's $\alpha$), respectively, and denoting
	\begin{align*}
		&c_0:=\frac{b_0}{\mu} \cdot a_0^{-\frac{\tilde\alpha}{\mu}}=\frac{\lambda^{n-1}}{(\tilde c_{1,\frac{\lambda+1}{2}})^n},\\
		&c_2:=a_0^{-\frac{\tilde\alpha+2}{\mu}}\left( \frac{b_2}{\mu}-2\mu a_0 a_2\cdot \frac{(\tilde \alpha +2)b_0}{2\mu^3a_0^2}\right)=(\tilde c_{1,\frac{\lambda+1}{2}})^{-(\frac 2\lambda +n)}\frac{\lambda^{\frac 2\lambda +n}(n-1)}{2(\lambda+2)},
	\end{align*}
	we can now apply~\cite[Theorem II.1.1]{Wong}. From this result it follows that
	\begin{align*}
		\int\limits_0^\infty \varphi(w)\cdot e^{-\alpha\cdot h(w)}\,\dint w =\Gamma(n)\frac{c_0}{\alpha^n}+\Gamma\bigg(\frac{\lambda n+2}{\lambda}\bigg)\frac{c_2}{\alpha^{\frac{\lambda n+2}{\lambda}}}+O\Big(\alpha^{-\frac{\lambda n+4}{\lambda}}\Big),	
	\end{align*}
	as $\alpha\to \infty$.
	In summary, we obtain
	\begin{align*}
		\mathbb I^*_{\alpha,n}(\lambda)=\frac{\lambda^{n-1}}{n}\frac{\tilde c_{1,\frac{\lambda n+1}{2}}}{(\tilde c_{1,\frac{\lambda+1}{2}})^n}+\frac{K_1(\lambda,n)}{\alpha^{\frac 2\lambda}}+O\Big(\alpha^{-\frac 4\lambda}\Big),
	\end{align*}
	where $K_1(\lambda,n)$ is explicitly given by
	\begin{align}
		K_1(\lambda,n)
		&	=\Gamma\bigg(\frac{\lambda n+2}{\lambda}\bigg)\cdot c_2\cdot\frac{\tilde c_{1,\frac{\lambda n+1}{2}}}{n!}\notag\\
		&	=\frac{\lambda^{n+\frac 2\lambda}(n-1)\pi^{\frac{n-1}{2}+\frac 1\lambda}}{2(\lambda+2)n!}\cdot\frac{\Gamma(\frac{\lambda n+1}{2})\Gamma(n+\frac 2\lambda)}{\Gamma(\frac{\lambda n}{2})}\cdot \left(\frac{\Gamma(\frac\lambda 2)}{\Gamma(\frac{\lambda+1}{2})} \right)^{n+\frac 2{\lambda}}.\label{eq:const(l,n)}
	\end{align}
	This completes the proof.
\hfill $\Box$
%\end{proof}

\subsection{Monotonicity: Proof of Theorem \ref{thm:Monotonicity}}
The proof we give is inspired by those of similar monotonicity results in \cite{Bonnet_etal} and especially \cite{KabluchkoThaeleZaporozhets} for the expected $f$-vector of beta and beta' polytopes in $\RR^d$.

\begin{proof}[Proof of Theorem \ref{thm:Monotonicity}]
Recalling the explicit formula for the expected $f$-vector from Theorem~\ref{thm:f-vector_beta_star} it is sufficient to show that the quantities $\mathbb{I}_{\alpha,m}^*(\lambda)$ given by \eqref{eq:def_I_star_sum} for $m\in\NN$ and $\alpha>0$ if $\lambda>1$ or $\alpha>(m-1)\pi$ if $\lambda=1$ are strictly monotone decreasing in $\alpha$. Since constants only depending on $m$ and $\lambda$ do not influence the monotonicity behaviour of $\mathbb{I}_{\alpha,m}^*(\lambda)$, it is sufficient to prove strict monotonicity in $\alpha$ of the function
\begin{align*}
	G(\alpha)
	:=
	{\alpha^m\over m!}\int\limits_1^\infty (y^2-1)^{-\frac{\lambda m+1}{2}} \exp\Bigg\{-\alpha\int\limits_{y}^\infty\tilde c_{1,\frac{\lambda+1}{2}}(t^2-1)^{-\frac{\lambda+1}{2}}\,\dint t\Bigg\}\,\dint y,
\end{align*}
where $\alpha>0$ if $\lambda>1$ or $\alpha>(m-1)\pi$ if $\lambda=1$, recall \eqref{eq:def_I_star}. To simplify our notation, we define
\begin{align*}
	f(y) := \tilde c_{1,\frac{\lambda+1}{2}}(y^2-1)^{-\frac{\lambda+1}{2}}\qquad\text{and}\qquad \bar F(y):=\int\limits_y^\infty f(t)\,\dint t,\qquad y>1,
\end{align*}
which allows us to rewrite $G(\alpha)$ as
$$
G(\alpha) = {\alpha^m\over m!}\int\limits_1^\infty \Big({f(y)\over \tilde c_{1,\frac{\lambda+1}{2}}}\Big)^{\lambda m+1\over\lambda+1}\,e^{-\alpha\bar F(y)}\,\dint y.
$$
Again, since the constant $\tilde c_{1,\frac{\lambda+1}{2}}$ does not influence the monotonicity behaviour of $G(\alpha)$, it is sufficient to prove that
$$
H(\alpha) := {\alpha^m\over m!}\int\limits_1^\infty f(y)^{\lambda m+1\over\lambda+1}\,e^{-\alpha\bar F(y)}\,\dint y
$$
is strictly monotone decreasing in $\alpha$ for $\alpha>0$ if $\lambda>1$ or $\alpha>(m-1)\pi$ if $\lambda=1$. We compute the derivative of $H(\alpha)$:
\begin{align*}
 H'(\alpha) &= {\alpha^{m-1}\over(m-1)!}\int\limits_1^\infty f(y)^{\lambda m+1\over\lambda+1}\,e^{-\alpha\bar F(y)}\,\dint y - {\alpha^m\over m!}\int\limits_1^\infty f(y)^{\lambda m+1\over\lambda+1}\,\bar F(y)\,e^{-\alpha\bar F(y)}\,\dint y\\
 &= {\alpha^{m-1}\over(m-1)!}\int\limits_1^\infty f(y)^{\lambda m+1\over\lambda+1}\,e^{-\alpha\bar F(y)}\,\Big(1-{\alpha\over m}\,\bar F(y)\Big)\,\dint y.
\end{align*}
Next, we substitute $z=\bar F(y)$ and introduce the abbreviations $L(z):=f(\bar F^{-1}(z))^{\lambda\over\lambda+1}$, $h(z):=e^{-\alpha z}$ and $g(z):=1-{\alpha\over m}z$ for $z>0$. This allows us to rewrite $H'(\alpha)$ as
$$
 H'(\alpha) = {\alpha^{m-1}\over(m-1)!}\int\limits_0^\infty L(z)^{m-1}\,h(z)\,g(z)\,\dint z,
$$
where we used that ${\lambda m+1\over\lambda+1}-1={\lambda(m-1)\over\lambda+1}$.

We observe now that $L(z)$ is strictly convex on $(0,\infty)$. Indeed, according to the chain rule the derivative of $L(z)$ equals $L'(z)=-{\lambda\over\lambda+1}f(\bar F^{-1}(z))^{{\lambda\over\lambda+1}-2}f'(\bar F^{-1}(z))$. To see that this function is strictly increasing, we first note that $-{\lambda\over\lambda+1}f(y)^{{\lambda\over\lambda+1}-2}f'(y)$ is strictly decreasing in $y>1$, since its derivative  $-\lambda(\tilde c_{1,\frac{\lambda+1}{2}})^{-1/(\lambda+1)}(y^2-1)^{-3/2}$ is strictly negative. Also, $\bar F^{-1}(z)$ is strictly decreasing in $z>0$, since $\bar F(z)$ itself is strictly decreasing in $z$ as well.

Let $z_0:=m/\alpha>0$ be the unique zero of the function $g(z)$. Using the strict convexity of $L(z)$ we have that $L(z)<{L(z_0)\over z_0}z$ on $(0,z_0)$ and $L(z)>{L(z_0)\over z_0}z$ on $(z_0,\infty)$. Together with the fact that $g$ is positive on $(0,z_0)$ and negative on $(z_0,\infty)$ this yields
\begin{align*}
 H'(\alpha) &= {\alpha^{m-1}\over(m-1)!}\Big(\int\limits_0^{z_0} L(z)^{m-1}\,h(z)\,g(z)\,\dint z+\int\limits_{z_0}^\infty L(z)^{m-1}\,h(z)\,g(z)\,\dint z\Big)\\
 &<{\alpha^{m-1}\over(m-1)!}\Big(\int\limits_0^{z_0} \Big({L(z_0)\over z_0}\,z\Big)^{m-1}\,h(z)\,g(z)\,\dint z + \int\limits_{z_0}^\infty \Big({L(z_0)\over z_0}\,z\Big)^{m-1}\,h(z)\,g(z)\,\dint z \Big)\\
 &={\alpha^{m-1}\over(m-1)!}\Big({L(z_0)\over z_0}\Big)^{m-1}\int\limits_0^\infty z^{m-1}\,h(z)\,g(z)\,\dint z.
\end{align*}
Finally, we observe that
$$
\int\limits_0^\infty z^{m-1}\,h(z)\,g(z)\,\dint z = \int\limits_0^\infty z^{m-1}\,e^{-\alpha z}\,\dint z - {\alpha\over m}\int\limits_0^\infty z^{m}\,e^{-\alpha z}\,\dint z,
$$
which after the substitution $u=\alpha z$  takes the form
$$
{1\over\alpha^m}\int\limits_0^\infty u^{m-1}\,e^{-u}\,\dint u - {1\over m\,\alpha^m}\int\limits_0^\infty u^m\,e^{-u}\,\dint u = {1\over\alpha^m}\Big(\Gamma(m)-{\Gamma(m+1)\over m}\Big)=0.
$$
This yields that $H'(\alpha)<0$ and shows that $H(\alpha)$ and consequently $\alpha\mapsto\EE f_k(P_{d,\alpha,\beta})$ is strictly decreasing in the respective range of $\alpha$. % for $\alpha>0$ if $\beta>(d+1)/2$ or $\alpha>(d-1)\pi$ if $\beta=(d+1)/2$.
\end{proof}

\section{Special cases and small dimensions: Proofs}

This section contains the proofs of the remaining results from Section~\ref{sec:SpecialCases}.

\subsection{Typical Poisson-Voronoi cells in small dimensions: Proof of Corollary~\ref{cor:k-faces_beta=d=234}}
%\begin{proof}[Proof of Corollary \ref{cor:k-faces_beta=d=234}]
The formula for $d=2$ can either be concluded from Theorem~\ref{thm:Tfunctional} by putting $a=b=0$ and $\beta=d=2$ there or from Theorem~\ref{thm:f-vector_beta_star}. From the latter we get
$$
\EE f_0(P_{2,\alpha,2})=\EE f_1(P_{2,\alpha,2}) = 2\,\mathbb{I}_{\alpha,2}^*(2),
$$
where we used the trivial values $\tilde{\mathbb{J}}_{2,1}(3/2)=\tilde{\mathbb{J}}_{2,2}(3/2)=1$.
From the definition of $\mathbb{I}_{\alpha,2}^*(2)$, see~\eqref{eq:I*_alt_expression_sinh_negative_powers},  we have
\begin{align*}
\mathbb{I}_{\alpha,2}^*(2)
=
{\alpha^2\over 2}\cdot{3\over 4}\int\limits_{0}^\infty (\sinh\varphi)^{-4}\exp\Bigg\{-{\alpha\over 2}\int\limits_\varphi^\infty(\sinh\theta)^{-2}\,\dint\theta\Bigg\}\dint\varphi
=
3\Big(1+{2\over \alpha}\Big),
\end{align*}
which proves the case $d=2$.
%The formula for $\EE f_0(P_{2,\alpha,2})$ follows.
Applying Theorem~\ref{thm:f-vector_beta_star} to the case $\beta=d=3$ yields
\begin{align*}
	\EE f_k(P_{3,\alpha,3})
%	&	=2\sum_{s=0}^\infty\mathbb I^*_{\alpha,3-2s}(3)\cdot \tilde{\mathbb J}_{3-2s,k+1}\Big(\frac 52 -s\Big)\\
		=2\mathbb I_{\alpha,3}^*(3)\cdot\tilde{\mathbb J}_{3,k+1}\Big(\frac 52\Big)+2\mathbb{I}^*_{\alpha,1}(3)\cdot \tilde{\mathbb J}_{1,k+1}\Big(\frac 32\Big).
\end{align*}
Now, we use that $\tilde{\mathbb J}_{1,k+1}(3/2)$ vanishes for $k\in\{1,2\}$ and is equal to $1$ for $k=0$. Together with the explicit values $\tilde{\mathbb J}_{3,1}(5/2)=1/2$ and $\tilde{\mathbb J}_{3,2}(5/2)=3/2$ (which are just angle sums in a triangle), this gives
\begin{align*}
	\EE f_k(P_{3,\alpha,3})
	&	=
	\begin{cases}
		\mathbb I^*_{\alpha,3}(3)+2 \mathbb I^*_{\alpha,1}(3),	&:k=0,\\
		 3 \mathbb I^*_{\alpha,3}(3),	&:k=1,\\
		 2 \mathbb I^*_{\alpha,3}(3) ,	&:k=2.
	\end{cases}
\end{align*}
However, applying Euler's relation $f_0(P)-f_1(P)+f_2(P)=2$, which holds for every polytope $P\subset\RR^3$, yields $\mathbb I^*_{\alpha,1}(3)=1$, and thus, completes the proof for $d=3$. The proof for $d=4$ is similar and uses the non-trivial values $\tilde {\mathbb J}_{4,1}(7/2) = 27/143$ and $\tilde {\mathbb J}_{4,2} (7/2) = 170/143$ that follow from~\eqref{eq:internal_angle_sum_J_formula}.
\hfill $\Box$
%\end{proof}

\subsection{The case \texorpdfstring{$\beta=(d+1)/2$}{beta=(d+1)/2}: Proof of Theorem~\ref{thm:f-vector,beta=(d+1)/2}}
We turn to the proof of the formula for the expected $f$-vector of $P_{d,\alpha,(d+1)/2}$ stated in Theorem~\ref{thm:f-vector,beta=(d+1)/2}. The next lemma provides an explicit formula for the expected external angle sums $\mathbb I^*_{\alpha,m}(1)$.

\begin{lemma}\label{lem:I_star_(1)}
	In the special case $\lambda=1$ and for $\alpha>\pi(m-1)$, it holds that
	\begin{align}\label{eq:I*(1)}
		\mathbb I^*_{\alpha,m}(1)
		&	=\frac{\alpha^m\Gamma(\frac{m+1}{2})}{m2^{m}\sqrt{\pi}\Gamma(\frac m2)}\cdot\frac{\Gamma(\frac{\alpha}{2\pi}-\frac{m-1}{2})}{\Gamma(\frac{\alpha}{2\pi}+\frac{m+1}{2})},\qquad m\in\NN.
	\end{align}
\end{lemma}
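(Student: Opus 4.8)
\textbf{Proof plan for Lemma~\ref{lem:I_star_(1)}.}
The plan is to specialise the definitions~\eqref{eq:def_I_star} and~\eqref{eq:def_I_star_sum} to $\lambda=1$ and to carry out the resulting one-dimensional integral explicitly. First I would record the two relevant values of the constant $\tilde c_{1,\beta}={\Gamma(\beta)}/({\pi^{1/2}\Gamma(\beta-1/2)})$, namely $\tilde c_{1,1}=1/\pi$ and $\tilde c_{1,(m+1)/2}=\Gamma(\tfrac{m+1}{2})/(\sqrt\pi\,\Gamma(\tfrac m2))$. Next I would evaluate the inner integral in~\eqref{eq:def_I_star}: since $1/(t^2-1)=\tfrac12\big(\tfrac1{t-1}-\tfrac1{t+1}\big)$ one gets $\int_y^\infty (t^2-1)^{-1}\,\dint t=\tfrac12\log\tfrac{y+1}{y-1}$, hence the exponential factor in~\eqref{eq:def_I_star} collapses to the power $\big(\tfrac{y-1}{y+1}\big)^{\alpha/(2\pi)}$. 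Therefore
$$
I^*_{\alpha,m}(1)=\frac{\Gamma(\frac{m+1}{2})}{\sqrt\pi\,\Gamma(\frac m2)}\int\limits_1^\infty (y^2-1)^{-\frac{m+1}{2}}\Big(\frac{y-1}{y+1}\Big)^{\frac{\alpha}{2\pi}}\,\dint y.
$$

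The second step is to compute this last integral by the substitution $u=\frac{y-1}{y+1}$, i.e.\ $y=\frac{1+u}{1-u}$, which maps $(1,\infty)$ onto $(0,1)$ with $\dint y=\frac{2}{(1-u)^2}\,\dint u$, $y-1=\frac{2u}{1-u}$, $y+1=\frac{2}{1-u}$ and hence $y^2-1=\frac{4u}{(1-u)^2}$. Substituting and simplifying (the powers of $1-u$ recombine nicely), the integral becomes $2\cdot 4^{-(m+1)/2}\int_0^1 u^{\frac{\alpha}{2\pi}-\frac{m+1}{2}}(1-u)^{m-1}\,\dint u$, which is a Beta integral equal to $2^{-m}B\big(\tfrac{\alpha}{2\pi}-\tfrac{m-1}{2},\,m\big)=2^{-m}\,\frac{\Gamma(\frac{\alpha}{2\pi}-\frac{m-1}{2})\,\Gamma(m)}{\Gamma(\frac{\alpha}{2\pi}+\frac{m+1}{2})}$; convergence of the Beta integral at $u=0$ requires precisely $\frac{\alpha}{2\pi}>\frac{m-1}{2}$, i.e.\ $\alpha>\pi(m-1)$, which is the standing hypothesis (and is exactly the threshold appearing in the admissibility condition for $I^*_{\alpha,m}(1)$).

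Finally, I would assemble the pieces: multiplying by the prefactor $\alpha^m/m!$ from~\eqref{eq:def_I_star_sum} and using $\Gamma(m)/m!=\Gamma(m)/(m\,\Gamma(m))=1/m$ gives
$$
\mathbb I^*_{\alpha,m}(1)=\frac{\alpha^m}{m!}\cdot\frac{\Gamma(\frac{m+1}{2})}{\sqrt\pi\,\Gamma(\frac m2)}\cdot 2^{-m}\cdot\frac{\Gamma(\frac{\alpha}{2\pi}-\frac{m-1}{2})\,\Gamma(m)}{\Gamma(\frac{\alpha}{2\pi}+\frac{m+1}{2})}
=\frac{\alpha^m\,\Gamma(\frac{m+1}{2})}{m\,2^m\sqrt\pi\,\Gamma(\frac m2)}\cdot\frac{\Gamma(\frac{\alpha}{2\pi}-\frac{m-1}{2})}{\Gamma(\frac{\alpha}{2\pi}+\frac{m+1}{2})},
$$
which is~\eqref{eq:I*(1)}. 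There is no real obstacle here beyond careful bookkeeping of the constants and of the powers of $2$ in the substitution; the only point that needs a word of justification is the convergence condition, which matches the hypothesis $\alpha>\pi(m-1)$ exactly.
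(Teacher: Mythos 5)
Your proof is correct and follows essentially the same route as the paper: specialise the definitions to $\lambda=1$, evaluate the inner integral as $\tfrac12\log\tfrac{y+1}{y-1}$ so the exponential becomes the power $\big(\tfrac{y-1}{y+1}\big)^{\alpha/(2\pi)}$, and reduce the remaining integral to a Beta function. The only (immaterial) difference is the substitution — you use $u=\tfrac{y-1}{y+1}$ to land on the $(0,1)$ form of the Beta integral, whereas the paper uses $z=\tfrac{2}{y-1}$ and the $(0,\infty)$ form; these are related by $u=1/(1+z)$ and yield identical computations, including the convergence threshold $\alpha>\pi(m-1)$.
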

%%ZK: Numerisch korrekt.

\begin{proof}
	By definition of $\mathbb I^*_{\alpha,m}(\lambda)$ given in~\eqref{eq:def_I_star} and~\eqref{eq:def_I_star_sum}, we have
	\begin{align*}
		\mathbb I^*_{\alpha,m}(1)
		&	=\frac{\alpha^{m}}{m!}\int\limits_1^\infty \tilde c_{1,\frac{m+1}{2}}(y^2-1)^{-\frac{m+1}{2}} \exp\Bigg\{-\alpha\int\limits_{y}^\infty\tilde c_{1,1}(t^2-1)^{-1}\,\dint t\Bigg\}\,\dint y.
	\end{align*}
	Now, $\tilde{c}_{1,1}=1/\pi$ and $\int_y^\infty{\dint t\over t^2-1}={1\over 2}\log{y+1\over y-1}$ for $y>1$.
	Inserting this and the value of $\tilde c_{1,(m+1)/2}$ leads to
	\begin{align*}
		\mathbb I^*_{\alpha,m}(1)
		&	=\frac{\alpha^m\Gamma(\frac{m+1}{2})}{m!\sqrt{\pi}\Gamma(\frac m2)}\int\limits_1^\infty \bigg(\frac{y+1}{y-1}\bigg)^{-\frac\alpha{2\pi}}(y^2-1)^{-\frac{m+1}{2}}\,\dint y.
	\end{align*}
	Substituting $z=\frac{y+1}{y-1}-1 = \frac{2}{y-1}$  gives
	\begin{align*}
		\mathbb I^*_{\alpha,m}(1)
%		=\frac{\alpha^m\Gamma(\frac{m+1}{2})}{m!2^m\sqrt{\pi}\Gamma(\frac m2)}\int\limits_1^\infty (u-1)^{m-1}u^{-(\frac{\alpha}{2\pi}+\frac{m+1}{2})}\,\dint u
		=\frac{\alpha^m\Gamma(\frac{m+1}{2})}{m!2^m\sqrt{\pi}\Gamma(\frac m2)}\int\limits_0^\infty z^{m-1}(z+1)^{-(\frac{\alpha}{2\pi}+\frac{m+1}{2})}\,\dint z.
	\end{align*}
	Note that this integral converges since ${\alpha\over 2\pi}+{m+1\over 2}>m$ by our assumption $\alpha>(m-1)\pi$. Using the integral representation for the beta function and expressing the result through gamma functions we arrive at
	\begin{align*}
		\mathbb I^*_{\alpha,m}(1)
		=\frac{\alpha^m\Gamma(\frac{m+1}{2})}{m!2^m\sqrt{\pi}\Gamma(\frac m2)}\cdot \frac{\Gamma(m)\Gamma(\frac{\alpha}{2\pi}-\frac{m-1}{2})}{\Gamma(\frac{\alpha}{2\pi}+\frac{m+1}{2})}
		=\frac{\alpha^m \Gamma(\frac{m+1}{2})}{m2^{m}\sqrt{\pi}\Gamma(\frac m2)}\cdot\frac{\Gamma(\frac{\alpha}{2\pi}-\frac{m-1}{2})}{\Gamma(\frac{\alpha}{2\pi}+\frac{m+1}{2})}.
	\end{align*}
The proof is thus complete.
\end{proof}

\begin{proof}[Proof of Theorem~\ref{thm:f-vector,beta=(d+1)/2}]
We apply Theorem~\ref{thm:f-vector_beta_star} with $\beta= (d+1)/2$ to obtain
\begin{align}\label{eq:proof_exp_f_vect_beta=(d+1)/2}
\EE f_{\ell-1}(P_ {d,\alpha,\frac{d+1}{2}})
=
2\sum_{\substack{m \in \{\ell,\ldots, d\}\\ m \equiv d \Mod 2}}  \mathbb I^*_{\alpha,m}(1)\cdot\mathbb{\tilde {J}}_{m,\ell}\Big(\frac {m}2\Big).
\end{align}
The expected internal angle sums appearing in this formula are given by Theorem~4.1 from~\cite{KabluchkoZeroPolytope} which states that
	\begin{align}
		\mathbb{\tilde J}_{m,\ell}\Big(\frac m2\Big)
		&	=\frac{\pi^{\ell-m}}{\ell!}\cdot\frac{m (A[m,\ell]-A[m-2,\ell])}{2\tilde c_{1,\frac{m+1}{2}}}\label{eq:J_tilde_m/2},
	\end{align}
where $A[\,\cdot\,,\,\cdot\,]$ has been defined in~\eqref{eq:A-Terme}.
	Inserting~\eqref{eq:I*(1)} and~\eqref{eq:J_tilde_m/2} into~\eqref{eq:proof_exp_f_vect_beta=(d+1)/2} yields
	\begin{multline*}
		\EE f_{\ell-1}(P_ {d,\alpha,\frac{d+1}{2}})
		=2\sum_{\substack{m \in \{\ell,\ldots, d\}\\ m \equiv d \Mod 2}}
  \frac{\alpha^{m}\Gamma(\frac{m+1}{2})}{m 2^{m}\sqrt{\pi}\Gamma(\frac {m}2)}\cdot\frac{\Gamma(\frac{\alpha}{2\pi}-\frac{m-1}{2})}{\Gamma(\frac{\alpha}{2\pi}+\frac{m+1}{2})}
		\cdot \frac{\pi^{\ell-m}}{\ell!}\frac{m(A[m,\ell]-A[m-2,\ell]) }{2\tilde c_{1,\frac{m+1}{2}}}.
	\end{multline*}
	Using the definition of $\tilde c_{1,(m+1)/2}$ and simplifying the resulting expression completes the proof of the formula for $\EE f_{\ell-1}(P_ {d,\alpha,(d+1)/2})$.	
%The formula for $\EE f_{d-1}(P_ {d,\alpha,\frac{d+1}{2}})$ can be obtained as a special case.
\end{proof}

\subsection{The case \texorpdfstring{$\beta=(d+2)/2$}{beta= (d+2)/2}: Proof of Theorem~\ref{thm:f-vector,beta=(d+2)/2}}
To prove Theorem~\ref{thm:f-vector,beta=(d+2)/2}, we apply Theorem~\ref{thm:f-vector_beta_star} with $\beta= (d+2)/2$, which results in
\begin{equation}\label{eq:proof_exp_f_vect_beta=(d+2)/2}
\EE f_{\ell-1}(P_ {d,\alpha,\frac{d+2}{2}})
=
2\sum_{\substack{m \in \{\ell,\ldots, d\}\\ m \equiv d \Mod 2}}  \mathbb I^*_{\alpha,m}(2)\cdot\mathbb{\tilde {J}}_{m,\ell}\Big(\frac {m+1}2\Big).
\end{equation}
The internal angle sums appearing here were already evaluated in~\cite[Theorem~2.6]{KabluchkoAngles}:
\begin{align}\label{eq:J_tilde_(m+1)/2}
	\mathbb{\tilde J}_{m,\ell}\Big(\frac{m+1}{2}\Big)=\frac{1}{\binom{2m}m}\left(\binom m\ell\binom{m+\ell} \ell-\binom{m-2}\ell\binom{m-2+\ell}\ell\right).
\end{align}
The next lemma relates the external angle sums $\mathbb I^*_{\alpha,m}(2)$ to the modified Bessel function  $K_\nu(z)$ of the second kind as defined in~\eqref{eq:def_Bessel_2nd_kind}.

\begin{lemma}\label{lem:I_star_(2)}
	In the special case $\lambda=2$ and for all $\alpha>0$, we have
	\begin{align}\label{eq:I*(2)}
		\mathbb I^*_{\alpha,m}(2)
		&=\sqrt{\alpha e^{\alpha} \over\pi}\,{2m-1\choose m}\,K_{m-\frac 12}\left(\frac{\alpha}{2}\right),
\qquad m\in\NN.
		%&	=\frac{\sqrt{\alpha}2^{2m-1}e^{\frac\alpha 2}\Gamma(m+\frac 12)}{m!\pi}\cdot K\Big[m-\frac 12,\frac{\alpha}{2}\Big]
	\end{align}
\end{lemma}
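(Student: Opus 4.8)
The plan is to evaluate the integral defining $\mathbb I^*_{\alpha,m}(2)$ directly, reducing it to the second integral representation of $K_\nu$ given in~\eqref{eq:def_Bessel_2nd_kind}. Starting from~\eqref{eq:def_I_star} and~\eqref{eq:def_I_star_sum} with $\lambda=2$, we have
$$
\mathbb I^*_{\alpha,m}(2)
=
\frac{\alpha^m}{m!}\int\limits_1^\infty \tilde c_{1,\frac{2m+1}{2}}(y^2-1)^{-\frac{2m+1}{2}}
\exp\Bigg\{-\alpha\int\limits_y^\infty \tilde c_{1,\frac32}(t^2-1)^{-\frac32}\,\dint t\Bigg\}\,\dint y.
$$
The first step is to compute the inner integral in closed form: since $\tilde c_{1,3/2}=\Gamma(3/2)/(\sqrt\pi\,\Gamma(1))=1/2$ and $\int_y^\infty (t^2-1)^{-3/2}\,\dint t = y/\sqrt{y^2-1} - 1$ (as can be checked by differentiation, using $\frac{\dint}{\dint t}\bigl(-t/\sqrt{t^2-1}\bigr)=(t^2-1)^{-3/2}$ and the limit $t/\sqrt{t^2-1}\to 1$ as $t\to\infty$), the exponent becomes $-\frac\alpha2\bigl(y/\sqrt{y^2-1}-1\bigr)$. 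This is exactly why the doubly critical threshold is absent here: the exponent is bounded near $y=1$ only after the substitution, and in fact $y/\sqrt{y^2-1}\to\infty$ there, so the integral converges for all $\alpha>0$.

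The second step is the change of variables. Write $y = t/\sqrt{t^2-1}$ with $t>1$; equivalently $y^2-1 = 1/(t^2-1)$, so $y/\sqrt{y^2-1} = t$ and the exponent becomes $-\frac\alpha2(t-1)$. Differentiating, $\dint y = -(t^2-1)^{-3/2}\,\dint t$, and the range $y:1\to\infty$ corresponds to $t:\infty\to 1$, absorbing the sign. The integrand $(y^2-1)^{-\frac{2m+1}{2}}$ becomes $(t^2-1)^{\frac{2m+1}{2}}$, and combining with the Jacobian gives $(t^2-1)^{\frac{2m+1}{2}-\frac32} = (t^2-1)^{m-1}$. Thus
$$
\int\limits_1^\infty (y^2-1)^{-\frac{2m+1}{2}} e^{-\frac\alpha2(y/\sqrt{y^2-1}-1)}\,\dint y
= e^{\alpha/2}\int\limits_1^\infty (t^2-1)^{m-1} e^{-\frac\alpha2 t}\,\dint t,
$$
and by the second formula in~\eqref{eq:def_Bessel_2nd_kind} with $\nu-\frac12 = m-1$, i.e.\ $\nu = m-\frac12$, and $z=\alpha/2$, the right-hand integral equals $\dfrac{\Gamma(m+\frac12)}{\sqrt\pi\,(\alpha/4)^{m-1/2}}K_{m-\frac12}(\alpha/2)$.

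The final step is to collect the constants. Using $\tilde c_{1,\frac{2m+1}{2}} = \Gamma(m+\frac12)/(\sqrt\pi\,\Gamma(m))$ together with the above, one obtains
$$
\mathbb I^*_{\alpha,m}(2)
= \frac{\alpha^m}{m!}\cdot\frac{\Gamma(m+\frac12)}{\sqrt\pi\,(m-1)!}\cdot e^{\alpha/2}\cdot\frac{\Gamma(m+\frac12)}{\sqrt\pi\,(\alpha/4)^{m-1/2}}\,K_{m-\frac12}\!\left(\frac\alpha2\right),
$$
and it remains to verify the elementary identity
$$
\frac{\alpha^m}{m!\,(m-1)!}\cdot\frac{\Gamma(m+\frac12)^2}{\pi}\cdot\frac{4^{m-1/2}}{\alpha^{m-1/2}}
= \sqrt{\frac{\alpha}{\pi}}\,\binom{2m-1}{m},
$$
which follows from the duplication formula $\Gamma(m+\frac12) = \sqrt\pi\,(2m)!/(4^m\,m!)$, after which $\Gamma(m+\frac12)^2\cdot 4^{2m}/(m!^2\,\pi) = \binom{2m}{m}^2$ and $\binom{2m}{m}/(2m) \cdot \binom{2m}{m}\cdot\frac{m}{1} $ collapses to $\binom{2m-1}{m}$ via $\binom{2m-1}{m} = \frac12\binom{2m}{m}$. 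Combining the factor $e^{\alpha/2}\sqrt\alpha$ under a single square root as $\sqrt{\alpha e^\alpha}$ gives~\eqref{eq:I*(2)}. I expect no real obstacle here; the only place requiring care is bookkeeping the Gamma-function constants and the duplication formula, and checking that the convergence of the $t$-integral at $t=1$ (which needs $m\ge1$) and at $t=\infty$ (automatic) holds for every $\alpha>0$, consistent with the absence of a critical intensity in this case. With Lemma~\ref{lem:I_star_(2)} in hand, Theorem~\ref{thm:f-vector,beta=(d+2)/2} follows by substituting~\eqref{eq:I*(2)} and~\eqref{eq:J_tilde_(m+1)/2} into~\eqref{eq:proof_exp_f_vect_beta=(d+2)/2} and simplifying, noting that $\binom{2m-1}{m}/\binom{2m}{m} = \frac12$ cancels the leading factor $2$.
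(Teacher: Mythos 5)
Your overall route is exactly the paper's: compute the inner integral as $\tfrac12\bigl(y/\sqrt{y^2-1}-1\bigr)$, pull out $e^{\alpha/2}$, apply the involution $y\mapsto y/\sqrt{y^2-1}$ to reduce to $\int_1^\infty (t^2-1)^{m-1}e^{-\alpha t/2}\,\dint t$, and identify this with $K_{m-1/2}(\alpha/2)$ via the second representation in~\eqref{eq:def_Bessel_2nd_kind}, finishing with the duplication formula. All of that is sound. However, there is a concrete error in the step where you invoke~\eqref{eq:def_Bessel_2nd_kind}: with $\nu-\tfrac12=m-1$, i.e.\ $\nu=m-\tfrac12$, the constant in that formula is $\Gamma(\nu+\tfrac12)=\Gamma(m)=(m-1)!$, not $\Gamma(m+\tfrac12)$ as you wrote. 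Consequently the ``elementary identity'' you propose to verify at the end,
\[
\frac{\alpha^m}{m!\,(m-1)!}\cdot\frac{\Gamma(m+\frac12)^2}{\pi}\cdot\frac{4^{m-1/2}}{\alpha^{m-1/2}}
= \sqrt{\frac{\alpha}{\pi}}\,\binom{2m-1}{m},
\]
is in fact \emph{false}: the two sides differ by the factor $\Gamma(m+\tfrac12)/\Gamma(m)$. For $m=1$ this is $\sqrt\pi/2\neq 1$, and indeed a direct check (the $m=1$ integral is an exact differential, giving $\mathbb I^*_{\alpha,1}(2)=1$, which matches $\sqrt{\alpha e^\alpha/\pi}\,K_{1/2}(\alpha/2)=1$) confirms that the stated lemma is correct and your constant is not. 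The concluding manipulation ``$\binom{2m}{m}/(2m)\cdot\binom{2m}{m}\cdot\frac m1$ collapses to $\binom{2m-1}{m}$'' does not parse and cannot rescue the false identity.

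Once you replace $\Gamma(m+\tfrac12)$ by $(m-1)!$ in the Bessel step, the $(m-1)!$ cancels against the $(m-1)!$ coming from $\tilde c_{1,m+\frac12}=\Gamma(m+\frac12)/(\sqrt\pi\,(m-1)!)$, and the remaining identity to check is
\[
\frac{\Gamma(m+\frac12)\,2^{2m-1}}{m!\,\sqrt\pi}=\binom{2m-1}{m},
\]
which does follow from the duplication formula $\Gamma(m+\tfrac12)=2^{1-2m}\sqrt\pi\,(2m-1)!/(m-1)!$. With that correction your argument coincides with the paper's proof of Lemma~\ref{lem:I_star_(2)}.
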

%ZK: Numerisch korrekt.

\begin{proof}
	For $\mathbb I^*_{\alpha,m}(2)$, we use the definition~\eqref{eq:def_I_star} and insert the values of $\tilde c_{1,m+\frac 12}$ and $\tilde c_{1,\frac 32}$ to obtain
	\begin{align*}
		\mathbb I^*_{\alpha,m}(2)
		=\frac{\alpha^m\Gamma(m+\frac 12)}{m!\sqrt{\pi}\Gamma(m)}\int\limits_1^\infty(y^2-1)^{-(m+\frac 12)}\exp\Bigg\{-\frac\alpha 2\int\limits_y^\infty (t^2-1)^{-\frac 32}\,\dint t\Bigg\}\,\dint y.
	\end{align*}
	Since
	\begin{align*}
		\int\limits_y^\infty (t^2-1)^{-\frac 32}\,\dint t=\frac{y}{\sqrt{y^2-1}}-1,\qquad y>1,
	\end{align*}
	we have
	\begin{align*}
		\mathbb I^*_{\alpha,m}(2)=\frac{\alpha^m e^{\frac \alpha 2}\Gamma(m+\frac 12)}{m!\sqrt{\pi}(m-1)!}\int\limits_1^\infty(y^2-1)^{-(m+\frac 12)}\exp\left\{-\frac\alpha 2\cdot\frac{y}{\sqrt{y^2-1}}\right\}\,\dint y.
	\end{align*}
	Substituting $u=y/\sqrt{y^2-1}$ and recalling a formula for $K_\nu(z)$ given in~\eqref{eq:def_Bessel_2nd_kind} yields
	\begin{align*}
		\mathbb I^*_{\alpha,m}(2)
		=\frac{\alpha^m e^{\frac \alpha 2}\Gamma(m+\frac 12)}{m!\sqrt{\pi}(m-1)!}\int\limits_1^\infty(z^2-1)^{m-1}e^{-\frac\alpha 2\cdot u}\,\dint u
		=\frac{\sqrt{\alpha}2^{2m-1}e^{\frac\alpha 2}\Gamma(m+\frac 12)}{m!\pi}\cdot K_{m-\frac 12}\left(\frac{\alpha}{2}\right).
	\end{align*}
	Applying Legendre's duplication formula to the last expression yields that
	\begin{align*}
		\frac{\sqrt{\alpha}2^{2m-1}e^{\frac\alpha 2}\Gamma(m+\frac 12)}{m!\pi} = {\sqrt{\alpha}\,e^{\alpha/2}\over m!\,\sqrt{\pi}}\,{\Gamma(2m)\over\Gamma(m)} = \sqrt{\alpha e^{\alpha}\over\pi}\,{2m-1\choose m}.
	\end{align*}
	The proof is thus complete.
\end{proof}

\begin{proof}[Proof of Theorem~\ref{thm:f-vector,beta=(d+2)/2}]
Insert~\eqref{eq:J_tilde_(m+1)/2} and~\eqref{eq:I*(2)} into~\eqref{eq:proof_exp_f_vect_beta=(d+2)/2} and simplify the resulting expression.
%More details:
%	This follows from Theorem~\ref{thm:f-vector_beta_star} together with~\eqref{eq:I*(2)} and~\eqref{eq:J_tilde_(m+1)/2}. In fact, %we obtain
%	\begin{align*}
%		\EE f_k(P_{d,\alpha,{d+2\over 2}}) &= 2\sqrt{\alpha\over\pi}\,e^{\alpha/2}\,\sum_{s=0}^\infty {{2(d-2s)-1\choose %d-2s}\over{2(d-2s)\choose d-2s}}\Bigg({d-2s\choose k+1}{d-2s+k+1\choose k+1}\\
%		&\hspace{6cm}-{d-2s-2\choose k+1}{d-2s+k-1\choose k+1}\Bigg).
%	\end{align*}
%	The result follows by taking into account that ${{2(d-2s)-1\choose d-2s}/{2(d-2s)\choose d-2s}}=1/2$.
\end{proof}

\subsection*{Acknowledgement}

The authors are grateful to Ben Hansen and Tobias M\"uller for providing us with the simulation of the hyperbolic Poisson-Voronoi tessellation shown in the left panel of Figure~\ref{fig:Voronoi} and to the unknown referee for a careful reading of the manuscript.
TG and ZK were supported by the German Research Foundation under Germany's Excellence Strategy  EXC 2044 -- 390685587, \textit{Mathematics M\"unster: Dynamics - Geometry - Structure}  and TG, ZK and CT by the DFG priority program SPP 2265 \textit{Random Geometric Systems}.

\addcontentsline{toc}{section}{References}

\end{document}